\documentclass[11pt,a4paper]{article}
\usepackage{}
\usepackage{amsfonts}
\usepackage{mathrsfs}
\usepackage{multirow}
\usepackage{color}
\usepackage{url}
\usepackage{booktabs}
\usepackage{listings}
\usepackage{xcolor}
\usepackage{algorithm}  
\usepackage{algpseudocode} 
\usepackage{epsfig}
\usepackage{epstopdf}
\usepackage[T1]{fontenc}
\usepackage{geometry}
\usepackage{amsbsy,amsmath,latexsym,amsfonts, epsfig, color, authblk, amssymb, graphics, bm}
\usepackage{epsf,slidesec,epic,eepic}
\usepackage{fancybox}
\usepackage{fancyhdr}
\usepackage{setspace}
\usepackage{nccmath}
\usepackage{cases}
\usepackage{subcaption}
\usepackage[colorlinks, citecolor=blue]{hyperref}
\newtheorem{theorem}{Theorem}[section]
\newtheorem{lemma}{Lemma}[section]
\newtheorem{definition}{Definition}[section]
\newtheorem{example}{Example}

\newtheorem{proposition}{Proposition}[section]

\newtheorem{assumption}{Assumption}
\newtheorem{remark}{Remark}[section]

\newtheorem{aproposition}{Proposition}

\newtheorem{claim}{Claim}

\newenvironment{proof}{{\noindent \bf Proof:}}{\hfill$\Box$\medskip}

\definecolor{lred}{rgb}{1,0.8,0.8}
\definecolor{lblue}{rgb}{0.8,0.8,1}
\definecolor{dred}{rgb}{0.6,0,0}
\definecolor{dblue}{rgb}{0,0,0.5}
\definecolor{dgreen}{rgb}{0,0.5,0.5}

\title{A proximal-linearized NEP method for composite optimization with conic and manifold constraints}

\author{Hao He\footnote{School of Mathematics, South China University of Technology, Guangzhou, China},\ Ruyu Liu\footnote{School of Mathematics, South China University of Technology}\ \ {\rm and}\ \ Shaohua Pan\footnote{School of Mathematics, South China University of Technology (shhpan@scut.edu.cn)}}

\begin{document}

\maketitle

\begin{abstract}
This paper studies difference-of-convex (DC) composite optimization
problems with conic and manifold constraints. By penalizing the conic constraint with a distance-based penalty, we propose an inexact proximal-linearized nonsmooth exact penalty (iPLNEP) algorithm. The proposed method successively finds approximate minimizers of proximal-linearized subproblems over the tangent spaces of the manifold based on computable inexactness criteria, while adaptively updating the proximal and penalty parameters. Under a boundedness assumption on the iterate and penalty parameter sequences, iPLNEP is shown to achieve an $O(\epsilon^{-2})$ worst-case iteration complexity bound for finding an $\epsilon$-stationary point. Using accelerated semi-proximal ADMM to solve the subproblems, we establish an overall oracle complexity bound of $O(\epsilon^{-4})$. Under an additional uniform error-bound condition, using semi-proximal ADMM as the subproblem solver yields an improved overall oracle complexity bound of $O(\epsilon^{-2}\log\epsilon^{-1})$. To the best of our knowledge, this provides the first proximal-linearized NEP framework with provable oracle complexity guarantees for DC composite optimization with conic and manifold constraints. If, in addition, the associated potential function satisfies the KL property, the whole sequence of iterates converges to a stationary point. Extensive numerical experiments on composite optimization problems with orthogonal-manifold, nonnegative cone, and second-order cone constraints demonstrate the effectiveness of the proposed method.
\end{abstract}

\noindent
{\bf Keywords:} DC composite optimization; conic constraints;  manifold constraints; nonsmooth exact penalty; iteration and oracle complexity; global convergence

\section{Introduction}\label{sec1}

Let $\mathbb{X}$ and $\mathbb{Y}$ denote real Euclidean spaces endowed with an inner product $\langle\cdot,\cdot\rangle$ and the induced norm $\|\cdot\|$. Let $\mathcal{M}$ be a closed $\mathcal{C}^1$-embedded submanifold of $\mathbb{X}$, and let $K\subset\mathbb{Y}$ be a simple closed convex subset. We consider the following optimization problem subject to the conic and manifold constraints:  
\begin{equation}\label{Rcprob}    
\min_{x\in g^{-1}(K)\cap\mathcal{M}}\Theta(x):=f(x)+\vartheta(x)-h(x),
\end{equation}
where $f,\vartheta,h\!:\mathbb{X}\to\mathbb{R}$ and $g\!:\mathbb{X}\to\mathbb{Y}$ satisfy the conditions stated in Assumption \ref{ass0}. 
\begin{assumption}\label{ass0}
{\bf(i)} $f$ is differentiable with locally Lipschitz gradient on an open convex set $\mathcal{O}\supset\mathcal{M}$, and $g$ is differentiable with locally Lipschitz Jacobian on $\mathcal{O}$;
	
\noindent
{\bf(ii)} $\vartheta$ and $h$ are convex functions;  {\bf(iii)} the feasible set $g^{-1}(K)\cap\mathcal{M}$ is nonempty. 
\end{assumption}

Model \eqref{Rcprob} provides a flexible framework for a broad class of optimization problems, including cases where \(\vartheta\) is extended-valued and where \(\vartheta\) and \(h\) are compositions of finite-valued convex functions with smooth mappings. Owing to the manifold constraint $x\in\!\mathcal{M}$, this model is not covered by the DC composite optimization framework in \cite[Eq. (46)]{Le2024}. Despite its generality, the underlying structure arises naturally in a variety of applications in structured data analysis and scientific computing, as illustrated below. 
\begin{example}\label{Example1}
Sparse and row-sparse principal component analysis are representative
applications of manifold optimization 
(see, e.g., \cite{Xiao2021,Huang2025}). Consider the following model:
\begin{equation}\label{sPCA}
\min_{X\in\mathbb{R}^{n\times p}}\Big\{-\frac{1}{2}{\rm tr}(X^{\top}MX)+\nu\varphi(X)\ \ {\rm s.t.}\ \ XX^{\top}v-v=0,\,X^{\top}X\!-\!I_p=0\Big\},
\end{equation}
where $M\in\mathbb{R}^{n\times n}$ is a symmetric data matrix,
$\nu>0$ is a regularization parameter, $\varphi$ is a convex
regularizer promoting sparsity or row sparsity, 
$v\in\mathbb{R}^n$ is either a positive vector or the zero vector,
and $I_p$ is the $p\times p$ identity matrix.
When $v\ne0$, the constraint $XX^\top v=v$ requires the column
space of $X$ to contain $v$. Model \eqref{sPCA} is a special case of \eqref{Rcprob} with $h\equiv 0,g(X)=XX^{\top}v-v,\mathbb{X}=\mathbb{R}^{n\times p},\mathcal{M}={\rm St}(n,p):=\big\{X\in\mathbb{R}^{n\times p}\,|\,X^{\top}X=I_p\big\}$ and $K=\{0\}$. Alternatively, both equality constraints can be incorporated into the conic constraint by setting $g(X)=(X^{\top}X\!-\!I_p,XX^{\top}v-v)$, $\mathcal{M}=\mathbb{X}=\mathbb{R}^{n\times p}$ and $K=\{0\}\times\{0\}$.  
\end{example}
\begin{example}\label{Example2}
The well-known $k$-means clustering problem in unsupervised machine learning is formulated in \cite{carson2017} as a special case of the following nonnegative orthogonal model
\begin{equation}\label{RNO}
\min_{X\in\mathbb{R}^{n\times p}}\Big\{f(X)\ \ {\rm s.t.}\ \ X^{\top}X-I_p=0,\,X\ge 0,\,v\in{\rm span}(X)\Big\},
\end{equation}
where $f:\mathbb{R}^{n\times p}\to\mathbb{R}$ is a smooth function, $X\ge 0$ denotes the entrywise nonnegativity of matrix $X$, ${\rm span}(X)$ denotes the column space of $X$, and $v\in\mathbb{R}^n$ is a given positive vector. 
Since $v\in{\rm span}(X)$ is equivalent to $XX^\top v=v$ under $X^\top X=I_p$, model \eqref{RNO} is a special case of \eqref{Rcprob} with $\vartheta\equiv h\equiv 0, g(X)=X,\mathbb{X}=\mathbb{Y}=\mathbb{R}^{n\times p}, \mathcal{M}={\rm St}(n,p)$ and $K=\mathbb{R}_{+}^{n\times p}$. Here,  $\mathbb{R}_{+}^{n\times p}$ represents the nonnegative cone in $\mathbb{R}^{n\times p}$.
\end{example}
\begin{example}\label{Example3}
Motivated by applications in machine learning and robust optimization
(see, e.g., \cite{Lopez2018,Okuno2015}), we consider the following
model with second-order cone constraints, a DC regularizer,
and a generalized sphere constraint: 
\begin{equation}\label{SOCP}
\min_{x\in\mathbb{R}^{n}}\Big\{f(x)+\vartheta(x)-h(x)\ \ {\rm s.t.}\ \ g(x)\in K^{m_1}\times\cdots\times K^{m_s},x^{\top}Bx=1\Big\}
\end{equation}
where $f,\vartheta,h$, and $g$ satisfy the assumptions imposed
on the corresponding functions in \eqref{Rcprob},
with $\mathbb{X}=\mathbb{R}^n$ and $\mathbb{Y}=\mathbb{R}^m$
for $m=\sum_{i=1}^s m_i$, each
$K^{m_i}:=\{(t,u)\in\mathbb{R}\times\mathbb{R}^{m_i-1}
\mid \|u\|_2\le t\}$ is a second-order cone,
and $B\in\mathbb{R}^{n\times n}$ is symmetric positive definite.
Model \eqref{SOCP} is a special case of \eqref{Rcprob} with
$K=K^{m_1}\times\cdots\times K^{m_s}$ and
$\mathcal{M}=\{x\in\mathbb{R}^n\mid x^{\top}Bx=1\}$.
\end{example}
\begin{example}\label{Example4}
Consider the following nonnegative sparse canonical correlation model:
\begin{align}\label{NSCCA-model}
&\min_{X\in\mathbb{R}_{+}^{p\times r}, Y\in\mathbb{R}_{+}^{q\times r}}-\frac{1}{2}{\rm tr}(X^{\top}A^{\top}BY)+\nu_1\|X\|_{2,1}+\nu_2\|Y\|_{2,1}\nonumber\\
&\qquad\ \ {\rm s.t.}\ \ X^{\top}A^{\top}AX=I_{r},Y^{\top}B^{\top}BY=I_r,
\end{align}
where $A\in\mathbb{R}^{n\times p}$ and $B\in\mathbb{R}^{n\times q}$ are data matrices such that $A^{\top}A$ and $B^{\top}B$ are positive definite, $1\le r\le\min\{p,q\}$ is the number of canonical pairs, and $\|X\|_{2,1}:=\sum_{i=1}^p\|X_{i,:}\|_2$ denotes the $\ell_{2,1}$-norm of $X$. Model \eqref{NSCCA-model} is a special case of \eqref{Rcprob} with $\mathbb{X}=\mathbb{R}^{p\times r}\times\mathbb{R}^{q\times r}$, $\mathcal{M}=\!\{(X,Y)\in\mathbb{X} \mid X^{\top}A^{\top}AX=I_r,Y^{\top}B^{\top}BY=I_r\}$, and $K=\mathbb{R}_{+}^{p\times r}\times\mathbb{R}_{+}^{q\times r}$. Removing the nonnegativity constraints on $X$ and $Y$ yields the model considered in \cite{Chen2020}.
\end{example}

Although problem \eqref{Rcprob} has found applications in various fields, solving it remains challenging due to the coexistence of the conic constraint \(g(x)\in K\) and the manifold constraint \(x\in\mathcal{M}\). Inspired by recent advances in manifold optimization and the work of \cite{Andreani2026}, we penalize only the conic constraint using a nonsmooth penalty term while retaining the manifold constraint explicitly. This leads to the following penalized problem
\begin{equation}\label{penprob}
\min_{x\in\mathcal{M}}\,\Theta_{\rho}(x):=f(x)+\vartheta(x)-h(x)+\rho\,{\rm dist}(g(x),K),
\end{equation}
where $\rho>0$ is the penalty parameter, ${\rm dist}(\cdot,K)$ denotes the distance to $K$ induced by a norm $|\!\lVert \cdot\lVert\!|$ on $\mathbb{Y}$ and the simplicity of $K$ ensures that ${\rm dist}(\cdot,K)$ admits an explicit expression. As shown in Section \ref{sec2.1}, under a relatively weak constraint qualification (CQ), $\Theta_{\rho}$ is a global exact penalty function: there exists some $\widehat{\rho}>0$ such that, for every $\rho\ge\widehat{\rho}$, the penalized problem and \eqref{Rcprob} have the same set of global minimizers. However, even for a fixed $\rho$, globally minimizing the nonconvex function $\Theta_{\rho}$ can be computationally challenging. At the level of stationary points, Proposition \ref{spoint-relation} relates the stationary points of \eqref{Rcprob} to those of \eqref{penprob}. This motivates us to seek a stationary point of \eqref{Rcprob} by approximately solving a sequence of penalized problems. 

\subsection{Related research}\label{sec1.1}

Nonsmooth penalty methods date back to the early work of Eremin \cite{EREMIN1966}, Zangwill \cite{Zangwill1967} and Pietrzykowski \cite{Pietrzykowski1969}. Subsequent contributions by Charalambous \cite{Charalambous1978}, Han and Mangasarian \cite{Han1979}, Coleman and Conn \cite{Coleman1980}, and Bazaraa and Goode \cite{Bazaraa2009} established the exact penalty theory for classical nonlinear programming. Burke \cite{Burke1991} later extended this theory to more general constrained optimization problems. More recently, nonsmooth exact penalty (NEP) methods have been studied for mathematical programs with equilibrium constraints (MPECs) \cite{Luo1996,Mangasarian1997,Ye1997}, MPEC reformulations of zero-norm and rank regularized optimization problems \cite{Bi2017,Liu2018}, and optimization problems with DC constraints \cite{Le2012,Gotoh2018,Qian2023}.  

For conic-constrained optimization problems, corresponding to \eqref{Rcprob} with $\mathcal{M}=\mathbb{X}$ and $\vartheta\equiv 0\equiv h$, a key property of the distance penalty formulation is its relationship to the KKT conditions of the original problem. Every KKT point is a critical point of the penalty function for all sufficiently large penalty parameters, and every feasible critical point of the penalty function is a KKT point of the original problem; see also Proposition \ref{spoint-relation}. This relation underpins the distance-based NEP method developed by Cartis et al. \cite{Cartis2011} for conic-constrained optimization problems with $K={0}$ or $\mathbb{R}_{-}^{m}$. At each outer iteration, their method approximately solves \eqref{penprob} with $\rho=\rho_k$ using either a trust-region or a quadratic regularization approach until a stationarity measure falls below a prescribed tolerance $\epsilon$. The penalty parameter is then updated through a steering procedure \cite{Byrd2005}. They established worst-case evaluation complexity bounds for finding either an $\epsilon$-approximate KKT point or an $\epsilon$-approximate infeasible critical point of the constraint violation measure. Under their smoothness and lower-boundedness assumptions, the bound is $O(\epsilon^{-2})$ when the penalty parameters admit an upper bound independent of $\epsilon$, and $O(\epsilon^{-5})$ when the penalty parameter sequence is unbounded but the sequence of outer iterates remains bounded.  

For the same special case of \eqref{Rcprob}, Diouane et al. \cite{Diouane2026-II} proposed an NEP method that approximately solves \eqref{penprob} with $\rho=\rho_k$ using a proximal modified quasi-Newton method developed in \cite{Diouane2026-I}, until a stationarity measure falls below a prescribed tolerance $\epsilon_k$. The tolerance $\epsilon_k$ and the penalty parameter $\rho_k$ are then updated based on a feasibility measure that differs slightly from that used in \cite{Cartis2011}. Under a condition on the penalty parameter sequence that is automatically satisfied under MFCQ, they established an overall complexity bound of $O(|\log(\epsilon/\epsilon_0)|\epsilon^{-2})$ on the total number of inner iterations required to find either an $\epsilon$-approximate KKT point or an infeasible critical point of the feasibility measure. When the penalty parameter sequence is unbounded, the bound becomes $O(\epsilon^{-8})$. This has a worse dependence on $\epsilon$ than the $O(\epsilon^{-5})$ bound in \cite{Cartis2011}, although the two analyses use differently scaled feasibility measures. Each inner iteration, however, requires evaluating the proximal mapping of $\vartheta\circ\mathcal{A}_j$ and approximately minimizing a quadratic function regularized by $\vartheta\circ\mathcal{A}_j$, where $\mathcal{A}_j=g'(x_j)$ and $x_j$ is the current inner iterate. In particular, evaluating this composite proximal mapping may itself require an iterative procedure, even when the proximal mapping of $\vartheta$ is readily computable.

It is worth noting that, for the same special case of \eqref{Rcprob}, worst-case iteration complexity bounds for finding approximate KKT points are also available for smooth penalty methods. These include an inexact proximal-point quadratic smooth penalty method under a global error bound \cite{Lin2022} and smooth penalty methods with fixed penalty parameters under LICQ \cite{Bourkhissi2025,Bourkhissi20250}. These bounds generally exhibit a worse dependence on the prescribed accuracy than the $O(\epsilon^{-2})$ bound of \cite{Cartis2011} and the $O(|\log(\epsilon/\epsilon_0)|\epsilon^{-2})$ bound of \cite{Diouane2026-II} for NEP methods, although the underlying assumptions differ. In addition, when $\mathcal{M}=\mathbb{X}$ and $K=\mathbb{R}_{-}$, problem \eqref{Rcprob} reduces to a special case of the constrained difference programming problems studied in \cite{Mordukhovich2026}. The extended SQP method (actually a nonsmooth penalty method) proposed therein employs exact solutions of subproblems and establishes the asymptotic convergence of the entire iterate sequence under the KL property. When $\mathcal{M}=\mathbb{X}$, problem \eqref{Rcprob} also becomes a special case of the DC composite programs considered in \cite{Le2024}, where a DC algorithm with adaptive penalty parameters is developed and subsequential convergence of the iterate sequence is established. 

The NEP methods in \cite{Cartis2011,Diouane2026-II} address a special case of \eqref{Rcprob} and require, at each outer iteration, an approximate stationary point of the current penalized problem to a prescribed accuracy. For the general problem \eqref{Rcprob}, obtaining such a point is challenging because of the nonsmooth DC composite structure and the manifold constraint. To address these difficulties, we develop an inexact proximal-linearized NEP method that successively solves strongly convex models on tangent spaces, with adaptive updates of both the proximal and penalty parameters. Specifically, at the $k$-th iteration, we construct a proximal linearization $\widehat{\Theta}_{\rho_k}(\cdot;x^k)$ of $\Theta_{\rho_k}$ at $x^k\!\in\mathcal{M}$ by linearizing $f$ and $g$, replacing $-h$ with an affine majorant determined by $\zeta^k\in\partial(-h)(x^k)$, and adding a proximal term $\frac{\beta_k}{2}\|\cdot-x^k\|^2$. We then approximately minimize $\widehat{\Theta}_{\rho_k}(x^k+v;x^k)$ over $T_{x^k}\mathcal{M}$ subject to a computable inexactness criterion and form the trial point $R_{x^k}(v^k)$. The trial point is accepted if it satisfies a sufficient-decrease condition for $\Theta_{\rho_k}$. Otherwise, $\beta_k$ is increased and the tangent-space subproblem is solved again at the same base point $x^k$. Once a step is accepted, the penalty parameter is updated according to the constraint violation, and the procedure continues. 
\subsection{Main contributions}\label{sec1.2}
This work develops a proximal-linearized NEP framework for DC composite optimization with conic and manifold constraints, with guarantees on implementability, iteration complexity, and overall oracle complexity.

{\bf(i)} We develop an implementable double-loop framework, iPLNEP, that couples adaptive proximal and penalty parameter updates with inexact solutions of strongly convex proximal-linearized subproblems on the tangent spaces. Unlike the NEP methods in \cite{Cartis2011,Diouane2026-II}, iPLNEP does not require each penalized problem to be solved to a prescribed stationarity accuracy. Its inner stopping criteria use computable KKT residuals and model decrease, without requiring knowledge of the exact subproblem solution. We prove that the inner stopping criteria can be satisfied after finitely many inner iterations, and that the adaptive parameter updates and retraction-based acceptance mechanism yield a well-defined algorithm. 

{\bf(ii)} We establish an $O(\epsilon^{-2})$ worst-case iteration complexity bound for finding an $\epsilon$-stationary point of \eqref{Rcprob}, assuming boundedness of the iterate and penalty parameter sequences. Unlike the analyses in \cite{Cartis2011,Diouane2026-II}, which use prescribed approximate stationarity conditions for the penalized problems, our analysis starts from computable inexactness conditions for the tangent-space subproblems. The central difficulty is to translate these local model conditions into quantitative stationarity and feasibility estimates for the original constrained problem while accounting for the errors introduced by linearization and retraction. These estimates enable us to establish the complexity bound without requiring approximate stationarity of each penalized problem to a prescribed accuracy. The resulting bound matches the order established in  \cite{Cartis2011} for a special case of \eqref{Rcprob}, whereas the corresponding bound in \cite{Diouane2026-II} contains an additional logarithmic factor.

{\bf(iii)} We establish overall oracle complexity bounds for iPLNEP, explicitly accounting for the cost of solving its proximal-linearized subproblems to the required accuracy. Under the same boundedness assumption, using the accelerated semi-proximal ADMM in \cite{Sun2025} to solve these subproblems yields an $O(\epsilon^{-4})$ bound. With the additional uniform error-bound condition in Assumption \ref{uniform-EB-ass}, using the semi-proximal ADMM in \cite[Appendix B]{Fazel2013} yields an $O(\epsilon^{-2}\log\epsilon^{-1})$ bound. These bounds quantify the total number of evaluations of $g'(x^k)(\cdot), \nabla g(x^k)(\cdot)$, $\mathcal{P}_{T_{x^k}\mathcal{M}}(\cdot),\mathcal{P}_{\!\sigma_k^{-1}\vartheta}(\cdot)$, and $\mathcal{P}_{\!\sigma_k^{-1}\rho_k{\rm dist}(\cdot,K)}(\cdot)$ required to compute an $\epsilon$-stationary point of the original problem.

{\bf(iv)} The convergence of the entire iterate sequence is established under the KL property of the constructed potential function. This result complements existing convergence analyses of NEP-type methods by allowing inexact subproblem solutions and adaptive updates of the proximal and penalty parameters. In particular, compared with the subsequential convergence result in \cite[Theorem 4]{Le2024}, our analysis further provides  KL-based convergence of the whole sequence and nonasymptotic complexity guarantees. We further note that, when \(\mathcal{M}=\mathbb{X}\) and \(K=\mathbb{R}_{-}\), the proposed algorithm shares a similar proximal linearization framework with the inexact proximal-linearized DC algorithm in \cite[Section 2.2]{Ye2023}. However, the inexactness criteria employed in our method differs from that used therein. Moreover, the main emphasis of this work is on establishing iteration complexity guarantees, whereas \cite{Ye2023} mainly investigates asymptotic convergence properties.

We apply iPLNEP with semi-proximal ADMM or its accelerated variant
as the subproblem solver to the problems in
Examples~\ref{Example1}--\ref{Example3}, using synthetic and real data.
We compare its performance with that of PenCPG \cite{Xiao2021}
and LSALM \cite{Zhu2026} for row-sparse PCA,
I-AManPG \cite{Huang2025} for community detection,
RALM \cite{Andreani2026} for $k$-means clustering,
a SeDuMi-based DC algorithm \cite{Lopez2018} for
DC-regularized SOCPs, and IPOPT \cite{Wachter2006} for
synthetic nonconvex SOCPs with a generalized sphere constraint.
The numerical results show that iPLNEP achieves lower constraint
violations with comparable solution quality for row-sparse PCA,
slightly better clustering performance with competitive computational
efficiency for $k$-means clustering, and better scalability
for the tested nonconvex SOCPs.
\subsection{Notation}\label{sec1.3}

Throughout this paper, $\mathbb{R}^{n\times p}$ denotes the space of $n\times p$ real matrices equipped with the standard trace inner product and the induced Frobenius norm $\|\cdot\|_{F}$, while $\mathbb{Z}:=\mathbb{X}\times\mathbb{Y}$ is equipped with the norm $\|z\|:=\!\sqrt{\|x\|^2\!+\!\|y\|^2}$ for $z=(x;y)$. We use $\mathcal{I}$ to denote the identity mapping. For a linear mapping $\mathcal{A}$, $\mathcal{A}^*$ denotes its adjoint mapping; and for a self-adjoint positive semidefinite linear operator $\mathcal{Q}$, let $\|\cdot\|_{\mathcal{Q}}:=\sqrt{\langle\cdot,\mathcal{Q}\cdot\rangle}$. For $a\in\mathbb{R}$, let $\lceil a\rceil_{+}:=\max\{0,\lceil a\rceil\}$. Let $\mathbb{N}$ and $\mathbb{N}_{+}$ denote the sets of nonnegative integers and positive integers, respectively. For any $k\in\mathbb{N}_{+}$, we write $[k]:=\{0,\ldots,k\}$ and $[k]_{+}:=\{1,\ldots,k\}$. For any $x,x'\in\mathbb{X}$, $[x,x']$ denotes the line segment joining $x$ and $x'$. For a closed set $C\subset\mathbb{X}$, let $\delta_C$ denote its indicator function, defined by $\delta_C(x)=0$ if $x\in C$ and $\delta_C(x)=\infty$ otherwise, and let $\mathcal{P}_C$ denote the projection mapping onto $C$. For $x\in\mathbb{X}$ and $\delta>0$, let $\mathbb{B}(x,\delta)$ and $\overline{\mathbb{B}}(x,\delta)$ denote the open and closed balls centered at $x$ with radius $\delta$, respectively. We also write $\overline{\mathbb{B}}_{\mathbb{X}}:=\overline{\mathbb{B}}(0,1)$. For a mapping $g\!:\mathbb{X}\to\mathbb{Y}$ and a point $x\in\mathbb{X}$, if $g$ is differentiable at $x$, $\nabla g(x)$ denotes the adjoint of the differential $g'(x)\!:\mathbb{X}\to\mathbb{Y}$ of $g$ at $x$. For any $x\in\mathcal{M}$, $T_x\mathcal{M}$ and $N_x\mathcal{M}$ denote the tangent and normal spaces to $\mathcal{M}$ at $x$, respectively. For a proper function $\phi\!:\mathbb{X}\to\overline{\mathbb{R}}:=(-\infty,\infty]$ and a point $x\in{\rm dom}\,\phi$, where ${\rm dom}\,\phi:=\big\{x\in\mathbb{X}\ |\ \phi(x)<\infty\big\}$, $\partial\phi(x)$ denotes the (limiting) subdifferential of $\phi$ at $x$. When $\phi$ is the indicator function of a closed set $C\subset\mathbb{X}$, $\partial\phi(x)$ coincides with the normal cone to $C$ at $x$, denoted by $\mathcal{N}_{C}(x)$. In particular, when $C=\mathcal{M}$, we write $\mathcal{N}_{\mathcal{M}}(x)$ as $N_{x}\mathcal{M}$.  
For a closed and proper function $\varphi:\mathbb{X}\to\overline{\mathbb{R}}$, its proximal mapping with parameter $\gamma>0$ is defined as $\mathcal{P}_{\gamma\varphi}(\cdot)\!:=\mathop{\arg\min}_{x'\in\mathbb{X}}\big\{\varphi(x')+\frac{1}{2\gamma}\|x'-\cdot\|^2\big\}$. 
\section{Preliminaries}\label{sec2}

We introduce an extended CQ for \eqref{Rcprob} and establish the global exactness of \eqref{penprob} under a suitable subregularity CQ. We then define stationary points for \eqref{Rcprob} and \eqref{penprob}, and discuss the relationship between them. Finally, several properties of retractions are recalled.
\subsection{CQs and global exact penalty}\label{sec2.1} 
\begin{definition}\label{def-ECQ}
 For problem \eqref{Rcprob}, we say that the extended CQ holds at a point $\overline{x}\in\mathcal{M}$ if both $  \overline{\xi}\in\mathcal{N}_{K}(\mathcal{P}_K(g(\overline{x})))$ and $0\in\nabla g(\overline{x})\overline{\xi}+N_{\overline{x}}\mathcal{M}$ imply $\overline{\xi}=0$, where $\mathcal{P}_K(\cdot)$ denotes the projection mapping onto $K$ induced by $|\!\lVert \cdot\lVert\!|$. If $\mathcal{P}_K$ is not single-valued, it is understood that $\mathcal{P}_K(g(\overline{x}))$ denotes an arbitrary projection of $g(\overline{x})$ onto $K$.
\end{definition}

When $\mathcal{M}=\mathbb{X}$, the extended CQ in Definition \ref{def-ECQ} reduces to the CQ in \cite[Eq.~(53)]{Le2024}, and further to the standard MFCQ when $\mathbb{X}=\mathbb{R}^n$ and $g(\overline{x})\in K=\mathbb{R}_{+}^m$. If $\overline{x}\in\mathcal{M}\cap g^{-1}(K)$, it coincides with the CQ in \cite[Example 9.44]{RW98}, specialized to $F(x)\!:=(g(x);x)$ for $x\in\mathbb{X}$ and $D\!=K\times\mathcal{M}$. The latter is equivalent to the metric regularity of the multifunction $\mathcal{F}(\cdot):=F(\cdot)-D$ at $(\overline{x},0)\in{\rm gph}\,\mathcal{F}$, and is stronger than the metric subregularity of $\mathcal{F}$ at this point. Recall that a multifunction $\mathcal{H}\!:\mathbb{X}\rightrightarrows\mathbb{Y}$ is said to be metrically subregular at a point $(\overline{x},\overline{y})\in{\rm gph}\,\mathcal{H}$ with constant $\kappa\ge 0$ if there exists $\varepsilon>0$ such that 
\[
 {\rm dist}(x,\mathcal{H}^{-1}(\overline{y}))\le\kappa{\rm dist}(\overline{y},\mathcal{H}(x))\quad{\rm for\ all}\ x\in\overline{\mathbb{B}}(\overline{x},\varepsilon).
\]
For $\overline{x}\in\!\mathcal{M}\cap g^{-1}(K)$, it follows directly from this definition that the metric subregularity of $\mathcal{F}$ at $(\overline{x},0)$ implies that of the multifunction $\mathcal{G}$ at $(\overline{x},0)$, where $\mathcal{G}$ is defined by
\begin{equation}\label{MG-map}
 \mathcal{G}(x):=\left\{\begin{array}{cl}
 g(x)-K & {\rm if}\ x\in\mathcal{M},\\
\emptyset &{\rm otherwise}.
\end{array}\right.
\end{equation}
Consequently, the extended CQ of \eqref{Rcprob} at $\overline{x}\in\mathcal{M}\cap g^{-1}(K)$ implies the subregularity of $\mathcal{G}$ at $(\overline{x},0)$, which is precisely the subregularity CQ for \eqref{Rcprob} at $\overline{x}$. 

We next show that \eqref{penprob} is a global exact penalty of \eqref{Rcprob} under the subregularity CQ.
\begin{lemma}\label{GEP-lemma}
Let $S^*$ denote the set of global optimal solutions of \eqref{Rcprob}. Suppose that the mapping $\mathcal{G}$ defined in \eqref{MG-map} is metrically subregular at any $(x,0)$ with $x\in S^*$, and that $\Theta$ is coercive or the manifold $\mathcal{M}$ is compact. Then, \eqref{penprob} is a global exact penalty of \eqref{Rcprob}. 
\end{lemma}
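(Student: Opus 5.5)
The plan is to reduce the claim to the classical exact-penalty principle: a local error bound at each global minimizer, together with local Lipschitz continuity of the objective, yields local exactness of the penalty. Compactness or coercivity is then used to make the penalty parameter uniform over $S^*$.

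\textbf{Step 1 (local calmness at each solution).} Fix $\overline{x}\in S^*$. By metric subregularity of $\mathcal{G}$ at $(\overline{x},0)$ there are $\kappa,\varepsilon>0$ such that, for every $x\in\mathcal{M}\cap\overline{\mathbb{B}}(\overline{x},\varepsilon)$,
\[
{\rm dist}(x,g^{-1}(K)\cap\mathcal{M})\le\kappa\,{\rm dist}(g(x),K).
\]
This uses $\mathcal{G}^{-1}(0)=g^{-1}(K)\cap\mathcal{M}$, and ${\rm dist}(0,\mathcal{G}(x))={\rm dist}(g(x),K)$ up to the norm-equivalence constant between $|\!\lVert\cdot\lVert\!|$ and $\|\cdot\|$, which is absorbed into $\kappa$.

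$\Theta$ is locally Lipschitz on $\mathcal{O}$: $f$ is $C^1$, and $\vartheta,h$ are finite convex. Hence, shrinking $\varepsilon$, $\Theta$ is $L$-Lipschitz on $\overline{\mathbb{B}}(\overline{x},2\varepsilon)$. For $x\in\mathcal{M}\cap\overline{\mathbb{B}}(\overline{x},\varepsilon/2)$, pick a projection $x'$ of $x$ onto the closed feasible set. Since $\|x-x'\|\le\|x-\overline{x}\|$, we have $x'\in\overline{\mathbb{B}}(\overline{x},\varepsilon)$, and therefore
\[
\Theta(x)\ge\Theta(x')-L\|x-x'\|\ge\Theta^*-L\kappa\,{\rm dist}(g(x),K).
\]
Consequently $\Theta_\rho(x)\ge\Theta^*$ on this neighborhood whenever $\rho\ge L\kappa$.

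\textbf{Step 2 (uniformity and global extension).} $S^*$ is compact: it is closed, and it is bounded by coercivity or by compactness of $\mathcal{M}$. Cover $S^*$ by finitely many of the neighborhoods from Step 1 and take $\rho_1$ to be the maximum of the corresponding constants. This gives an open set $U\supset S^*$ with $\Theta_{\rho}\ge\Theta^*$ on $U\cap\mathcal{M}$ for all $\rho\ge\rho_1$.

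Outside $U$, argue by contradiction. Suppose there are $\rho_j\to\infty$ and $x^j\in\mathcal{M}\setminus U$ with $\Theta_{\rho_j}(x^j)<\Theta^*$. Since $\Theta_{\rho_j}\ge\Theta$, the points $x^j$ lie in a level set of $\Theta$ on $\mathcal{M}$, which is bounded under either hypothesis. Moreover, $\rho_j{\rm dist}(g(x^j),K)<\Theta^*-\Theta(x^j)$, which is bounded above because $\Theta$ is bounded below on that bounded set. Hence ${\rm dist}(g(x^j),K)\to0$.

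A cluster point $\widehat{x}\in\mathcal{M}\setminus U$ (using that $\mathcal{M}$ is closed) is therefore feasible and satisfies $\Theta(\widehat{x})\le\Theta^*$. Thus $\widehat{x}\in S^*\subset U$, a contradiction. This yields $\widehat{\rho}\ge\rho_1$ such that $\Theta_\rho\ge\Theta^*$ on all of $\mathcal{M}$ for $\rho\ge\widehat{\rho}$.

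\textbf{Step 3 (equality of solution sets).} For $\rho\ge\widehat{\rho}$, $\min_{\mathcal{M}}\Theta_\rho=\Theta^*$, attained on $S^*$, so $S^*\subseteq\operatorname{argmin}\Theta_\rho$. For the reverse inclusion, take $\rho>\widehat{\rho}$ and a minimizer $x$ of $\Theta_\rho$. Then
\[
\Theta^*=\Theta_\rho(x)=\Theta_{\widehat\rho}(x)+(\rho-\widehat\rho)\,{\rm dist}(g(x),K)\ge\Theta^*+(\rho-\widehat\rho)\,{\rm dist}(g(x),K),
\]
so $x$ is feasible and $\Theta(x)=\Theta^*$, i.e., $x\in S^*$. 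Replacing $\widehat\rho$ by $\widehat\rho+1$ gives the statement for all $\rho\ge\widehat\rho$.

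\textbf{Main obstacle.} The delicate part is Step 1: guaranteeing that the projection $x'$ stays inside the region where both the subregularity estimate and the Lipschitz bound are valid. The choice of the radius $\varepsilon/2$ handles this. The remaining concern is the finite-valuedness of $\vartheta$ needed for local Lipschitz continuity, which Assumption \ref{ass0}(ii) provides; if $\vartheta$ were extended-valued, one would need to restrict to ${\rm dom}\,\vartheta$.
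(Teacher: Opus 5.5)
Your proof is correct. It is self-contained, whereas the paper's proof reduces the lemma to results in the literature.

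The paper converts metric subregularity of $\mathcal{G}$ at $(x,0)$ into calmness of $\mathcal{G}^{-1}$ via Theorem 3H.3 of Dontchev--Rockafellar. It then notes that this is the same as calmness at $(0,x)$ of the level-set map $\mathcal{H}(\tau)=\{x\in\mathcal{M}\mid {\rm dist}(g(x),K)=\tau\}$. Finally, it invokes Lemma 2.1 and Proposition 2.1(b) of Liu--Bi--Pan (2018), which give a global exact penalty from calmness of such a map at every global minimizer together with coercivity or compactness.

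You instead use the subregularity inequality directly as the local error bound ${\rm dist}(x,g^{-1}(K)\cap\mathcal{M})\le\kappa\,{\rm dist}(g(x),K)$ on $\mathcal{M}$ near each $\overline{x}\in S^*$. You then proceed in four steps:
\begin{itemize}
\item obtain local exactness by the projection-plus-Lipschitz argument;
\item make the penalty parameter uniform using compactness of $S^*$;
\item handle $\mathcal{M}\setminus U$ by contradiction via boundedness of $\{x\in\mathcal{M}\mid\Theta(x)\le\Theta^*\}$;
\item upgrade $\min_{\mathcal{M}}\Theta_\rho=\Theta^*$ to equality of the minimizer sets with the $\rho>\widehat\rho$ trick.
\end{itemize}
This is essentially the content of the cited results written out, so the mechanisms agree.

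What your version buys is transparency about where each hypothesis enters:
\begin{itemize}
\item finite-valuedness of $\vartheta$ and smoothness of $f$ on $\mathcal{O}$, for local Lipschitz continuity of $\Theta$;
\item closedness of $\mathcal{M}$ and of the feasible set, for the projection and the cluster point;
\item coercivity or compactness, for boundedness of the relevant level set.
\end{itemize}
It also tracks explicitly the norm-equivalence constant between the Euclidean distance in the subregularity definition and the distance used in the penalty, which the paper covers by ``straightforward to verify''. The paper's version buys brevity and places the lemma within the calmness/partial-calmness theory of exact penalization.

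One small correction to your closing remark: the subregularity estimate is applied only at $x$, never at $x'$. The restriction to radius $\varepsilon/2$ is needed only to keep $x'$ in the ball where $\Theta$ is $L$-Lipschitz, and $\|x-x'\|\le\|x-\overline{x}\|$ already places $x'$ in $\overline{\mathbb{B}}(\overline{x},\varepsilon)$.
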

\begin{proof}
By \cite[Theorem 3H.3]{Dontchev2009}, the metric subregularity of a multifunction $\mathcal{H}\!:\mathbb{X}\rightrightarrows\mathbb{Y}$  at $(\overline{x},\overline{y})\in{\rm gph}\,\mathcal{H}$ is equivalent to the calmness of its inverse  $\mathcal{H}^{-1}:\mathbb{Y}\rightrightarrows\mathbb{X}$ at $(\overline{y},\overline{x})$ with the same constant $\kappa$, i.e., there exists $\delta>0$ such that for all $y\in\mathbb{Y}$, 
\[
  \mathcal{H}^{-1}(y)\cap\mathbb{B}(\overline{x},\delta)\subset\mathcal{H}^{-1}(\overline{y})+\kappa\|y-\overline{y}\|\overline{\mathbb{B}}_{\mathbb{X}}.
\]
Let $\mathcal{H}(\tau):=\big\{x\in\mathcal{M}\ |\ {\rm dist}(g(x),K)=\tau\big\}$ for $\tau\ge 0$. Fix any $\overline{x}\in\mathcal{M}\cap g^{-1}(K)=\mathcal{H}(0)$. By the definitions of calmness and metric subregularity, it is straightforward to verify that such $\mathcal{H}$ is calm at $(0,\overline{x})$ if and only if $\mathcal{G}^{-1}$ is calm at $(0,\overline{x})$. Hence, for any given $x\in S^*$, the metric subregularity of $\mathcal{G}$ at $(x,0)$ is equivalent to the calmness of $\mathcal{H}$ at $(0,x)$. Invoking \cite[Lemma 2.1 \& Proposition 2.1 (b)]{Liu2018}, \eqref{penprob} is a global exact penalty of \eqref{Rcprob}. 
\end{proof}
\subsection{Stationary points}\label{sec2.2}

Let $\widetilde{\Theta}:=\Theta+\delta_{K}\circ g+\delta_{\mathcal{M}}$ denote the extended-real-valued objective function of \eqref{Rcprob}. By \cite[Theorem 10.1]{RW98}, any local optimal solution $x^*$ satisfies $0\in\partial\widetilde{\Theta}(x^*)$. Under Assumption \ref{ass0}, the function $\Theta$ is locally Lipschitz continuous, and at every $x\in \mathcal{M}\cap g^{-1}(K)$,   $\partial\widetilde{\Theta}(x)\subset\partial\Theta(x)+\partial(\delta_{K}\circ g+\delta_{\mathcal{M}})(x)$ and    $\partial\Theta(x)\subset\nabla\!f(x)+\partial \vartheta(x)+\partial(-h)(x)$, where the inclusions follow from \cite[Exercise 10.10]{RW98}. Further, if the mapping $\mathcal{G}$ in \eqref{MG-map} is metrically subregular at $(x,0)$ for $x\in \mathcal{M}\cap g^{-1}(K)$, then by \cite[Section 3.1]{Ioffe2008} the inclusion $\partial(\delta_{K}\circ g+\delta_{\mathcal{M}})(x)\subset \nabla g(x)\mathcal{N}_{K}(g(x))+N_{x}\mathcal{M}$ holds. Therefore, if $\mathcal{G}$ is metrically subregular at $(x^*,0)$, 
\[
 0\in\partial\widetilde{\Theta}(x^*)\subset \nabla\!f(x^*)+\partial \vartheta(x^*)+\partial(-h)(x^*)+\nabla g(x^*)\mathcal{N}_{K}(g(x^*))+N_{x^*}\mathcal{M}.
\]
This inclusion naturally motivates the following notions of  stationary points for \eqref{Rcprob}.
\begin{definition}\label{def-spoint}
\textnormal{(i)} A point $x$ is called a stationary point of \eqref{Rcprob} if $x\in\mathcal{M}\cap g^{-1}(K)$ and there exists $\xi\in \mathcal{N}_K(g(x))$ such that
 $0\in\nabla\!f(x)+\partial\vartheta(x)+\partial(-h)(x) + \nabla g(x)\xi +N_{x}\mathcal{M}$ or, equivalently, $0\in \mathcal{P}_{T_x\mathcal{M}}\big[\nabla\!f(x) + \partial\vartheta(x)+\partial (-h)(x) + \nabla g(x)\xi\big]$.

\noindent
\textnormal{(ii)} For $\epsilon\ge 0$, a point $x$ is called an $\epsilon$-stationary point of \eqref{Rcprob} if $x\in\mathcal{M}$ and there exist $z\in\mathbb{X}, y\in K,\zeta\in\partial (-h)(x),\xi_1\in\partial\vartheta(z)$ and $\xi_2\in\mathcal{N}_K(y)$ such that
\begin{equation*}
 \|x-z\|\le\epsilon,\,\|g(x)-y\|\le\epsilon\ {\rm and}\ \|\mathcal{P}_{T_x\mathcal{M}}\left[\nabla\!f(x)+\zeta+\xi_1+\nabla g(x)\xi_2\right]\|\le\epsilon.
\end{equation*}
\end{definition}	

For given $\rho>0$ and $x\in\mathcal{M}$, it follows from \cite[Exercise 10.10]{RW98} that $\partial(\Theta_{\rho}\!+\!\delta_{\mathcal{M}})(x)\!\subset \partial\Theta_{\rho}(x)+N_{x}\mathcal{M}\subset \nabla\! f(x)+\partial\vartheta(x)+\partial(-h)(x)+\rho\partial({\rm dist}(g(\cdot),K))(x)+N_{x}\mathcal{M}$, and the inclusion is generally strict due to the nonconvex $-h$. This motivates the following notions of stationary points for the penalty problem \eqref{penprob}.
\begin{definition}\label{Def-Penspoint}
For a given $\rho>0$, a point $x$ is called a stationary point of problem \eqref{penprob} if $x\in\mathcal{M}$ and $0\in\partial\Theta_{\rho}(x)+N_{x}\mathcal{M}$, and a weak stationary point of \eqref{penprob} if $x\in\mathcal{M}$ and
$ 0\in\nabla\! f(x)+\partial\vartheta(x)+\rho\partial({\rm dist}(g(\cdot),K))(x)+\partial(-h)(x)+N_{x}\mathcal{M}$. 
\end{definition}

To establish the relationship between the stationary points of \eqref{Rcprob} and those of \eqref{penprob}, we first recall the subdifferential of ${\rm dist}(\cdot,K)$ in the following lemma.  The lemma follows directly by applying \cite[Theorem 10.13]{RW98} to the function $(y,z)\mapsto |\!\lVert y-z\rVert\!|+\delta_K(y)$.
\begin{lemma}\label{subdiff-dist}
	At any $\overline{y}\in\mathbb{Y}$, $\partial{\rm dist}(\overline{y},K)=\bigcup_{x^*\in\mathcal{P}_{K}(\overline{y})}[(-\partial|\!\lVert \cdot\lVert\!|(x^*\!-\!\overline{y}))\cap\mathcal{N}_{K}(x^*)]$ if $\overline{y}\notin K$; otherwise, $\partial{\rm dist}(\overline{y},K)=\{u\in\mathcal{N}_K(\overline{y})\mid |\!\lVert u\lVert\!|_*\le 1\}$, where $|\!\lVert \cdot\lVert\!|_*$ is the dual norm of $|\!\lVert \cdot\lVert\!|$.    
\end{lemma}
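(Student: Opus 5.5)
We write ${\rm dist}(y,K)=\min_{z\in\mathbb{Y}}\{|\!\lVert y-z\rVert\!|+\delta_K(z)\}$, a parametric minimization, and invoke \cite[Theorem 10.13]{RW98} with $F(y,z):=|\!\lVert y-z\rVert\!|+\delta_K(z)$. This $F$ is proper, lsc and convex. It is also level-bounded in $z$ locally uniformly in $y$: if $|\!\lVert y-z\rVert\!|\le\alpha$ and $y$ ranges over a bounded set, then $z$ is bounded. The minimizer set at $\overline{y}$ is exactly $\mathcal{P}_K(\overline{y})$. By convexity, all subdifferentials coincide with the convex-analysis ones, and the inclusion in Theorem 10.13 becomes an equality. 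Equivalently, this is the standard formula for the subdifferential of an infimal convolution $|\!\lVert\cdot\rVert\!|\,\square\,\delta_K$ at a point where the infimum is attained. Hence
\[
\partial{\rm dist}(\overline{y},K)=\{u\mid (u,0)\in\partial F(\overline{y},x^*)\}\quad\text{for any } x^*\in\mathcal{P}_K(\overline{y}),
\]
and the union over $x^*$ is therefore the same set.

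Next we compute $\partial F(\overline{y},x^*)$. The norm term $(y,z)\mapsto|\!\lVert y-z\rVert\!|$ is finite and convex everywhere, so the sum rule applies. Writing $w:=\overline{y}-x^*$, we get
\[
\partial F(\overline{y},x^*)=\{(v,-v)\mid v\in\partial|\!\lVert\cdot\rVert\!|(w)\}+\{0\}\times\mathcal{N}_K(x^*).
\]
Thus $(u,0)\in\partial F(\overline{y},x^*)$ if and only if $u=v$ with $v\in\partial|\!\lVert\cdot\rVert\!|(\overline{y}-x^*)\cap\mathcal{N}_K(x^*)$. Since the norm is symmetric, $\partial|\!\lVert\cdot\rVert\!|(\overline{y}-x^*)=-\partial|\!\lVert\cdot\rVert\!|(x^*-\overline{y})$, which gives the stated formula when $\overline{y}\notin K$.

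When $\overline{y}\in K$, we have $\mathcal{P}_K(\overline{y})=\{\overline{y}\}$ and $w=0$. Then $\partial|\!\lVert\cdot\rVert\!|(0)$ is the dual unit ball $\{u\mid |\!\lVert u\rVert\!|_*\le1\}$, and intersecting it with $\mathcal{N}_K(\overline{y})$ gives the second formula.

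The main point needing care is the use of Theorem 10.13. In general it only provides an inclusion for the limiting subdifferential, so the reverse inclusion, and the fact that the result is independent of the chosen $x^*$ when $\mathcal{P}_K$ is multivalued, both have to come from convexity. We settle both with the direct convex argument. If $(u,0)\in\partial F(\overline{y},x^*)$, then for all $y$ and all $z\in K$,
\[
F(y,z)\ge F(\overline{y},x^*)+\langle u,y-\overline{y}\rangle .
\]
Taking the infimum over $z$ gives ${\rm dist}(y,K)\ge{\rm dist}(\overline{y},K)+\langle u,y-\overline{y}\rangle$, so $u\in\partial{\rm dist}(\overline{y},K)$. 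Conversely, if $u\in\partial{\rm dist}(\overline{y},K)$, then $F(y,z)\ge{\rm dist}(y,K)$ together with $F(\overline{y},x^*)={\rm dist}(\overline{y},K)$ shows $(u,0)\in\partial F(\overline{y},x^*)$ for every minimizer $x^*$. So the union in the statement equals the common value of this set.
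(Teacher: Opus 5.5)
Your argument is correct and takes the same route as the paper, which simply cites \cite[Theorem 10.13]{RW98} applied to the parametric minimization of $|\!\lVert y-z\rVert\!|+\delta_K(\cdot)$ over the projected variable. You also supply the details the paper leaves implicit: the convex sum and chain rule computation of $\partial F$, the sign flip coming from the symmetry of the norm, and the elementary convexity argument that turns the inclusion into an equality valid for every $x^*\in\mathcal{P}_K(\overline{y})$.
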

\begin{proposition}\label{spoint-relation}
A stationary point $\overline{x}$ of \eqref{Rcprob} is a weak stationary point of \eqref{penprob} for all sufficiently large $\rho$. Conversely, a weak stationary point $\overline{x}$ of \eqref{penprob} satisfying $g(\overline{x})\in K$ is a stationary point of \eqref{Rcprob}. If the objective function of \eqref{Rcprob} does not involve $-h$, the same relations hold between the stationary points of \eqref{Rcprob} and \eqref{penprob}.
\end{proposition}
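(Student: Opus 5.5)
The argument rests on the chain rule for the convex–smooth composition $\phi:={\rm dist}(g(\cdot),K)$ together with Lemma \ref{subdiff-dist}. Since ${\rm dist}(\cdot,K)$ is convex and finite and $g$ is $\mathcal{C}^1$ near $\mathcal{M}$, \cite[Theorem 10.6]{RW98} (or Exercise 10.7) gives the exact formula
\[
\partial\phi(x)=\nabla g(x)\,\partial{\rm dist}(g(x),K).
\]
This function is also subdifferentially regular.

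\emph{First direction.} Let $\overline{x}$ be a stationary point of \eqref{Rcprob}. Then $g(\overline{x})\in K$, and there are $\overline{\xi}\in\mathcal{N}_K(g(\overline{x}))$, $\eta\in\partial\vartheta(\overline{x})$ and $\zeta\in\partial(-h)(\overline{x})$ with
\[
0\in\nabla f(\overline{x})+\eta+\zeta+\nabla g(\overline{x})\overline{\xi}+N_{\overline{x}}\mathcal{M}.
\]
Take any $\rho\ge|\!\lVert\overline{\xi}\rVert\!|_*$ (put $\rho>0$ arbitrary if $\overline{\xi}=0$). Then $u:=\overline{\xi}/\rho$ lies in $\mathcal{N}_K(g(\overline{x}))$, because this is a cone, and $|\!\lVert u\rVert\!|_*\le1$. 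By the second case of Lemma \ref{subdiff-dist}, $u\in\partial{\rm dist}(g(\overline{x}),K)$. Hence $\nabla g(\overline{x})\overline{\xi}=\rho\nabla g(\overline{x})u\in\rho\,\partial\phi(\overline{x})$, so $\overline{x}$ is a weak stationary point of \eqref{penprob} for every such $\rho$. This proves the first claim.

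\emph{Converse.} Let $\overline{x}$ be a weak stationary point of \eqref{penprob} with $g(\overline{x})\in K$. The inclusion provides some element of $\rho\,\partial\phi(\overline{x})=\rho\nabla g(\overline{x})\,\partial{\rm dist}(g(\overline{x}),K)$, i.e. $\rho\nabla g(\overline{x})u$ with $u\in\mathcal{N}_K(g(\overline{x}))$ by Lemma \ref{subdiff-dist}. Setting $\xi:=\rho u\in\mathcal{N}_K(g(\overline{x}))$ yields exactly the condition in Definition \ref{def-spoint}(i).

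\emph{Case without $-h$.} Here the point is that weak stationarity and stationarity of \eqref{penprob} coincide. The function $\Theta_\rho=f+\vartheta+\rho\phi$ is a sum of a $\mathcal{C}^1$ function, a finite convex function and a regular (convex-composite) locally Lipschitz function. By \cite[Corollary 10.9]{RW98} the sum rule then holds with equality:
\[
\partial\Theta_\rho(x)=\nabla f(x)+\partial\vartheta(x)+\rho\,\partial\phi(x).
\]
The two notions in Definition \ref{Def-Penspoint} therefore agree, and the previous two arguments apply verbatim.

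The main obstacle is justifying the exact chain rule and the regularity used for the equality in the sum rule. I will obtain both from the convexity of ${\rm dist}(\cdot,K)$, which holds for any norm and convex $K$, via \cite[Theorem 10.6]{RW98}; no constraint qualification is needed because the outer function is finite-valued.
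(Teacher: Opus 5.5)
Your proposal is correct and follows the paper's proof essentially step for step. You use the chain rule $\partial({\rm dist}(g(\cdot),K))(x)=\nabla g(x)\,\partial{\rm dist}(g(x),K)$ from \cite[Theorem 10.6]{RW98}, together with Lemma \ref{subdiff-dist}: with $\rho\ge|\!\lVert\overline{\xi}\rVert\!|_*$ for the first direction, in reverse for the converse, and via the exact sum rule $\partial\Theta_\rho=\nabla f+\partial\vartheta+\rho\,\partial({\rm dist}(g(\cdot),K))$ when $-h$ is absent; your appeal to regularity (convexity of ${\rm dist}(\cdot,K)$) and \cite[Corollary 10.9]{RW98} simply makes explicit the justification for the equalities that the paper states without detail.
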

\begin{proof}
The Lipschitz continuity of ${\rm dist}(\cdot,K)$, together with \cite[Theorem 10.6]{RW98}, implies 
\begin{equation}\label{equa21-dist}
 \partial({\rm dist}(g(\cdot),K))(x)=\nabla g(x)\partial{\rm dist}(g(x),K)\quad{\rm for\ all}\ x\in\mathbb{X}.
\end{equation}
Suppose that $\overline{x}$ is a stationary point of \eqref{Rcprob}. By Definition \ref{def-spoint}(i), $\overline{x}\in\mathcal{M}\cap g^{-1}(K)$ and there exists some $\xi\in \mathcal{N}_K(g(\overline{x}))$ such that $0\in \nabla\!f(\overline{x})+\partial\vartheta(\overline{x})+ \nabla g(\overline{x})\xi+\partial(-h)(\overline{x})+N_{\overline{x}}\mathcal{M}$. Choose  $\rho_{\xi}\ge|\!\lVert \xi\lVert\!|_*$. Since $\xi\in \mathcal{N}_K(g(\overline{x}))$, Lemma \ref{subdiff-dist} yields $\rho^{-1}\xi\in \partial{\rm dist}(g(\overline{x}),K)$ for all $\rho\ge\rho_{\xi}$. It then follows from \eqref{equa21-dist} that $\nabla g(\overline{x})\xi\in\rho\partial({\rm dist}(g(\cdot),K))(\overline{x})$ for all $\rho\ge\rho_{\xi}$. Consequently, $0\in \nabla\! f(\overline{x})+\partial\vartheta(\overline{x})+\rho\partial({\rm dist}(g(\cdot),K))(\overline{x})+\partial(-h)(\overline{x})+N_{\overline{x}}\mathcal{M}$. This shows that $\overline{x}$ is a  weak stationary point of \eqref{penprob} for all $\rho\ge\rho_{\xi}$.
Conversely, suppose that $\overline{x}\in\mathcal{M}$ is a weak stationary point of \eqref{penprob} for $\rho>0$ satisfying $g(\overline{x})\in K$. By Definition \ref{Def-Penspoint}, Lemma \ref{subdiff-dist}, and \eqref{equa21-dist}, we obtain $0\in \nabla\!f(\overline{x})+\partial\vartheta(\overline{x})+ \nabla g(\overline{x})\mathcal{N}_K(g(\overline{x}))+\partial(-h)(\overline{x})+N_{\overline{x}}\mathcal{M}$. Definition \ref{def-spoint} then implies that $\overline{x}$ is a stationary point of \eqref{Rcprob}. The last part follows by noting that $\partial\Theta_{\rho}(x)=\nabla\! f(x)+\partial\vartheta(x)+\rho\partial({\rm dist}(g(\cdot),K))(x)$ for all $x\in\mathbb{X}$.
\end{proof}

To close this section, we recall the definition of a retraction, which serves as a first-order approximation to the exponential mapping on a Riemannian manifold. Retractions will be employed in our algorithm to ensure that all iterates remain on the manifold $\mathcal{M}$.
\begin{definition}\label{def-retract}
 (see \cite[Definition 4.1]{Absil2008}) Let $R:T\mathcal{M}\to\mathcal{M}$ be a smooth mapping, where $T\mathcal{M}$ denotes the tangent bundle of $\mathcal{M}$. The mapping $R$ is called a retraction if its restriction $R_x(\cdot):=R(x,\cdot):T_x\mathcal{M}\to \mathcal{M}$ satisfies $R_x(0)=x$ and $R_x'(0)=\mathcal{I}$.
\end{definition}
   
Recall that $\mathcal{M}$ is a closed $\mathcal{C}^1$-embedded submanifold of $\mathbb{X}$. The local representation of $\mathcal{M}$ implies that the projection mapping $\mathcal{P}_{T_x\mathcal{M}}$ is continuous in $x\in\mathcal{M}$. The following basic properties of retractions will also be used in the subsequent analysis. Since their proofs are similar to those in \cite[Appendix B]{Boumal2019}, we omit the details.
\begin{lemma}\label{lemma-retract}
 For any compact set $\varLambda\subset\mathcal{M},\delta>0$, and retraction $R$ on $\mathcal{M}$, there exist constants $M_1,M_2>0$ such that, for every $x\in\varLambda$ and $v\in T_{x}\mathcal{M}\cap\overline{\mathbb{B}}(0,\delta)$, 
 \begin{equation*}
 \|R_{x}(v)-x\|\leq M_1\|v\|\ \ {\rm and}\ \ 
 \|R_{x}(v)-x-v\|\leq M_2\|v\|^2.
 \end{equation*}
\end{lemma}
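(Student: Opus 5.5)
The argument splits into the global (compact-set) version of two local estimates. The idea is to use smoothness of $R$ on a compact subset of the tangent bundle.

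Set $\mathcal{K}:=\{(x,v)\in T\mathcal{M}\mid x\in\varLambda,\ v\in T_x\mathcal{M},\ \|v\|\le\delta\}$. Since $\mathcal{M}$ is a closed embedded submanifold, $T\mathcal{M}$ is a closed subset of $\mathbb{X}\times\mathbb{X}$: it is locally the graph of the continuous family $x\mapsto T_x\mathcal{M}$, by continuity of $\mathcal{P}_{T_x\mathcal{M}}$ in $x$. Hence $\mathcal{K}$ is closed and bounded, so compact.

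\emph{First inequality.} Because $R$ is smooth (at least $\mathcal{C}^1$) on $T\mathcal{M}$, the map $(x,v)\mapsto R_x'(v)$ is continuous. Thus $M_1:=\sup_{(x,v)\in\mathcal{K}}\|R_x'(v)\|$ is finite. For fixed $x\in\varLambda$, the set $T_x\mathcal{M}\cap\overline{\mathbb{B}}(0,\delta)$ is convex and contains the segment from $0$ to $v$. Since $R_x(0)=x$, the mean value inequality gives
\[
\|R_x(v)-x\|=\Big\|\int_0^1 R_x'(tv)v\,dt\Big\|\le M_1\|v\|.
\]

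\emph{Second inequality.} I will use the identity
\[
R_x(v)-x-v=\int_0^1\big(R_x'(tv)-R_x'(0)\big)v\,dt,
\]
which uses $R_x'(0)=\mathcal{I}$ on $T_x\mathcal{M}$. If $R$ is $\mathcal{C}^2$ (the smooth case of Definition \ref{def-retract}), then $M_2':=\sup_{\mathcal{K}}\|R_x''(v)\|<\infty$ by compactness. Hence $\|R_x'(tv)-R_x'(0)\|\le M_2' t\|v\|$, and integrating gives the bound with $M_2:=M_2'/2$.

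This step is the main subtlety. If only $\mathcal{C}^1$ regularity of $R$ on a $\mathcal{C}^1$ manifold were available, the quadratic bound could fail. The plan therefore relies on the smoothness built into Definition \ref{def-retract}, applying the second-order Taylor bound uniformly on the compact set $\mathcal{K}$. This follows the argument of \cite[Appendix B]{Boumal2019}.

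An alternative is needed if one only wants to use the second derivative along rays inside each fibre. In that case, work in local trivializations of $T\mathcal{M}$ over finitely many charts covering $\varLambda$, obtain the constants chart by chart, and take the maximum over the finite cover.
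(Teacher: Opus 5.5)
Your proposal is correct and matches the argument the paper has in mind. The paper omits the proof and defers to the compactness-plus-Taylor argument of \cite[Appendix B]{Boumal2019}, which is exactly what you do: bound the first and second fibre derivatives of $R$ on the compact set $\{(x,v)\in T\mathcal{M}\mid x\in\varLambda,\ \|v\|\le\delta\}$ and integrate along the segment $[0,v]\subset T_x\mathcal{M}$; the cap $\|v\|\le\delta$ means no separate large-$\|v\|$ case is needed. You are right that the quadratic bound needs $\mathcal{C}^2$ regularity of $R$, which comes from the smoothness in Definition~\ref{def-retract} rather than from the $\mathcal{C}^1$ assumption on $\mathcal{M}$, and your closing paragraph on local trivializations is unnecessary.
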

\section{An inexact proximal-linearized NEP method}\label{sec3}

Let $x^k\in\!\mathcal{M}$ be the current iterate, and let $\rho_k$ denote the associated penalty parameter. We first construct a proximal linearization of the penalty function $\Theta_{\rho_k}$ at $x^k$. Under Assumption \ref{ass0}(i), we define the first-order local models of \(f\) and \(g\) around \(x^k\) as follows: 
\[  
\ell_f(\cdot;x^k):=f(x^k)+\langle\nabla\!f(x^k),\cdot-x^k\rangle\ \ {\rm and}\ \ \ell_g(\cdot;x^k):=g(x^k)+g'(x^k)(\cdot-x^k).
\]
Choose an arbitrary \(\zeta^k\in\partial(-h)(x^k)\). Since $-\zeta^k\in\partial h(x^k)$ and \(h\) is convex, we have 
\[
 -h(x)\le -h(x^k)+\langle \zeta^k,x-x^k\rangle\quad\forall x\in\mathbb{X}.
\]
Consequently, we define the following proximal linearization of  $\Theta_{\rho_k}$ at $x^k$:
\begin{equation*}
 \widehat{\Theta}_{\!\rho_k}(x;x^k)\!:=
    \ell_f(x;x^k)+\vartheta(x)+\rho_k\mathrm{dist}(\ell_g(x;x^k),K)
    -h(x^k)+\langle\zeta^k,x\!-\!x^k\rangle
    +\frac{\beta_k}{2}\|x-x^k\|^2,
\end{equation*}
where $\beta_k>0$ is a proximal parameter. The proximal term $\frac{\beta_k}{2}\|\cdot-x^k\|^2$ is incorporated to ensure the strong convexity of $\widehat{\Theta}_{\rho_k}(\cdot;x^k)$, which facilitates the computation of an approximate solution to the resulting subproblem. It is worth noting that  $\widehat{\Theta}_{\rho_k}(\cdot;x^k)$ does not necessarily majorize $\Theta_{\rho_k}$ around $x^k$ unless $\beta_k$ is sufficiently large, depending on the Lipschitz moduli of $\nabla\!f$ and $g'$ at $x^k$, as well as the penalty parameter \(\rho_k\). 

The proposed proximal-linearized NEP method computes a search direction $v^k\in T_{x^k}\mathcal{M}$ by approximately solving the following strongly convex subproblem 
\begin{equation}\label{subprob}
\min_{v\in T_{\!x^k}\mathcal{M}}\widehat{\Theta}_{\rho_k}(x^k\!+v;x^k)=\ell_k(v)+\vartheta(x^k\!+v)+\rho_k{\rm dist}(\ell_g(x^k\!+v;x^k),K)+\frac{\beta_k}{2}\|v\|^2,
\end{equation}
where $\ell_k(v):=\langle\nabla\!f(x^k)+\zeta^k, v \rangle+f(x^k)-h(x^k)$ for $v\in\mathbb{X}$. To measure the optimality of an approximate solution of \eqref{subprob}, we reformulate this subproblem as
\begin{align}\label{Esubprob}
&\min_{v\in\mathbb{X},z\in\mathbb{Z}}\,\ell_k(v)+\psi_{\rho_k}(z)+\delta_{T_{\!x^k}\mathcal{M}}(v)+\frac{\beta_k}{2}\|v\|^2\nonumber\\
&\quad{\rm s.t.}\ \ G(x^k,v)-z=0\quad\ {\rm with}\ \ G(x,v):=\begin{pmatrix}
		x+v\\
		\ell_g(x\!+\!v;x)
\end{pmatrix},
\end{align}		
where $\psi_{\rho_k}(z):=\vartheta(z_1)+\rho_k{\rm dist}(z_2,K)$ for $z=(z_1,z_2)\in\mathbb{Z}$. A pair $(v^{k,*},z^{k,*})\in T_{\!x^k}\mathcal{M}\times\mathbb{Z}$ is an optimal solution of \eqref{Esubprob} iff there exists $\xi^{k,*}\in\partial\psi_{\rho_k}(z^{k,*})$ such that 
\[
 G(x^k,v^{k,*})-z^{k,*}=0\ \ {\rm and}\ \ \mathcal{P}_{T_{\!x^k}\mathcal{M}}\big(\nabla \ell_k(v^{k,*})+\beta_kv^{k,*}\!+\!\nabla_{\!v}G(x^k,v^{k,*})\xi^{k,*}\big)=0.
\]

Let $\overline{v}^k$ denote the unique optimal solution of subproblem \eqref{subprob}. We next formulate an inexactness criterion for computing $v^k$ based on whether $\overline{v}^k=0$. If $\overline{v}^k = 0$, the optimality condition of \eqref{subprob} implies that $x^k$ is a critical point of the penalty function $\Theta_{\rho_k}$. Now it is reasonable to require the approximate solution $v^k\in T_{x^k}\mathcal{M}$ to be small in norm. Accordingly, we call $v^k \in T_{x^k}\mathcal{M}$ an acceptable approximate solution of \eqref{subprob} if there exist $z^k \in \mathbb{Z}$ and $\xi^k \in \partial\psi_{\rho_k}(z^k)$ such that, for a prescribed tolerance $\varepsilon_k > 0$,
\begin{equation}\label{inexact-cond1}
\max\big\{\|G(x^k,v^k)\!-\!z^k\|,\|\mathcal{P}_{T_{\!x^k}\mathcal{M}}(\nabla \ell_k(v^k)+\beta_kv^k+\!\nabla_{\!v}G(x^k,v^k)\xi^k)\|, \|\beta_kv^k\|\big\}\le \varepsilon_k.
\end{equation}
The term $\Vert{}\beta_kv^k\Vert$ is included to control the size $\Vert{}v^k\Vert{}$ of $v^k$. When $\overline{v}^k\ne 0$, it is natural to require \(v^{k}\) to provide a sufficient descent for $\widehat{\Theta}_{\rho_k}(\cdot;x^k)$ at $x^k$, namely,
\begin{equation}\label{model-descent}
\widehat{\Theta}_{\rho_k}(x^k\!+v^k;x^k)< \widehat{\Theta}_{\rho_k}(x^k;x^k)=\Theta_{\rho_k}(x^k).
\end{equation}  
Meanwhile, a natural way to quantify the inexactness of \(v^{k}\in T_{x^k}\mathcal{M}\) as an approximate solution of \eqref{subprob} is to require the existence of \(z^k\in \mathbb{Z}\) and \(\xi^k\in \partial\psi_{\rho_k}(z^k)\) such that 
\begin{equation}\label{inexact-cond2}
\|G(x^k,v^k)-z^k\|\!\le a_k\|v^k\|^2\ {\rm and}\ \big\|\mathcal{P}_{T_{\!x^k}\mathcal{M}}(\nabla \ell_k(v^k)+\beta_kv^k\!+\nabla_{\!v}G(x^k,v^k)\xi^k)\big\|\le \!b_k\|v^k\|,
\end{equation}
where $a_k>0$ and $b_k>0$ are prescribed constants. Here, the constraint residual $\|G(x^k,v^k)-z^k\|$ is required to be of order $O(\|v^k\|^2)$ rather than merely $O(\|v^k\|)$. This second-order control is essential for establishing Lemma \ref{Lemma1-Thetak} below and the subsequent iteration complexity bounds. In summary, at the \(k\)-th iteration, the proposed method inexactly solves the strongly convex subproblem \eqref{subprob}, or equivalently \eqref{Esubprob}, to obtain a triple \((v^k, z^k, \xi^k)\in T_{x^k}\mathcal{M} \times \mathbb{Z} \times \partial\psi_{\rho_k}(z^k)\) satisfying either the inexactness criterion \eqref{inexact-cond1} or the alternative conditions \eqref{model-descent}-\eqref{inexact-cond2}. Subsequently, a trial point is constructed as $R_{x^k}(v^k)$ through the retraction $R_{x^k}(\cdot)$; see \cite[Definition 4.1]{Absil2008}.

Note that \(\beta _{k}\) controls the curvature of the proximal linearization \(\widehat{\Theta}_{\rho_k}(\cdot; x^k)\), and consequently affecting its local approximation accuracy and the quality of the candidate point $R_{x^k}(v^k)$. Following a strategy similar to that in \cite[Algorithm 2.2]{Cartis2011}, we adaptively update \(\beta_k\). Specifically, define the actual and predicted decreases as 
\begin{equation}\label{ared-pred}
 {\rm ared}_k:=\Theta_{\rho_k}(x^k)-\Theta_{\rho_k}(R_{x^k}(v^k))\ \ {\rm and}\ \ {\rm pred}_k\!:=\Theta_{\rho_k}(x^k)-\widehat{\Theta}_{\rho_k}(x^k\!+\!v^k;x^k).
\end{equation}
If $\text{pred}_{k}<\overline{\gamma}\Vert{}v^k\Vert{}^2$ for some \(\overline{\gamma} > 0\), or if the ratio \(\text{ared}_{k}/\text{pred}_{k}\) falls below a prescribed threshold, then \(\beta_k\) is regarded as insufficiently large to yield an accurate local approximation of \(\Theta_{\rho_k}(\cdot)\). In this case, the trial point \(R_{x^k}(v^k)\) is rejected and \(\beta_{k}\) is increased. Otherwise, the trial point \(R_{x^k}(v^k)\) is accepted, and \(\beta_{k}\) is either kept unchanged or reduced slightly. The penalty parameter $\rho_k$ is updated according to the feasibility violation measured by ${\rm dist}(z_2^k,K)$. Specifically, under the inexactness criterion \eqref{inexact-cond1}, we increase $\rho_k$ whenever $\text{dist}(z_2^k, K)>\varepsilon_k$. Under the criterion given by \eqref{model-descent}-\eqref{inexact-cond2}, since $\Vert{}g(x^k)+g'(x^k)v^k-z_2^k\Vert{} \le a_k\|v^k\|^2$ is required, we increase $\rho_{k}$ once $\text{dist}(z_2^k, K)>\beta_k \Vert{}v^k\Vert{}$. These considerations motivate the following algorithm.
\begin{algorithm}[h]
 \begin{algorithmic}[1]
 \caption{\label{PenAl}{\bf (Inexact nonsmooth penalty method)}}
 \State Input: a retraction $R$, parameters $\rho_0>0,0<\beta_{\rm min}<\beta_{0},a_{\max},b_{\max}\!>0,\tau>1$, \hspace*{1.0cm} $\varepsilon_0>0,\varsigma\in(0,1),0<\!\eta_1<\eta_2<1,\overline{\gamma}>0,0<\!\sigma_2<1<\sigma_1$, and $x^0\in \mathcal{M}$. 
		
 \For{$k=0,1,2,\ldots$} 
		
 \State Choose $\zeta^k\in\partial (-h)(x^k)$. 
		
 \State Select $a_k\in (0,a_{\max}]$ and $b_k\in(0,b_{\max}]$. Solve \eqref{Esubprob} inexactly to obtain \hspace*{0.4cm} $(v^k,z^k,\xi^k)\in T_{x^k}\mathcal{M}\times \mathbb{Z}\times\partial\psi_{\rho_k}(z^k)$ satisfying either \eqref{inexact-cond1} or both \eqref{model-descent} and \eqref{inexact-cond2}. \hspace*{0.42cm} If \eqref{inexact-cond1} holds, go to Step 5; otherwise, go to Step 6. 
		
 \State If $\max\{\varepsilon_k,{\rm dist}(z_2^k,K)\}=0$, stop. Otherwise, set $x^{k+1}\!:=x^k,\beta_{k+1}:=\beta_k$, \hspace*{0.4cm} $\varepsilon_{k+1}:=\varsigma\varepsilon_k$, and $\rho_{k+1}:=\rho_k$ if ${\rm dist}(z_2^k,K)\le\varepsilon_k$, and $\rho_{k+1}:=\tau\rho_k$ otherwise. \hspace*{0.4cm} Return to Step 2.
		
\State If $\max\{\|v^{k}\|,{\rm dist}(z_2^k,K)\}=0$, stop. Otherwise, compute $r_k:={{\rm ared}_k}/{{\rm pred}_k}$. \hspace*{0.4cm} If $r_k< \eta_1$ or ${\rm pred}_k<\overline{\gamma}\|v^k\|^2$, set $x^{k+1}:=x^k$, $\beta_{k+1}:=\sigma_1\beta_k,\,\rho_{k+1}:=\rho_k$, and \hspace*{0.4cm} return to Step 2. Otherwise, set $x^{k+1}:=R_{x^k}(v^k)$ and update $\beta_{k+1}$ by
		\begin{equation}\label{update-betak}
		\beta_{k+1}:=\left\{\begin{array}{cl}
				\beta_k &\text{if}\ r_k<\eta_2,\qquad {\# successful}\\
				\max\{\sigma_2\beta_k,\beta_{\min}\}&\text{ otherwise}.\qquad{\# very\ successful}
			\end{array}\right.
	    \end{equation}
		
\State Update the penalty parameter $\rho_{k+1}$ via the following rule
		\begin{equation}\label{update-rho}
			\rho_{k+1}:=\left\{
			\begin{aligned}
				\tau \rho_k\quad &{\rm if}\,\,\max\{\rho_k,[{\rm dist}(z_2^k,K)]^{-1}\}< \beta_{k}^{-1}\|v^k\|^{-1},\\
				\rho_k\quad\,\,&{\rm otherwise}.
			\end{aligned}				
			\right.
		\end{equation}			
		\EndFor
	\end{algorithmic}
\end{algorithm}
\begin{remark}\label{remark-alg}
{\bf(a)} As detailed in Section \ref{sec6.1.1}, the (accelerated) semi-proximal ADMM applied to subproblem \eqref{Esubprob} generates, in finitely many iterations, a triple $(v^k, z^k, \xi^k) \in T_{x^k}\mathcal{M} \times \mathbb{Z} \times \partial\psi_{\rho_k}(z^k)$ satisfying either the inexactness criterion \eqref{inexact-cond1} or the alternative one given by \eqref{model-descent}-\eqref{inexact-cond2}. Likewise, Appendix A shows that a convergent solver (e.g., an accelerated proximal gradient method) applied to the dual problem of \eqref{Esubprob} also generates such a triple in finitely many iterations. Thus, both inexactness criteria are finitely attainable by standard subproblem solvers. The criterion \eqref{model-descent}-\eqref{inexact-cond2} is inspired by \cite{Liu2024}. To accommodate the additional conic constraint \(g(x)\in K\), we further incorporate criterion \eqref{inexact-cond1}, which is occasionally activated as illustrated in Figure \ref{ex_inc_fig}(a), but is essential for deriving the oracle complexity bound of Algorithm \ref{PenAl} for finding an \(\epsilon\)-stationary point. 
\begin{figure}[htbp]
\centering
\includegraphics[width=1.0\textwidth]{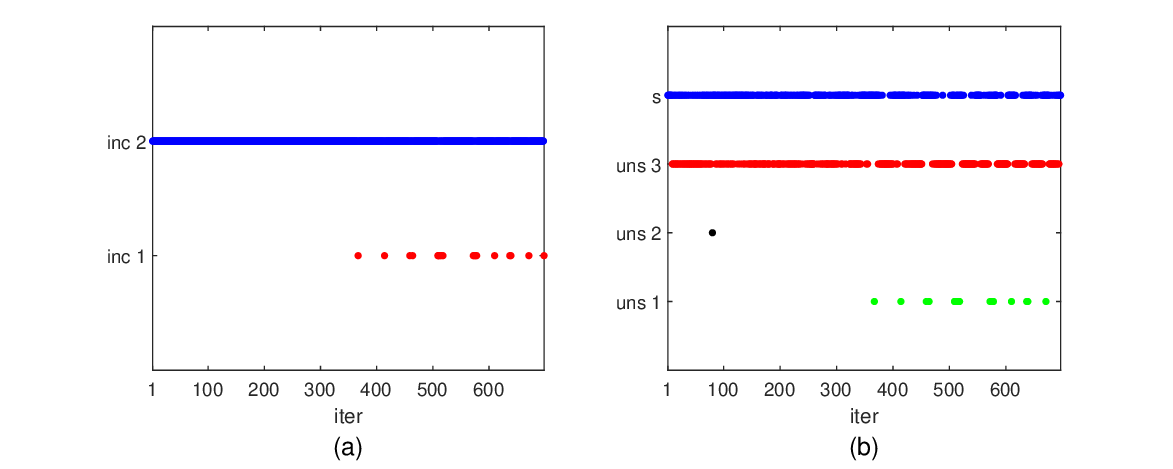}
\vspace{-1em}
 \caption{\small (a) Inexactness conditions and (b) iteration types throughout the algorithm. Here, ``inc~1'' and ``inc~2'' indicate satisfaction of \eqref{inexact-cond1} and \eqref{model-descent}-\eqref{inexact-cond2}, respectively;  ``uns~1'', ``uns~2'' and ``uns~3'' denote iteration steps corresponding to Step 5, $\operatorname{pred}_k\!<\overline{\gamma}\|v^k\|$ in Step 6, and $r_k\!<\eta_1$ in Step 6, respectively; and ``s'' denotes a successful or very successful iteration.}
 \label{ex_inc_fig}
\end{figure}

\noindent
{\bf (b)} When Step 6 is executed, the triple $(v^k,z^k,\xi^k)\in T_{x^k}\mathcal{M}\times \mathbb{Z}\times\partial\psi_{\rho_k}(z^k)$ satisfies the inexactness criterion given in \eqref{model-descent}-\eqref{inexact-cond2}. Consequently, ${\rm pred}_k > 0$, and thus the ratio $r_k = {\rm ared}_k / {\rm pred}_k$ is well-defined. If $r_k < \eta_1$ or ${\rm pred}_k < \overline{\gamma}\Vert{}v^k\Vert{}^2$, then $\beta_{k+1}$ is increased to improve the accuracy of the local approximation \(\widehat{\Theta}_{\rho_{k+1}}(\cdot; x^{k+1})\) to \(\Theta_{\rho_{k+1}}\) while keeping \(x^{k+1}=x^k\) unchanged. Otherwise, from the definition of $r_k$, it follows 
\begin{equation}\label{S-descent}
\Theta_{\rho_k}(x^k)-\Theta_{\rho_k}(R_{x^k}(v^k))\ge \overline{\gamma}\eta_1\|v^k\|^2,
\end{equation}
and $\beta_{k+1}$ is updated according to \eqref{update-betak}. In particular, the initialization and update rules for $\beta_k$ in Algorithm~1
ensure that $\beta_k\ge\beta_{\min}$ for all $k\in\mathbb{N}$.
	
\noindent
{\bf(c)} Let $\mathcal{S}:=\{k\in\mathbb{N} \mid x^{k+1}\!=R_{x^k}(v^k)\}$ denote the set of successful iteration indices, sorted in ascending order. By the iterative mechanism of Algorithm \ref{PenAl}, for each $k\in\mathcal{S}$, the triple $(v^k,z^k,\xi^k)\in T_{x^k}\mathcal{M}\times \mathbb{Z}\times\partial\psi_{\rho_k}(z^k)$ satisfies the conditions \eqref{model-descent}-\eqref{inexact-cond2}, and the sufficient-decrease condition \eqref{S-descent} holds. Moreover, for any $k\notin\mathcal{S}$, the update rule yields $x^{k+1}=x^k=x^{k'}$ for some $k'\in\mathcal{S}$ with $k'<k$. Hence, the unsuccessful iterations do not generate new iterates; instead, they simply retain the iterate obtained at the latest successful iteration. Consequently, the set of cluster points of the entire sequence $\{x^k\}_{k\in\mathbb{N}}$ is identical to that of $\{x^k\}_{k\in\mathcal{S}}$. Figure \ref{ex_inc_fig}(b) depicts the successful and unsuccessful iterations during the execution of the algorithm. It can be observed that most unsuccessful iterations are triggered by the condition $r_k<\eta_1$ in Step 6.
	
\noindent
{\bf(d)} If $\max\{\varepsilon_k,{\rm dist}(z_2^k,K)\}=0$, then $x^k$ must be a stationary point of \eqref{Rcprob}. Indeed, now Step 5 is executed, so the triple $(v^k,z^k,\xi^k)\in T_{x^k}\mathcal{M}\times\mathbb{Z}\times\partial\psi_{\rho_k}(z^k)$ satisfies \eqref{inexact-cond1}. This, together with $\varepsilon_k=0$ and ${\rm dist}(z_2^k,K)=0$, implies the stationarity conditions in Definition \ref{def-spoint}(i). Similarly, if $\max\{\|v^{k}\|,{\rm dist}(z_2^k,K)\}=0$, then $x^k$ is also a stationary point of \eqref{Rcprob}. In this case, the iterative mechanism of Algorithm \ref{PenAl} ensures that $(v^k,z^k,\xi^k)\in T_{x^k}\mathcal{M}\times \mathbb{Z}\times\partial\psi_{\rho_k}(z^k)$ satisfies \eqref{inexact-cond2}. Since $v^k=0$, \eqref{inexact-cond2} yields $z^k_1=x^k$, $g(x^k)=z_2^k\in K$ and $0\in \nabla\!f(x^k)+\partial\vartheta(x^k)+\partial (-h)(x^k)+\nabla\!g(x^k)\mathcal{N}_K(g(x^k))+N_{x^k}\mathcal{M}$. By Definition \ref{def-spoint}(i), $x^k$ is a stationary point of \eqref{Rcprob}. This justifies the use of either $\max\{\varepsilon_k,{\rm dist}(z_2^k,K)\}=0$ or $\max\{\|v^k\|,{\rm dist}(z_2^k,K)\}=0$ as the stopping condition.
\end{remark}
\begin{remark}\label{remark1-Alg}
 When $\mathcal{M}=\mathbb{X}$, there is no need to split the nonsmooth functions $\vartheta(x^k\!+\cdot)$ and $\delta_{T_{x^k}\mathcal{M}}(\cdot)$ in \eqref{Esubprob}, so the mapping $G(x,\cdot)$ defined there simplifies to $\ell_{g}(x+v;x)$. In this case, the inexactness conditions \eqref{inexact-cond1} and \eqref{inexact-cond2} are respectively replaced by 
 \begin{align*}
 \max\big\{\|(x^k\!+\!v^k-s^k;\ell_g(x^k\!+\!v^k;x^k)-z^k)\|,\|x^k\!+\!v^k-s^k\|, \|\beta_kv^k\|\big\}\le \varepsilon_k,\\
 \|(x^k\!+\!v^k-s^k;\ell_g(x^k\!+\!v^k;x^k)-z^k)\|\le\!a_k\|v^k\|^2\ {\rm and}\ \big\|x^k\!+\!v^k-s^k\big\|\!\le b_k\|v^k\|, 
\end{align*}
where $s^k:=\mathcal{P}_{\vartheta}\big(x^k+v^k-\!\nabla\ell_k(v^k)\!-\!\beta_kv^k-\nabla g(x^k)\xi^k\big)$ with $\xi^k\in\rho_k\partial{\rm dist}(z^k,K)$. 
\end{remark}

Since ${\rm dist}(\cdot,K)$ is $1$-Lipschitz continuous w.r.t. $|\!\lVert\cdot\rVert\!|$ and all norms on the finite-dimensional space $\mathbb{Y}$ are equivalent, there exist constants $c_{u}\ge c_{l}>0$ such that 
\begin{equation}\label{dist-Lip}
c_{l}\|\cdot\|\le|\!\lVert\cdot\rVert\!|\le c_{u}\|\cdot\|,\  |{\rm dist}(y^1,K)-{\rm dist}(y^2,K)|\le c_{u}\|y^1-y^2\|\quad\forall y^1,y^2\in\mathbb{Y}. 
\end{equation}
Combining the second inequality in \eqref{dist-Lip} with the definition of $\psi_{\rho_k}$, we obtain the following estimate, which will be frequently invoked in the subsequent analysis.
\begin{lemma}\label{lemma-psirhok}
Let $\gamma$ be a Lipschitz constant of $\vartheta$ on the set $\Omega\subset\mathbb{X}$. Then, for each $k\in\mathbb{N}$, $\psi_{\rho_k}$ is Lipschitz continuous on $\Omega\times\mathbb{Y}$ with 
 \[
   |\psi_{\rho_k}(z^1)-\psi_{\rho_k}(z^2)|\le \big(\gamma^2\!+\!c_{u}^2\rho_k^2\big)^{1/2}\|z^1-z^2\|\quad\forall z^1,z^2\in \Omega\times\mathbb{Y}.
 \]
\end{lemma}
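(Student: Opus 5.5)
The bound follows directly from the definition of $\psi_{\rho_k}$ by splitting the difference into the $\vartheta$-part and the distance part. Write $z^i=(z^i_1,z^i_2)\in\Omega\times\mathbb{Y}$ for $i=1,2$. Since $\psi_{\rho_k}(z)=\vartheta(z_1)+\rho_k{\rm dist}(z_2,K)$, the triangle inequality gives
\[
|\psi_{\rho_k}(z^1)-\psi_{\rho_k}(z^2)|\le|\vartheta(z^1_1)-\vartheta(z^2_1)|+\rho_k|{\rm dist}(z^1_2,K)-{\rm dist}(z^2_2,K)|.
\]

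The two terms are bounded separately. Because $z^1_1,z^2_1\in\Omega$ and $\gamma$ is a Lipschitz constant of $\vartheta$ on $\Omega$, the first term is at most $\gamma\|z^1_1-z^2_1\|$. By the second inequality in \eqref{dist-Lip}, the second term is at most $c_u\rho_k\|z^1_2-z^2_2\|$. Hence the difference is bounded by the Euclidean inner product of the vectors $(\gamma,c_u\rho_k)$ and $(\|z^1_1-z^2_1\|,\|z^1_2-z^2_2\|)$ in $\mathbb{R}^2$.

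Applying the Cauchy--Schwarz inequality in $\mathbb{R}^2$ gives
\[
|\psi_{\rho_k}(z^1)-\psi_{\rho_k}(z^2)|\le(\gamma^2+c_u^2\rho_k^2)^{1/2}\big(\|z^1_1-z^2_1\|^2+\|z^1_2-z^2_2\|^2\big)^{1/2}.
\]
By the definition of the norm on $\mathbb{Z}=\mathbb{X}\times\mathbb{Y}$ in Section \ref{sec1.3}, the last factor is exactly $\|z^1-z^2\|$, which is the claimed estimate.

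There is no real obstacle here. The only points to check are that the Lipschitz estimate for $\vartheta$ is applied only to first components lying in $\Omega$, which the hypothesis $z^1,z^2\in\Omega\times\mathbb{Y}$ guarantees, and that Cauchy--Schwarz is used so as to produce the product-norm constant $(\gamma^2+c_u^2\rho_k^2)^{1/2}$ rather than the cruder $\gamma+c_u\rho_k$.
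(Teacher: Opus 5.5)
Your argument is correct and is essentially the paper's own justification: the paper only says the bound follows by combining the second inequality in \eqref{dist-Lip} with the definition of $\psi_{\rho_k}$. Your proof carries this out in detail, splitting the difference componentwise, applying the Lipschitz bounds for $\vartheta$ on $\Omega$ and for ${\rm dist}(\cdot,K)$, and then using Cauchy--Schwarz in $\mathbb{R}^2$ together with the product norm on $\mathbb{Z}$.
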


We next establish two estimates needed for the analysis in Section \ref{sec4}. Lemma \ref{Lemma1-Thetak} provides a lower bound on $\widehat{\Theta}_{\rho_k}(x^k\!+v;x^k)-\widehat{\Theta}_{\rho_k}(x^k\!+v^k;x^k)$ for $v\in T_{x^k}\mathcal{M}$, while Lemma \ref{Lemma2-Thetak} gives an upper bound for $\Theta_{\rho_k}(R_{x^k}(v^k))-\widehat{\Theta}_{\rho_k}(x^k+v^k;x^k)$.
\begin{lemma}\label{Lemma1-Thetak}
 Fix any $k\in\mathbb{N}$ such that $(v^k,z^k,\xi^k)\!\in T_{x^k}\mathcal{M}\times\mathbb{Z}\times\partial\psi_{\rho_k}(z^k)$ satisfies \eqref{model-descent}-\eqref{inexact-cond2}. Let $\mathcal{U}_k$ be an open neighborhood of $[x^k+v^k,z_1^k]$, and let $\widetilde L_{\vartheta,k}$ denote the Lipschitz modulus of $\vartheta$ on $\mathcal{U}_k$. Then, for any $v\in T_{x^k}\mathcal{M}$,  
 \begin{equation*}
 \widehat{\Theta}_{\rho_k}(x^k\!+v;x^k)-\widehat{\Theta}_{\rho_k}(x^k\!+v^k;x^k)\!\ge\!\frac{\beta_k}{2}\|v\!-v^k\|^2-b_k\|v^k\|\|v\!-\!v^k\| -2a_k\sqrt{\widetilde{L}_{\vartheta,k}^2\!+\!c_{u}^2\rho_{k}^2}\|v^k\|^2.
\end{equation*}
\end{lemma}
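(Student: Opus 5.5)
Write $\Phi_k(v):=\widehat{\Theta}_{\rho_k}(x^k\!+v;x^k)$ for $v\in T_{x^k}\mathcal{M}$. The idea is to use convexity to split $\Phi_k$ into two pieces. The first is the smooth strongly convex part $\ell_k(v)+\frac{\beta_k}{2}\|v\|^2$. The second is $\psi_{\rho_k}(G(x^k,v))$, which is convex in $v$ because $G(x^k,\cdot)$ is affine. The obstacle is that the subgradient $\xi^k$ is taken at $z^k$, not at $G(x^k,v^k)$. So I will work with the surrogate $\psi_{\rho_k}(z^k)$ and pay twice for the mismatch $\|G(x^k,v^k)-z^k\|\le a_k\|v^k\|^2$, using Lemma \ref{lemma-psirhok}. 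That is exactly where the term $2a_k\sqrt{\widetilde L_{\vartheta,k}^2+c_u^2\rho_k^2}\|v^k\|^2$ comes from.

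\textbf{Step 1: subgradient inequality at $z^k$.} Since $\psi_{\rho_k}$ is convex and $\xi^k\in\partial\psi_{\rho_k}(z^k)$,
\[
\psi_{\rho_k}(G(x^k,v))\ge \psi_{\rho_k}(z^k)+\langle \xi^k,G(x^k,v)-z^k\rangle .
\]
Split the inner product as $\langle \xi^k,G(x^k,v)-G(x^k,v^k)\rangle+\langle\xi^k,G(x^k,v^k)-z^k\rangle$. Because $G(x^k,\cdot)$ is affine, the first piece equals $\langle \nabla_v G(x^k,v^k)\xi^k,v-v^k\rangle$.

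\textbf{Step 2: smooth part.} For the smooth part, exact expansion of the quadratic gives
\[
\ell_k(v)+\tfrac{\beta_k}{2}\|v\|^2-\ell_k(v^k)-\tfrac{\beta_k}{2}\|v^k\|^2=\langle\nabla\ell_k(v^k)+\beta_kv^k,v-v^k\rangle+\tfrac{\beta_k}{2}\|v-v^k\|^2 .
\]
Adding this to Step 1, the linear terms combine into $\langle w^k,v-v^k\rangle$ with $w^k:=\nabla\ell_k(v^k)+\beta_kv^k+\nabla_vG(x^k,v^k)\xi^k$. Since $v-v^k\in T_{x^k}\mathcal{M}$, we may replace $w^k$ by $\mathcal{P}_{T_{x^k}\mathcal{M}}w^k$. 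The second condition in \eqref{inexact-cond2} then bounds this term below by $-b_k\|v^k\|\|v-v^k\|$.

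\textbf{Step 3: the residual terms.} Two error terms remain:
\[
\langle\xi^k,G(x^k,v^k)-z^k\rangle+\psi_{\rho_k}(z^k)-\psi_{\rho_k}(G(x^k,v^k)).
\]
The difference of $\psi$ values is at least $-\sqrt{\widetilde L_{\vartheta,k}^2+c_u^2\rho_k^2}\,a_k\|v^k\|^2$. This uses Lemma \ref{lemma-psirhok} on $\mathcal{U}_k\times\mathbb{Y}$, which contains both $z^k$ and $G(x^k,v^k)$: their first components are $z_1^k$ and $x^k+v^k$, and $\mathcal{U}_k$ is convex or at least contains the segment. For the inner product, Cauchy--Schwarz gives a lower bound of $-\|\xi^k\|\,a_k\|v^k\|^2$.

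So it remains to show $\|\xi^k\|\le\sqrt{\widetilde L_{\vartheta,k}^2+c_u^2\rho_k^2}$. This is the main technical point. Write $\xi^k=(\xi_1^k,\xi_2^k)$ with $\xi_1^k\in\partial\vartheta(z_1^k)$ and $\xi_2^k\in\rho_k\partial{\rm dist}(z_2^k,K)$. Because $z_1^k$ lies in the open set $\mathcal{U}_k$ on which $\vartheta$ is $\widetilde L_{\vartheta,k}$-Lipschitz, we get $\|\xi_1^k\|\le\widetilde L_{\vartheta,k}$. Because ${\rm dist}(\cdot,K)$ is $c_u$-Lipschitz in $\|\cdot\|$ by \eqref{dist-Lip}, its subgradients have Euclidean norm at most $c_u$, so $\|\xi_2^k\|\le c_u\rho_k$.

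Summing the two error terms gives $-2a_k\sqrt{\widetilde L_{\vartheta,k}^2+c_u^2\rho_k^2}\|v^k\|^2$. Collecting Steps 1–3 then yields the claimed inequality. Condition \eqref{model-descent} is not needed for this estimate.
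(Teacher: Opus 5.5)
Your proof is correct and follows essentially the same route as the paper. Both arguments combine three ingredients: the subgradient inequality for $\psi_{\rho_k}$ at $z^k$ together with the affinity of $G(x^k,\cdot)$; the exact quadratic expansion of the smooth part, which the paper phrases as $\beta_k$-strong convexity of $\ell_k+\frac{\beta_k}{2}\|\cdot\|^2+\delta_{T_{x^k}\mathcal{M}}$ with a normal vector $s\in N_{x^k}\mathcal{M}$ rather than by projecting $w^k$; and paying the mismatch $\|G(x^k,v^k)-z^k\|\le a_k\|v^k\|^2$ twice, once through Lemma~\ref{lemma-psirhok} and once through a bound on $\|\xi^k\|$. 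The only difference is how that last bound is obtained: the paper uses the Lipschitz continuity of $\psi_{\rho_k}$ on $\mathcal{U}_k\times\mathbb{Y}$ with slack constant $\widetilde L_{\vartheta,k}+\epsilon$ and lets $\epsilon\downarrow 0$ at the end, while you bound $\xi_1^k$ and $\xi_2^k$ separately, which is equally valid.
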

\begin{proof}
 Fix any $\epsilon>0$. Since $\vartheta$ is Lipschitz continuous on $\mathcal{U}_k$ with Lipschitz constant $\widetilde{L}_{\vartheta,k}\!+\epsilon$, it follows from Lemma \ref{lemma-psirhok}, $\xi^k\in\partial\psi_{\rho_k}(z^k)$, and \cite[Theorem 9.13]{RW98} that $\|\xi^k\|\le [(\widetilde{L}_{\vartheta,k}\!+\epsilon)^2+c_{u}^2\rho_{k}^2]^{1/2}$. Together with the first inequality in \eqref{inexact-cond2}, we get
 \begin{equation}\label{temp-ineq31}
 \big|\psi_{\rho_k}(z^k)\!-\psi_{\rho_k}(G(x^k,v^k))+\langle\xi^k,G(x^k,v^k)\!-z^k\rangle\big|\!\le 2a_k\sqrt{(\widetilde{L}_{\vartheta,k}\!+\!\epsilon)^2\!+\!c_{u}^2\rho_{k}^2}\|v^k\|^2.
 \end{equation}
 Fix any $v\in T_{x^k}\mathcal{M}$. Since $\ell_k(\cdot)+\frac{\beta_k}{2}\|\cdot\|^2+\delta_{T_{\!x^k}\mathcal{M}}(\cdot)$ is  $\beta_k$-strongly convex, it holds
 \[
	\ell_k(v)+\frac{\beta_k}{2}\|v\|^2\ge \ell_k(v^k)+\frac{\beta_k}{2}\|v^k\|^2+\langle\nabla \ell_k(v^k)+\beta_kv^k+s,v-v^k\rangle+\frac{\beta_k}{2}\|v-v^k\|^2
 \]
 for any $s\in \mathcal{N}_{T_{x^k}\mathcal{M}}(v^k)$. 
 Moreover, by convexity and $\xi^k\in\partial\psi_{\rho_k}(z^k)$, we have
 \begin{align*}
 \psi_{\rho_k}(G(x^k,v))&\ge \psi_{\rho_k}(z^k)+\langle \xi^k,G(x^k,v^k)-z^k\rangle+\langle \xi^k,G(x^k,v)-G(x^k,v^k)\rangle\nonumber\\
 &=\psi_{\rho_k}(z^k)+\langle\xi^k,G(x^k,v^k)-z^k\rangle+\langle \nabla_{\!v} G(x^k,v^k)\xi^k,v-v^k\rangle,
 \end{align*}
 where the equality is due to the affinity of $G(x^k,\cdot)$. Adding the above two inequalities together and using the relation $\widehat{\Theta}_{\rho_k}(x^k\!+\!\cdot;x^k)=\ell_k(\cdot)+\frac{\beta_k}{2}\|\cdot\|^2+\psi_{\rho_k}(G(x^k,\cdot))$ gives 
 \begin{align*}
  \widehat{\!\Theta}_{\rho_k}(x^k\!+\!v;x^k)&\ge \widehat{\Theta}_{\rho_k}(x^k\!+\!v^k;x^k)+\psi_{\rho_k}(z^k)-\psi_{\rho_k}(G(x^k,v^k))+\langle\xi^k,G(x^k,v^k)-z^k\rangle\nonumber \\
  &\quad +\frac{\beta_k}{2}\|v-v^k\|^2+\langle\nabla \ell_k(v^k)+\beta_kv^k+\nabla_{\!v}G(x^k,v^k)\xi^k\!+s,v-v^k\rangle\\
  &\overset{\eqref{temp-ineq31}}{\ge} \widehat{\Theta}_{\rho_k}(x^k\!+\!v^k;x^k)+\langle\nabla \ell_k(v^k)+\beta_kv^k+\nabla_{\!v} G(x^k,v^k)\xi^k,v-v^k\rangle\\
  &\quad +\frac{\beta_k}{2}\|v-v^k\|^2-2a_k\sqrt{(\widetilde{L}_{\vartheta,k}\!+\!\epsilon)^2\!+\!c_{u}^2\rho_{k}^2}\|v^k\|^2,
 \end{align*}
 where the second inequality also follows from $v-v^k\in T_{\!x^k}\mathcal{M}$ and $s\in\mathcal{N}_{T_{x^k}\mathcal{M}}(v^k)= N_{\!x^k}\mathcal{M}$. Letting $\epsilon\downarrow0$ in the above inequality and applying the Cauchy-Schwarz inequality to the inner product on the right-hand side, together with the second inequality in \eqref{inexact-cond2}, yields the desired conclusion. This completes the proof.
\end{proof}
\begin{lemma}\label{Lemma2-Thetak}
 Fix any $k\in\mathbb{N}$. Let $\widehat{L}_{f,k}, \widehat{L}_{\vartheta,k},\widehat{L}_{h,k}$ and $\widehat{L}_{g,k}$ be the Lipschitz moduli of $f,\vartheta,h$ and $g$, respectively, on the line segment $[x^k\!+\!v^k,R_{x^k}(v^k)]$, and let $\widehat{L}_{\nabla\!f,k}$ and $\widehat{L}_{\nabla g,k}$ be those of $\nabla\!f$ and $\nabla g$, respectively, on the line segment $[x^k,x^k\!+\!v^k]$. Then, 
 \begin{align*}
 \Theta_{\rho_k}(R_{x^k}(v^k))&\le\widehat{\Theta}_{\rho_k}(x^k\!+v^k;x^k)+\frac{1}{2}\big(\widehat{L}_{\nabla\!f,k}+c_{u}\rho_k\widehat{L}_{\nabla g,k}-\beta_k\big)\|v^k\|^2\\		&\quad+\big(\widehat{L}_{f,k}+c_{u}\rho_k\widehat{L}_{g,k}+\widehat{L}_{\vartheta,k}+\widehat{L}_{h,k}\big)\|R_{x^k}(v^k)-(x^k\!+\!v^k)\|.
 \end{align*}
\end{lemma}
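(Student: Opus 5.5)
Write $y^k:=x^k+v^k$ and $x^{k+1}_{\rm t}:=R_{x^k}(v^k)$. The plan is to split $\Theta_{\rho_k}(x^{k+1}_{\rm t})-\widehat{\Theta}_{\rho_k}(y^k;x^k)$ into two pieces, $[\Theta_{\rho_k}(x^{k+1}_{\rm t})-\Theta_{\rho_k}(y^k)]$ and $[\Theta_{\rho_k}(y^k)-\widehat{\Theta}_{\rho_k}(y^k;x^k)]$. The first piece is a perturbation along the segment $[y^k,x^{k+1}_{\rm t}]$ and produces the term multiplied by $\|R_{x^k}(v^k)-(x^k+v^k)\|$. The second piece is a linearization error along $[x^k,y^k]$ and produces the quadratic term.

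\textbf{First piece.} Bound each component of $\Theta_{\rho_k}=f+\vartheta-h+\rho_k{\rm dist}(g(\cdot),K)$ by its Lipschitz modulus on $[y^k,x^{k+1}_{\rm t}]$.
\begin{itemize}
\item We have $|f(x^{k+1}_{\rm t})-f(y^k)|\le\widehat L_{f,k}\|x^{k+1}_{\rm t}-y^k\|$, and the analogous bounds hold for $\vartheta$ and $h$. For $-h$ we simply use $|h(\cdot)-h(\cdot)|$, so no sign issue arises.
\item For the penalty term, the second inequality in \eqref{dist-Lip} gives $\rho_k|{\rm dist}(g(x^{k+1}_{\rm t}),K)-{\rm dist}(g(y^k),K)|\le c_u\rho_k\|g(x^{k+1}_{\rm t})-g(y^k)\|\le c_u\rho_k\widehat L_{g,k}\|x^{k+1}_{\rm t}-y^k\|$.
\end{itemize}
Summing these gives exactly the coefficient $\widehat L_{f,k}+c_u\rho_k\widehat L_{g,k}+\widehat L_{\vartheta,k}+\widehat L_{h,k}$.

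\textbf{Second piece.} Note first that $\widehat{\Theta}_{\rho_k}(y^k;x^k)$ contains $\vartheta(y^k)$ exactly, so the $\vartheta$ terms cancel. It remains to compare the other three components.
\begin{itemize}
\item For $f$: apply the descent lemma along $[x^k,y^k]$ using the mean value theorem in integral form, $f(y^k)-\ell_f(y^k;x^k)=\int_0^1\langle\nabla f(x^k+tv^k)-\nabla f(x^k),v^k\rangle dt\le\frac12\widehat L_{\nabla f,k}\|v^k\|^2$.
\item For $g$: similarly, $\|g(y^k)-\ell_g(y^k;x^k)\|\le\frac12\widehat L_{\nabla g,k}\|v^k\|^2$. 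This needs a Lipschitz modulus of $g'$ on the segment; since $\nabla g$ is the adjoint of $g'$, the operator norms coincide and the moduli agree. Combining with \eqref{dist-Lip}, $\rho_k{\rm dist}(g(y^k),K)\le\rho_k{\rm dist}(\ell_g(y^k;x^k),K)+\frac{c_u\rho_k}{2}\widehat L_{\nabla g,k}\|v^k\|^2$.
\item For $h$: convexity with $-\zeta^k\in\partial h(x^k)$ gives $-h(y^k)\le-h(x^k)+\langle\zeta^k,v^k\rangle$.
\end{itemize}
Adding the three bounds yields
\[
\Theta_{\rho_k}(y^k)\le\widehat{\Theta}_{\rho_k}(y^k;x^k)-\frac{\beta_k}{2}\|v^k\|^2+\frac12\big(\widehat L_{\nabla f,k}+c_u\rho_k\widehat L_{\nabla g,k}\big)\|v^k\|^2,
\]
where the $-\frac{\beta_k}{2}\|v^k\|^2$ appears because $\widehat\Theta_{\rho_k}$ includes the proximal term.

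\textbf{Conclusion and the delicate point.} Adding the first and second pieces gives the claim. The one step needing care is the applicability of the Lipschitz moduli. The points $y^k$ and the segments need not lie on $\mathcal M$, so we must ensure they stay inside the open convex set $\mathcal O$ where $\nabla f$ and $g'$ are locally Lipschitz. Because $\mathcal O$ is convex and contains both $x^k$ and $R_{x^k}(v^k)$, it suffices that $y^k\in\mathcal O$. Otherwise we interpret the moduli as possibly $+\infty$, in which case the statement holds trivially. Since the segments are compact, finite local moduli exist whenever the segments lie in $\mathcal O$.
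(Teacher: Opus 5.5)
Your proof is correct and is essentially the paper's argument. Both split at $x^k+v^k$. The retraction error is bounded by the zeroth-order Lipschitz moduli on $[x^k+v^k,R_{x^k}(v^k)]$, and the linearization error by the descent lemma for $f$ and $g$ on $[x^k,x^k+v^k]$, the subgradient inequality for $-h$ (with $\vartheta$ cancelling), and \eqref{dist-Lip} for the distance term.

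The differences are cosmetic. The paper carries a slack $\epsilon>0$ on each modulus and lets $\epsilon\downarrow 0$, which is unnecessary if the modulus is taken as the supremum of difference quotients. Your remark about the segments lying in $\mathcal{O}$ makes explicit a point the paper leaves implicit.
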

\begin{proof}
 Fix any $\epsilon>0$. By the definitions of $\widehat{L}_{f,k}$ and $\widehat{L}_{\nabla\!f,k}$ and the descent lemma,
\begin{align*}
 &f(R_{x^k}(v^k))\le f(x^k+v^k)+(\widehat{L}_{ f,k}+\epsilon)\|R_{x^k}(v^k)-(x^k\!+\!v^k)\|\nonumber\\
 &\le f(x^k)+\langle\nabla\! f(x^k),v^k\rangle+\frac{1}{2}(\widehat{L}_{\nabla\!f,k}+\epsilon)\|v^k\|^2+(\widehat{L}_{ f,k}+\epsilon)\|R_{x^k}(v^k)-(x^k\!+\!v^k)\|.
\end{align*}
 Similarly, by the definitions of $\widehat{L}_{\vartheta,k}$ and $\widehat{L}_{h,k}$ and the inclusion $\zeta^k\in\partial(-h)(x^k)$, 
 \begin{subequations}
 \begin{align*}
  \vartheta(R_{x^k}(v^k))&\le \vartheta(x^k+v^k)+(\widehat{L}_{\vartheta,k}+\epsilon)\|R_{x^k}(v^k)-(x^k+v^k)\|,\\
  -h(R_{x^k}(v^k))&\le -h(x^k+v^k) +(\widehat{L}_{h,k}+\epsilon)\|R_{x^k}(v^k)-(x^k+v^k)\|\nonumber\\
 &\le -h(x^k)+\langle \zeta^k,v^k\rangle+(\widehat{L}_{h,k}+\epsilon)\|R_{x^k}(v^k)-(x^k+v^k)\|.
 \end{align*}
\end{subequations} 
In addition, from the definitions of $\widehat{L}_{g,k}$ and $\widehat{L}_{\nabla g,k}$ and the relation \eqref{dist-Lip}, we have
\begin{align*}\label{dist-gK}
  {\rm dist}(g(R_{x^k}(v^k)),K)&\le {\rm dist}(\ell_g(x^k\!+\!v^k;x^k),K)+|\!\lVert g(R_{x^k}(v^k))-\ell_g(x^k\!+\!v^k;x^k)\lVert\!|\nonumber\\
  &\le {\rm dist}(\ell_g(x^k\!+\!v^k;x^k),K)+c_{u}\| g(R_{x^k}(v^k))\!-\!g(x^k\!+\!v^k)\|\\
  &\quad + c_{u}\| g(x^k\!+\!v^k)\!-\!\ell_g(x^k\!+\!v^k;x^k)\|\\
  &\le {\rm dist}(\ell_g(x^k\!+\!v^k;x^k),K)+c_{u}(\widehat{L}_{g,k}+\epsilon)\|R_{x^k}(v^k)-(x^k\!+\!v^k)\| \nonumber\\
  &\quad\ +\frac{1}{2}c_{u}(\widehat{L}_{\nabla g,k}+\epsilon)\|v^k\|^2.
 \end{align*}
 Combining the above inequalities with the definitions of $\Theta_{\rho_k}$ and $\widehat{\Theta}_{\rho_k}(\cdot;x^k)$ gives
 \begin{align*}
 \Theta_{\rho_k}(R_{x^k}(v^k))&\le \widehat{\Theta}_{\rho_k}(x^k\!+\!v^k;x^k)+\frac{1}{2}\big[\widehat{L}_{\nabla\!f,k}+c_{u}\rho_k\widehat{L}_{\nabla g,k}+(1+c_{u}\rho_k)\epsilon-\beta_k\big]\|v^k\|^2\\		
 &\ \ +\big[\widehat{L}_{f,k}\!+c_{u}\rho_k\widehat{L}_{g,k}+\!\widehat{L}_{\vartheta,k}+\!\widehat{L}_{h,k}+(3+c_{u}\rho_k)\epsilon\big]\|R_{x^k}(v^k)\!-\!(x^k\!+\!v^k)\|.
 \end{align*}
 Letting $\epsilon\downarrow 0$ in this inequality, we obtain the desired conclusion.
\end{proof}

In the following two sections, we establish iteration complexity bounds for Algorithm \ref{PenAl} and analyze the convergence of the sequence $\{x^k\}_{k\in\mathbb{N}}$ under the following assumption.
\begin{assumption}\label{ass1}
{\bf(i)} The sequence $\{x^k\}_{k\in\mathbb{N}}$ is bounded;
{\bf(ii)} $\{\rho_k\}_{k\in\mathbb{N}}$ is bounded.
\end{assumption}
\begin{remark}\label{remark-ass1}
{\bf(a)} Assumption \ref{ass1}(i) is relatively mild and holds automatically if the manifold \(\mathcal{M}\) is compact. Since $\{x^k\}_{k\in\mathbb{N}}\subset \mathcal{M}$, this assumption implies the existence of a compact set $\varLambda_0\subset\mathcal{M}$ such that $\{x^k\}_{k\in\mathbb{N}}\subset\varLambda_0$. 

\noindent
{\bf(b)} Assumption \ref{ass1}(ii) is satisfied under Assumption \ref{ass1}(i) together with an extended CQ on the cluster point set of \(\{x^k\}_{k\in\mathbb{N}}\); see Appendix B. For the special case of \eqref{Rcprob} studied in \cite{Cartis2011,Diouane2026-II}, this extended CQ reduces to the MFCQ. Under Assumption \ref{ass1}(ii), the iterative scheme of Algorithm \ref{PenAl} implies the existence of $\overline{k}\in\mathbb{N}$ such that $\rho_{k}=\rho_{\overline{k}}:=\overline{\rho}$ for all $k\ge\overline{k}$. This will be used repeatedly later.  
\end{remark}
\section{Iteration complexity analysis}\label{sec4}

In this section, we establish the iteration complexity bounds for Algorithm \ref{PenAl} under Assumption \ref{ass1}. To this end, define the sets $\mathbb{N}_1\!:=\{k \in \mathbb{N} \mid v^k\ {\rm satisfies}\ \eqref{inexact-cond1}\}$ and $\mathbb{N}_2:=\{k \in \mathbb{N} \mid v^k\ {\rm satisfies}\ \eqref{model-descent}\ {\rm and}\ \eqref{inexact-cond2}\}$, and assume without loss of generality that the elements of $\mathbb{N}_1$ and $\mathbb{N}_2$ are arranged in ascending order. For each $k\in\mathbb{N}$, let $\mathcal{F}_k$ denote the set of consecutive unsuccessful iteration indices starting from $k$, and define $\mathcal{F}_k^{2}\!:=\mathcal{F}_k\cap \mathbb{N}_2$. More precisely, $\mathcal{F}_k=\emptyset$ if $k\in\mathcal{S}$; otherwise, $\mathcal{F}_k=\mathbb{N}\backslash[k\!-\!1]$ when $\mathcal{S}$ is finite and $\mathcal{S}\cap\{k,k+1,\ldots\}=\emptyset$, or $\mathcal{F}_k=\!\{k,\ldots,k+\!n_k\}$ for some $n_k\in\mathbb{N}$ when $\mathcal{S}$ is infinite. Clearly, $\mathcal{F}_k^{2}\subset\{j\in\mathbb{N}\,|\,\textrm{Step 6 is executed at iteration $j$}\}$. 

We first establish several auxiliary lemmas that are essential for the subsequent complexity analysis. Specifically, Lemma \ref{bound-lemma} establishes the boundedness of the sequence $\{(v^k,z^{k},\zeta^k,\xi^k)\}_{k\in\mathbb{N}}$, Lemma \ref{lemma1-complexity} ensures that the index set $\mathcal{F}_k^{2}$ is finite for each $k\in\mathbb{N}$, and Lemma \ref{Lemma-Sfinite} shows that, if $\mathcal{S}$ is finite, then the iterate associated with the largest element of $\mathcal{S}$ must be a stationary point of problem \eqref{Rcprob}.
\begin{lemma}\label{bound-lemma}
 The sequence $\{(v^k,\overline{v}^k,z^{k},\zeta^k,\xi^k)\}_{k\in\mathbb{N}}$ is bounded under Assumption \ref{ass1}. Hence, there exist compact convex sets  $\mathcal{X}_1\supset\bigcup_{k\in\mathbb{N}}\big([x^k+v^k,R_{x^{k}}(v^{k})]\cup \mathcal{U}_k\big)$, $\mathcal{X}_2\supset\bigcup_{k\in\mathbb{N}}[x^k+v^k,R_{x^{k}}(v^{k})]$, and $\mathcal{X}_3\supset\bigcup_{k\in\mathbb{N}}[x^k,x^k+v^k]$, where $\mathcal{U}_k$ is given in Lemma \ref{Lemma1-Thetak}. Moreover, there exists $M>0$ such that $\|R_{x^{k}}(v^{k})-(x^{k}\!+v^{k})\|\le M\|v^{k}\|^2$ for all $k\in\mathbb{N}$. 
\end{lemma}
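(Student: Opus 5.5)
The plan is to bound each component separately, then assemble the compact convex sets and the retraction estimate. By Assumption \ref{ass1} and Remark \ref{remark-ass1}(a), $\{x^k\}\subset\varLambda_0$ with $\varLambda_0$ compact, and $\rho_k\le\rho_{\max}:=\sup_k\rho_k<\infty$.

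\textbf{Bounds on $\zeta^k$, $\overline v^k$ and $v^k$.} Since $h$ is a finite convex function, $\partial h$ is locally bounded. Hence $\{\zeta^k\}$, with $-\zeta^k\in\partial h(x^k)$, is bounded on $\varLambda_0$; likewise $\{\nabla f(x^k)\}$ and $\{\|g'(x^k)\|\}$ are bounded by continuity.

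For $\overline v^k$, I compare \eqref{subprob} at $\overline v^k$ and at $0$. Write $\phi_k(v):=\vartheta(x^k+v)+\rho_k{\rm dist}(\ell_g(x^k+v;x^k),K)$, which is convex. Take any $w_k\in\partial\vartheta(x^k)$; these are bounded because $\vartheta$ is a finite convex function on a compact set. Then $\phi_k(v)\ge\phi_k(0)+\langle w_k,v\rangle$. Optimality of $\overline v^k$ gives
\[
\langle\nabla f(x^k)+\zeta^k,\overline v^k\rangle+\phi_k(\overline v^k)+\tfrac{\beta_k}{2}\|\overline v^k\|^2\le\phi_k(0).
\]
Combining the two displays yields $\frac{\beta_{\min}}2\|\overline v^k\|^2\le C\|\overline v^k\|$, where I use $\beta_k\ge\beta_{\min}$ from Remark \ref{remark-alg}(b). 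Thus $\{\overline v^k\}$ is bounded.

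For $v^k$, split into two cases. If $k\in\mathbb{N}_1$, then $\|v^k\|\le\varepsilon_k/\beta_{\min}\le\varepsilon_0/\beta_{\min}$. If $k\in\mathbb{N}_2$, then \eqref{model-descent} gives $\widehat\Theta_{\rho_k}(x^k+v^k;x^k)<\Theta_{\rho_k}(x^k)$. Applying the same subgradient lower bound with $v^k$ in place of $\overline v^k$ gives $\frac{\beta_{\min}}2\|v^k\|^2\le C\|v^k\|$. In both cases $\{v^k\}$ is bounded. Note that only \eqref{model-descent} is needed here, not Lemma \ref{Lemma1-Thetak}.

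\textbf{Bounds on $z^k$ and $\xi^k$.} If $k\in\mathbb{N}_1$, then $\|G(x^k,v^k)-z^k\|\le\varepsilon_0$. If $k\in\mathbb{N}_2$, then $\|G(x^k,v^k)-z^k\|\le a_{\max}\|v^k\|^2$. Since $G(x^k,v^k)$ is bounded by the previous bounds, $\{z^k\}$ is bounded.

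For $\xi^k\in\partial\psi_{\rho_k}(z^k)$, the first block lies in $\partial\vartheta(z_1^k)$ and is bounded by local boundedness of $\partial\vartheta$ on the bounded set of the $z_1^k$. The second block lies in $\rho_k\partial{\rm dist}(z_2^k,K)$. By Lemma \ref{subdiff-dist} its dual norm is at most $\rho_k$, so it is bounded via \eqref{dist-Lip}.

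\textbf{The compact convex sets.} Take $\delta:=\sup_k\|v^k\|$. Lemma \ref{lemma-retract} with $\varLambda=\varLambda_0$ shows $\|R_{x^k}(v^k)-x^k\|\le M_1\delta$, so all the segments $[x^k+v^k,R_{x^k}(v^k)]$ and $[x^k,x^k+v^k]$ lie in a common ball. The neighborhoods $\mathcal{U}_k$ can be chosen as the $1$-neighborhood of the segment $[x^k+v^k,z_1^k]$, which is bounded uniformly in $k$. I then take $\mathcal{X}_1,\mathcal{X}_2,\mathcal{X}_3$ to be a sufficiently large closed ball.

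\textbf{The retraction estimate.} The second inequality of Lemma \ref{lemma-retract} with the same $\varLambda_0$ and $\delta$ gives $\|R_{x^k}(v^k)-(x^k+v^k)\|\le M_2\|v^k\|^2$, so $M:=M_2$ works.

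\textbf{Main obstacle.} The one delicate point is the uniform bound on $v^k$ in the $\mathbb{N}_2$ case. It requires a lower affine bound on $\phi_k$ with a $k$-uniform slope, which in turn depends on $\rho_k$ being bounded (the ${\rm dist}$ term is handled by dropping it, since it is nonnegative relative to its value minus $\rho_k c_u\|g'(x^k)\|\|v\|$). To be safe I would use the cruder estimate
\[
\phi_k(v)\ge\phi_k(0)-\big(\|w_k\|+c_u\rho_{\max}\sup_j\|g'(x^j)\|\big)\|v\|,
\]
with $w_k$ chosen so that $\vartheta(x^k+v)\ge\vartheta(x^k)+\langle w_k,v\rangle$.
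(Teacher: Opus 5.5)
Your proof is correct and follows essentially the paper's route: you bound $\widehat{\Theta}_{\rho_k}(x^k+v;x^k)-\Theta_{\rho_k}(x^k)$ from below via a bounded subgradient of $\vartheta$ at $x^k$, and combine this with \eqref{model-descent} (or with optimality of $\overline{v}^k$ against $v=0$), $\beta_k\ge\beta_{\min}$ and boundedness of $\rho_k$, followed by local boundedness of subdifferentials and Lemma \ref{lemma-retract}. The only difference is that the paper drops the nonnegative term $\rho_k{\rm dist}(\ell_g(x^k+v^k;x^k),K)$ and keeps $-\rho_k{\rm dist}(g(x^k),K)$ as a bounded constant, whereas you use the Lipschitz estimate; note also that your first claimed inequality $\phi_k(v)\ge\phi_k(0)+\langle w_k,v\rangle$ with only $w_k\in\partial\vartheta(x^k)$ is false in general, because the distance term can decrease along $v$ when $g(x^k)\notin K$, so the cruder estimate in your last paragraph is required rather than merely a safety measure.
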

\begin{proof}
 Since $-h$ is locally Lipschitz continuous and $\zeta^k\!\in\partial(-h)(x^k)$ for all $k$, by Assumption \ref{ass1}(i) and \cite[Theorem 9.13 \& Proposition 5.15]{RW98}, the sequence $\{\zeta^k\}_{k\in\mathbb{N}}$ is bounded. Since  $\varepsilon_k\le\varepsilon_0$ and $\beta_k\ge\beta_{\min}$ for all $k\in\mathbb{N}$, it follows from \eqref{inexact-cond1} that $\{v^k\}_{k\in\mathbb{N}_1}$ and $\{z^k\}_{k\in\mathbb{N}_1}$ are bounded. Since $\mathbb{N}=\mathbb{N}_1\cup\mathbb{N}_2$, it remains to show that $\{v^k\}_{k\in\mathbb{N}_2}$ and $\{z^k\}_{k\in\mathbb{N}_2}$ are bounded. Suppose, on the contrary, that the sequence $\{v^k\}_{k\in\mathbb{N}_2}$ is unbounded. For each $k\in\mathbb{N}_2$, from \eqref{model-descent} and the definition of $\mathbb{N}_2$, we get 
 \begin{align*}
  \Theta_{\rho_k}(x^k)&>\widehat{\Theta}_{\rho_k}(x^k\!+\!v^{k};x^k)=\langle\nabla\!f(x^k)\!+\!\zeta^k,v^k\rangle+(\beta_k/{2})\|v^k\|^2+\vartheta(x^k\!+\!v^k)\nonumber\\
  &\qquad\qquad\qquad\qquad\quad\ +\rho_k\textrm{dist}(\ell_g(x^k\!+\!v^k;v^k),K)+f(x^k)-h(x^k).\nonumber
 \end{align*}
 For each $k\in\mathbb{N}$, noting that $\partial\vartheta(x^k)\ne\emptyset$, we choose an arbitrary  $\widehat{\xi}_1^k\in\partial\vartheta(x^k)$. Then, the convexity of $\vartheta$ implies  $\vartheta(x^k+v^k)\ge\vartheta(x^k)+\langle \widehat{\xi}_1^k,v^k\rangle$. Hence, for each $k\in\mathbb{N}_2$, 
 \[
 \Theta_{\rho_k}(x^k)>({\beta_k}/{2})\|v^k\|^2-\|\nabla\! f(x^k)+\zeta^k\!+\widehat{\xi}_1^k\|\|v^k\|+f(x^k)+\vartheta(x^k)-h(x^k),
 \]
 which, by the expression of $\Theta_{\rho_k}(x^k)$, can equivalently be rearranged into 
 \[
  -\|\nabla\! f(x^k)+\zeta^k\!+\widehat{\xi}_1^k\|\|v^k\|-\rho_{k}{\rm dist}(g(x^k),K)+({\beta_k}/{2})\|v^k\|^2\le 0.
 \]
 Since $\vartheta$ is locally Lipschitz continuous and $\widehat{\xi}_1^k\in\partial\vartheta(x^k)$ for each $k\in\mathbb{N}$, Assumption \ref{ass1}(i) and \cite[Theorem 9.13 \& Proposition 5.15]{RW98} imply that $\{\widehat{\xi}_1^k\}_{k\in\mathbb{N}_2}$ is bounded, and so is $\{\|\nabla\! f(x^k)\!+\zeta^k\!+\widehat{\xi}_1^k\|\}_{k\in\mathbb{N}_2}$. Together with $\beta_k\ge\beta_{\rm min}>0$ and Assumption \ref{ass1}(ii), the above inequality contradicts the unboundedness of $\{v^k\}_{k\in\mathbb{N}_2}$. Thus, $\{v^k\}_{k\in\mathbb{N}_2}$ is bounded. The first inequality of \eqref{inexact-cond2}, combined with the boundedness of $\{x^k\}_{k\in\mathbb{N}}, \{v^k\}_{k\in\mathbb{N}_2}$ and $\{a_k\}_{k\in\mathbb{N}}$, then implies that $\{z^k\}_{k\in\mathbb{N}_2}$ is bounded. By Assumption \ref{ass1}(ii), $\xi^k\in\partial\psi_{\rho_k}(z^k)$, and the boundedness of $\{z^k\}_{k\in\mathbb{N}}$, another application of \cite[Theorem 9.13 \& Proposition 5.15]{RW98} yields the boundedness of $\{\xi^k\}_{k\in\mathbb{N}}$. Finally, since $\Theta_{\rho_k}(x^k)\ge\widehat{\Theta}_{\rho_k}(x^k+\overline{v}^k;x^k)$ for each $k\in\mathbb{N}$, following the above arguments by contradiction shows that the sequence $\{\overline{v}^k\}_{k\in\mathbb{N}}$ is bounded. Thus, we complete the boundedness proof of the sequence $\{(v^k,\overline{v}^k,z^{k},\zeta^k,\xi^k)\}_{k\in\mathbb{N}}$. The boundedness of $\{v^k\}_{k\in\mathbb{N}}$ implies that there exists $\delta_1>0$ such that $\|v^k\|\le\delta_1$ for all $k\in\mathbb{N}$. Invoking Lemma \ref{lemma-retract} with $\varLambda=\varLambda_0$ from Remark \ref{remark-ass1}(a) and $\delta=\delta_1$, there exists a constant $M>0$ such that $\|R_{x^{k}}(v^{k})-(x^{k}\!+v^{k})\|\le M\|v^{k}\|^2$ for all $k\in\mathbb{N}$.  
 \end{proof}
\begin{lemma}\label{lemma1-complexity}
 Under Assumption \ref{ass1}, the following assertions hold. 
 \begin{itemize}
 \item[(i)] For each $k\in\mathbb{N}$, $|\mathcal{F}_k^2|\le N:=\max\{N_1,N_2\}$, where
 \begin{align*}				\!N_{1}:=\Big\lceil\frac{\log\big[2\big(b_{\max}+2a_{\max}(L_{\vartheta}^2\!+\!c_u^2\overline{\rho}^2)^{1/2}+\overline{\gamma}\big)\big]-\log\beta_{\min}}{\log \sigma_1}\Big\rceil_++1,\qquad\qquad\\ 
    \!N_2:=\!\bigg\lceil\frac{\log\big[2(1\!-\!\eta_1)\big(b_{\max}+\!2a_{\max}\sqrt{L_{\vartheta}^2\!+\!c_u^2\overline{\rho}^2}\big)+2L]-\!\log\left[(1\!-\!\eta_1)\beta_{\min}\right]}{\log \sigma_1}\bigg\rceil_+\!+\!1
 \end{align*}
 with $L:=\frac{1}{2}(L_{\nabla\!f}+c_u\overline{\rho} L_{\nabla g})+(L_{f}+c_u\overline{\rho}L_{g}+L_{\vartheta}+L_{h})M$. Here, $M$ is the same as in Lemma \ref{bound-lemma}, $L_{\vartheta}$ is a Lipschitz constant of $\vartheta$ on $\mathcal{X}_1$, while $L_{f},L_{h}$ and $L_{g}$ are Lipschitz constants of $f,h$ and $g$ on $\mathcal{X}_1\cap\mathcal{X}_2$, respectively. Furthermore, $L_{\nabla\!f}$ and $L_{\nabla g}$ are Lipschitz constants of $\nabla\!f$ and $\nabla g$ on $\mathcal{X}_3$, respectively.  
		
 \item[(ii)] For each $k\in\mathbb{N}$, $\beta_k\le \beta_{\rm min}\sigma_1^{N}:=\overline{\beta}$.
\end{itemize} 
\end{lemma}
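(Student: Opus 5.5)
The plan is to show that once $\beta_j$ exceeds an explicit threshold, any iteration $j\in\mathcal{F}_k^2$ must be successful. Since $\beta$ is multiplied by $\sigma_1$ at each unsuccessful Step-6 iteration, this caps the number of such iterations. Within a block of consecutive unsuccessful iterations the iterate is frozen, $x^j=x^k$, and $\beta$ never decreases. For large $j$, Remark~\ref{remark-ass1}(b) gives $\rho_j=\overline{\rho}$; for the earlier finitely many indices we would bound $\rho_j\le\overline{\rho}$, taking $\overline{\rho}:=\sup_k\rho_k$. At Step-5 iterations $\beta$ is unchanged, and at Step-6 unsuccessful iterations $\beta_{j+1}=\sigma_1\beta_j$. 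Hence if $j_1<j_2<\cdots$ enumerate $\mathcal{F}_k^2$, we get $\beta_{j_i}\ge\sigma_1^{i-1}\beta_{\min}$, using $\beta_j\ge\beta_{\min}$ from Remark~\ref{remark-alg}(b).

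The first key step handles the test ${\rm pred}_j<\overline{\gamma}\|v^j\|^2$. Apply Lemma~\ref{Lemma1-Thetak} with $v=0$. Since $\widehat{\Theta}_{\rho_j}(x^j;x^j)=\Theta_{\rho_j}(x^j)$, this gives
\[
{\rm pred}_j\ge \tfrac{\beta_j}{2}\|v^j\|^2-b_{\max}\|v^j\|^2-2a_{\max}\sqrt{L_\vartheta^2+c_u^2\overline{\rho}^2}\,\|v^j\|^2 .
\]
Here $\widetilde L_{\vartheta,j}\le L_\vartheta$ because $\mathcal{U}_j\subset\mathcal{X}_1$ by Lemma~\ref{bound-lemma}. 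So ${\rm pred}_j\ge\overline{\gamma}\|v^j\|^2$ as soon as $\beta_j\ge 2(b_{\max}+2a_{\max}\sqrt{\cdot}+\overline{\gamma})$, which is what produces $N_1$.

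The second key step handles the ratio test. Write $1-r_j=(\Theta_{\rho_j}(R_{x^j}(v^j))-\widehat{\Theta}_{\rho_j}(x^j+v^j;x^j))/{\rm pred}_j$. Lemma~\ref{Lemma2-Thetak} bounds the numerator. We then use $\|R_{x^j}(v^j)-x^j-v^j\|\le M\|v^j\|^2$ from Lemma~\ref{bound-lemma}, together with the local Lipschitz constants, whose segments lie in $\mathcal{X}_1\cap\mathcal{X}_2$ and $\mathcal{X}_3$. This gives the numerator bound $\le (L-\beta_j/2)\|v^j\|^2\le L\|v^j\|^2$, with $L$ as in the statement.

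Combining this with the lower bound on ${\rm pred}_j$ from the first step yields $1-r_j\le L/(\beta_j/2-c)$, where $c:=b_{\max}+2a_{\max}\sqrt{\cdot}$. This quantity is at most $1-\eta_1$ once $\beta_j\ge 2c+2L/(1-\eta_1)$, which matches the expression in $N_2$. If $v^j=0$ the algorithm either stops or ${\rm pred}_j>0$ is still guaranteed by \eqref{model-descent}; we only need the case $v^j\neq0$, since otherwise ${\rm pred}_j\ge\overline\gamma\|v^j\|^2$ holds trivially, and a short remark should cover this.

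With both tests passed, iteration $j$ is successful. Therefore $\sigma_1^{i-1}\beta_{\min}$ below the larger threshold for every $i\le|\mathcal{F}_k^2|$, which solving the logarithm gives $|\mathcal{F}_k^2|\le N$.

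For part (ii), we argue by induction that $\beta_k\le\beta_{\min}\sigma_1^N$. We would need $\beta_0\le\overline{\beta}$, or else define things so that it holds; I expect this to be handled by noting that $\beta$ increases only inside unsuccessful blocks. A block starting right after a successful step begins with $\beta\le$ the threshold, or with a value that decreased. Each increase within the block happens at an element of $\mathcal{F}_k^2$, and once $\beta$ exceeds the threshold the next Step-6 iteration succeeds.

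The main obstacle is making this bookkeeping rigorous: bounding $\beta$ at the start of each block, handling the possible non-constancy of $\rho_j$ for small $j$ (replace it by $\overline\rho$ as its supremum), and accounting for the $+1$ in $N_1,N_2$ together with the initial value $\beta_0$.
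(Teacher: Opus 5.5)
Your proposal is correct and follows the paper's proof essentially step for step. The paper also uses Lemma~\ref{Lemma1-Thetak} at $v=0$ for the ${\rm pred}$ test (giving $N_1$) and Lemma~\ref{Lemma2-Thetak} with $\|R_{x^j}(v^j)-(x^j+v^j)\|\le M\|v^j\|^2$ for the ratio test (giving $N_2$), together with the count $\beta_{j_i}\ge\beta_{\min}\sigma_1^{i-1}$ along $\mathcal{F}_k^2$. For (ii) it uses the same threshold mechanism, phrased as a contradiction through the last increase of $\beta$ before $k_0$. Note also that \eqref{model-descent} already forces $v^j\neq 0$ at every Step-6 iteration, since $v^j=0$ gives ${\rm pred}_j=0$, so no separate case is needed.

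On the point you left open: nothing in the hypotheses gives $\beta_0\le\beta_{\min}\sigma_1^{N}$. The paper's proof of (ii) tacitly assumes it too, because its index $k_1<k_0$ need not exist otherwise. So what your induction and the paper's argument actually establish is $\beta_k\le\max\{\beta_0,\beta_{\min}\sigma_1^{N}\}$, and that bound is all the later results use.
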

\begin{proof}
 {\bf(i)} Fix any $k\in\mathbb{N}$. We only need to consider the case where $\mathcal{F}_k^2\neq\emptyset$, in which case $k\notin\mathcal{S}$. Suppose, to the contrary, that $|\mathcal{F}_k^2|>N$. Since Step 6 of Algorithm \ref{PenAl} is executed at each iteration $j\in\mathcal{F}_k^2$, there exist indices $j_1,\ldots,j_N\in\mathbb{N}_{+}$ satisfying $k\le j_1<\cdots<j_N$ such that $\{j_1,\ldots,j_{N}\}\subset\mathcal{F}_k^2$. By the definition of $\mathcal{F}_k^2$ and the update rule in Step 6, for each $i\in[N]_{+}$, either $\text{pred}_{j_i}<\overline{\gamma}\|v^{j_i}\|^2$ or $r_{j_i}<\eta_1$. We next  show that $\text{pred}_{j_{N}}\ge\overline{\gamma}\|v^{j_N}\|^2$ and $r_{j_N}\ge\eta_1$, which contradicts $|\mathcal{F}_k^2|>N$.

 We first show that $\text{pred}_{j_{N}}\ge\overline{\gamma}\|v^{j_N}\|^2$. Since $\mathcal{U}_{j_N}\subset \mathcal{X}_1$, we have $\widetilde{L}_{\vartheta,j_{N}}\le L_{\vartheta}$, where $\widetilde{L}_{\vartheta,j_{N}}$ is defined in Lemma \ref{Lemma1-Thetak}. Invoking Lemma \ref{Lemma1-Thetak} with  $k=j_{N}$ and $v=0$, and noting that $\Theta_{\rho_{j_{N}}}(x^{j_{N}})=\widehat{\Theta}_{\rho_{j_{N}}}(x^{j_{N}};x^{j_{N}})$, we immediately obtain
 \begin{align*}\label{temp-ineq41}
  &{\rm pred}_{j_{N}}=\widehat{\Theta}_{\rho_{j_{N}}}(x^{j_{N}};x^{j_{N}})-\widehat{\Theta}_{\rho_{j_{N}}}(x^{j_{N}}\!+\!v^{j_{N}};x^{j_{N}})\nonumber\\
  &\ge\Big(\frac{1}{2}\beta_{j_{N}}-b_{j_{N}}-2a_{j_{N}}(\widetilde{L}^2_{\vartheta,{j_{N}}}\!+\!c_{u}^2\rho_{j_{N}}^2)^{1/2}\Big)\|v^{j_{N}}\|^2\nonumber\\
  &\ge\Big(\frac{1}{2}\beta_{\rm min}\sigma_1^{N-1}-b_{\max}-2a_{\max}(L_{\vartheta}^2\!+c_{u}^2\overline{\rho}^2)^{1/2}\Big)\|v^{j_{N}}\|^2\ge\overline{\gamma} \|v^{j_{N}}\|^2,
 \end{align*}
 where the second inequality follows from the update rule for $\beta_k$ in Step 6, together with $b_{j_{N}}\in(0,b_{\rm max}]$, $a_{j_{N}}\in(0,a_{\rm max}]$, and Assumption \ref{ass1}(ii), while the third inequality follows from $N\ge N_1$ and the definition of $N_{1}$. Consequently, ${\rm pred}_{j_{N}}\ge\overline{\gamma}\|v^{j_{N}}\|^2$. 

 We next show that $r_{j_{N}}\ge\eta_1$. For each $j\in\mathcal{F}_k^2$, since $[x^j\!+\!v^j,R_{x^j}(v^j)]\subset\mathcal{X}_1\cap\mathcal{X}_2$ and $[x^j,x^j\!+\!v^j]\subset\mathcal{X}_3$, the Lipschitz moduli in Lemma \ref{Lemma2-Thetak} satisfy $\widehat{L}_{f,j}\le L_{f},\widehat{L}_{\vartheta,j}\le L_{\vartheta},\widehat{L}_{h,j}\le L_{h},\widehat{L}_{g,j}\le L_{g}$ and $\widehat{L}_{\nabla\!f,j}\le L_{\nabla\!f},\widehat{L}_{\nabla g,j}\le L_{\nabla g}$. Moreover, by Lemma \ref{bound-lemma}, $\|R_{x^{j}}(v^{j})-(x^{j}+v^{j})\|\le M\|v^{j}\|^2$. By Lemma \ref{Lemma2-Thetak}, Assumption \ref{ass1}(ii), and the definition of $L$, we have $\Theta_{\rho_{j_{N}}}(R_{x^{j_{N}}}(v^{j_{N}}))\le\!\widehat{\Theta}_{\rho_{j_{N}}}(x^{j_{N}}\!+\!v^{j_{N}};x^{j_{N}})+L\|v^{j_{N}}\|^2$. Together with the definition of $r_{j_{N}}$, it then follows that 
 \begin{align*}
  r_{j_{N}}&\ge \frac{\Theta_{\rho_{j_{N}}}(x^{j_{N}})\!-\!\widehat{\Theta}_{\rho_{j_{N}}}(x^{j_{N}}\!+\!v^{j_{N}};x^{j_{N}})\!-\!L\|v^{j_{N}}\|^2}{\Theta_{\rho_{j_{N}}}(x^{j_{N}})- \widehat{\Theta}_{\rho_{j_{N}}}(x^{j_{N}}+v^{j_{N}};x^{j_{N}})}\\
  &\ge \frac{\big(\frac{1}{2}\beta_{j_{N}}-b_{j_{N}}-2a_{j_{N}}(\widetilde{L}^2_{\vartheta,j_{N}}\!+c_u^2\rho_{j_{N}}^2)^{1/2}\big)\|v^{j_{N}}\|^2-\!L\|v^{j_{N}}\|^2}{\big(\frac{1}{2}\beta_{j_{N}}-b_{j_{N}}-2a_{j_{N}}(\widetilde{L}^2_{\vartheta,j_{N}}+c_u^2\rho_{j_{N}}^2)^{1/2}\big)\|v^{j_{N}}\|^2}\\
  &\ge\frac{\big(\frac{1}{2}\beta_{\min}\sigma_1^{N-1}-b_{\max}-2a_{\max}(L_{\vartheta}^2+c_u^2\overline{\rho}^2)^{1/2}\big)\|v^{j_{N}}\|^2-L\|v^{j_{N}}\|^2}{\big(\frac{1}{2}\beta_{\min}\sigma_1^{N-1}-b_{\max}-2a_{\max}(L_{\vartheta}^2\!+c_u^2\overline{\rho}^2)^{1/2}\big)\|v^{j_{N}}\|^2}\ge \eta_1,
 \end{align*}
 where the second inequality follows from Lemma \ref{Lemma1-Thetak} with $k=j_N$ and $v=0$, together with the fact that the function $\mathbb{R}_{++}\ni t\mapsto\frac{t-L\|v^{j_{N}}\|^2}{t}$ is nondecreasing, and the fourth one follows from $N\ge N_2$ and the definition of $N_{2}$. Thus, $r_{j_N}\ge\eta_1$. 
	
 \noindent
 {\bf(ii)} Suppose, to the contrary, that $\beta_{k_0}\!>\beta_{\rm min}\sigma_1^{N}$ for some $k_0\in\mathbb{N}$. Since the parameter $\beta_k$ is updated only at Step 6, there exists an index $k_1<k_0$ such that $\beta_{k_1}\ge\beta_{\rm min}\sigma_1^{N-1}$ and $\beta_k$ is increased at the $k_1$-th iteration. It then follows from Step 6 that either $\text{pred}_{k_1}<\overline{\gamma}\|v^{k_1}\|^2$ or $r_{k_1}<\eta_1$. On the other hand, following arguments similar to those for item (i) can show that ${\rm pred}_{k_1}\ge\overline{\gamma}\|v^{k_1}\|^2$ and $r_{k_1}\ge\eta_1$, a contradiction. 
\end{proof}
\begin{lemma}\label{Lemma-Sfinite}
 Let $k_{\mathcal{S}}$ denote the largest index of $\mathcal{S}$ whenever $\mathcal{S}$ is finite, and set $k_{\mathcal{S}}\!:=-1$ when $\mathcal{S}=\emptyset$. Under Assumption \ref{ass1}, if $\mathcal{S}$ is finite, then $x^k=x^{k_{\mathcal{S}}+1}$ for all $k\ge k_{\mathcal{S}}+1$, and $x^{k_{\mathcal{S}}+1}$ is a stationary point of problem \eqref{Rcprob}. 
\end{lemma}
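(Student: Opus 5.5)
The plan is to exploit that after $k_{\mathcal{S}}$ every iteration is unsuccessful, so the base point is frozen. Then Lemma \ref{lemma1-complexity}(i) forces all but finitely many of these iterations to go through Step 5. Finally we pass to the limit in \eqref{inexact-cond1} with $\varepsilon_k\to0$. If the algorithm terminates at Step 5 or Step 6, Remark \ref{remark-alg}(d) already gives stationarity of the current iterate, which equals $x^{k_{\mathcal{S}}+1}$. So we assume it runs forever.

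\emph{Step 1 (frozen iterate).} Every $k\ge k_{\mathcal{S}}+1$ lies outside $\mathcal{S}$. Both Step 5 and the rejection branch of Step 6 set $x^{k+1}=x^k$, so by induction $x^k=x^{k_{\mathcal{S}}+1}=:\overline{x}$ for all $k\ge k_{\mathcal{S}}+1$. This gives the first claim.

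\emph{Step 2 (eventually only Step 5).} By definition, $\mathcal{F}_{k_{\mathcal{S}}+1}=\mathbb{N}\backslash[k_{\mathcal{S}}]$. Lemma \ref{lemma1-complexity}(i) gives $|\mathcal{F}^2_{k_{\mathcal{S}}+1}|\le N$, so only finitely many iterations execute Step 6. Hence there is $\widehat{k}$ such that $k\in\mathbb{N}_1$ for all $k\ge\widehat{k}$. Since Step 5 leaves $\beta$ unchanged, $\beta_k\equiv\widehat{\beta}\ge\beta_{\min}$ for $k\ge\widehat{k}$. Moreover $\varepsilon_{k+1}=\varsigma\varepsilon_k$ at each such step, so $\varepsilon_k\to0$. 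By Remark \ref{remark-ass1}(b), $\rho_k=\overline{\rho}$ for $k\ge\overline{k}$. Step 5 multiplies $\rho$ by $\tau$ whenever ${\rm dist}(z_2^k,K)>\varepsilon_k$, so for all large $k$ we must have ${\rm dist}(z_2^k,K)\le\varepsilon_k\to0$.

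\emph{Step 3 (passage to the limit).} From \eqref{inexact-cond1} with $x^k=\overline{x}$ and $\beta_k=\widehat{\beta}$ we obtain:
\begin{itemize}
\item[(a)] $\|v^k\|\le\varepsilon_k/\beta_{\min}\to0$;
\item[(b)] $\|G(\overline{x},v^k)-z^k\|\to0$, hence $z_1^k\to\overline{x}$ and $z_2^k\to g(\overline{x})$;
\item[(c)] $\|\mathcal{P}_{T_{\overline{x}}\mathcal{M}}(\nabla\!f(\overline{x})+\zeta^k+\widehat{\beta}v^k+\xi_1^k+\nabla g(\overline{x})\xi_2^k)\|\le\varepsilon_k$.
\end{itemize}
Here $\xi^k=(\xi_1^k,\xi_2^k)$ with $\xi_1^k\in\partial\vartheta(z_1^k)$ and $\xi_2^k\in\overline{\rho}\,\partial{\rm dist}(z_2^k,K)$, and (c) uses $\nabla_vG(\overline{x},v)\xi=\xi_1+\nabla g(\overline{x})\xi_2$. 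Lemma \ref{bound-lemma} says $\{(\zeta^k,\xi^k)\}$ is bounded, so along a subsequence $\zeta^k\to\overline{\zeta}$ and $\xi^k\to\overline{\xi}$.

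Outer semicontinuity of the subdifferentials of the locally Lipschitz functions $-h$, $\vartheta$ and ${\rm dist}(\cdot,K)$ then gives
\[
\overline{\zeta}\in\partial(-h)(\overline{x}),\qquad \overline{\xi}_1\in\partial\vartheta(\overline{x}),\qquad \overline{\xi}_2\in\overline{\rho}\,\partial{\rm dist}(g(\overline{x}),K).
\]
Closedness of $K$ and ${\rm dist}(z_2^k,K)\to0$ give $g(\overline{x})\in K$. Lemma \ref{subdiff-dist} then yields $\overline{\xi}_2\in\mathcal{N}_K(g(\overline{x}))$. Letting $k\to\infty$ in (c) along the subsequence gives
\[
\mathcal{P}_{T_{\overline{x}}\mathcal{M}}\big[\nabla\!f(\overline{x})+\overline{\zeta}+\overline{\xi}_1+\nabla g(\overline{x})\overline{\xi}_2\big]=0.
\]
Together with $\overline{x}\in\mathcal{M}\cap g^{-1}(K)$, this is exactly Definition \ref{def-spoint}(i).

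The main obstacle is Step 2. We need to guarantee that Step 5 occurs infinitely often, so that $\varepsilon_k\to0$, and that no further Step 6 iterations occur. This rests on identifying $\mathcal{F}_{k_{\mathcal{S}}+1}$ with the whole tail and applying the uniform bound $N$ from Lemma \ref{lemma1-complexity}(i). It also relies on $\rho_k$ stabilizing, so that the feasibility test in Step 5 yields ${\rm dist}(z_2^k,K)\le\varepsilon_k$ eventually.
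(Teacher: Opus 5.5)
Your proposal is correct and follows essentially the same route as the paper: you freeze the iterate after $k_{\mathcal{S}}$, show that eventually only Step 5 is executed (so $\varepsilon_k\to0$ and, since $\rho_k$ stabilizes, ${\rm dist}(z_2^k,K)\le\varepsilon_k$), and then pass to the limit in \eqref{inexact-cond1} using the boundedness from Lemma \ref{bound-lemma} and outer semicontinuity. The differences are cosmetic: you bound the tail Step~6 executions directly by $|\mathcal{F}^2_{k_{\mathcal{S}}+1}|\le N$ from Lemma \ref{lemma1-complexity}(i), whereas the paper uses $\beta_k\le\overline{\beta}$ from Lemma \ref{lemma1-complexity}(ii) (each tail rejection multiplies $\beta_k$ by $\sigma_1$); you also handle the finite-termination case explicitly via Remark \ref{remark-alg}(d).
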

\begin{proof}
By the definition of $\mathcal{S}$ and the iterative mechanism of Algorithm \ref{PenAl}, the finiteness of $\mathcal{S}$ implies $x^k=x^{k_{\mathcal{S}}+1}$ for all $k\ge k_{\mathcal{S}}+1$. It remains to prove that $x^{k_{\mathcal{S}}+1}$ is a stationary point of \eqref{Rcprob}. By the finiteness of $\mathcal{S}$ and the boundedness of $\{\beta_k\}_{k\in\mathbb{N}}$ established in Lemma \ref{lemma1-complexity}(ii), Step 6 with $r_k< \eta_1$ or ${\rm pred}_k<\overline{\gamma}\|v^k\|^2$ can occur only finitely many times for $k\ge k_{\mathcal{S}}+1$. Therefore, there exists an index $\widehat{k}\in \mathbb{N}$ such that Algorithm \ref{PenAl} executes Step 5 for all $k\ge \widehat{k}$. This implies that $\lim_{k\to\infty}\varepsilon_k=0$ and
\begin{equation}\label{lim-Qkvk}
\left\{\begin{aligned}
 &\lim_{k\to\infty}\|\beta_kv^k\|=0,\, \lim_{k\to\infty}\|G(x^k,v^k)-z^k\|=0,\\
 &\lim_{k\to\infty}\big\|\mathcal{P}_{T_{\!x^k}\mathcal{M}}(\nabla \ell_k(v^k)+\beta_kv^k+\!\nabla_{\!v}G(x^k,v^k)\xi^k)\big\|=0.
\end{aligned}\right.
\end{equation}
The first limit in \eqref{lim-Qkvk}, combined with $\beta_k\ge\beta_{\min}$, yields $\lim_{k\to\infty}v^k=0$. From the second limit in \eqref{lim-Qkvk} and $x^k=x^{k_{\mathcal{S}}+1}$ for all $k\ge k_{\mathcal{S}}+1$, we have $\lim_{k\to\infty}z^k=\lim_{k\to\infty}G(x^k,v^k)=(x^{k_{\mathcal{S}}+1};g(x^{k_{\mathcal{S}}+1}))$. By Lemma \ref{bound-lemma}, the sequence $\{(\zeta^k,\xi^k)\}_{k\in\mathbb{N}}$ is bounded. Passing to a subsequence if necessary, we may assume $\lim_{k\to\infty}(\zeta^k,\xi^k)=(\zeta^*,\xi^*)$. Then, the outer semicontinuity of the multifunctions $\partial(-h)(\cdot)$ and $\partial\psi_{\overline{\rho}}(\cdot)$ implies that $\zeta^*\in \partial(-h) (x^{k_{\mathcal{S}}+1})$ and $\xi^*=(\xi^*_1;\xi^*_2)\in\partial \vartheta(x^{k_{\mathcal{S}}+1})\times \overline{\rho}\partial{\rm dist}(g(x^{k_{\mathcal{S}}+1}),K)$. Combining the first and third limits in \eqref{lim-Qkvk} with the continuity of $\mathcal{P}_{T_{x}\mathcal{M}}$ as a function of \(x\in\mathcal{M}\) yields
\[ \mathcal{P}_{T_{\!x^{k_{\mathcal{S}}+1}}\mathcal{M}}\big(\nabla\!f(x^{k_{\mathcal{S}}+1})+\zeta^*+\xi^*_1+\nabla g(x^{k_{\mathcal{S}}+1})\xi^*_2\big)=0.
\]
By Definition \ref{def-spoint}(i), it remains to prove that $\xi^*_2\in\mathcal{N}_{K}(g(x^{k_{\mathcal{S}}+1}))$. Indeed, by Assumption \ref{ass1}(ii) and Step 5, we have ${\rm dist}(z^k_2,K)\le \varepsilon_k$ for all sufficiently large $k$. Passing to the limit $k\to\infty$ in this inequality yields ${\rm dist}(g(x^{k_{\mathcal{S}}+1}),K)=\lim_{k\to\infty}{\rm dist}(z^k_2,K)=0$. Together with $\xi^*_2\in\overline{\rho} \partial{\rm dist}(g(x^{k_{\mathcal{S}}+1}),K)$ and Lemma \ref{subdiff-dist}, it follows that $\xi^*_2\in\mathcal{N}_{K}(g(x^{k_{\mathcal{S}}+1}))$. 
\end{proof}

In view of Lemma 4.3, we focus on the nontrivial case \(\mathcal S\neq\emptyset\) in the remainder of this paper. The following proposition provides an upper bound on the number of Step 5 executions required by Algorithm \ref{PenAl} to generate an $\epsilon$-stationary point. 
\begin{proposition}\label{prop1-complexity}
 Let $\epsilon \in (0, 1)$, let $n_{\rho}$ denote the total number of penalty parameter updates, and set $c_{K}\!:=\sup_{k\in\mathbb{N}_1}{\rm dist}(z_2^k,K)$. Suppose that Assumption \ref{ass1} holds. Define 
 \begin{equation*}
  T:=\left\{\begin{array}{cl}   
  \!\!\Big\lceil \frac{\log(\chi^{-1}\varepsilon_0\epsilon^{-1})}{\log\varsigma^{-1}}\Big\rceil+1 &{\rm if}\ \mathbb{N}_1\!=\emptyset\ {\rm or}\ c_{K}\!=0,\\
  \!\!\min\Big\{\Big\lceil n_{\rho},\frac{\log(2c_{l}^{-1}c_{K}\overline{\rho}\rho_0^{-1}\epsilon^{-1})}{\log \tau }\Big\rceil_{+}\Big\}\!+\!\Big\lceil \frac{\log(\chi^{-1}\varepsilon_0\epsilon^{-1})}{\log\varsigma^{-1}}\Big\rceil+1&{\rm otherwise}
\end{array}\right.
\end{equation*}
 where $\chi\!:=\!\frac{\min\{1,c_{l}\}\beta_{\min}}{\max\{1+\beta_{\min},2(\beta_{\min}+c_{\nabla\!g})\}}$ with $c_{\nabla\!g}\!:=\!\max\{0,\sup_{k\in\mathbb{N}_1}\!\!\|\nabla g(x^k)\|\}$. If $T\le |\mathbb{N}_1|$, there exists $i_1$ among the first $T$ indices in $\mathbb{N}_1$ such that $x^{i_1}$ is an $\epsilon$-stationary point.
\end{proposition}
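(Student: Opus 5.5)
Take $k\in\mathbb{N}_1$, i.e.\ a Step 5 iteration, and read off an $\epsilon$-stationary certificate for $x^k$ directly from \eqref{inexact-cond1}. Write $\xi^k=(\xi_1^k;\xi_2^k)$ with $\xi_1^k\in\partial\vartheta(z_1^k)$ and $\xi_2^k\in\rho_k\partial{\rm dist}(z_2^k,K)$. Set $z:=z_1^k$. Choose $y\in\mathcal{P}_K(z_2^k)$ and the multiplier $\xi_2:=\xi_2^k$. By Lemma \ref{subdiff-dist}, $\xi_2^k\in\mathcal{N}_K(y)$; this holds whether $z_2^k\in K$ or not. We get:
\begin{itemize}
\item $\|x^k-z\|\le\|x^k+v^k-z_1^k\|+\|v^k\|\le\varepsilon_k+\varepsilon_k/\beta_{\min}$;
\item $\|g(x^k)-y\|\le\|g(x^k)+g'(x^k)v^k-z_2^k\|+\|g'(x^k)\|\|v^k\|+\|z_2^k-y\|$, which is at most $\varepsilon_k+c_{\nabla g}\varepsilon_k/\beta_{\min}+c_l^{-1}{\rm dist}(z_2^k,K)$, using $c_l\|\cdot\|\le|\!\lVert\cdot\rVert\!|$;
\item the projected residual satisfies $\|\mathcal{P}_{T_{x^k}\mathcal{M}}[\nabla f(x^k)+\zeta^k+\xi_1^k+\nabla g(x^k)\xi_2^k]\|\le\varepsilon_k+\|\beta_kv^k\|\le 2\varepsilon_k$, since $\nabla_vG(x^k,v)\xi=\xi_1+\nabla g(x^k)\xi_2$.
\end{itemize}
Hence all three quantities are at most $\varepsilon_k/\chi'$ plus a term $c_l^{-1}{\rm dist}(z_2^k,K)$ in the feasibility part. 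Here $\chi'$ is comparable to the stated $\chi$, and I expect the constant bookkeeping to reproduce $\chi$ exactly: splitting the feasibility term as $\epsilon/2+\epsilon/2$ produces the factor $2(\beta_{\min}+c_{\nabla g})$.

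Next, control $\varepsilon_k$ and ${\rm dist}(z_2^k,K)$ along $\mathbb{N}_1$. The key point is that $\varepsilon$ is only changed in Step 5, where $\varepsilon_{k+1}=\varsigma\varepsilon_k$. So at the $j$-th element $i_j$ of $\mathbb{N}_1$ we have $\varepsilon_{i_j}=\varsigma^{j-1}\varepsilon_0$. After $\lceil\log(\chi^{-1}\varepsilon_0\epsilon^{-1})/\log\varsigma^{-1}\rceil+1$ Step 5 executions, this gives $\varepsilon_{i_j}\le\chi\epsilon$ (up to the factor-2 adjustment), which handles the stationarity and $\|x-z\|$ parts. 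In the case $\mathbb{N}_1=\emptyset$ or $c_K=0$, the distance term vanishes and the first branch of $T$ follows.

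For the feasibility term, at the chosen Step 5 index we need ${\rm dist}(z_2^k,K)\le c_l\epsilon/2$. Distinguish Step 5 indices with ${\rm dist}(z_2^k,K)\le\varepsilon_k$ (good: the distance is already of order $\varepsilon_k$) from those with ${\rm dist}(z_2^k,K)>\varepsilon_k$, at each of which $\rho$ is multiplied by $\tau$. The number of the latter is at most $n_\rho$, and also at most $\lceil\log(\overline\rho/\rho_0)/\log\tau\rceil$ since $\rho_k\le\overline\rho$. Among the first $T$ indices, after discarding at most $\min\{n_\rho,\cdot\}$ of the ``bad'' ones, there remain at least $\lceil\log(\chi^{-1}\varepsilon_0\epsilon^{-1})/\log\varsigma^{-1}\rceil+1$ indices. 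The last of these is good and has $\varepsilon_{i_1}$ small enough, since $\varepsilon$ shrinks on every Step 5 iteration, good or bad.

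The main obstacle is matching the precise form $\log(2c_l^{-1}c_K\overline\rho\rho_0^{-1}\epsilon^{-1})/\log\tau$ of the bad-count. Presumably it comes from a bad index with ${\rm dist}(z_2^k,K)\le c_K$: once more than that many $\tau$-increases have occurred, $\rho_0\tau^{n}>2c_Kc_l^{-1}\overline\rho/\epsilon$ contradicts $\rho_k\le\overline\rho$. Alternatively, one combines $\rho_k\ge\rho_0\tau^{n}$ with $\xi_2$ scaling. I would first try the crude bound $n_{\rm bad}\le\log(\overline\rho/\rho_0)/\log\tau$, which is smaller than the stated quantity once $2c_Kc_l^{-1}\epsilon^{-1}\ge1$, and otherwise note that $c_K<c_l\epsilon/2$ already makes every index feasibility-good.
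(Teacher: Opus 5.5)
Your proposal is correct and follows essentially the paper's proof. It reads off the same certificate from \eqref{inexact-cond1}, where $\chi$ is calibrated so that $(1+\beta_{\min}^{-1})\chi\le 1$, $(1+c_{\nabla g}\beta_{\min}^{-1})\chi\le\frac12$ and $2\chi\le\min\{1,c_l\}$, so no extra factor-2 adjustment is needed. It then uses the same split: if $c_K\le c_l\epsilon/2$, the $T$-th index of $\mathbb{N}_1$ works; if $c_K>c_l\epsilon/2$, then $T_\epsilon>n_\rho=\log(\overline\rho/\rho_0)/\log\tau$, so discarding the at most $n_\rho$ penalty-increasing Step 5 indices still leaves a last good index with ${\rm dist}(z_2^{i_1},K)\le\varepsilon_{i_1}\le\chi\epsilon\le c_l\epsilon/2$.

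One small fix: when $\mathcal{P}_K$ is not single-valued, $y$ must be the particular projection supplied by Lemma~\ref{subdiff-dist}, not an arbitrary element of $\mathcal{P}_K(z_2^k)$. This costs nothing, since every projection satisfies $\|z_2^k-y\|\le c_l^{-1}{\rm dist}(z_2^k,K)$.
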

\begin{proof}
 By Assumption \ref{ass1}(i) and the continuity of $\nabla g$, the constant $c_{\nabla g}$ is well defined. Hence, the constant $\chi$ is well defined and satisfies $0<\chi<\frac{1}{2}\min\{1,c_{l}\}$. Consequently, $T$ is well defined. Let $\widehat{N}_1$ consist of the first $T$ indices in $\mathbb{N}_1$. Since $T\le |\mathbb{N}_1|$, we have $\widehat{\mathbb{N}}_1\ne\emptyset$. Suppose that there exists an index $i_1\in\widehat{\mathbb{N}}_1$ such that 
 \begin{equation}\label{aim-ineq-i1}
 \varepsilon_{i_1}\le \chi\epsilon\ \ {\rm and}\ \ {\rm dist}(z_2^{i_1},K)\le c_l\epsilon/2.
 \end{equation}
 Then the triple $(v^{i_1},z^{i_1},\xi^{i_1})\in T_{\!x^{i_1}}\mathcal{M}\times\mathbb{Z}\times\partial\psi_{\rho_{i_1}}(z^{i_1})$ satisfies the inexactness condition \eqref{inexact-cond1} for $k=i_1$, which together with the first inequality of \eqref{aim-ineq-i1} yields
 \begin{equation*}
 \left\{\begin{aligned}
 &\|x^{i_1}+v^{i_1}-z^{i_1}_1\|\le \chi\epsilon,\,\|g(x^{i_1})+g'(x^{i_1})v^{i_1}-z^{i_1}_2\|\le \chi\epsilon,\,\|\beta_{i_1}v^{i_1}\|\le \chi\epsilon,\\
 &\|\mathcal{P}_{T_{\!x^{i_1}}\mathcal{M}}\big(\nabla\! f(x^{i_1})+\zeta^{i_1}+\xi_1^{i_1}+\nabla g(x^{i_1})\xi_2^{i_1}+\beta_{i_1}v^{i_1}\big)\|\le \chi\epsilon.
\end{aligned}\right.
\end{equation*}
Since $\beta_{k}\ge\beta_{\min}$ for all $k\in\mathbb{N}$, it  follows that $\|v^{i_1}\|\le \beta_{\min}^{-1}\chi\epsilon$, and consequently, 
\begin{equation}\label{bound-chieps}
 \left\{\begin{aligned}
 &\|x^{i_1}-z^{i_1}_1\|\le (1+\beta^{-1}_{\min})\chi\epsilon,\,\|g(x^{i_1})-z^{i_1}_2\|\le (1+c_{\nabla g}\beta^{-1}_{\min})\chi\epsilon,\\			&\big\|\mathcal{P}_{T_{\!x^{i_1}}\mathcal{M}}\big(\nabla\! f(x^{i_1})+\zeta^{i_1}+\xi_1^{i_1}+\nabla g(x^{i_1})\xi_2^{i_1}\big)\big\|\le 2\chi\epsilon.
 \end{aligned}\right.
\end{equation}
 Note that $\xi_2^{i_1}\in\rho_{i_1}\partial{\rm dist}(z_2^{i_1},K)$. By Lemma \ref{subdiff-dist}, there exists $u_2^*\in\mathcal{P}_K(z^{i_1}_2)$ such that $\xi_2^{i_1}\in\mathcal{N}_{K}(u_2^*)$. Using $c_{l}\|\cdot\|\le|\!\lVert\cdot\rVert\!|$, the second inequality of \eqref{bound-chieps}, and that of \eqref{aim-ineq-i1} yields that $\|g(x^{i_1})\!-u_2^*\|\le\|g(x^{i_1})\!-z_2^{i_1}\|+c_l^{-1}{\rm dist}(z_2^{i_1},K)\le\!\big[(1\!+c_{\nabla\! g}\beta^{-1}_{\min})\chi+1/2\big]\epsilon$. This, together with the first and third inequalities in \eqref{bound-chieps} and the definition of $\chi$, shows that $x^{i_1}$ is an $\epsilon$-stationary point of \eqref{Rcprob}. Thus, it remains to establish the existence of index $i_1\in\widehat{\mathbb{N}}_1$ satisfying \eqref{aim-ineq-i1}. To this end, let $i_T$ denote the $T$-th index of $\widehat{\mathbb{N}}_1$, and consider the following two cases: $c_K\le c_l\epsilon/2$ and $c_K> c_l\epsilon/2$.

\noindent
{\bf Case 1: $c_K\le c_l\epsilon/2$.} Now ${\rm dist}(z_2^{i_T},K)\le c_K\le c_l\epsilon/2$. Since $\varepsilon_k$ is updated only at Step 5, combining the definition of $\widehat{\mathbb{N}}_1$ and the update rule of $\varepsilon_k$ in Step 5 yields  
 \begin{equation}\label{epsk0}   
 \varepsilon_{i_T}=\varepsilon_0\varsigma^{T-1}\le\chi\epsilon,
 \end{equation}
 where the inequality follows from $\varsigma\in(0,1)$ and $T\ge\big\lceil \frac{\log(\varepsilon_0\chi^{-1}\epsilon^{-1})}{\log\varsigma^{-1}}\big\rceil+1$. Hence, choosing $i_1=i_{T}$ proves the claim in this case.

\noindent
 {\bf Case 2: $c_K>c_l\epsilon/2$.} In this case, we have $c_{K}\in(0,\infty)$, since Lemma \ref{bound-lemma} ensures the boundedness of $\{z^k\}_{k\in\mathbb{N}}$. Then  $T_{\epsilon}:=\Big\lceil\frac{\log(2c_{l}^{-1}c_{K}\overline{\rho}\rho_0^{-1}\epsilon^{-1})}{\log \tau }\Big\rceil_{+}$ is well defined. Note that $n_{\rho}=\frac{\log(\overline{\rho}/\rho_0)}{\log\tau}$ by the penalty parameter update rule. Hence,  $c_K>c_l\epsilon/2$ implies that $T_{\epsilon}=\big\lceil n_{\rho}+\frac{{\rm log}(2c_Kc_l^{-1}\epsilon^{-1})}{{\rm log}\tau}\big\rceil> n_{\rho}$. Define $T_{\le}\!:=|\{k\in\widehat{\mathbb{N}}_1\mid {\rm dist}(z_2^k,K)\le \varepsilon_k\}|$ and $T_{>}\!:=|\{k\in\widehat{\mathbb{N}}_1 \mid {\rm dist}(z_2^k,K)>\varepsilon_k\}|$. Clearly, $T=|\widehat{\mathbb{N}}_1|=T_{\le}+T_{>}$. Since the total number of penalty parameter updates is $n_{\rho}$, Step 5 gives $T_{>}\le n_{\rho}$. Consequently, 
 \[
  T_{\le}=T-T_{>}\ge T-n_{\rho}=\big\lceil \frac{\log(\varepsilon_0\chi^{-1}\epsilon^{-1})}{\log\varsigma^{-1}}\big\rceil+1, 
 \]
 where the second equality is due to $T_{\epsilon}> n_{\rho}$ and the definition of $T$. Let $i_1$ be the largest index in $\widehat{\mathbb{N}}_1$ at which the penalty parameter is not updated. Since $\widehat{\mathbb{N}}_1\subset\mathbb{N}_1$, the definition of $\mathbb{N}_1$ and Step 5 implies that ${\rm dist}(z_2^{i_1},K)\le \varepsilon_{i_1}$. Recall that $\varepsilon_k$ is updated only at Step 5. By the choice of $i_1$, we have  $\varepsilon_{i_1}\le\varepsilon_0\varsigma^{T_{\le}-1}\le\chi\epsilon$. Consequently, ${\rm dist}(z_2^{i_1},K)\le \varepsilon_{i_1}\le\chi\epsilon\le c_l\epsilon/2$, and the index $i_1$ has the desired property.
\end{proof}

The following proposition provides an upper bound on the number of Step 6 executions required by Algorithm \ref{PenAl} to generate an $\epsilon$-stationary point. 
\begin{proposition}\label{prop2-complexity}
 Let $\epsilon\in(0,1)$. Let $\gamma_v\!:=\!\sup_{k\in\mathcal{S}}\|v^k\|,\gamma_{\nabla\!g}\!:=\!\sup_{k\in\mathcal{S}}\|\nabla g(x^k)\|$, $\gamma_{K}\!:=\sup_{k\in\mathcal{S}}{\rm dist}(z_2^k,K)$, and $\gamma_{c}:=c_u\max\{\varepsilon_{0},a_{\rm max}\gamma_{v}^2\}+c_u\gamma_{\nabla\!g}\gamma_{v}+\gamma_{K}$. Define
 \begin{align*}
 \mu_1:=\frac{1}{\overline{\gamma}\eta_1}\Big[\Theta_{\rho_{0}}(x^0)+\rho_0\gamma_{c}(\tau^{n_{\rho}}\!-\!1)-\min_{x\in\varLambda_0}\Theta(x)\Big],\qquad\qquad\\
 \mu_2\!:=\frac{1}{2}\max\Big\{1\!+\!a_{\max}\gamma_{v},\,b_{\max}\!+\!\overline{\beta},\,2\big(a_{\max}\gamma_v\!+\!\gamma_{\nabla g}\!+\!c_l^{-1}\max\{1,\gamma_{K}\overline{\rho}\}\overline{\beta}\big)\Big\},
 \end{align*}
 where $n_{\rho}$ is the same as in Proposition \ref{prop1-complexity}.
 Suppose Assumption \ref{ass1} holds. Define
 \[
  N_{\epsilon}:=\left\{\begin{array}{cl}
  0 & {\rm if}\ \gamma_{K}=0,\\
  \Big\lceil\frac{\log(2c_{l}^{-1}\gamma_{K}\overline{\rho}\rho_0^{-1}\epsilon^{-1})}{\log \tau }\Big\rceil_{+}&{\rm otherwise}.
  \end{array}\right.
 \]
 If $|\mathbb{N}_2|\ge (N\!+1)\widetilde{N}_{\epsilon}$, there exists $i_2$ among the first $(N\!+1)\widetilde{N}_{\epsilon}$ indices in $\mathbb{N}_2$ such that $x^{i_2}$ is an $\epsilon$-stationary point. Here, $\widetilde{N}_{\epsilon}:=\big\lceil\frac{4\mu_1\mu_2^2}{\epsilon^2}\big\rceil+\min\{N_{\epsilon},n_{\rho}\}+1$ and $N$ is given in Lemma \ref{lemma1-complexity}(i).  
\end{proposition}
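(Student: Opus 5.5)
The argument has three parts: convert the sufficient decrease \eqref{S-descent} on successful iterations into a summable bound on $\|v^k\|^2$, turn a small $\|v^k\|$ together with \eqref{inexact-cond2} into $\epsilon$-stationarity, and count unsuccessful Step 6 iterations through Lemma \ref{lemma1-complexity}(i).

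\textbf{Telescoping.} For $k\in\mathcal{S}$, \eqref{S-descent} gives $\Theta_{\rho_k}(x^{k+1})\le\Theta_{\rho_k}(x^k)-\overline{\gamma}\eta_1\|v^k\|^2$. When $\rho$ jumps from $\rho_k$ to $\tau\rho_k$, the penalty function value increases by $(\tau-1)\rho_k{\rm dist}(g(x^{k+1}),K)$.

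I bound ${\rm dist}(g(x^{k+1}),K)$ by $\gamma_c$. For a successful step, I use ${\rm dist}(g(x^{k}),K)\le c_u\|g(x^k)-z_2^k\|+{\rm dist}(z_2^k,K)$ and $\|g(x^k)-z_2^k\|\le a_{\max}\gamma_v^2+\gamma_{\nabla g}\gamma_v$. Since $\gamma_c$ is defined with the mixed constants $c_u\max\{\varepsilon_0,a_{\max}\gamma_v^2\}$, I expect the distance at the relevant iterate to be bounded through the last successful $z^k$ in the same way. The $\varepsilon_0$ term covers the case where the last penalty increase comes from Step 5.

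Summing the geometric increments gives $\sum\rho_0\tau^{j}(\tau-1)\gamma_c\le\rho_0\gamma_c(\tau^{n_\rho}-1)$. Using $\Theta_{\rho}\ge\Theta\ge\min_{\varLambda_0}\Theta$, I get $\sum_{k\in\mathcal{S}}\|v^k\|^2\le\mu_1$. Consequently, among any $\lceil 4\mu_1\mu_2^2\epsilon^{-2}\rceil+1$ successful indices, at least one has $\|v^k\|\le\epsilon/(2\mu_2)$.

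\textbf{Stationarity from small steps.} Fix $k\in\mathcal{S}\subset\mathbb{N}_2$ with $\|v^k\|\le\epsilon/(2\mu_2)$ and $\|v^k\|\le1$. Set $z=z_1^k$ and $y=\mathcal{P}_K(z_2^k)$, and take $\xi_1=\xi_1^k$ and $\xi_2=\xi_2^k$. Lemma \ref{subdiff-dist} gives $\xi_2\in\mathcal{N}_K(y)$. Using \eqref{inexact-cond2} I obtain three bounds:
\begin{itemize}
\item $\|x^k-z_1^k\|\le(1+a_{\max}\gamma_v)\|v^k\|$;
\item the tangent residual is at most $(b_{\max}+\overline{\beta})\|v^k\|$, by Lemma \ref{lemma1-complexity}(ii);
\item $\|g(x^k)-y\|\le(a_{\max}\gamma_v+\gamma_{\nabla g})\|v^k\|+c_l^{-1}{\rm dist}(z_2^k,K)$.
\end{itemize}

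The feasibility term ${\rm dist}(z_2^k,K)$ needs care. By \eqref{update-rho}, if $\rho$ is not increased at $k$, then either $\rho_k\ge(\beta_k\|v^k\|)^{-1}$ or ${\rm dist}(z_2^k,K)\le\beta_k\|v^k\|\le\overline{\beta}\|v^k\|$. After $\rho$ stabilizes at $\overline{\rho}$, the first alternative gives $\|v^k\|\ge(\overline{\rho}\,\overline{\beta})^{-1}$. I would then use ${\rm dist}\le\gamma_K\le\gamma_K\overline{\rho}\,\overline{\beta}\|v^k\|$, which explains the factor $\max\{1,\gamma_K\overline{\rho}\}\overline{\beta}$ in $\mu_2$. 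All three residuals are then at most $\mu_2\cdot 2\|v^k\|\le\epsilon$.

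The main obstacle is the successful indices at which $\rho$ is still being increased. There are at most $n_\rho$ such indices. Alternatively, after $N_\epsilon$ increases, $\rho_k\ge 2\gamma_K\overline\rho/(c_l\epsilon)$, which makes the feasibility bound automatic. This accounts for the additive term $\min\{N_\epsilon,n_\rho\}$ in $\widetilde{N}_\epsilon$.

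\textbf{Counting.} Between consecutive successful iterations, the iterations in $\mathbb{N}_2$ are unsuccessful Step 6 iterations, and by Lemma \ref{lemma1-complexity}(i) there are at most $N$ of them in each run $\mathcal{F}_k^2$. Hence the first $(N+1)\widetilde N_\epsilon$ indices of $\mathbb{N}_2$ contain at least $\widetilde N_\epsilon$ successful indices. Among these, excluding the at most $\min\{N_\epsilon,n_\rho\}$ penalty-increasing ones leaves $\lceil4\mu_1\mu_2^2\epsilon^{-2}\rceil+1$ indices. The telescoping step then produces one of them with $\|v^{i_2}\|\le\epsilon/(2\mu_2)$, and the previous step shows that $x^{i_2}$ is $\epsilon$-stationary.
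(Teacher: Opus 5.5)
Your proposal is correct and follows essentially the same route as the paper. You count successful indices among the first $(N+1)\widetilde{N}_{\epsilon}$ elements of $\mathbb{N}_2$ via Lemma~\ref{lemma1-complexity}(i), exclude the first $\min\{N_{\epsilon},n_{\rho}\}$ penalty-update indices, derive $\epsilon$-stationarity from \eqref{inexact-cond2} and \eqref{update-rho}, and telescope \eqref{S-descent} with the penalty-jump correction to obtain the bound $\mu_1$. Your remark that $N_{\epsilon}$ increases would force $\overline{\rho}\ge 2\gamma_K\overline{\rho}/(c_l\epsilon)$, i.e.\ $\gamma_K\le c_l\epsilon/2$, is the paper's case split on $\gamma_K$ versus $c_l\epsilon/2$ in contrapositive form.

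The one step you leave vague is the bound ${\rm dist}(g(x^{k+1}),K)\le\gamma_c$ at penalty-update indices. Your phrase ``through the last successful $z^k$'' does not work as written, because $x^{k+1}=R_{x^k}(v^k)\neq x^k$ when $k\in\mathcal{S}$. Instead, note that $x^{k+1}$ is the base point of the next successful index, and the final telescoped term needs no such bound. This is also the correct reading of the paper's own justification of \eqref{temp-ineq44}.
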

\begin{proof}
 Since $\mathcal{S}\ne\emptyset$, Lemma \ref{bound-lemma} ensures that $\gamma_{K}\in[0,\infty)$. Assumption \ref{ass1}(i) and Lemma \ref{bound-lemma} imply that $\gamma_{v}$ and $\gamma_{\nabla\!g}$ are finite. We first establish the following claim.

\begin{claim}\label{claim0}
$|\mathcal{S}\cap\widehat{\mathbb{N}}_2|\ge \widetilde{N}_{\epsilon}$, where $\widehat{\mathbb{N}}_2$ consists of the first $(N\!+1)\widetilde{N}_{\epsilon}$ indices in $\mathbb{N}_2$.
\end{claim}

Suppose, on the contrary, that $|\mathcal{S}\cap \widehat{\mathbb{N}}_2|<\widetilde{N}_{\epsilon}$. It is not hard to infer that  $\widehat{\mathbb{N}}_2\backslash\mathcal{S}\ne\emptyset$. Moreover, $\mathcal{S}\cap \widehat{\mathbb{N}}_2\ne\emptyset$ (if not, the definition of $\mathcal{F}_{\!k}^2$ would give $|\mathcal{F}_{\!0}^2|\ge |\widehat{\mathbb{N}}_2|\ge N+1$, contradicting $|\mathcal{F}_{\!0}^2|\le N$ from Lemma \ref{lemma1-complexity}(i)). Then, there exists  $l\in\mathbb{N}_{+}$ such that $\mathcal{S}\cap \widehat{\mathbb{N}}_2=\{j_1,\ldots,j_{l}\}$ with $0\le j_1<\cdots<j_l\in\mathbb{N}_{+}$. Clearly, the indices of $\widehat{\mathbb{N}}_2\backslash\mathcal{S}$ lie in the intervals $(0,j_1),\ldots,(j_{l},\infty)$. Set $j_0:=-1$ and $j_{l+1}:=\infty$. For each $i\in[l]$, define 
\[
  \widetilde{k}_i:=\left\{\begin{array}{cl}
   j_{i}+1 &{\rm if}\ j_{i}+1\in(j_i,j_{i+1}),\\
   j_i &{\rm otherwise}.
   \end{array}\right.
\]
By Lemma \ref{lemma1-complexity}(i), $|\mathcal{F}_{\!\widetilde{k}_i}^2|\le N$ for all $i\in[l]$. Moreover, since $|\mathcal{S}\cap \widehat{\mathbb{N}}_2|<\widetilde{N}_{\epsilon}$, we have
\[
  |\widehat{\mathbb{N}}_2\backslash \mathcal{S}|=|\widehat{\mathbb{N}}_2|-|\mathcal{S}\cap \widehat{\mathbb{N}}_2|>(N\!+1)\widetilde{N}_{\epsilon}-\widetilde{N}_{\epsilon}=N\widetilde{N}_{\epsilon}\ \ {\rm and}\ \ l+1\le\widetilde{N}_{\epsilon}.
 \]
 Hence, $\sum_{i=0}^{l}|\mathcal{F}_{\widetilde{k}_i}^2|\le (l\!+\!1)N\le N\widetilde{N}_{\epsilon}<|\widehat{\mathbb{N}}_2\backslash \mathcal{S}|$. On the other hand, by the definitions of $\widetilde{k}_i$ and $\mathcal{F}_{\!\widetilde{k}_i}^2$, we have $\widehat{\mathbb{N}}_2\backslash \mathcal{S}\subset \bigcup_{i\in[l]}\mathcal{F}_{\!\widetilde{k}_i}^2$, giving $\sum_{i=0}^{l}|\mathcal{F}_{\!\widetilde{k}_i}^2|\ge|\widehat{\mathbb{N}}_2\backslash \mathcal{S}|$, a contradiction. 

 Let $\mathcal{K}_{\rho}:=\emptyset$ if $n_{\rho}=0$; otherwise, let $\mathcal{K}_{\rho}:= \{k_1, \ldots, k_{n_{\rho}}\}$ with $0 \le k_1 < \cdots < k_{n_{\rho}}$, where $k_1, \ldots, k_{n_{\rho}}$ are the indices at which the penalty parameter is updated. By the penalty parameter update rule, $\rho_{k_1}=\rho_0$ and $\rho_{k_{i+1}}=\tau\rho_{k_i}$ for all $i\in[n_\rho\!-1]_{+}$. Let $\mathcal{K}_{\rho}':=\emptyset$ if $\mathcal{K}_{\rho}=\emptyset$; otherwise, let $\mathcal{K}_{\rho}'$ consist of the first $\min\{N_{\epsilon},n_{\rho}\}$ indices in $\mathcal{K}_{\rho}$. We next establish the following key property of the indices outside $\mathcal{K}_\rho'$. 
 
 \begin{claim}\label{claim1} 
  If $k\in \mathcal{S}\backslash\mathcal{K}'_{\rho}$ and $\|v^{k}\|\le {\epsilon}/{(2\mu_2)}$, then $x^k$ is an $\epsilon$-stationary point.
 \end{claim}
 
 Fix any $k\in \mathcal{S}\backslash\mathcal{K}'_{\rho}$. We first show that ${\rm dist}(z^k_2,K)\!\le\max\{\frac{c_l\epsilon}{2},\max\{1,\gamma_{K}\overline{\rho}\}\overline{\beta}\|v^k\|\}$. It suffices to consider the case $\gamma_K>c_l\epsilon/2$, since otherwise ${\rm dist}(z^k_2,K)\le\gamma_K\le c_l\epsilon/2$. In this case, $N_{\epsilon}=\big\lceil n_{\rho}+\frac{{\rm log}[2\gamma_K(c_l\epsilon)^{-1}]}{{\rm log}\,\tau}\big\rceil> n_{\rho}$, which implies that $\mathcal{K}_{\rho}=\mathcal{K}_{\rho}'$.  Then, the definition of $\mathcal{K}_{\rho}$ and the penalty parameter update rule in \eqref{update-rho} imply that $\rho_{k}\ge\frac{1}{\beta_k\|v^k\|}$ or ${\rm dist}(z^k_2,K)\le \beta_k\|v^k\|$. In the former case, Assumption \ref{ass1}(ii) yields ${\rm dist}(z^k_2,K)\le\gamma_{K}\le\gamma_{K}(\overline{\rho}/\rho_k)\le\gamma_{K}\overline{\rho}\beta_k\|v^k\|$. Consequently, in either case,   
 \begin{equation}\label{complex-pr-eq1}
 {\rm dist}(z^k_2,K)\le \max\{1,\gamma_{K}\overline{\rho}\}\beta_k\|v^k\|\le \max\{1,\gamma_{K}\overline{\rho}\}\overline{\beta}\|v^k\|.
 \end{equation}
 where the second inequality follows from $\beta_k\le\overline{\beta}$  in Lemma \ref{lemma1-complexity}(ii). 
 	
 The above arguments show that ${\rm dist}(z^k_2,K)\le\max\big\{{c_{l}\epsilon}/{2},\max\{1,\gamma_{K}\overline{\rho}\}\overline{\beta}\|v^k\|\big\}$ for all $k\in\mathcal{S}\backslash\mathcal{K}_{\rho}'$. For such $k$, since $\xi_2^k\in\rho_k\partial{\rm dist}(z_2^k,K)$, by Lemma \ref{subdiff-dist}, there exists $u_2^k\in\mathcal{P}_{K}(z_2^k)$ such that $\xi_2^k\in\mathcal{N}_{K}(u_2^k)$. By \eqref{inexact-cond2} and the definition of $G(x^k,\cdot)$, we get 
 \begin{subequations}
 \begin{align}\label{aim-ineq41}
  &\|x^k\!-\!z_1^k\|\le \|x^k+v^k\!-\!z^k_1\|+\|v^k\|\le a_k\|v^k\|^2+\|v^k\|\le (1\!+\!a_{\max}\gamma_v)\|v^k\|,\\
  \label{aim-ineq42}
  &\|g(x^k)-u_2^k\|=\|g(x^k)+g'(x^k)v^k-z^k_2+z_2^k-u_2^k-g'(x^k)v^k\|\nonumber\\
  &\qquad\qquad\qquad\le \|g(x^k)+g'(x^k)v^k-z^k_2\|+\|\nabla g(x^k)\|\|v^k\|+c_l^{-1}{\rm dist}(z^k_2,K)\nonumber\\
  &\qquad\qquad\qquad\le (a_{\max}\gamma_v+\gamma_{\nabla\!g})\|v^k\|+c_l^{-1}{\rm dist}(z^k_2,K)\nonumber\\
  &\qquad\qquad\qquad\le \left(a_{\max}\gamma_v+\gamma_{\nabla\!g}+c_l^{-1}\max\{1,\gamma_{K}\overline{\rho}\}\overline{\beta}\right)\|v^k\|+\epsilon/2,\\
 \label{aim-ineq43}
 &\|\mathcal{P}_{T_{\!x^k}\mathcal{M}}\big[\nabla\! f(x^k)+\zeta^k+\xi_1^k+\nabla g(x^k)\xi_2^k\big]\|\le (b_k+\beta_k)\|v^k\|\le (b_{\max}\!+\overline{\beta})\|v^k\|,
 \end{align}
 \end{subequations}
 where the second inequality in \eqref{aim-ineq43} is due to $b_k\le b_{\max}$, and $\beta_k\le\overline{\beta}$ in Lemma \ref{lemma1-complexity}(ii). Thus, for any $k\in \mathcal{S}\backslash\mathcal{K}'_{\rho}$ and $\|v^{k}\|\le {\epsilon}/{(2\mu_2)}$, the definition of $\mu_2$ implies that $x^{k}$ is an $\epsilon$-stationary point of \eqref{Rcprob}. Thus, we complete the proof of Claim \ref{claim1}. 

By Claim \ref{claim0}, $|\mathcal{S}\cap\widehat{\mathbb{N}}_2|\ge \widetilde{N}_{\epsilon}$. Let $J\!:=\big\{j_1,\ldots,j_{\widetilde{N}_{\epsilon}}\big\}$ consist of the first $\widetilde{N}_{\epsilon}$ indices in $\mathcal{S}\cap\widehat{\mathbb{N}}_2$, arranged so that $0\le j_1<\cdots< j_{\widetilde{N}_{\epsilon}}$. We next prove the existence of ${k_{\epsilon}}\in J\backslash\mathcal{K}_{\rho}'$ such that $\|v^{k_{\epsilon}}\|\le {\epsilon}/{(2\mu_2)}$. Since $J\backslash\mathcal{K}_{\rho}'\subset [\mathcal{S}\cap\widehat{\mathbb{N}}_2]\backslash\mathcal{K}_{\rho}'\subset\mathcal{S}\backslash\mathcal{K}_{\rho}'$, it follows from Claim \ref{claim1} that $x^{k_{\epsilon}}$ is an $\epsilon$-stationary point of \eqref{Rcprob}. Consequently, the desired conclusion follows. Suppose, to the contrary, that $\|v^j\|>{\epsilon}/{(2\mu_2)}$ for all $j\in J\backslash\mathcal{K}_{\rho}'$. Since $|J\backslash\mathcal{K}_{\rho}'|\ge\widetilde{N}_{\epsilon}-\min\{N_{\epsilon},n_{\rho}\}> {4\mu_1\mu_2^2}/{\epsilon^2}$, we obtain 
 \begin{equation}\label{temp-ineq42}
 \mu_1< \sum_{j\in  J\backslash\mathcal{K}_{\rho}'}\frac{\epsilon^2}{4\mu_2^2}< \sum_{j\in  J\backslash\mathcal{K}_{\rho}'}\|v^j\|^2\le \sum_{j\in J}\|v^j\|^2.
 \end{equation}
 To achieve a contradiction, we prove that $\sum_{j\in J}\|v^j\|^2\le\mu_1$. Since \eqref{S-descent} holds for all $j\in\mathcal{S}$ by Remark \ref{remark-alg}(c), it holds that $\|v^{j}\|^2\le (\overline{\gamma}\eta_1)^{-1}\big[\Theta_{\rho_{j}}(x^{j})-\Theta_{\rho_{j}}(R_{x^{j}}(v^{j}))\big]$ for all $j\in J$. Combining with $R_{x^{j}}(v^{j})=x^{j+1}$ for each $j\in J$ then leads to 
 \begin{align*}
 \sum_{j\in J}\|v^j\|^2\le \frac{1}{\overline{\gamma}\eta_1} \sum_{j\in J}\big[\Theta_{\rho_{j}}(x^{j})\!-\!\Theta_{\rho_{j}}(x^{j+1})\big]&=\frac{1}{\overline{\gamma}\eta_1}\!\!\sum_{k\in J\cup([j_{\widetilde{N}_{\varepsilon}}]\backslash J)}\!\!\big[\Theta_{\rho_{k}}(x^{k})\!-\!\Theta_{\rho_{k}}(x^{k+1})\big]\nonumber\\ 
 &=\frac{1}{\overline{\gamma}\eta_1} \sum_{k=0}^{j_{\widetilde{N}_{\epsilon}}}\left[\Theta_{\rho_k}(x^k)-\Theta_{\rho_{k}}(x^{k+1})\right],
 \end{align*}
 where the first equality is due to $x^k=x^{k+1}$ for all $k\in[j_{\widetilde{N}_{\varepsilon}}]\backslash J$. Note that $\Theta_{\rho_k}(x^k)-\Theta_{\rho_{k}}(x^{k+1})=\Theta_{\rho_k}(x^k)-\Theta_{\rho_{k+1}}(x^{k+1})+(\rho_{k+1}-\rho_k){\rm dist}(g(x^{k+1}),K)$. Then,
 \begin{align}\label{temp-ineq43}
  \sum_{j\in J}\|v^j\|^2&\le \frac{1}{\overline{\gamma}\eta_1}\Big[\Theta_{\rho_0}(x^0)-\Theta_{\rho_{j_{\widetilde{N}_{\epsilon}}+1}}(x^{j_{\widetilde{N}_{\epsilon}}+1})+\sum_{k=0}^{j_{\widetilde{N}_{\epsilon}}}(\rho_{k+1}-\rho_k){\rm dist}(g(x^{k+1}),K)\Big]\nonumber\\
  &\le\frac{1}{\overline{\gamma}\eta_1}\Big[\Theta_{\rho_0}(x^0)-\Theta_{\rho_{j_{\widetilde{N}_{\epsilon}}+1}}(x^{j_{\widetilde{N}_{\epsilon}}+1})+(\tau\!-\!1)\sum_{j\in\mathcal{K}_{\rho}}\rho_j{\rm dist}(g(x^{j+1}),K)\Big]\nonumber\\
  &\le\frac{1}{\overline{\gamma}\eta_1}\Big[\Theta_{\rho_0}(x^0)-\min_{x\in\varLambda_0}\Theta(x)+(\tau\!-\!1)\sum_{j\in\mathcal{K}_{\rho}}\rho_j{\rm dist}(g(x^{j+1}),K)\Big],
 \end{align}
 where the second inequality is due to $\rho_{j+1}=\tau\rho_j$ for $j\in \mathcal{K}_\rho$ and $\rho_{j+1}=\rho_j$ for $j\notin\mathcal{K}_{\rho}$, and the last inequality is since $\Theta_{\rho_{j_{\widetilde{N}_{\epsilon}+1}}}(x^{j_{\widetilde{N}_{\epsilon}+1}})\ge \Theta(x^{j_{\widetilde{N}_{\epsilon}+1}})\ge \min_{x\in\varLambda_0}\Theta(x)$. In addition, the Lipschitz continuity of ${\rm dist}(\cdot,K)$, along with \eqref{inexact-cond1} and \eqref{inexact-cond2}, implies 
 \begin{equation}\label{temp-ineq44}
  \resizebox{\linewidth}{!}{$
    \begin{aligned}
      {\rm dist}(g(x^{j+1}),K)
      &\le\!c_u[\|g(x^{j+1})\!-\!z_2^{j+1}\!+\!g'(x^{j+1})v^{j+1}\|\!+\!\|g'(x^{j+1})v^{j+1}\|]\!+\!{\rm dist}(z_2^{j+1},K)\\ 
      &\le c_u\max\{\varepsilon_{j+1},a_{j+1}\|v^{j+1}\|^2\}+c_u\|g'(x^{j+1})\|\|v^{j+1}\|+{\rm dist}(z_2^{j+1},K)\\ 
      &\le c_u\max\{\varepsilon_{0},a_{\rm max}\gamma_{v}^2\}+c_u\gamma_{\nabla\!g}\gamma_{v}+\gamma_{K}=\gamma_{c}\quad{\rm for\ each}\ j\in\mathbb{N},
    \end{aligned}
  $}
\end{equation}
 where the third inequality follows by noting that for every $j\notin\mathcal{S}$, $x^{j+1}=x^j=x^{j'}$ for some $j'\in\mathcal{S}$ with $j'<j$. Combining \eqref{temp-ineq43}-\eqref{temp-ineq44} with $\sum_{j\in\mathcal{K}_{\rho}}\rho_{j}=\sum_{i=0}^{n_{\rho}-1}\rho_0\tau^i=\frac{\rho_0(\tau^{n_{\rho}}-1)}{\tau-1}$, and then using the definition of $\mathcal{K}_{\rho}$ and the update rule of $\rho_k$ yield 
 \begin{equation}\label{sumvk}
\begin{aligned}
 \sum_{j\in J}\|v^j\|^2&\le \frac{1}{\overline{\gamma}\eta_1}\Big[\Theta_{\rho_0}(x^0)-\min_{x\in\varLambda_0}\Theta(x)+\gamma_{c}(\tau\!-\!1)\sum_{j\in\mathcal{K}_{\rho}}\rho_j\Big]\\
 &\le (\overline{\gamma}\eta_1)^{-1}\big[\Theta_{\rho_{0}}(x^0)-\min_{x\in\varLambda_0}\Theta(x)+\rho_0\gamma_{c}(\tau^{n_{\rho}}\!-\!1)\big]=\mu_1.
 \end{aligned}
 \end{equation}
 This is a contradiction to the above \eqref{temp-ineq42}. Thus, we complete the proof.
\end{proof}

We are now ready to establish the iteration complexity of Algorithm \ref{PenAl}. To this end, we modify its termination criteria as follows. In Step 5, we adopt the stopping condition $\varepsilon_k\le\chi\epsilon$ and ${\rm dist}(z_2^k,K)\le c_l\epsilon/2$, while in Step 6, we use $\|v^k\|\le\epsilon/(2\mu_2)$ and ${\rm dist}(z^k_2,K)\le \max\big\{{c_{l}\epsilon}/{2},\max\{1,\gamma_{K}\overline{\rho}\}\overline{\beta}\|v^k\|\big\}$. Here, $\chi$ is defined in Proposition \ref{prop1-complexity}, while $\gamma_K$ and $\mu_2$ are defined in Proposition \ref{prop2-complexity}. The following theorem follows directly from Propositions \ref{prop1-complexity}-\ref{prop2-complexity} and the definition of Algorithm \ref{PenAl}. 
\begin{theorem}\label{complexity-theorem}
 Let $\epsilon\in(0,1)$. Under Assumption \ref{ass1}, Algorithm \ref{PenAl} generates an $\epsilon$-stationary point within at most $T+(N\!+1)\big(\big\lceil\frac{4\mu_1\mu_2^2}{\epsilon^2}\big\rceil+\min\{N_{\epsilon},n_{\rho}\}+1\big)$ iterations, where $T$ and $N$ are given in Proposition \ref{prop1-complexity} and Lemma \ref{lemma1-complexity}(i), respectively, and $\mu_1$, $\mu_2$ and $N_{\epsilon}$ are given in Proposition \ref{prop2-complexity}.
\end{theorem}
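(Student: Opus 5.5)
The plan is to split the iteration count according to the partition $\mathbb{N}=\mathbb{N}_1\cup\mathbb{N}_2$. Every iteration either executes Step 5 (and then lies in $\mathbb{N}_1$) or executes Step 6 (and then lies in $\mathbb{N}_2$). Under the modified termination criteria, the algorithm stops at the first index $k$ at which one of the following holds. In Step 5: $\varepsilon_k\le\chi\epsilon$ and ${\rm dist}(z_2^k,K)\le c_l\epsilon/2$. In Step 6: $\|v^k\|\le\epsilon/(2\mu_2)$ together with the stated bound on ${\rm dist}(z_2^k,K)$. The proofs of Propositions \ref{prop1-complexity} and \ref{prop2-complexity} show that at any such index, $x^k$ is an $\epsilon$-stationary point. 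For Step 5 this is the derivation from \eqref{aim-ineq-i1} to \eqref{bound-chieps}. For Step 6 it is the chain \eqref{aim-ineq41}--\eqref{aim-ineq43}, combined with the bound ${\rm dist}(z_2^k,K)\le\max\{c_l\epsilon/2,\max\{1,\gamma_K\overline{\rho}\}\overline{\beta}\|v^k\|\}$, which is now imposed directly as a stopping test rather than derived from $k\notin\mathcal{K}_\rho'$. So when the algorithm terminates it returns an $\epsilon$-stationary point, and the task reduces to bounding the number of iterations before termination.

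Suppose, to get a contradiction, that the algorithm runs more than $T+(N+1)\widetilde N_\epsilon$ iterations without stopping. By pigeonhole, either $|\mathbb{N}_1|\ge T$ or $|\mathbb{N}_2|\ge (N+1)\widetilde N_\epsilon$ among the iterations performed, where $\widetilde N_\epsilon$ is as in Proposition \ref{prop2-complexity}.

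In the first case, Proposition \ref{prop1-complexity} gives an index $i_1$ among the first $T$ elements of $\mathbb{N}_1$ satisfying \eqref{aim-ineq-i1}. That is exactly the Step 5 stopping test, so the algorithm would already have stopped, a contradiction.

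In the second case, Proposition \ref{prop2-complexity} gives an index $i_2\in\mathcal{S}\setminus\mathcal{K}_\rho'$ among the first $(N+1)\widetilde N_\epsilon$ elements of $\mathbb{N}_2$ with $\|v^{i_2}\|\le\epsilon/(2\mu_2)$. Claim \ref{claim1} shows that such an index also satisfies the bound \eqref{complex-pr-eq1} on ${\rm dist}(z_2^{i_2},K)$, so Step 6 stops there, again a contradiction.

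The main point needing care is that the propositions were proved for the unmodified algorithm, whose full infinite run is used to define $n_\rho$, $\gamma_v$, $\gamma_K$, $\mathcal{K}_\rho$ and related quantities. I would argue as follows. Up to the stopping index, the modified algorithm produces exactly the same iterates as the unmodified one, since stopping only truncates the sequence. Hence the constants defined as suprema over the unmodified run still dominate their values on the truncated run. The counting arguments in Claim \ref{claim0} and in \eqref{temp-ineq42}--\eqref{sumvk} only involve iterations up to the relevant index, so they remain valid.

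I would also check that the Step 6 test does not require knowing whether $k\in\mathcal{K}_\rho'$. It does not, because the distance bound is tested explicitly. Under this test Claim \ref{claim1} holds for every $k\in\mathcal{S}$, not only for $k\in\mathcal{S}\setminus\mathcal{K}_\rho'$, which only helps. Adding the two counts gives the stated bound.
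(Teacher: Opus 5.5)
Your argument is correct and is the paper's own. The paper only says that the theorem follows from Propositions \ref{prop1-complexity}--\ref{prop2-complexity} and the definition of Algorithm \ref{PenAl}, and you supply what that sentence leaves implicit: the pigeonhole split over $\mathbb{N}_1$ and $\mathbb{N}_2$, the use of the indices $i_1$ and $i_2$ built inside those proofs as the places where the modified tests fire, and the remark that truncating the run changes neither the iterates nor the run-dependent constants.

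One caveat, which you share with the paper, concerns the Step 6 test. It is also checked at Step 6 iterations that are then rejected, but \eqref{aim-ineq41}--\eqref{aim-ineq42} use $\|v^k\|\le\gamma_v$ and $\|\nabla g(x^k)\|\le\gamma_{\nabla g}$, whose suprema are taken over $\mathcal{S}$ only. So your claim that every index passing that test yields an $\epsilon$-stationary point is not yet justified at rejected steps. You can fix this in either of two ways:
\begin{itemize}
\item apply the Step 6 test only to accepted steps; or
\item take these suprema over all $k\in\mathbb{N}$, which are finite by Lemma \ref{bound-lemma} and Assumption \ref{ass1}(i); this only enlarges $\mu_1,\mu_2$ and leaves both propositions intact.
\end{itemize}
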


Theorem~\ref{complexity-theorem} shows that Algorithm \ref{PenAl}  achieves an iteration complexity of $O(\epsilon^{-2})$, with the same polynomial dependence on $\epsilon^{-1}$ as the complexity bounds
established in \cite{Cartis2011,Diouane2026-II} for NEP methods. Since each iteration of Algorithm \ref{PenAl} requires an approximate solution of \eqref{Esubprob}, the choice of inner solver is crucial for determining the overall oracle complexity. Inspired by the special structure of \eqref{Esubprob}, we use the semi-proximal ADMM \cite[Appendix B]{Fazel2013} (resp. its accelerated variant \cite{Sun2025}) to solve the subproblems, with the corresponding procedures presented in Algorithm \ref{sPADMM} (resp. Algorithm \ref{aADMM}) of Section \ref{sec6.1.1}. For each $k\in\mathbb{N}$, let $\{w^{k,l}\}_{l\in\mathbb{N}}$ denote the iterate sequence generated by Algorithm~\ref{sPADMM} or \ref{aADMM}, and define the KKT residual of the $k$-th subproblem by
 \begin{equation}\label{KKT-res}
 \mathcal{R}_k(w):=\!\begin{pmatrix}
 v\!-\!\mathcal{P}_{T_{\!x^k}\mathcal{M}}\big[v\!-\!\nabla\ell_k(v)\!-\!\beta_kv-\nabla_{\!v}G(x^k,v)\lambda\big]\\
 z-\mathcal{P}_{\psi_{\rho_k}}(z+\lambda)\\
 z-G(x^k,v)
 \end{pmatrix}\ {\rm for}\ w=(v,z,\lambda).
 \end{equation} 
 We next establish the overall oracle complexity of Algorithm \ref{PenAl} when its subproblems are solved by Algorithm \ref{sPADMM} or \ref{aADMM}, assuming that each iteration of the inner solver has access to oracles for evaluating the mappings $g'(x^k)(\cdot),\nabla g(x^k)(\cdot), \mathcal{P}_{\sigma_k^{-1}\psi_{\rho_k}}(\cdot)$ and $\mathcal{P}_{T_{\!x^k}\mathcal{M}}(\cdot)$. For Algorithm \ref{sPADMM}, the analysis relies on the following uniform error-bound condition on the KKT residual mappings of the proximal-linearized subproblems.
\begin{assumption}\label{uniform-EB-ass} 
There exists a constant $c_{\mathcal{R}}>0$, independent of $k$ and $l$, such that ${\rm dist}(w^{k,l},\mathcal R_k^{-1}(0))
\le c_{\mathcal{R}}\|\mathcal R_k(w^{k,l})\|$ for each $k\in\mathbb{N}$ and $l\in\mathbb{N}_+$. 
\end{assumption}
\begin{theorem}\label{complexity-oracle}
 Let $\epsilon\in(0,1)$ and $\overline{\sigma}\ge1$, and let $B_0\subset\mathbb{X}\times\mathbb{Z}\times\mathbb{Z}$ be a compact set. Suppose that Assumption \ref{ass1} holds and that there exist $a_{\rm min}>0$ and $b_{\rm min}>0$ such that the parameters in Algorithm~\ref{PenAl} satisfy $a_k\ge a_{\rm min}$ and $b_k\ge b_{\rm min}$ for all $k\in\mathbb{N}$. 
\begin{itemize}
\item[(i)] If Algorithm \ref{aADMM} with
$\sigma_k\in [\overline{\sigma}^{-1},\overline{\sigma}]$
and $\overline{w}^{k,0}\in B_0$ is used as the inner solver, then Algorithm \ref{PenAl} computes an $\epsilon$-stationary point in at most $O(\epsilon^{-4})$ oracle calls.

\item[(ii)] If Algorithm \ref{sPADMM} with $\sigma_k\in [\overline{\sigma}^{-1},\overline{\sigma}]$ and $w^{k,0}\in B_0$ is used as the inner solver and Assumption \ref{uniform-EB-ass} holds, then Algorithm \ref{PenAl} computes an $\epsilon$-stationary point in at most $O(\epsilon^{-2}\log\epsilon^{-1})$ oracle calls. 
\end{itemize} 
\end{theorem}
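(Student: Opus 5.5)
The overall oracle count will be written as the number of outer iterations times a uniform bound on the inner iterations per outer iteration, up to the point where the modified stopping test of Theorem~\ref{complexity-theorem} fires. That theorem already gives $O(\epsilon^{-2})$ outer iterations; by Lemma~\ref{lemma1-complexity}(i) the constants $N$, $T=O(\log\epsilon^{-1})$, $\mu_1,\mu_2$ are independent of $\epsilon$. Each inner iteration of Algorithm~\ref{sPADMM} or \ref{aADMM} uses $O(1)$ oracle calls, so it remains to bound the inner iterations $l_k$ needed to reach either \eqref{inexact-cond1} with tolerance $\varepsilon_k\ge\chi\epsilon\varsigma$, or \eqref{model-descent}--\eqref{inexact-cond2}. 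Before the algorithm stops, the modified criteria of Step~6 are not met, so $\|v^k\|$ is bounded below by order $\epsilon$ at accepted steps. More usefully, the argument will run on the exact subproblem solution $\overline v^k$: either $\|\overline v^k\|\ge c\epsilon$, or the KKT residual $\le c'\epsilon$ already yields \eqref{inexact-cond1} with $\varepsilon_k\sim\epsilon$.

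The central step is to convert a small KKT residual $\|\mathcal R_k(w^{k,l})\|\le\delta$ into one of the two inexactness criteria, and I expect this to be the main difficulty. From $w=(v,z,\lambda)$, set $\xi:=z+\lambda-\mathcal P_{\psi_{\rho_k}}(z+\lambda)$ shifted appropriately, so that $\xi\in\partial\psi_{\rho_k}(\tilde z)$ with $\|\tilde z-z\|\le\delta$. The residual components then bound $\|G(x^k,v)-\tilde z\|$ and the projected gradient by $O(\delta)$, with constants uniform in $k$. For this I will use Lemma~\ref{bound-lemma}, $\beta_k\in[\beta_{\min},\overline\beta]$, bounded $\rho_k$, $\|\nabla g(x^k)\|$ bounded on $\varLambda_0$, and $\sigma_k\in[\overline\sigma^{-1},\overline\sigma]$. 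Strong convexity gives $\|v-\overline v^k\|\le C\delta$. There are two cases:
\begin{itemize}
\item if $\|\overline v^k\|\le C'\varepsilon_k$, then \eqref{inexact-cond1} holds as soon as $\delta\le c\,\varepsilon_k$;
\item otherwise $\|v\|\ge\|\overline v^k\|/2$, and \eqref{inexact-cond2} holds once $\delta\le c\min\{a_{\min},b_{\min}\}\|\overline v^k\|^2$; here $a_k\ge a_{\min}$ and $b_k\ge b_{\min}$ are exactly what is needed.
\end{itemize}
For \eqref{model-descent}, I will invoke Lemma~\ref{Lemma1-Thetak} at $v=0$ together with the Lipschitz estimate of Lemma~\ref{lemma-psirhok}. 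This gives $\widehat\Theta_{\rho_k}(x^k;x^k)-\widehat\Theta_{\rho_k}(x^k+v;x^k)\ge(\beta_k/2-O(\delta/\|v\|))\|v\|^2>0$. The delicate point is choosing the case threshold $C'$ consistently: if $\|\overline v^k\|$ lies in between, it must still be of order at least $\epsilon$. In either case the required residual accuracy is $\delta_\epsilon=\Theta(\epsilon^2)$, using $\varepsilon_k\ge\varsigma\chi\epsilon$ before termination.

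Next comes the solver rates. The initial distance $\mathrm{dist}(w^{k,0},\mathcal R_k^{-1}(0))$ is uniformly bounded, since $B_0$ is compact and the KKT points of \eqref{Esubprob} are bounded (bounded $\overline v^k$ from Lemma~\ref{bound-lemma}, and multipliers bounded by the Lipschitz constant of $\psi_{\rho_k}$). For (i), the accelerated sPADMM of \cite{Sun2025} gives $\|\mathcal R_k(\overline w^{k,l})\|=O(1/l)$ with uniform constants, so $l_k=O(\delta_\epsilon^{-1})=O(\epsilon^{-2})$, and the total is $O(\epsilon^{-2})\cdot O(\epsilon^{-2})=O(\epsilon^{-4})$. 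For (ii), Assumption~\ref{uniform-EB-ass} together with the standard sPADMM analysis of \cite[Appendix B]{Fazel2013} gives Q-linear convergence of the residual distance, with rate depending only on $c_{\mathcal R}$ and the uniform bounds. Hence $l_k=O(\log\delta_\epsilon^{-1})=O(\log\epsilon^{-1})$, and the total is $O(\epsilon^{-2}\log\epsilon^{-1})$. Verifying that the linear rate and the $O(1/l)$ constants are uniform in $k$, via uniform bounds on $\beta_k,\rho_k,\sigma_k,\|g'(x^k)\|$, is routine but must be checked.
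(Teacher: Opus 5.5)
Your architecture matches the paper's proof: $O(\epsilon^{-2})$ outer iterations from Theorem~\ref{complexity-theorem}, a lower bound $\varepsilon_k=\Omega(\epsilon)$, uniformly bounded KKT sets and initial distances, a case split on $\|\overline v^k\|$ against a threshold of order $\epsilon$, a required residual accuracy $\Theta(\epsilon^2)$, and then the $O(1/l)$ residual rate of \cite{Sun2025} in (i) and a uniform linear rate in (ii). However, the step ``strong convexity gives $\|v-\overline v^k\|\le C\delta$'' is false in part (i), where no error bound is available. Apply the inequality of Lemma~\ref{Lemma1-Thetak} with comparison point $\overline v^k$; its proof uses only $v\in T_{x^k}\mathcal M$, $\xi\in\partial\psi_{\rho_k}(z)$ and the two residuals. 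Adding the inequality $\widehat\Theta_{\rho_k}(x^k+v;x^k)-\widehat\Theta_{\rho_k}(x^k+\overline v^k;x^k)\ge\frac{\beta_k}{2}\|v-\overline v^k\|^2$, for residual components of size $\delta$ you get only
\[
\beta_k\|v-\overline v^k\|^2\le \delta\|v-\overline v^k\|+2L\delta,\qquad\text{i.e.}\qquad \|v-\overline v^k\|=O(\sqrt{\delta}).
\]
The reason is that the constraint residual is paired with the bounded multiplier, not with $v-\overline v^k$. The loss is real. Take $\mathcal M=\mathbb R\times\{0\}\subset\mathbb R^2$, $x^k=0$, $\vartheta=\|\cdot\|_2$, $\nabla\!f(x^k)+\zeta^k=(-1,0)$ and $g\equiv0\in K$. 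Then $\overline v^k=0$. Yet $v=(u,0)$, $z_1=(u,-\delta)$, $\xi_1=z_1/\|z_1\|$, with $u\asymp\delta^{2/3}$ chosen to annihilate the projected gradient, has KKT residual exactly $\delta$. A linear bound needs an error bound, which you do have in (ii) through Assumption~\ref{uniform-EB-ass}.

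Consequently, your Case 1 claim that \eqref{inexact-cond1} holds once $\delta\le c\,\varepsilon_k$ is wrong. To guarantee $\beta_k\|v\|\le\varepsilon_k$ you need $\delta\lesssim\varepsilon_k^2$, and in Case 2 the bound $\|v\|\ge\|\overline v^k\|/2$ needs $\delta\lesssim\|\overline v^k\|^2$. Since Case 2 already forces $\delta=\Theta(\epsilon^2)$, the corrected requirement is still $\delta_\epsilon=\Theta(\epsilon^2)$, so both final orders survive.

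This square-root loss is also why the paper needs $O(\epsilon^{-2})$ inner steps in (i). The paper bounds $\frac{\beta_{\min}}{2}\|v^{k,l}-v^{k,*}\|^2$ by the $O(1/l)$ objective-gap estimate of \cite[Theorem 3.7(a)]{Sun2025} plus a Lipschitz multiple of $\|\mathcal R_k(w^{k,l})\|$. It then derives \eqref{model-descent} from that gap via strong convexity. Your corrected route extracts both the distance bound and \eqref{model-descent} from the residual alone, through the inequality of Lemma~\ref{Lemma1-Thetak}. It therefore dispenses with the objective-gap estimate and is slightly more self-contained. Note that the error term there is $2L\delta$, so the descent margin is $\frac{\beta_k}{2}-O(\delta/\|v\|)-O(\delta/\|v\|^2)$. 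This is positive once $\delta\le c\|v\|^2$ with $c$ small relative to $\beta_{\min}/L$.

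Two minor points remain.
\begin{enumerate}
\item Before termination one only has $\varepsilon_k\ge\chi\varsigma^{n_\rho+2}\epsilon$, not $\chi\varsigma\epsilon$. Step 5 may keep shrinking $\varepsilon_k$ below $\chi\epsilon$ at iterations where ${\rm dist}(z_2^k,K)>c_l\epsilon/2$ forces a penalty update. This is harmless because $n_\rho<\infty$ under Assumption~\ref{ass1}(ii).
\item The $O(1/l)$ rate in \cite{Sun2025} is for $w^{k,l}$, for which $\lambda^{k,l}\in\partial\psi_{\rho_k}(z^{k,l})$ holds automatically, so your prox shift is unnecessary. Also, the uniform linear rate in (ii) comes from \cite[Corollary 1]{Han2018} under the error bound, not from \cite[Appendix B]{Fazel2013}, which gives only convergence.
\end{enumerate}
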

\begin{proof}
 Let $k_{\epsilon}$ be the index at which Algorithm \ref{PenAl} first returns an $\epsilon$-stationary point. By Theorem \ref{complexity-theorem},  $k_{\epsilon}$ is bounded by $O(\epsilon^{-2})$. Hence, to establish the claimed complexity bound, it suffices to show that, for each $k\in[k_{\epsilon}]$, Algorithm \ref{sPADMM} (resp. Algorithm \ref{aADMM}) requires $O(\log\epsilon^{-1})$ (resp. $O(\epsilon^{-2})$) steps to generate an iterate $(v^{k},z^{k},\xi^{k})\in T_{\!x^k}\mathcal{M}\times\mathbb{Z}\times\psi_{\rho_k}(z^{k})$ satisfying the inexactness condition \eqref{inexact-cond1} or the alternative conditions \eqref{model-descent}-\eqref{inexact-cond2}. The strong convexity of \eqref{Esubprob} w.r.t. variable $v$ implies that the set of KKT points $\mathcal{R}_{k}^{-1}(0)$ for \eqref{Esubprob} is nonempty and satisfies $\mathcal{R}_{k}^{-1}(0)\subset\{\overline{v}^k\}\times\{G(x^k,\overline{v}^k)\}\times\partial \psi_{\rho_k}(G(x^k,\overline{v}^k))$. Since $\{\overline{v}^{k}\}_{k\in\mathbb{N}}$ is bounded by Lemma \ref{bound-lemma}, the local Lipschitz continuity of $\psi_{\rho_k}$, together with \cite[Theorem 9.13]{RW98} and Assumption \ref{ass1}(ii), yields that $\bigcup_{k\in\mathbb{N}}\mathcal{R}_{k}^{-1}(0)$ is bounded. Recall that $\{w^{k,l}\}_{l\in\mathbb{N}}$ is the sequence generated by Algorithm~\ref{sPADMM} or \ref{aADMM}. By Remark \ref{remark-admm}(c), $v^{k,l}\in T_{\!x^k}\mathcal{M}$ and $\lambda^{k,l}\in\partial\psi_{\rho_k}(z^{k,l})$ for all $l\in\mathbb{N}_{+}$. Let $w^{k,*}=(v^{k,*},z^{k,*},\lambda^{k,*})$ denote the limit of $\{w^{k,l}\}_{l\in\mathbb{N}}$ as $l\to\infty$. The existence of such a limit follows from \cite[Appendix B]{Fazel2013} for Algorithm \ref{sPADMM} and \cite[Theorem 3.7]{Sun2025} for Algorithm \ref{aADMM}. Clearly, $\overline{v}^{k,*}=\overline{v}^k$ and $z^{k,*}=G(x^k,\overline{v}^{k})$. Then, the boundedness of $\{(v^{k,*},z^{k,*})\}_{k\in\mathbb{N}}$, together with Assumption \ref{ass1}(i), ensures the existence of a compact convex set $\mathcal{X}$ containing $\{x^k\!+v^{k,l}\}_{k,l\in\mathbb{N}},\{z_1^{k,l}\}_{k,l\in\mathbb{N}}$, and $\{x^k\!+v^{k,*}\}_{k\in\mathbb{N}}$. Consequently, $\vartheta$ is Lipschitz continuous on $\mathcal{X}$, and we denote its Lipschitz constant by $L_{\vartheta,\mathcal{X}}$. 
	
 Recall the definition of $k_{\epsilon}$. By Proposition \ref{prop1-complexity}, Step 5 is executed at most $T$ times up to iteration $k_{\epsilon}$. Then, for each $k\in[k_{\epsilon}]$, by recalling that the parameters $\varepsilon_k$ are updated only in Step 5 and noting that $T\le n_{\rho}+\!\Big\lceil \frac{\log(\chi^{-1}\varepsilon_0\epsilon^{-1})}{\log\varsigma^{-1}}\Big\rceil+1$, it holds
 \begin{equation}\label{epsk-lbound}
 \varepsilon_k\ge \varepsilon_{k_\epsilon}\ge \varepsilon_0\varsigma^T\ge \widehat{\chi}\epsilon\ \ {\rm with}\ \widehat{\chi}:=\chi \varsigma^{n_{\rho}+2}.
\end{equation}

 \noindent
 {\bf(i)} Fix any $k\in[k_{\epsilon}]$. We first consider Algorithm \ref{aADMM} for $\alpha=2$. Let $\mathcal{A}_k:\mathbb{X}\to\mathbb{Z}$ denote the Jacobian mapping of $G(x^k,\cdot)$, and define $R_{k,0}:=\|\overline{w}^{k,0}\!-\!w^{k,*}\|_{\mathcal{C}_k}$, where $\mathcal{C}_k:=[\sigma_k\mathcal{I}\ \ 0 \ \  \mathcal{A}_k^*;0\ \ 0 \ \ 0; \mathcal{A}_k\ \ 0\ \ \sigma_k\mathcal{I}]$ is a linear mapping from $\mathbb{X}\times\mathbb{Z}\times\mathbb{Z}$ to itself. It follows from  \cite[Theorem 3.7(a)]{Sun2025} that, for each $l\in\mathbb{N}$,
 \begin{equation}\label{Rk-ubound}
 \|\mathcal{R}_k(w^{k,l})\|\le \frac{\widehat{\varpi}_k}{l+1}\ \ {\rm with}\ \widehat{\varpi}_k\!:=\frac{2R_{k,0}(\sigma_k\!+\!1)}{\varrho\sqrt{\sigma_k}}.
\end{equation}
In addition, the definition of $\widehat{\Theta}_{\rho_k}(\cdot;x^k)$ implies that, for each $l\in\mathbb{N}$,
\begin{align*}
 &\widehat{\Theta}_{\!\rho_k}(x^k\!+\!v^{k,l};x^k)\!-\!\widehat{\Theta}_{\!\rho_k}(x^k\!+\!v^{k,*};x^k)\\
 &=\big[\ell_k(v^{k,l})\!+\!\psi_{\rho_k}(z^{k,l})\!-\!\ell_k(v^{k,*})\!-\!\psi_{\rho_k}(z^{k,*})\big]+\big[\psi_{\rho_k}(G(x^k,v^{k,l}))-\psi_{\rho_k}(z^{k,l})\big]\\
 &\le \big[\ell_k(v^{k,l})\!+\!\psi_{\rho_k}(z^{k,l})\!-\!\ell_k(v^{k,*})\!-\!\psi_{\rho_k}(z^{k,*})\big]+\!\sqrt{L_{\vartheta,\mathcal{X}}^2+\!c_u^2\overline{\rho}^2}\|z^{k,l}\!-\!G(x^k,v^{k,l})\|\\
 &\le \frac{\widetilde{\varpi}_k}{l+1}+\sqrt{L_{\vartheta,\mathcal{X}}^2+c_u^2\overline{\rho}^2}\|\mathcal{R}_k(w^{k,l})\|\quad {\rm with}\ \widetilde{\varpi}_k:=\frac{2R_{k,0}(3R_{k,0}\sqrt{\sigma_k}\!+\!\|\lambda^{k,*}\|)}{\varrho\sqrt{\sigma_k}},
 \end{align*}
 where the first inequality is a consequence of Lemma \ref{lemma-psirhok}, and the second one is obtained by invoking \cite[Theorem 3.7(a)]{Sun2025} and the definition of $\mathcal{R}_k$. Combining the above inequality with the $\beta_k$-strong convexity of $\widehat{\Theta}_{\rho_k} (x^k\!+\!\cdot;x^k)$ yields, for all $l\in\mathbb{N}$, 
 \begin{align}\label{vkstar-bound}
 ({\beta_{\min}}/{2})\|v^{k,l}\!-\!v^{k,*}\|^2&\le\widehat{\Theta}_{\!\rho_k}(x^k\!+v^{k,l};x^k)-\widehat{\Theta}_{\!\rho_k}(x^k\!+\!v^{k,*};x^k)\nonumber\\
 &\qquad\qquad\le \frac{\widetilde{\varpi}_k}{l+1}+\sqrt{L_{\vartheta,\mathcal{X}}^2\!+c_{u}^2\overline{\rho}^2}\,\|\mathcal{R}_k(w^{k,l})\|.
 \end{align}
 Let $\overline{l}_{k}\!:=\!\Big\lceil\widehat{\varpi}_k\max\big\{\frac{1}{\widehat{\chi}\epsilon},\frac{4\overline{\beta}^2}{a_{\min}\widehat{\chi}^2\epsilon^2},\frac{2\overline{\beta}}{b_{\min}\widehat{\chi}\epsilon},\frac{64\overline{\beta}^2\!\sqrt{L_{\vartheta,\mathcal{X}}^2+c_{u}^2\overline{\rho}^2}}{\beta_{\min}\widehat{\chi}^2\epsilon^2}\big\}\Big\rceil$ and $\widetilde{l}_k\!:=\!\big\lceil\frac{64\overline{\beta}^2\widetilde{\varpi}_k}{\beta_{\min}\widehat{\chi}^2\epsilon^2}\big\rceil$. Then, 
 \begin{equation}\label{vk-vstar-bound}
 \|v^{k,l}-v^{k,*}\|\le\frac{\widehat{\chi}\epsilon}{4\overline{\beta}}\ \ {\rm and}\ \
 \widehat{\Theta}_{\rho_k}(x^k\!+\!v^{k,*};x^k)\ge   \widehat{\Theta}_{\rho_k}(x^k\!+\!v^{k,l};x^k)-\frac{\beta_{\min}\widehat{\chi}^2\epsilon^2}{32\overline{\beta}^2}
 \end{equation}
 for each $l\ge\max\{\overline{l}_{k},\widetilde{l}_k\}$. Indeed, the definition of $\overline{l}_{k}$, together with \eqref{Rk-ubound}, implies
 \begin{equation}\label{bound-chik1}
  \|\mathcal{R}_k(w^{k,l})\|\le \min\bigg\{\widehat{\chi}\epsilon,\frac{a_{\min}(\widehat{\chi}\epsilon)^2}{4\overline{\beta}^2}, \frac{b_{\min}\widehat{\chi}\epsilon}{2\overline{\beta}}, \frac{\beta_{\min}\widehat{\chi}^2\epsilon^2}{64\overline{\beta}^2\sqrt{L_{\vartheta,\mathcal{X}}^2+c_{u}^2\overline{\rho}^2}}\bigg\}\quad\forall l\ge\overline{l}_{k}.
\end{equation}
Combining $\|\mathcal{R}_k(w^{k,l})\|\le \frac{\beta_{\min}\widehat{\chi}^2\epsilon^2}{64\overline{\beta}^2\sqrt{L_{\vartheta,\mathcal{X}}^2+c_{u}^2\overline{\rho}^2}}$ for any $l\ge\overline{l}_k$ with the second inequality in \eqref{vkstar-bound} and noting that $\frac{\widetilde{\varpi}_k}{l+1}\le \frac{\beta_{\rm min}\widehat{\chi}^2\epsilon^2}{64\overline{\beta}^2}$ for all $l\ge\widetilde{l}_k$, we obtain the second inequality in \eqref{vk-vstar-bound}. This, together with the first inequality in \eqref{vkstar-bound}, implies $\|v^{k,l}\!-\!v^{k,*}\|^2\le \frac{\widehat{\chi}^2\epsilon^2}{16\overline{\beta}^2}$ for all $l\ge\max\{\overline{l}_{k},\widetilde{l}_k\}$. Consequently, the first inequality in \eqref{vk-vstar-bound} follows. We next claim that, for each $l\ge\max\{\overline{l}_{k},\widetilde{l}_k\}$, the triple $(v^{k,l},z^{k,l},\lambda^{k,l})\in T_{\!x^k}\mathcal{M}\times\mathbb{Z}\times\partial\psi_{\rho_k}(z^{k,l})$ satisfies the criterion \eqref{inexact-cond1} or the conditions \eqref{model-descent}-\eqref{inexact-cond2} by the following two cases.
	
 \noindent 
 \textbf{Case 1:} $\|v^{k,*}\|\le\frac{3\widehat{\chi}\epsilon}{4\overline{\beta}}$. For each $l\ge\widetilde{l}_k$, the first inequality in \eqref{vk-vstar-bound} implies that $\|v^{k,l}\|\le\|v^{k,*}\|+\frac{\widehat{\chi}\epsilon}{4\overline{\beta}} \le \frac{\widehat{\chi}\epsilon}{\overline{\beta}}$, thereby yielding $\|\beta_kv^{k,l}\|\le \widehat{\chi}\epsilon\le\varepsilon_k$, where the last inequality is due to \eqref{epsk-lbound}. While $\|\mathcal{R}_k(w^{k,l})\|\le\widehat{\chi}\epsilon$ follows from \eqref{bound-chik1}, which together with \eqref{epsk-lbound} implies that $(v^{k,l},z^{k,l},\lambda^{k,l})$ for all $l\ge\max\{\overline{l}_{k},\widetilde{l}_k\}$ satisfy \eqref{inexact-cond1}. 
	
 \noindent 
 \textbf{Case 2:} $\|v^{k,*}\|>\frac{3\widehat{\chi}\epsilon}{4\overline{\beta}}$. The $\beta_k$-strong convexity of $\widehat{\Theta}_{\rho_k}(x^k\!+\!\cdot;x^k)+\delta_{T_{\!x^k}\mathcal{M}}(\cdot)$, together with the second inequality in \eqref{vk-vstar-bound}, yields that for each $l\ge\widetilde{l}_k$, 
 \begin{align*}
 \Theta_{\rho_k}(x^k)&=\widehat{\Theta}_{\rho_k}(x^k;x^k)\ge \widehat{\Theta}_{\rho_k}(x^k\!+\!v^{k,*};x^k)+({\beta_{\rm min}}/{2})\|v^{k,*}\|^2\\
 &>\widehat{\Theta}_{\rho_k}(x^k\!+\!v^{k,*};x^k)+ \frac{9\beta_{\rm min}(\widehat{\chi}\epsilon)^2}{32\overline{\beta}^2}>\widehat{\Theta}_{\rho_k}(x^k\!+\!v^{k,l};x^k).
 \end{align*}
 Moreover, for each $l\ge\max\{\overline{l}_{k},\widetilde{l}_k\}$, the first inequality in \eqref{vk-vstar-bound} implies $\|v^{k,l}\|\ge \frac{\widehat{\chi}\epsilon}{2\overline{\beta}}$. Together with \eqref{bound-chik1}, $a_k\in[a_{\rm min},a_{\rm max}]$ and $b_k\in[b_{\rm min},b_{\rm max}]$, it then follows that $ \|\mathcal{R}_k(w^{k,l})\|\le \min\{a_k\|v^{k,l}\|^2,b_k\|v^{k,l}\|\}$. Consequently, the triple $(v^{k,l},z^{k,l},\lambda^{k,l})$ for each $l\ge\max\{\overline{l}_{k},\widetilde{l}_k\}$ satisfies the conditions \eqref{model-descent}-\eqref{inexact-cond2}.
	
It remains to show that $\max\{\overline{l}_{k},\widetilde{l}_k\}=O(\epsilon^{-2})$. To this end, we only need to bound $\widehat{\varpi}_k$ and $\widetilde{\varpi}_k$ appearing in the definitions of $\overline{l}_{k}$ and $\widetilde{l}_k$.  
Since $\{\mathcal{A}_k\}_{k\in \mathbb{N}}$ is bounded and  $\sigma_k\in[\overline{\sigma}^{-1},\overline{\sigma}]$ for all $k\in\mathbb{N}$, the sequence $\{\mathcal{C}_k\}_{k\in\mathbb{N}}$ is uniformly bounded. This, together with the boundedness of $\{w^{k,*}\}_{k\in\mathbb{N}}$ and $\{\overline{w}^{k,0}\}_{k\in\mathbb{N}}$, implies that there exists $R_0>0$ such that $R_{k,0}\le R_0$ for all $k\in\mathbb{N}$. Then, $\widehat{\varpi}_k\le\widehat{\varpi}:=\frac{2R_0(\overline{\sigma}+1)}{\varrho\sqrt{\overline{\sigma}}}$. Note that $z^{k,*}\in\mathcal{X}\times\mathbb{Y}$ and $\lambda^{k,*}\in\partial\psi_{\rho_k}(z^{k,*})$ for all $k\in\mathbb{N}$. The Lipschitz continuity of $\psi_{\rho_k}$ on $\mathcal{X}\times\mathbb{Y}$ implies $\|\lambda^{k,*}\|\le\sqrt{L_{\vartheta,\mathcal{X}}^2\!+\! c_u^2\overline{\rho}^2}$. Then, $\widetilde{\varpi}_k\le\widetilde{\varpi}:=2R_{0}\varrho^{-1}\big(3R_{0}+\!\sqrt{\overline{\sigma}(L_{\vartheta,\mathcal{X}}^2\!+\! c_u^2\overline{\rho}^2)}\big)$. Now, 	$\overline{l}_k\le\Big\lceil\widehat{\varpi}\max\Big\{\frac{1}{\widehat{\chi}\epsilon}, \frac{4\overline{\beta}^2}{a_{\min}\widehat{\chi}^2\epsilon^2}, \frac{2\overline{\beta}}{b_{\min}\widehat{\chi}\epsilon},\frac{64\overline{\beta}^2(L_{\vartheta,\mathcal{X}}^2+c_u^2\overline{\rho}^2)^{1/2}}{\beta_{\min}\widehat{\chi}^2\epsilon^2}\Big\}\Big\rceil$ and $\widetilde{l}_k\le\Big\lceil\frac{64\overline{\beta}^2\widetilde{\varpi}}{\beta_{\min}\widehat{\chi}^2\epsilon^2}\Big\rceil$ follow by the definitions of $\overline{l}_k$ and $\widetilde{l}_k$. The desired conclusion holds for Algorithm \ref{aADMM} with $\alpha=2$ by the arbitrariness of $k\in [k_{\epsilon}]$. For Algorithm \ref{aADMM} with $\alpha>2$, combining \cite[Proposition 2.9(b)]{Sun2025} with the detailed proof of \cite[Theorem 3.5]{Bot2023} and following the proof of \cite[Theorem 3.7(a)]{Sun2025}, inequality \eqref{Rk-ubound} likewise holds with $\widehat{\varpi}_k$ replaced by $\frac{c(\alpha)(\sigma_k+1)}{\varrho\sqrt{\rho_k}}$, where $c(\alpha)$ is a constant depending only on $\alpha$. Then, by arguments analogous to those above, the conclusion follows for Algorithm \ref{aADMM} with $\alpha>2$.

\noindent
{\bf(ii)} Fix any $k\in[k_{\epsilon}]$. Note that subproblem \eqref{Esubprob} is a special case of \cite[Problem (1)]{Han2018} with $g(\cdot)=\ell_k(\cdot)+\frac{\beta_k}{2}\|\cdot\|^2,h(\cdot)\equiv 0,\vartheta(\cdot)=\delta_{T_{\!x^k}\mathcal{M}}(\cdot), \varphi(\cdot)=\psi_{\rho_k}(\cdot), \mathscr{A}^*=\mathcal{A}_k,\mathscr{B}^*=-\mathcal{I}$ and $c=-G(x^k,0)$, where $\mathcal{A}_k$ is the same as in item (i). Using \cite[Corollary 1]{Han2018} and the uniform error-bound condition in Assumption \ref{uniform-EB-ass}, we obtain that, for each $l\in\mathbb{N}_{+}$,
\begin{equation*}
{\rm dist}_{\mathcal{E}_k}(w^{k,l+1},\mathcal{R}_{k}^{-1}(0))\le\frac{2c_{\mathcal{R}}^2\nu_k\|\mathcal{E}_k\|+1}{2c_{\mathcal{R}}^2\nu_k\|\mathcal{E}_k\|+2} {\rm dist}_{\mathcal{E}_k}(w^{k,l},\mathcal{R}_{k}^{-1}(0)),
\end{equation*}
where $\mathcal{E}_k:={\rm Diag}(\beta_k\mathcal{I},\sigma_{k}\mathcal{I},\sigma_k^{-1}\mathcal{I})+\frac{\sigma_{k}}{4}\mathcal{B}_k\mathcal{B}_k^*$ with $\mathcal{B}_k^*(w):=\mathcal{A}_kv-z$ for $w=(v,z,\lambda)$, $\nu_k:=\max\{3\sigma_k\|\|\mathcal{A}_k\|^2\mathcal{I}\!-\!\mathcal{A}_k^*\mathcal{A}_k\|,3\sigma_k\|\mathcal{A}_k\|^2,\sigma_k^{-1}\}$, and ${\rm dist}_{\mathcal{E}_k}(\cdot,\mathcal{R}_{k}^{-1}(0))$ denotes the distance induced by the norm $\|\cdot\|_{\mathcal{E}_k}$. From $\sigma_k\in [\overline{\sigma}^{-1},\overline{\sigma}]$ and Assumption \ref{ass1}(i), the sequences $\{\nu_j\}_{j\in\mathbb{N}}$ and $\{\|\mathcal{E}_j\|\}_{j\in\mathbb{N}}$ are bounded, so there exists $C_{0}>0$ such that $\nu_j\|\mathcal{E}_j\|\le C_{0}$ for all $j\in\mathbb{N}$. Therefore, 
$\frac{2c_{\mathcal{R}}^2\nu_k\|\mathcal{E}_k\|+1}{2c_{\mathcal{R}}^2\nu_k\|\mathcal{E}_k\|+2}\le \frac{2c_{\mathcal{R}}^2C_{0}+1}{2c_{\mathcal{R}}^2C_{0}+2}:=  c_{r}\in(0,1)$. Thus,
\begin{equation}\label{linear-rate}
{\rm dist}_{\mathcal{E}_k}(w^{k,l+1},\mathcal{R}_{k}^{-1}(0))\le c_r{\rm dist}_{\mathcal{E}_k}(w^{k,l},\mathcal{R}_{k}^{-1}(0))\quad{\rm for\ all}\ l\in\mathbb{N}_{+}.
\end{equation}
Let $L_{\mathcal{R}_k}$ denote the Lipschitz constant of $\mathcal{R}_k$. For each $l\in\mathbb{N}_{+}$ and any $w\in\mathcal{R}_{k}^{-1}(0)$,  
\begin{equation*}
\|\mathcal{R}_k(w^{k,l})\|=\|\mathcal{R}_k(w^{k,l})-\mathcal{R}_k(w)\|\le L_{\mathcal{R}_k}\|w^{k,l}\!-\!w\|\le\frac{L_{\mathcal{R}_k}\|w^{k,l}-w\|_{\mathcal{E}_k}}{\sqrt{\min\{\beta_{\min},\sigma_k,\sigma_k^{-1}\}}}.
\end{equation*}
Taking the infimum over $w\in\mathcal{R}_{k}^{-1}(0)$ and then applying \eqref{linear-rate} yields, for all $l\in\mathbb{N}_{+}$, 
\begin{equation*}
\!\!\|\mathcal{R}_k(w^{k,l})\|\le\!\frac{L_{\mathcal{R}_k}{\rm dist}_{\mathcal{E}_k}(w^{k,l},\mathcal{R}_{k}^{-1}(0))}{\sqrt{\min\{\beta_{\min},\sigma_k,\sigma_k^{-1}\}}}\le L_{\mathcal{R}_k}\varpi_kc_r^{l}\ \ {\rm with}\  \varpi_k\!:=\!\frac{{\rm dist}_{\mathcal{E}_k}(w^{k,0},\mathcal{R}_{k}^{-1}(0))}{\sqrt{\min\{\beta_{\min},\sigma_k,\sigma_k^{-1}\}}}.
\end{equation*}
Let $\overline{l}_{k}:=\!\Big\lceil\frac{1}{\log(1/c_{r})}\max\big\{\log\frac{L_{\mathcal{R}_k}\varpi_k}{\widehat{\chi}\epsilon},\log\frac{4\overline{\beta}^2L_{\mathcal{R}_k}\varpi_k}{a_{\min}\widehat{\chi}^2\epsilon^2},\log\frac{2\overline{\beta}L_{\mathcal{R}_k}\varpi_k}{b_{\min}\widehat{\chi}\epsilon}\big\}\Big\rceil$. It then follows from the last inequality that, for each $l\ge\overline{l}_{k}$, 
\begin{equation}\label{bound-chik}
 \|\mathcal{R}_k(w^{k,l})\|\le \min\Big\{\widehat{\chi}\epsilon,\,\frac{a_{\min}(\widehat{\chi}\epsilon)^2}{4\overline{\beta}^2},\,\frac{b_{\min}\widehat{\chi}\epsilon}{2\overline{\beta}}\Big\}.
\end{equation}
Note that $\widehat{\Theta}_{\rho_k}(x^k\!+\!\cdot;x^k)$ is Lipschitz continuous on the compact convex set $\mathcal{X}$ with the Lipschitz constant, denoted by $\widehat{L}_k$. Then, for any $l\in\mathbb{N}_{+}$, it holds 
\begin{equation*}
|\widehat{\Theta}_{\rho_k}(x^k\!+v^{k,*};x^k)-\widehat{\Theta}_{\rho_k}(x^k\!+\!v^{k,l};x^k)|\le \widehat{L}_k\|v^{k,l}-v^{k,*}\|.
\end{equation*}
In addition, recall that $\mathcal{R}_{k}^{-1}(0)\subset\{\overline{v}^k\}\times\{G(x^k,\overline{v}^k)\}\times\partial \psi_{\rho_k}(G(x^k,\overline{v}^k))$ and $v^{k,*}=\overline{v}^k$. From the above \eqref{linear-rate} and the definition of $\|\cdot\|_{\mathcal{E}_k}$, it follows that, for each $l\in\mathbb{N}_{+}$, 
\begin{equation*}
 \|v^{k,l}\!-v^{k,*}\|\le\!\frac{\|v^{k,l}-v^{k,*}\|_{\mathcal{E}_k}}{\sqrt{\min\{\beta_{\min},\sigma_k,\sigma_k^{-1}\}}}\le \frac{{\rm dist}_{\mathcal{E}_k}(w^{k,l},\Omega_k^*)}{\sqrt{\min\{\beta_{\min},\sigma_k,\sigma_k^{-1}\}}}\le\varpi_kc_r^{l}.
\end{equation*}
Let $\widetilde{l}_{k}\!:=\!\Big\lceil\frac{1}{\log(1/c_r)}\max\Big\{\!\log\frac{4\overline{\beta}^2\widehat{L}_k\varpi_k}{\beta_{\min}\widehat{\chi}^2\epsilon^2},\log\frac{4\overline{\beta}\varpi_k}{\widehat{\chi}\epsilon}\!\Big\}\!\Big\rceil$. From the above two inequalities, we obtain, for all $l\ge\widetilde{l}_{k}$,
\begin{equation}\label{vkbar-bound}
 \|v^{k,l}-v^{k,*}\|\le\frac{\widehat{\chi}\epsilon}{4\overline{\beta}} \ \ {\rm and}\ \ 
 \widehat{\Theta}_{\rho_k}(x^k\!+\!v^{k,*};x^k)\ge   \widehat{\Theta}_{\rho_k}(x^k\!+\!v^{k,l};x^k)-\frac{\beta_{\min}\widehat{\chi}^2\epsilon^2}{4\overline{\beta}^2}.
\end{equation}

Now, by virtue of \eqref{bound-chik}-\eqref{vkbar-bound}, analogous arguments to those for Cases 1 and 2 in item (i) show that the triplet $(v^{k,l},z^{k,l},\lambda^{k,l})\in T_{\!x^k}\mathcal{M}\times\mathbb{Z}\times\partial\psi_{\rho_k}(z^{k,l})$ for each $l\ge\max\{\overline{l}_{k},\widetilde{l}_k\}$ satisfies either the criterion \eqref{inexact-cond1} or the criterion given by \eqref{model-descent}-\eqref{inexact-cond2}. It remains to show that $\max\{\overline{l}_{k},\widetilde{l}_k\}=O(\log\epsilon^{-1})$. To this end, we only need to bound $L_{\mathcal{R}_k},\varpi_k$ and $\widehat{L}_k$ appearing in the definitions of $\overline{l}_{k}$ and $\widetilde{l}_k$. By the definition of $\mathcal{R}_k$, we have $L_{\mathcal{R}_k}\le \sqrt{(\beta_k\!+\!\|\mathcal{A}_k\|)^2+(1\!+\!\|\mathcal{A}_k\|)^2}\le\sqrt{(\overline{\beta}+\!c_{\mathcal{A}})^2+(1+\!c_{\mathcal{A}})^2}:=L_{\mathcal{R}}$ with  $c_{\mathcal{A}}:=1+\gamma_{\nabla\!g}$, where the second inequality follows by the definitions of $\mathcal{A}_k$ and $\gamma_{\nabla g}$ in Theorem \ref{complexity-theorem}. Since $\bigcup_{k\in\mathbb{N}}\mathcal{R}_{k}^{-1}(0)$ and $\{\mathcal{E}_k\}_{k\in\mathbb{N}}$ are bounded and $\{w^{k,0}\}_{k\in\mathbb{N}}\subset B_0$, there exists a constant $d_{\mathcal{E}}>0$ such that ${\rm dist}_{\mathcal{E}_k}(w^{k,0},\mathcal{R}_{k}^{-1}(0))\le d_{\mathcal{E}}$ for all $k\in\mathbb{N}$. The definition of $\varpi_k$ then implies that $\varpi_k\le\varpi:=\frac{d_{\mathcal{E}}}{\sqrt{\min\{\beta_{\min},\,\overline{\sigma}^{-1},\,\overline{\sigma}\}}}$. By the definition of $\widehat{L}_k$, the expression of $\widehat{\Theta}_{\!\rho_k}(x^k\!+\cdot;x^k)$, and Assumption \ref{ass1}, we have $\widehat{L}_k\le \widehat{L}:=  L_{\nabla\!f}+L_{h}+\overline{\beta}c_{v}+(L_{\vartheta,\mathcal{X}}^2\!+c_u^2\overline{\rho}^2)^{1/2}$, where $c_{v}:=\max_{v\in\mathcal{X}}\|v\|$, and $L_{\nabla\!f}$ and $L_{h}$ are defined in Lemma \ref{lemma1-complexity}(i). The above arguments, together with the definitions of $\overline{l}_k$ and $\widetilde{l}_k$, show that $\overline{l}_{k}\le\overline{l}:=\Big\lceil\frac{1}{\log(1/c_r)}\max\Big\{\log\frac{4\overline{\beta}^2L_{\mathcal{R}}\varpi}{a_{\min}\widehat{\chi}^2\epsilon^2},\,\log \frac{2\overline{\beta}L_{\mathcal{R}}\varpi}{b_{\min}\widehat{\chi}\epsilon},\,\log\frac{2L_{\mathcal{R}}\varpi}{\widehat{\chi}\epsilon}\Big\}\Big\rceil$ and  $\widetilde{l}_{k}\le\widetilde{l}:=\Big\lceil\frac{1}{\log(1/c_r)}\max\Big\{\log\frac{4\varpi\overline{\beta}}{\widehat{\chi}\epsilon},\,\log \frac{4\varpi\overline{\beta}^2\widehat{L}}{\beta_{\min}\widehat{\chi}\epsilon^2}\Big\}\Big\rceil$. The conclusion then follows.
\end{proof}
\begin{remark}\label{remark-uniform-EB-ass}
{\bf(a)} When $\mathcal{M}=\mathbb{R}^n$, $\vartheta$ is polyhedral convex, $\operatorname{dist}(\cdot,K)$ is piecewise linear quadratic (PLQ), and $g$ is affine, the uniform error bound in Assumption~\ref{uniform-EB-ass} can be shown to hold under Assumption~\ref{ass1}. The proof, provided in Appendix C, relies on establishing uniform quadratic growth of the dual objective and then deriving an error bound for the primal-dual KKT residual. Since $g'(x^k)$ is constant and $\rho_k$ takes only finitely many values, the linear systems used in the proof admit a common Hoffman constant, independent of their $k$-dependent right-hand sides. Combining this uniform estimate with the boundedness of $\{x^k\}_{k\in\mathbb{N}}$ and $\{\nabla\! f(x^k)+\zeta^k\}_{k\in\mathbb{N}}$, and the bounds $0<\beta_{\min}\leq\beta_k\leq\overline{\beta}$ for all $k\in\mathbb{N}$, yields an error-bound constant independent of $k$.

\noindent
{\bf(b)} Even when $\vartheta$ and ${\rm dist}(\cdot,K)$ are both PLQ (see 
the test problems in Sections \ref{sec6.4}-\ref{sec6.5}), this structure alone does not guarantee that the semi-proximal ADMM inner solver satisfies the prescribed inexactness criteria within $O(\log\epsilon^{-1})$ iterations uniformly over $k\in[k_\epsilon]$. Nevertheless, numerical results in Sections \ref{sec6.4}-\ref{sec6.5} indicate that Algorithm~\ref{PenAl} equipped with Algorithm~\ref{sPADMM} achieves slightly better empirical
performance than that equipped with Algorithm~\ref{aADMM}. This phenomenon can be explained as follows. For any fixed $k\in\mathbb{N}$, the mapping 
$\mathcal R_k$ is piecewise polyhedral \cite{Sun1986}, which by \cite[Corollary~1]{Han2018} and its proof implies the existence of $c_{\mathcal{R},k}>0$ such that ${\rm dist}(w^{k,l},\mathcal R_k^{-1}(0))\le c_{\mathcal{R},k}\|\mathcal{R}_k(w^{k,l})\|$ for all $l\in\mathbb{N}_{+}$, and consequently, Algorithm~\ref{sPADMM} converges globally at a linear rate. However, both the error-bound constant and the convergence factor may depend on $k$. Therefore, linear convergence for
each fixed subproblem alone is insufficient to establish
the overall oracle complexity. Assumption~\ref{uniform-EB-ass} requires an error-bound constant that is uniform over all outer iterations and valid along the entire sequence of inner iterates.
\end{remark}
\section{Global convergence analysis}\label{sec5}

This section is dedicated to the convergence analysis of the sequence $\{x^k\}_{k\in\mathbb{N}}$ under Assumption \ref{ass1}. In view of Lemma \ref{Lemma-Sfinite}, we assume that the index set $\mathcal{S}$ is infinite throughout this section. The following theorem establishes the subsequential convergence of $\{x^k\}_{k\in\mathbb{N}}$. 
\begin{theorem}\label{converge-theorem1}
 Under Assumption \ref{ass1}, the following assertions hold.
 \begin{itemize}
 \item[(i)] $\lim_{\mathcal{S}\ni k\to\infty}v^k=0$, and $\lim_{k\to\infty}\Theta_{\rho_k}(x^k)=\varpi_*$ for some $\varpi_*$; 
 
 \item[(ii)] Every accumulation point of $\{x^k\}_{k\in\mathbb{N}}$ is a stationary point of  \eqref{Rcprob};

 \item[(iii)] $\lim_{k\to\infty}{\rm dist}(g(x^k),K)=0$ and $\lim_{k\to\infty}\Theta(x^k)=\varpi_*$.
 \end{itemize}
\end{theorem}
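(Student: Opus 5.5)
The plan is to work with the merit sequence $\Theta_{\rho_k}(x^k)$. By Remark \ref{remark-ass1}(b) the penalty parameter is eventually constant, $\rho_k=\overline{\rho}$ for $k\ge\overline{k}$. For $k\ge\overline{k}$ the sequence $\Theta_{\overline{\rho}}(x^k)$ is nonincreasing: unsuccessful iterations keep $x^{k+1}=x^k$, and successful ones satisfy \eqref{S-descent}. It is also bounded below by $\min_{x\in\varLambda_0}\Theta(x)$, so it converges to some $\varpi_*$. Summing \eqref{S-descent} over $k\in\mathcal{S}$, $k\ge\overline{k}$, gives $\overline{\gamma}\eta_1\sum_{k\in\mathcal{S}}\|v^k\|^2<\infty$, hence $v^k\to0$ along $\mathcal{S}$. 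This proves (i).

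For (ii), let $\overline{x}$ be an accumulation point. By Remark \ref{remark-alg}(c) it is also a limit of $\{x^k\}_{k\in\mathcal{S}'}$ for some subsequence $\mathcal{S}'\subset\mathcal{S}$. For $k\in\mathcal{S}$ we have \eqref{inexact-cond2}, the bounds $a_k\le a_{\max}$, $b_k\le b_{\max}$, and $\beta_k\le\overline{\beta}$ (Lemma \ref{lemma1-complexity}(ii)). Together with $v^k\to0$ these give:
\begin{itemize}
\item $z_1^k-x^k\to0$ and $g(x^k)-z_2^k\to0$;
\item $\mathcal{P}_{T_{x^k}\mathcal{M}}(\nabla f(x^k)+\zeta^k+\xi_1^k+\nabla g(x^k)\xi_2^k)\to0$.
\end{itemize}
By Lemma \ref{bound-lemma}, $(\zeta^k,\xi^k)$ is bounded, so we may extract limits $(\zeta^*,\xi^*)$. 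Outer semicontinuity of $\partial(-h)$, $\partial\vartheta$ and $\overline{\rho}\partial{\rm dist}(\cdot,K)$, combined with continuity of $x\mapsto\mathcal{P}_{T_x\mathcal{M}}$, then yields the stationarity inclusion with $\xi_2^*\in\overline{\rho}\partial{\rm dist}(g(\overline{x}),K)$, exactly as in the proof of Lemma \ref{Lemma-Sfinite}.

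The main obstacle is proving feasibility $g(\overline{x})\in K$, which is what turns $\xi_2^*$ into an element of $\mathcal{N}_K(g(\overline{x}))$ via Lemma \ref{subdiff-dist}. For this I will use the penalty rule \eqref{update-rho}. For $k\ge\overline{k}$ with $k\in\mathcal{S}$ no increase occurs, so $\max\{\overline{\rho},[{\rm dist}(z_2^k,K)]^{-1}\}\ge\beta_k^{-1}\|v^k\|^{-1}$. Since $v^k\to0$ and $\beta_k\le\overline{\beta}$, the right-hand side tends to $\infty$ and eventually exceeds $\overline{\rho}$. Hence for all large $k\in\mathcal{S}$ we get ${\rm dist}(z_2^k,K)\le\beta_k\|v^k\|\le\overline{\beta}\|v^k\|\to0$, and therefore ${\rm dist}(g(x^k),K)\to0$ along $\mathcal{S}$. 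Taking limits gives $g(\overline{x})\in K$.

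For (iii), every $x^k$ equals $x^{k'}$ for the latest $k'\in\mathcal{S}$ with $k'<k$, more precisely $x^k=R_{x^{k'}}(v^{k'})$. This requires the estimate at the new point rather than at $x^{k'}$. I will bound
\[
{\rm dist}(g(x^{k'+1}),K)\le{\rm dist}(g(x^{k'}),K)+c_uL_g\|R_{x^{k'}}(v^{k'})-x^{k'}\|,
\]
and the last term is at most $c_uL_gM_1\|v^{k'}\|\to0$ by Lemma \ref{lemma-retract}. Hence ${\rm dist}(g(x^k),K)\to0$ along the whole sequence. Finally, $\Theta(x^k)=\Theta_{\overline{\rho}}(x^k)-\overline{\rho}\,{\rm dist}(g(x^k),K)\to\varpi_*$.
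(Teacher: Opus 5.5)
Your proposal is correct and follows the paper's proof essentially step by step: eventual constancy of $\rho_k$, monotonicity and summability from \eqref{S-descent}, subsequential limits along $\mathcal{S}$ via outer semicontinuity, and feasibility of accumulation points from the non-increase condition in \eqref{update-rho}. You also spell out the step the paper compresses, namely that $\beta_k^{-1}\|v^k\|^{-1}\to\infty$ eventually exceeds $\overline{\rho}$, which forces ${\rm dist}(z_2^k,K)\le\beta_k\|v^k\|$.

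The only deviation is in (iii). You transfer feasibility from $\mathcal{S}$ to the whole sequence through the retraction bound $\|R_{x^{k'}}(v^{k'})-x^{k'}\|\le M_1\|v^{k'}\|$. The paper instead uses that ${\rm dist}(g(\cdot),K)$ is continuous and, by (ii), vanishes at every accumulation point of the bounded sequence $\{x^k\}$. Both arguments are valid.
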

\begin{proof}
Combining Remark \ref{remark-ass1}(b) with the iterations of Algorithm \ref{PenAl}, for each $k\ge\overline{k}$, 
\begin{equation}\label{dist-z2k}
{\rm dist}(z_2^k,K)\le\varepsilon_k\ \ {\rm or}\ \ \max\{\rho_k,[{\rm dist}(z_2^k,K)]^{-1}\}\ge \beta_{k}^{-1}\|v^k\|^{-1}. 
\end{equation}
This together with Remark \ref{remark-alg}(b) implies that for all $\mathcal{S}\ni k\ge\overline{k}$, 
\begin{equation}\label{temp-ineq51}
\Theta_{\overline{\rho}}(x^k)-\Theta_{\overline{\rho}}(x^{k+1})\ge \gamma\eta_1\|v^k\|^2.
\end{equation}

\noindent
{\bf(i)} Note that $x^k=x^{k+1}$ for all $k\notin\mathcal{S}$. It follows from \eqref{temp-ineq51} that for any $\mathcal{S}\ni k'>\overline{k}$, 
\begin{align*}
\gamma\eta_1\sum_{\mathcal{S}\ni k\ge \overline{k}}^{k'}\!\!\|v^k\|^2&\le\sum_{\mathcal{S}\ni k\ge\overline{k}}^{k'}\big[\Theta_{\overline{\rho}}(x^k)-\Theta_{\overline{\rho}}(x^{k+1})\big]=\Theta_{\overline{\rho}}(x^{\overline{k}})-\Theta_{\overline{\rho}}(x^{k'+1})\\
&\le \Theta_{\overline{\rho}}(x^{\overline{k}})-\min_{x\in\varLambda_0}\Theta_{\overline{\rho}}(x)\le \Theta_{\overline{\rho}}(x^{\overline{k}})-\min_{x\in\varLambda_0}\Theta(x).
\end{align*}
Taking the limit as $\mathcal{S}\ni k'\to\infty$ in the last inequality gives
$\gamma\eta_1\sum_{\mathcal{S}\ni k\ge \overline{k}}^{\infty}\|v^k\|^2<\infty$, which implies that 
$\lim_{\mathcal{S}\ni k\to\infty}v^k=0$. In addition, since $x^k=x^{k+1}$ for all $k\notin\mathcal{S}$, it follows from \eqref{temp-ineq51} that $\{\Theta_{\rho_{k}}(x^k)\}_{k\ge\overline{k}}$ is nonincreasing. From $\Theta_{\rho_{k}}(x^k)\ge\min_{x\in\varLambda_0}\Theta(x)$ for all $k\in\mathbb{N}$, it then follows that the sequence $\{\Theta_{\rho_{k}}(x^k)\}_{k\in\mathbb{N}}$ is convergent. 

\noindent
{\bf(ii)} Let $x^*$ be an arbitrary cluster point of $\{x^k\}_{k\in\mathbb{N}}$. By Remark \ref{remark-alg}(c), $x^*$ is also a cluster point of $\{x^k\}_{k\in\mathcal{S}}$, so there exists an infinite index set $\mathcal{K}\subset\mathcal{S}$ such that $\lim_{\mathcal{K}\ni k\to\infty}x^k=x^*$. By the definition of $\mathcal{S}$, for each $k\in\mathcal{S}$, $(v^k,z^k,\xi^k)\in T_{x^k}\mathcal{M}\times \mathbb{Z}\times\partial\psi_{\rho_k}(z^k)$ satisfies \eqref{model-descent}-\eqref{inexact-cond2}. Taking the limit as $\mathcal{S}\ni k\to\infty$ in the first inequality of \eqref{inexact-cond2} and using $\lim_{\mathcal{S}\ni k\to\infty}v^k=0$ and $a_k\le a_{\max}$ for all $k\in\mathbb{N}$, we get $\lim_{\mathcal{K}\ni k\to\infty}z^k=(x^*;g(x^*))$. By Lemma \ref{lemma1-complexity}(i), the sequence $\{(\zeta^k,\xi^k)\}_{k\in\mathcal{K}}$ is bounded. By passing to a subsequence if necessary, we can assume that $\lim_{\mathcal{K}\ni k\to\infty}(\zeta^k,\xi^k)=(\zeta^*,\xi^*)$. The outer semicontinuity of $\partial(-h),\partial\vartheta(\cdot)$ and $\partial{\rm dist}(\cdot,K)$, together with $\lim_{k\to\infty}\rho_k=\overline{\rho}$, implies that $\zeta^*\in\partial(-h)(x^*)$ and $\xi^*=(\xi_1^*;\xi_2^*)\in\partial\vartheta(x^*)\times\overline{\rho}\partial{\rm dist}(g(x^*),K)$. While passing to the limit $\mathcal{S}\ni k\to\infty$ in the second inequality of \eqref{dist-z2k} and using $\lim_{\mathcal{S}\ni k\to\infty}v^k=0$ yields ${\rm dist}(g(x^*),K)=0$, which together with $\xi_2^*\in\overline{\rho}\partial{\rm dist}(g(x^*),K)$ and Lemma \ref{subdiff-dist} implies  $\xi_2^*\in\mathcal{N}_K(g(x^*))$. Now, taking the limit as $\mathcal{S}\ni k\to\infty$ in the second inequality of \eqref{inexact-cond2} and using the continuity of $\mathcal{P}_{T_{x}\mathcal{M}}$ as a function of $x\in\mathcal{M}$ and $b_k\le b_{\rm max}$, we have $\nabla\! f(x^*)+\zeta^*+\xi_1^*+\nabla g(x^*)\xi_2^*+N_{x^*}\mathcal{M}=0$.
This, by Definition \ref{def-spoint}, shows that $x^*$ is a stationary point of \eqref{Rcprob}. 

\noindent
{\bf(iii)} Let $\phi(\cdot):={\rm dist}(g(\cdot),K)$. Then, $\phi$ is continuous. From item (ii), $\phi(x^*)=0$ for any cluster point $x^*$ of $\{x^k\}_{k\in\mathbb{N}}$. Therefore, $\lim_{k\to\infty}{\rm dist}(g(x^k),K)=\lim_{k\to\infty}\phi(x^k)=0$. This, along with $\rho_k=\overline{\rho}$ for all $k\ge\overline{k}$, implies $\lim_{k\to\infty}\Theta(x^k)=\lim_{k\to\infty}\Theta_{\rho_k}(x^k)$.  
\end{proof}

The rest of this section focuses on the convergence of $\{x^k\}_{k\in\mathbb{N}}$ under the KL property. In view of Theorem \ref{converge-theorem1} and the iterative scheme of Algorithm \ref{PenAl}, it suffices to show the convergence of $\{x^k\}_{k\in\mathcal{S}}$ under the KL property. This requires to construct an appropriate potential function. To this end, we first derive an upper bound for $\{\Theta_{\rho_k}(x^{k+1})\}_{k\in\mathcal{S}}$.  
\begin{lemma}\label{Lemma-Thetakbound}
Let $\overline{c}:=L+2a_{\rm max}(L_{\vartheta}^2+\!c_u^2\overline{\rho}^2)^{1/2}+\overline{\beta}/2$, where $L$ and $L_{\vartheta}$ are the constants defined in Lemma \ref{lemma1-complexity}(i). Then, under Assumption \ref{ass1}, for each $\mathcal{S}\ni k\ge\overline{k}$, 
\begin{equation*}
 \Theta_{\rho_{k}}(x^{k+1})\le f(x^k)-h(x^k)+\langle\nabla\!f(x^k)\!+\!\zeta^k,v^k\rangle+\langle \xi^k,G(x^k,v^k)\!-\!z^k\rangle+\psi_{\overline{\rho}}(z^k)+\overline{c}\|v^k\|^2.
\end{equation*}
\end{lemma}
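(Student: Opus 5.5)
The plan is to chain Lemma \ref{Lemma2-Thetak} with the identity $\widehat{\Theta}_{\rho_k}(x^k+\cdot;x^k)=\ell_k(\cdot)+\frac{\beta_k}{2}\|\cdot\|^2+\psi_{\rho_k}(G(x^k,\cdot))$. Then we replace $\psi_{\rho_k}(G(x^k,v^k))$ by the computable quantity $\psi_{\rho_k}(z^k)+\langle\xi^k,G(x^k,v^k)-z^k\rangle$ at a cost of $O(\|v^k\|^2)$, using the first inequality of \eqref{inexact-cond2}.

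First, fix $\mathcal{S}\ni k\ge\overline{k}$, so that $\rho_k=\overline{\rho}$ by Remark \ref{remark-ass1}(b) and $x^{k+1}=R_{x^k}(v^k)$. Since $k\in\mathcal{S}\subset\mathbb{N}_2$, conditions \eqref{model-descent}-\eqref{inexact-cond2} hold. Apply Lemma \ref{Lemma2-Thetak}, and bound its Lipschitz moduli by the uniform constants of Lemma \ref{lemma1-complexity}(i), exactly as in the proof of that lemma: the segments lie in $\mathcal{X}_1\cap\mathcal{X}_2$ and $\mathcal{X}_3$. Then use $\|R_{x^k}(v^k)-(x^k+v^k)\|\le M\|v^k\|^2$ from Lemma \ref{bound-lemma}. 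Discarding the nonpositive term $-\frac{\beta_k}{2}\|v^k\|^2$ inside Lemma \ref{Lemma2-Thetak}, this yields
\[
\Theta_{\overline{\rho}}(x^{k+1})\le \widehat{\Theta}_{\overline{\rho}}(x^k+v^k;x^k)+L\|v^k\|^2 .
\]
This is the same inequality already derived in the proof of Lemma \ref{lemma1-complexity}(i). Expanding $\widehat{\Theta}_{\overline{\rho}}(x^k+v^k;x^k)=f(x^k)-h(x^k)+\langle\nabla f(x^k)+\zeta^k,v^k\rangle+\psi_{\overline{\rho}}(G(x^k,v^k))+\frac{\beta_k}{2}\|v^k\|^2$ and using $\beta_k\le\overline{\beta}$ from Lemma \ref{lemma1-complexity}(ii) accounts for the $\overline{\beta}/2$ term in $\overline{c}$.

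Second, bound $\psi_{\overline{\rho}}(G(x^k,v^k))$ via \eqref{temp-ineq31} from the proof of Lemma \ref{Lemma1-Thetak}, letting $\epsilon\downarrow0$ there. That inequality gives
\[
\psi_{\overline{\rho}}(G(x^k,v^k))\le \psi_{\overline{\rho}}(z^k)+\langle\xi^k,G(x^k,v^k)-z^k\rangle+2a_k\sqrt{\widetilde{L}_{\vartheta,k}^2+c_u^2\overline{\rho}^2}\,\|v^k\|^2 .
\]
Since $\mathcal{U}_k\subset\mathcal{X}_1$, we have $\widetilde{L}_{\vartheta,k}\le L_{\vartheta}$. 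Together with $a_k\le a_{\max}$, the last term is at most $2a_{\max}(L_\vartheta^2+c_u^2\overline{\rho}^2)^{1/2}\|v^k\|^2$. Summing the three quadratic contributions gives exactly $\overline{c}\|v^k\|^2$.

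The only delicate point is the one-sided use of \eqref{temp-ineq31}. Its derivation needs $|\psi_{\overline{\rho}}(z^k)-\psi_{\overline{\rho}}(G(x^k,v^k))|$ to be controlled by Lemma \ref{lemma-psirhok} on $\mathcal{U}_k\times\mathbb{Y}$. It also needs $\|\xi^k\|\le(L_\vartheta^2+c_u^2\overline{\rho}^2)^{1/2}$, via \cite[Theorem 9.13]{RW98}. Both hold because $z^k_1$ and $x^k+v^k$ lie in $\mathcal{U}_k\subset\mathcal{X}_1$. Everything else is bookkeeping of constants already fixed in Lemma \ref{lemma1-complexity}.
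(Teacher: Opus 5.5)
Your proposal is correct and follows essentially the same route as the paper. You combine Lemma \ref{Lemma2-Thetak} with Lemma \ref{bound-lemma} to get $\Theta_{\rho_k}(x^{k+1})\le\widehat{\Theta}_{\rho_k}(x^k+v^k;x^k)+L\|v^k\|^2$, expand the model, and bound $\psi_{\rho_k}(G(x^k,v^k))-\psi_{\rho_k}(z^k)-\langle\xi^k,G(x^k,v^k)-z^k\rangle$ by $2a_{\max}(L_\vartheta^2+c_u^2\overline{\rho}^2)^{1/2}\|v^k\|^2$ using the first inequality of \eqref{inexact-cond2}; the only cosmetic difference is that you quote this last estimate from \eqref{temp-ineq31} (with $\epsilon\downarrow 0$ and $\widetilde{L}_{\vartheta,k}\le L_{\vartheta}$), whereas the paper re-derives it directly from Lemma \ref{lemma-psirhok} and the resulting bound on $\|\xi^k\|$.
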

\begin{proof}
 Fix any $\mathcal{S}\ni k\ge\overline{k}$. From Lemma \ref{Lemma2-Thetak} and Lemma \ref{bound-lemma}, it follows that
\begin{align*}
 \Theta_{\rho_{k}}(x^{k+1})&\le \widehat{\Theta}_{\rho_{k}}(x^k\!+\!v^k;x^k)+L\|v^k\|^2\nonumber\\
 &=f(x^k)-h(x^k)+\langle\nabla\!f(x^k)\!+\!\zeta^k,v^k\rangle+\langle \xi^k,G(x^k,v^k)\!-\!z^k\rangle+\psi_{\rho_{k}}(z^k)\nonumber\\
 &\quad\ +\psi_{\rho_k}(G(x^k,v^k))-\psi_{\rho_k}(z^k)-\langle\xi^k,G(x^k,v^k)\!-\!z^k\rangle+\frac{\beta_k}{2}\|v^k\|^2\!+\!L\|v^k\|^2.
\end{align*} 
Since $\psi_{\rho_k}$ is Lipschitz continuous on $\mathcal{X}_1\times\mathbb{Y}$ with constant  $(L_{\vartheta}^2+c_u^2\overline{\rho}^2)^{1/2}$, we have $\|\xi^k\|\le(L_{\vartheta}^2\!+\!c_u^2\overline{\rho}^2)^{1/2}$, where $\mathcal{X}_1$ is the set defined in Lemma \ref{bound-lemma}. Then,
\begin{align*}
&\psi_{\rho_k}(G(x^k,v^k))-\psi_{\rho_k}(z^k)-\langle\xi^k,G(x^k,v^k)-z^k\rangle\\
&\le 2(L_{\vartheta}^2+\!c_u^2\overline{\rho}^2)^{1/2}\,\|G(x^k,v^k)-z^k\|\le 2a_{\rm max}(L_{\vartheta}^2+\!c_u^2\overline{\rho}^2)^{1/2}\,\|v^k\|^2,
\end{align*}
where the first inequality is due to Lemma \ref{lemma-psirhok}, and the second one follows from the first inequality of \eqref{inexact-cond2} and $a_k\le a_{\rm max}$. The desired inequality follows by combining the above two inequalities with $\rho_k\le\overline{\rho}$ and $\beta_k\le\overline{\beta}$. 
\end{proof}

Let $\{e_1,\ldots,e_p\}$ be an orthonormal basis of $\mathbb{X}$, and let $\{d_1,\ldots,d_q\}$ an orthonormal basis of $\mathbb{Y}$. For each $k\in\mathbb{N}$, let $Q^k\in\mathbb{R}^{q\times p}$ be the unique matrix representation of $g'(x^k)\!:\mathbb{X}\to\mathbb{Y}$ under these two bases. Then, with the linear mappings $\mathcal{G}:\mathbb{R}^p\to\mathbb{X}$ and $\mathcal{F}:\mathbb{R}^q\to\mathbb{Y}$ defined by $\mathcal{G}\widehat{x}:=\sum_{i=1}^{p}\widehat{x}_ie_i$ for $\widehat{x}\in\mathbb{R}^p$ and $\mathcal{F}\widehat{y}:=\sum_{i=1}^{q}\widehat{y}_id_i$ for $\widehat{y}\in\mathbb{R}^q$, respectively, the linear operator $g'(x^k)$ and its adjoint $\nabla g(x^k)$ can be represented as 
\begin{equation}\label{Jac-gxk}
 g'(x^k)v=\mathcal{F}Q^k\mathcal{G}^*v\ \ {\rm for}\ v\in\mathbb{X}\ \ {\rm and}\ \ \nabla g(x^k)y=\mathcal{G}(Q^k)^{\top}\mathcal{F}^*y\ \ {\rm for}\ y\in\mathbb{Y}.
\end{equation} 
Now, motivated by Lemma \ref{Lemma-Thetakbound}, we define the potential function $\Xi_{\overline{c}}:\mathbb{W}\to\overline{\mathbb{R}}$ by
\[
\Xi_{\overline{c}}(w)\!:=f(x)+\!\langle x,\zeta\rangle+\!h^*(-\zeta)+\langle y,v\rangle+\!\psi_{\overline{\rho}}(z) \! +\!\langle\xi,H(x,v,Q)\!-\!z\rangle+\!\delta_{T\mathcal{M}}(x,v)\!+\overline{c}\|v\|^2
\]
for $w\!:=(x,v,Q,\xi,z,\zeta,y)\in\mathbb{W}$, where $\mathbb{W}:=\mathbb{X}\times\mathbb{X}\times\mathbb{R}^{p\times q}\times\mathbb{Z}\times\mathbb{Z} \times\mathbb{X}\times\mathbb{X}$. Here, $\overline{c}$ is the same as in Lemma \ref{Lemma-Thetakbound}, and $H:\mathbb{X}\times\mathbb{X}\times\mathbb{R}^{p\times q}\to\mathbb{Z}$ is the mapping defined by 
\begin{equation}\label{Hmap}
 H(x,v,Q):=\begin{pmatrix}
 x+v\\ g(x)+\mathcal{F}Q\mathcal{G}^*v        
 \end{pmatrix}.
\end{equation}
For each $k\in\mathbb{N}$, let $w^k:=(x^k,v^k,Q^k,z^k,\xi^k,\zeta^k,y^k)\in\mathbb{W}$ with $y^k:=\nabla\!f(x^k)+\zeta^k$. The following proposition establishes the convergence of $\{\Xi_{\overline{c}}(w^k)\}_{k\in\mathcal{S}}$ under Assumption \ref{ass1}.   
\begin{proposition}\label{prop-Xicbar}
 Under Assumption \ref{ass1}, the following statements hold true.
 \begin{itemize}
 \item[(i)] The set of cluster points of $\{w^k\}_{k\in\mathbb{N}}$, denoted by $\mathcal{W}^*$, is nonempty and compact.

 \item[(ii)] $\Theta(x^{k+1})\le\Theta_{\rho_{k}}(x^{k+1})\le\Xi_{\overline{c}}(w^k)$ for all $\mathcal{S}\ni k\ge\overline{k}$. 
 
 \item[(iii)] $\lim_{\mathcal{S}\ni k\to\infty}\Xi_{\overline{c}}(w^k)=\varpi_*$, and $\Xi_{\overline{c}}(w)=\varpi_*$ for all $w\in\!\mathcal{W}^*$.
\end{itemize}
\end{proposition}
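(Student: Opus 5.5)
The plan is to prove the three items in order, using the boundedness results already in hand. Throughout, I write $\mathbb{W}$ in the ordering $(x,v,Q,\xi,z,\zeta,y)$ and match $w^k$ to it.

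\textbf{Item (i).} Lemma \ref{bound-lemma} together with Assumption \ref{ass1}(i) shows that $\{(x^k,v^k,z^k,\xi^k,\zeta^k)\}_{k\in\mathbb{N}}$ is bounded. Since $\{x^k\}$ lies in the compact set $\varLambda_0$ and $g'$, $\nabla f$ are continuous, the matrices $Q^k$ and the vectors $y^k=\nabla f(x^k)+\zeta^k$ are bounded as well. A bounded sequence in a finite-dimensional space has a nonempty set of cluster points, and that set is closed, hence compact.

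\textbf{Item (ii).} The first inequality is immediate from $\rho_k\,{\rm dist}(g(x^{k+1}),K)\ge0$. For the second, I start from Lemma \ref{Lemma-Thetakbound} and compare its right-hand side with $\Xi_{\overline c}(w^k)$ term by term.
\begin{itemize}
\item[] Since $-\zeta^k\in\partial h(x^k)$, the Fenchel--Young equality gives $h(x^k)+h^*(-\zeta^k)=\langle x^k,-\zeta^k\rangle$, that is, $-h(x^k)=\langle x^k,\zeta^k\rangle+h^*(-\zeta^k)$.
\item[] The identity $\langle\nabla f(x^k)+\zeta^k,v^k\rangle=\langle y^k,v^k\rangle$ holds by the definition of $y^k$.
\item[] By \eqref{Jac-gxk} and \eqref{Hmap}, $H(x^k,v^k,Q^k)=G(x^k,v^k)$.
\item[] The indicator term vanishes because $(x^k,v^k)\in T\mathcal{M}$.
\end{itemize}
With these substitutions the right-hand side of Lemma \ref{Lemma-Thetakbound} equals $\Xi_{\overline c}(w^k)$ exactly.

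\textbf{Item (iii), limit along $\mathcal S$.} By Theorem \ref{converge-theorem1}(i), $v^k\to0$ along $\mathcal S$. Then \eqref{inexact-cond2} gives $\|G(x^k,v^k)-z^k\|\le a_{\max}\|v^k\|^2\to0$, and since $\xi^k$ is bounded, $\langle\xi^k,G(x^k,v^k)-z^k\rangle\to0$. Moreover, $\psi_{\overline\rho}$ is Lipschitz on $\mathcal X_1\times\mathbb Y$ (Lemma \ref{lemma-psirhok}), so $|\psi_{\overline\rho}(z^k)-\psi_{\overline\rho}(G(x^k,v^k))|\to0$. Also $G(x^k,v^k)-(x^k;g(x^k))\to0$, so $\psi_{\overline\rho}(z^k)-\vartheta(x^k)-\overline\rho\,{\rm dist}(g(x^k),K)\to0$. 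Collecting terms, $\Xi_{\overline c}(w^k)-\Theta_{\overline\rho}(x^k)\to0$ along $\mathcal S$. Because $\rho_k=\overline\rho$ for $k\ge\overline k$, Theorem \ref{converge-theorem1}(i) gives $\Theta_{\overline\rho}(x^k)\to\varpi_*$, and therefore $\lim_{\mathcal S\ni k\to\infty}\Xi_{\overline c}(w^k)=\varpi_*$.

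\textbf{Item (iii), constancy on $\mathcal W^*$.} Take $w^*=(x^*,v^*,Q^*,\xi^*,z^*,\zeta^*,y^*)\in\mathcal W^*$.
\begin{itemize}
\item[] First I reduce to subsequences in $\mathcal S$. For $k\notin\mathcal S$ the point $x^k$ repeats an earlier iterate, but $v^k$, $z^k$, $\xi^k$, $\zeta^k$ may differ from their values at successful steps. I would therefore restrict $\mathcal W^*$ to cluster points of $\{w^k\}_{k\in\mathcal S}$, which is presumably the intended meaning given Remark \ref{remark-alg}(c).
\item[] Along $\mathcal K\subset\mathcal S$ with $w^k\to w^*$, we get $v^*=0$, $z^*=(x^*;g(x^*))$ and $y^*=\nabla f(x^*)+\zeta^*$. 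Theorem \ref{converge-theorem1}(iii) gives $g(x^*)\in K$.
\item[] By outer semicontinuity, $-\zeta^*\in\partial h(x^*)$, so Fenchel--Young again yields $\langle x^*,\zeta^*\rangle+h^*(-\zeta^*)=-h(x^*)$.
\item[] Also $H(x^*,0,Q^*)=z^*$, and $(x^*,0)\in T\mathcal M$ because $\mathcal M$ is closed.
\end{itemize}
Substituting, $\Xi_{\overline c}(w^*)=f(x^*)-h(x^*)+\vartheta(x^*)=\Theta(x^*)$. Continuity of $\Theta$ and Theorem \ref{converge-theorem1}(iii) then give $\Theta(x^*)=\varpi_*$.

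\textbf{Main obstacle.} The delicate step is this last one, namely handling cluster points generated by unsuccessful iterations. The term $h^*(-\zeta)$ is only lower semicontinuous, so passing to the limit directly is not enough. The plan avoids this by using the Fenchel--Young equality at the limit point itself rather than taking limits of $h^*(-\zeta^k)$.
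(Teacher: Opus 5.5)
Your proposal is correct and follows the paper's proof essentially step by step. It uses boundedness via Lemma~\ref{bound-lemma} for (i), the Fenchel--Young substitution into Lemma~\ref{Lemma-Thetakbound} for (ii), and $v^k\to0$, $G(x^k,v^k)-z^k\to0$ along $\mathcal{S}$ for the limit in (iii); your direct evaluation $\Xi_{\overline c}(w^*)=\Theta(x^*)$ via Fenchel--Young at the limit point is exactly what the paper's ``leveraging the above equality'' amounts to.

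Your restriction of $\mathcal{W}^*$ to cluster points of $\{w^k\}_{k\in\mathcal{S}}$ is not a weakness relative to the paper. The paper's argument also rests on $v^k\to0$ from Theorem~\ref{converge-theorem1}(i), which is established only along $\mathcal{S}$, and this restricted version is the one relevant to the KL argument along $\mathcal{S}$. Covering cluster points produced by unsuccessful iterations would additionally require $v^k\to0$ and $G(x^k,v^k)-z^k\to0$ along those iterations, which neither your argument nor the paper's supplies.
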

\begin{proof}
Under Assumption \ref{ass1}, the linear mapping $g'(x^k):\mathbb{X}\to\mathbb{Y}$ is bounded, which in turn implies that $\{Q^k\}_{k\in\mathbb{N}}$ is bounded. Then, item (i) follows from the definition of $\{w^k\}_{k\in\mathbb{N}}$ and Lemma \ref{bound-lemma}. For item (ii), the first inequality is direct, so we only need to prove the second inequality. Fix any $\mathcal{S}\ni k\ge\overline{k}$. Recall that $\zeta^k\in\partial(-h)(x^k)\subset-\partial h(x^k)$. It follows from Fenchel-Young equality that $h^*(-\zeta^k)+h(x^k)=-\langle x^k,\zeta^k\rangle$. Invoking Lemma \ref{Lemma-Thetakbound} and the definition of $y^k$ yields
\begin{align}\label{ineq1-Xicbar}
\Theta_{\rho_{k}}(x^{k+1})&\le f(x^k)-h(x^k)+\langle y^k,v^k\rangle 
+\langle\xi^k,G(x^k,v^k)-z^k\rangle+\psi_{\overline{\rho}}(z^k)+\overline{c}\|v^k\|^2\nonumber\\
&=f(x^k)+\langle x^k,\zeta^k\rangle+h^*(-\zeta^k)+\langle y^k,v^k\rangle+\psi_{\overline{\rho}}(z^k)+\overline{c}\|v^k\|^2\\
&\quad +\langle\xi^k,H(x^k,v^k,Q^k)-z^k\rangle+\delta_{T\mathcal{M}}(x^k,v^k)=\Xi_{\overline{c}}(w^k),\nonumber
\end{align}
where the first equality follows from $G(x^k,v^k)=\!H(x^k,v^k,Q^k)$ and $\delta_{T\mathcal{M}}(x^k,v^k)=0$. The former relation is a direct consequence of combining \eqref{Jac-gxk}-\eqref{Hmap} with the definition of \(G(\cdot,\cdot)\) in \eqref{Esubprob}. Then, item (ii) follows. From the first equality of \eqref{ineq1-Xicbar}, for each $\mathcal{S}\ni k\ge\overline{k}$  
\[
 \Xi_{\overline{c}}(w^k)=f(x^k)-h(x^k)
 +\langle y^k,v^k\rangle +\langle\xi^k,G(x^k,v^k)\!-\!z^k\rangle+\overline{c}\|v^k\|^2+\vartheta(z_1^k)+\overline{\rho}{\rm dist}(z_2^k,K).
\]
By Theorem \ref{converge-theorem1}(i), we have $\lim_{\mathcal{S}\ni k\to\infty}v^k=0$ which, combined with the first inequality in \eqref{inexact-cond2}, implies that $\lim_{\mathcal{S}\ni k\to\infty}(G(x^k,v^k)-z^k)=0$. Consequently, $\lim_{\mathcal{S}\ni k\to\infty}(x^k-z_1^k)=0$ and $\lim_{\mathcal{S}\ni k\to\infty}(g(x^k)-z_2^k)=0$. Taking the limit as $\mathcal{S}\ni k\to\infty$ in the above equality and using the continuity of $\vartheta$ and ${\rm dist}(\cdot,K)$ and Theorem \ref{converge-theorem1}(i), we obtain
$
 \lim_{\mathcal{S}\ni k\to\infty}\Xi_{\overline{c}}(w^k)=\lim_{\mathcal{S}\ni k\to\infty}\Theta_{\rho_k}(x^k)=\varpi_*$. Furthermore, by leveraging the above equality, it is not hard to obtain $\Xi_{\overline{c}}(w)=\varpi_*$ for all $w\in\mathcal{W}^*$.   
\end{proof}

To show that the sequence $\{x^k\}_{k\in\mathcal{S}}$ converges to some $x^*$ under the KL property, we need to characterize the relative error condition for the potential function $\Xi$.
\begin{proposition}\label{rebound}
Suppose $\mathcal{M}$ is a closed $\mathcal{C}^2$-embedded submanifold. Under Assumption \ref{ass1}, there exists $\widetilde{c}>0$ such that for sufficiently large $k\in\mathcal{S}$, ${\rm dist}(0,\partial\Xi_{\overline{c}}(w^k))\le \widetilde{c}\,\|v^k\|$.
\end{proposition}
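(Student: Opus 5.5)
The plan is to exhibit one explicit element of $\partial\Xi_{\overline{c}}(w^k)$ and bound its norm block by block. First split $\Xi_{\overline{c}}=\Phi+\Psi$. The smooth part is
\[
\Phi(w):=f(x)+\langle x,\zeta\rangle+\langle y,v\rangle+\langle \xi,H(x,v,Q)-z\rangle+\overline{c}\|v\|^2 ,
\]
which is $\mathcal{C}^1$ near $w^k$ because $f$ and $g$ are $\mathcal{C}^1$ on $\mathcal{O}$. The nonsmooth part is the separable function $\Psi(w):=\delta_{T\mathcal{M}}(x,v)+\psi_{\overline{\rho}}(z)+h^*(-\zeta)$. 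By \cite[Exercise 8.8 and Proposition 10.5]{RW98},
\[
\partial\Xi_{\overline{c}}(w^k)=\nabla\Phi(w^k)+\mathcal{N}_{T\mathcal{M}}(x^k,v^k)\times\{0\}\times\{0\}\times\partial\psi_{\overline{\rho}}(z^k)\times\big(-\partial h^*(-\zeta^k)\big)\times\{0\}.
\]
The blocks are in the order $(x,v,Q,\xi,z,\zeta,y)$. Throughout I take $k\in\mathcal{S}$ with $k\ge\overline{k}$, so that $\rho_k=\overline{\rho}$ and \eqref{inexact-cond2} holds. By Lemma \ref{bound-lemma}, $\{(x^k,v^k,z^k,\xi^k,\zeta^k)\}$ stays in a compact set.

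The easy blocks are handled as follows.

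\emph{$\zeta$-block.} Since $-\zeta^k\in\partial h(x^k)$, we have $x^k\in\partial h^*(-\zeta^k)$, so $0=x^k-x^k$ belongs to this block.

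\emph{$z$-block.} This block is $-\xi^k+\partial\psi_{\overline{\rho}}(z^k)$, which contains $0$ because $\xi^k\in\partial\psi_{\overline{\rho}}(z^k)$.

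\emph{$y$-block.} It equals $v^k$.

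\emph{$\xi$-block.} It equals $H(x^k,v^k,Q^k)-z^k=G(x^k,v^k)-z^k$, whose norm is at most $a_{\max}\gamma_v\|v^k\|$.

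\emph{$Q$-block.} It is the gradient of $Q\mapsto\langle\mathcal{F}^*\xi_2^k,Q\mathcal{G}^*v^k\rangle$, namely $(\mathcal{F}^*\xi_2^k)(\mathcal{G}^*v^k)^{\top}$. Its norm is at most $\|\xi^k\|\|v^k\|$, and $\|\xi^k\|$ is bounded.

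The $x$- and $v$-blocks need the normal space of $T\mathcal{M}$. The key identity is $\nabla_vH(x^k,v^k,Q^k)\xi^k=\xi_1^k+\nabla g(x^k)\xi_2^k=\nabla_vG(x^k,v^k)\xi^k$, which holds by \eqref{Jac-gxk}. Hence $y^k+\nabla_vH\xi^k=\nabla\ell_k(v^k)+\nabla_vG(x^k,v^k)\xi^k$. Set $r^k:=\mathcal{P}_{T_{x^k}\mathcal{M}}(\nabla\ell_k(v^k)+\beta_kv^k+\nabla_vG\xi^k)$, so $\|r^k\|\le b_{\max}\|v^k\|$ by \eqref{inexact-cond2}. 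Then
\[
y^k+\nabla_vH\xi^k=r^k-\beta_kv^k+n^k,\qquad n^k\in N_{x^k}\mathcal{M}.
\]
Moreover $\|n^k\|$ is uniformly bounded, since every vector involved is bounded. On the $x$-side, $\nabla_x\Phi(w^k)=\nabla f(x^k)+\zeta^k+\xi_1^k+\nabla g(x^k)\xi_2^k$. This equals the same vector $r^k-\beta_kv^k+n^k$, because $\nabla_xH^*\xi=\xi_1+\nabla g(x)\xi_2+(\text{derivative of }g'(x)\text{ is frozen in }Q)$, and $Q$ is a separate variable.

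To describe the normal space of $T\mathcal{M}$, I use the $\mathcal{C}^2$ hypothesis. By compactness of $\varLambda_0$, finitely many local defining maps $F$ of class $\mathcal{C}^2$ with surjective $F'(x)$ cover the iterates. Locally,
\[
T\mathcal{M}=\{(x,v): F(x)=0,\ F'(x)v=0\}.
\]
This is a $\mathcal{C}^1$ embedded submanifold of $\mathbb{X}\times\mathbb{X}$, and
\[
\mathcal{N}_{T\mathcal{M}}(x,v)=\big\{\big(\nabla F(x)\lambda+[F''(x)v]^*\mu,\ \nabla F(x)\mu\big)\big\}.
\]
I then choose the multipliers to cancel the normal parts. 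First pick $\mu^k$ with $\nabla F(x^k)\mu^k=-n^k$. This is possible because $\mathrm{range}\,\nabla F(x^k)=N_{x^k}\mathcal{M}$, and $\|\mu^k\|$ is uniformly bounded because $F'$ is uniformly surjective on the compact set. Next pick $\lambda^k$ with $\nabla F(x^k)\lambda^k=-n^k$. With these choices the two blocks become:
\begin{itemize}
\item the $v$-block equals $r^k-\beta_kv^k+2\overline{c}v^k$, of norm at most $(b_{\max}+\overline{\beta}+2\overline{c})\|v^k\|$;
\item the $x$-block equals $r^k-\beta_kv^k+[F''(x^k)v^k]^*\mu^k$, of norm at most $(b_{\max}+\overline{\beta}+\sup\|F''\|\sup\|\mu^k\|)\|v^k\|$.
\end{itemize}
Summing the squared block bounds gives ${\rm dist}(0,\partial\Xi_{\overline{c}}(w^k))\le\widetilde{c}\|v^k\|$.

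The main obstacle is the normal-space step. One needs a correct and uniform description of $\mathcal{N}_{T\mathcal{M}}$, which is exactly where $\mathcal{C}^2$ is used. One also needs uniform bounds on $\mu^k$ and $F''$ across charts, obtained from compactness and a finite atlas with a uniform lower bound on the smallest singular value of $F'$. A secondary point to check is that $\Phi$ is genuinely $\mathcal{C}^1$ in $x$. This holds because $g$ is $\mathcal{C}^1$ and $Q$ decouples the Jacobian dependence, so no second derivative of $g$ appears.
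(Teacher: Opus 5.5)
Your proposal is correct and follows essentially the same route as the paper: you exhibit an explicit element of $\partial\Xi_{\overline{c}}(w^k)$ using the local-defining-map description of $N_{(x^k,v^k)}T\mathcal{M}$, take the same multiplier in both normal components to cancel the normal part $n^k$, and bound each block by a constant multiple of $\|v^k\|$. The differences are minor. You justify the subdifferential formula through a smooth-plus-separable sum rule, which is a little more careful than the paper's blockwise partial subdifferentials. You also bound the multipliers via a uniform lower bound on the smallest singular value of $F'$, where the paper uses a compactness/contradiction argument.
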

\begin{proof}
 Since $\mathcal{M}$ is a closed $\mathcal{C}^2$-embedded submanifold of $\mathbb{X}$, by \cite[Definition 3.6]{Boumal2023}, for any $x\in\mathcal{M}$, there exist an open neighborhood $\mathcal{U}_x$ of $x$ and a $\mathcal{C}^2$-smooth mapping $G_x\!:\mathbb{X}\to\mathbb{U}$ such that $\mathcal{M}\cap \mathcal{U}_x=\{x'\in\mathbb{X}\mid G_x(x')=0\}$ and $G'_x(x'):\mathbb{X}\to\mathbb{U}$ is surjective for each $x'\in \mathcal{M}\cap \mathcal{U}_x$. Since $v^k\in T_{x^k}\mathcal{M}$ and $\{x^k\}_{k\in\mathcal{S}}\subset\varLambda_0\subset\mathcal{M}$ by Remark \ref{remark-ass1}(a), invoking \cite[Corollary 3]{He2026} guarantees the existence of an integer integer $l\in\mathbb{N}_{+}$, points $\overline{x}^1,\dots,\overline{x}^l\in\varLambda_0$, positive real numbers $\varepsilon_{\overline{x}^1}>0,\cdots,\varepsilon_{\overline{x}^l}>0$, and $\mathcal{C}^2$-smooth mappings $G_i:=G_{\overline{x}^i}$ for $i\in[l]_{+}$ such that, for every $k\in\mathcal{S}$, there exists an index $j_k\in[l]_{+}$ satisfying	
 \begin{subnumcases}{}\label{Normal-space}
  x^k \in \mathcal{M}\cap \mathbb{B}(\overline{x}^{j_k}, \varepsilon_{\overline{x}^{j_k}}),\ N_{x^k} \mathcal{M} = \big\{\nabla G_{j_k}(x^k)u \mid u \in \mathbb{U}\big\}, \\
  \label{Tangent-space}
  N_{(x^k, v^k)} T\mathcal{M} =\left\{ \begin{pmatrix}
  \nabla G_{j_k}(x^k)u_1 + [D^2G_{j_k}(x^k) v^k]^*u_2 \\
   \nabla G_{j_k}(x^k)u_2
 \end{pmatrix} \mid u_1 \in \mathbb{U}, u_2 \in\mathbb{U} \right\},
\end{subnumcases}
where $D^2G_{j_k}(x^k)$ denotes the second-order differential of $G_{j_k}$ at $x^k$. Since $s-\mathcal{P}_{T_{\!x^k}\mathcal{M}}(s)\in N_{\!x^k}\mathcal{M}$ for all $s\in\mathbb{X}$, for each $k\in\mathcal{S}$, combining the second inequality in \eqref{inexact-cond2} with the equality in \eqref{Normal-space} guarantees the existence of  $u^k\in\mathbb{U}$ such that 
\[
 \|\nabla\!f(x^k)+\zeta^k+\beta_kv^k+\xi_1^k+\nabla g(x^k)\xi_2^k+\nabla G_{j_k}(x^k)u^k\|\le b_k\|v^k\|.
\]
This, together with $\beta_k\le\overline{\beta}$ and $b_k\le b_{\rm max}$ for each $k\in\mathcal{S}$, implies that
\begin{equation}\label{uk-bound}
 \|\nabla\!f(x^k)+\zeta^k+\xi_1^k+\nabla g(x^k)\xi_2^k+\!\nabla G_{j_k}(x^k)u^k\|\le (b_{\max}\!+\!\overline{\beta})\|v^k\|.
\end{equation}
We claim that \eqref{uk-bound} implies the boundedness of $\{u^k\}_{k\in\mathcal{S}}$. Indeed, the continuity of $\nabla\!f$ and $\nabla g$ and the boundedness of $\{(x^k,\zeta^k,\xi^k)\}_{k\in\mathcal{S}}$ by Lemma \ref{bound-lemma} imply that the sequence $\{\nabla\!f(x^k)\!+\zeta^k+\xi_1^k+\nabla g(x^k)\xi_2^k\}_{k\in\mathcal{S}}$ is bounded. Suppose, to the contrary, that $\{u^k\}_{k\in\mathcal{S}}$ is unbounded. There must exist an infinite index $\mathcal{K}\subset\mathcal{S}$ such that $\lim_{\mathcal{K}\ni k\to\infty}\|u^k\|=\infty$. Passing to a subsequence if necessary, we assume that $\lim_{\mathcal{K}\ni k\to\infty}x^k=x^*\in\mathcal{M}$. Together with the inclusion in \eqref{Normal-space}, we can find an index $i_0\in[l]_{+}$ such that $x^*\in \mathcal{M}\cap \mathbb{B}(\overline{x}^{i_0},\varepsilon_{\overline{x}^{i_0}})$ and $x^k\in \mathcal{M}\cap \mathbb{B}(\overline{x}^{i_0},\varepsilon_{\overline{x}^{i_0}})$ for large enough $k\in\mathcal{K}$. By \eqref{uk-bound}, for large enough $k\in\mathcal{K}$,
\begin{equation*}
 \|\nabla\!f(x^k)+\zeta^k+\xi_1^k+\nabla g(x^k)\xi_2^k+\nabla G_{i_0}(x^k)u^k\|\le (b_{\max}+\overline{\beta})\|v^k\|.
\end{equation*}
Passing to a subsequence if necessary, we can assume that $\lim_{\mathcal{K}\ni k\to\infty}\frac{u^k}{\|u^k\|}=u^*$ with $\|u^*\|=1$. Dividing the both sides of the above inequality by $\|u^k\|$ and taking the limit as $\mathcal{K}\ni k\to\infty$ gives $\nabla G_{i_0}(x^*)u^*=0$. Since  $G_{i_0}'(x^*)\!:\mathbb{X}\to \mathbb{U}$ is surjective, we obtain $u^*=0$, a contradiction to $\|u^*\|=1$. Consequently, the sequence $\{u^k\}_{k\in\mathcal{S}}$ is bounded. 

For each $k\in\mathcal{S}$, from $w^k=\!(x^k,v^k,Q^k,z^k,\xi^k,\zeta^k,y^k)$ and the definition of $\Xi_{\overline{c}}$, we have
\begin{align}\label{xv-Xi}
\partial_{x,v,Q}\Xi_{\overline{c}}(w^k)=\begin{pmatrix}
\nabla\!f(x^k)+\zeta^k\\ y^k+2\overline{c}v^k\\ 0
\end{pmatrix}+\nabla H(x^k,v^k,Q^k)\xi^k+\begin{pmatrix}
N_{(x^k, v^k)} T \mathcal{M}\\
0
\end{pmatrix},\qquad\\
\label{others-Xi}
\partial_{\xi,z,\zeta,y}\Xi_{\overline{c}}(w^k)=\{H(x^k,v^k,Q^k)\!-\!z^k\}\times[\partial\psi_{\rho_{\overline{k}}}(z^k)-\xi^k]\times[x^k\!-\!\partial h^*(-\zeta^k)]\times\{v^k\}.
\end{align}
Then, for each $k\in\mathcal{S}$, combining \eqref{Normal-space}-\eqref{Tangent-space} with the definition of $H$ in \eqref{Hmap} yields
\begin{equation}\label{sk-def}
 s^k:=\begin{pmatrix}
 y^k+\xi_1^k+\nabla g(x^k)\xi_2^k+\!\nabla G_{j_k}(x^k)u^k+ [D^2G_{j_k}(x^k) v^k]^*u^k\\ y^k+\xi_1^k+\mathcal{G}(Q^k)^{\top}\mathcal{F}^*\xi_2^k+2\overline{c}v^k+\nabla G_{j_k}(x^k)u^k\\
 (\mathcal{F}^*\xi_2^k)(\mathcal{G}^*v^k)^{\top}  
\end{pmatrix}\in\partial_{x,v,Q}\Xi_{\overline{c}}(w^k).
\end{equation}
For each $k\in\mathcal{S}$, recalling that $\xi^k\in\partial\psi_{\rho_{\overline{k}}}(z^k)$ and $\zeta^k\in\partial\psi(-h)(x^k)\subset-\partial\psi(x^k)$, we have $0\in\partial\psi_{\rho_{\overline{k}}}(z^k)-\xi^k$ and $0\in x^k\!-\!\partial h^*(-\zeta^k)$. Comparing with \eqref{others-Xi} and using \eqref{sk-def} gives 
\[
 (s^k,H(x^k,v^k,Q^k)-z^k,0,0,v^k)\in\partial\Xi_{\overline{c}}(w^k)\quad{\rm for\ each}\ k\in\mathcal{S}.
\]
Recall that the sequence $\{(x^k,u^k,\xi^k)\}_{k\in\mathcal{S}}$ is bounded, $G_i$ for $i\in[l]_{+}$ are twice continuously differentiable, and $j_k\in[l]_{+}$ for each $k\in\mathcal{S}$. Hence, there exist constants $\widehat{c}_1>0$ and $\widehat{c}_2>0$ such that 
$\|[D^2G_{j_k}(x^k)v^k]^*u^k\|\le \widehat{c}_1\|v^k\|$ and $\|(\mathcal{F}^*\xi_2^k)(\mathcal{G}^*v^k)^{\top} \|\le\widehat{c}_2\|v^k\|$. Moreover, $\mathcal{G}(Q^k)^{\top}\mathcal{F}^*\xi_2^k=\nabla g(x^k)\xi_2^k$ holds by \eqref{Jac-gxk}. Combining these with \eqref{uk-bound} and the inclusion \eqref{sk-def}, and recalling that $y^k=\nabla\!f(x^k)+\zeta^k$, we obtain that for all $k\in\mathcal{S}$,
\[
  \|s^k\|^2\le\big[4(b_{\max}\!+\!\overline{\beta})^2+2\widehat{c}_1^2+\widehat{c}_2^2+4\overline{c}^2\big]\|v^k\|^2.
\]
Note that $H(x^k,v^k,Q^k)-z^k=G(x^k,v^k)-z^k$ for all $k\in\mathcal{S}$. It follows from the first inequality of \eqref{inexact-cond2} that $\|H(x^k,v^k,Q^k)-z^k\|\le a_k\|v^k\|^2\le \widehat{c}_3\|v^k\|$ for all $k\in\mathcal{S}$ with some $\widehat{c}_3>0$, where the second inequality is due to $a_{k}\le a_{\rm max}$ and the boundedness of $\{v^k\}_{k\in\mathbb{N}}$ by Lemma \ref{bound-lemma}. Combining this with the above two equations yields the desired inequality for $\widetilde{c}=[1+4(b_{\max}\!+\!\overline{\beta})^2+2\widehat{c}_1^2+(\widehat{c}_2^2+\widehat{c}_3^2)+4\overline{c}^2]^{1/2}$.
\end{proof}

Combining Propositions \ref{prop-Xicbar}-\ref{rebound} with the sufficient decrease of $\{\Theta_{\rho_{\overline{k}}}(x^k)\}_{k\in\mathcal{S}}$ in \eqref{temp-ineq51}, and noting that $x^{k+1} = x^k$ for all $k \notin \mathcal{S}$, we establish the convergence result in Theorem \ref{converge2-theorem}. Since the proof follows the same line of argument as that of \cite[Theorem 5.2]{He2026}, we omit it. Notably, this result cannot be obtained using the standard KL-based analysis framework in \cite{Attouch2013,Bolte2014} because $\{\Xi_{\overline{c}}(w^k)\}_{k\in\mathcal{S}}$ is not necessarily decreasing. Nor can it be deduced from \cite{Bonettini2023}, owing to the lack of two-sided bounds relating $\Theta_{\overline{\rho}}(x^{k+1})$ and $\Xi_{\overline{c}}(w^k)$.
\begin{theorem}\label{converge2-theorem}
 Suppose $\mathcal{M}$ is a closed $\mathcal{C}^2$-embedded submanifold. Under Assumption \ref{ass1}, if $\Xi_{\overline{c}}$ satisfies the KL property on the set $\mathcal{W}^*$, then $\sum_{k=0}^{\infty}\|x^{k+1}-x^k\|<\infty$, so the sequence $\{x^k\}_{k\in\mathbb{N}}$ converges to a stationary point of \eqref{Rcprob}.
\end{theorem}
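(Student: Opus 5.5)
We follow the KL framework that is adapted to a potential that is not monotone, as in \cite[Theorem 5.2]{He2026}. Because $x^{k+1}=x^k$ for $k\notin\mathcal{S}$, it suffices to show $\sum_{k\in\mathcal{S}}\|x^{k+1}-x^k\|<\infty$. Lemma \ref{lemma-retract} together with Lemma \ref{bound-lemma} gives $\|x^{k+1}-x^k\|=\|R_{x^k}(v^k)-x^k\|\le M_1\|v^k\|$ for $k\in\mathcal{S}$, so the target reduces to $\sum_{k\in\mathcal{S}}\|v^k\|<\infty$. For $k\ge\overline{k}$ we work with the index set $\mathcal{S}=\{k_0<k_1<\cdots\}$ and three estimates. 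The first is the sufficient decrease \eqref{temp-ineq51}: $\Theta_{\overline\rho}(x^{k_i})-\Theta_{\overline\rho}(x^{k_{i+1}})\ge\overline{\gamma}\eta_1\|v^{k_i}\|^2$, where $x^{k_{i+1}}=x^{k_i+1}$. The second is the sandwich from Proposition \ref{prop-Xicbar}(ii): $\Theta_{\overline\rho}(x^{k_i+1})\le\Xi_{\overline c}(w^{k_i})$. The third is the relative error bound from Proposition \ref{rebound}: ${\rm dist}(0,\partial\Xi_{\overline c}(w^{k_i}))\le\widetilde c\|v^{k_i}\|$.

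Next we relate $\Xi_{\overline c}(w^{k})$ back to $\Theta_{\overline\rho}(x^{k})$. From the explicit expression in the proof of Proposition \ref{prop-Xicbar}(iii), and using Lipschitz continuity of $\vartheta$ and dist together with \eqref{inexact-cond2}, we get $\Xi_{\overline c}(w^k)\ge\Theta_{\overline\rho}(x^k)-C\|v^k\|$ for some constant $C$. Indeed, $|\vartheta(z_1^k)-\vartheta(x^k)|$ and $|{\rm dist}(z_2^k,K)-{\rm dist}(g(x^k),K)|$ are $O(\|v^k\|)$, and the inner-product terms are $O(\|v^k\|)$. Hence $\Xi_{\overline c}(w^{k_i})-\varpi_*\ge\Theta_{\overline\rho}(x^{k_{i+1}})-\varpi_*\ge0$, and $\Xi_{\overline c}(w^{k_i})$ is bounded by $\Theta_{\overline\rho}(x^{k_i})+O(\|v^{k_i}\|^2)$. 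The degenerate case where $\Theta_{\overline\rho}(x^{k})=\varpi_*$ for some $k$ is handled directly: monotonicity then forces $v^k=0$ thereafter.

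Set $\Delta_i:=\Theta_{\overline\rho}(x^{k_i})-\varpi_*$. For large $i$, $w^{k_i}$ lies close to $\mathcal{W}^*$ with $\Xi_{\overline c}(w^{k_i})\in(\varpi_*,\varpi_*+\eta)$, by Proposition \ref{prop-Xicbar}(iii) and the uniformized KL property on the compact set $\mathcal{W}^*$. Whenever $\Xi_{\overline c}(w^{k_i})>\varpi_*$, this gives $\varphi'(\Xi_{\overline c}(w^{k_i})-\varpi_*)\,\widetilde c\|v^{k_i}\|\ge1$. Since $\varphi'$ is nonincreasing and $\Xi_{\overline c}(w^{k_i})-\varpi_*\le\Delta_i+c'\|v^{k_i}\|^2$, the plan is the standard concavity step: $\varphi(\Delta_i)-\varphi(\Delta_{i+1})\ge\varphi'(\Delta_i)(\Delta_i-\Delta_{i+1})\ge\varphi'(\Delta_i)\overline\gamma\eta_1\|v^{k_i}\|^2$. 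This step requires comparing $\varphi'(\Delta_i)$ with $\varphi'(\Xi-\varpi_*)$, and the main obstacle sits exactly there, because $\Xi$ can exceed $\Delta_i$. I would avoid the issue by applying KL at the shifted point and using the evaluation $\Xi_{\overline c}(w^{k_{i-1}})\ge\Delta_i$, which follows from the sandwich. The concavity step is then written with $\Xi$-values on both sides: $\Xi(w^{k_{i-1}})-\varpi_*\ge\Delta_i\ge\Xi(w^{k_i})-\varpi_*-c'\|v^{k_i}\|^2$. This yields $\varphi(E_{i-1})-\varphi(E_{i+1})\gtrsim\|v^{k_i}\|^2/(\widetilde c\|v^{k_{i-1}}\|)$ up to $O(\|v\|^2)$ error terms, with $E_i:=\Xi(w^{k_i})-\varpi_*$. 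The $O(\|v\|^2)$ errors are absorbed through a lagged index, following the He2026 argument.

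From this inequality, the AM-GM trick $2\|v^{k_i}\|\le\|v^{k_{i-1}}\|+c''[\varphi(E_{i-1})-\varphi(E_{i+1})]$ plus a summable error, combined with telescoping, gives $\sum_i\|v^{k_i}\|<\infty$. Then $\sum_k\|x^{k+1}-x^k\|<\infty$, so $\{x^k\}$ is Cauchy, and its limit is stationary by Theorem \ref{converge-theorem1}(ii).
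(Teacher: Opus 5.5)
Your ingredients match what the omitted proof (via \cite[Theorem 5.2]{He2026}) combines: \eqref{temp-ineq51}, the one-sided bound in Proposition~\ref{prop-Xicbar}(ii), Proposition~\ref{rebound}, and $x^{k+1}=x^k$ for $k\notin\mathcal{S}$. Your idea of applying KL at $w^{k_{i-1}}$ and using $E_{i-1}\ge\Delta_i$ is also the key point. The gap is in how you close the argument. You rewrite the concavity step with $\Xi$-values on both sides and telescope $\varphi(E_i)$. That forces you to pass from $E_{i+1}$ back to $\Delta_{i+1}$ through $E_{i+1}\le\Delta_{i+1}+c'\|v^{k_{i+1}}\|^2$. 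The resulting error is not a bare $O(\|v\|^2)$ term. It is $\varphi'(E_{i-1})\,c'\|v^{k_{i+1}}\|^2$, or $\varphi(c'\|v^{k_{i+1}}\|^2)$ if you use subadditivity of $\varphi$, and $\varphi'$ is unbounded near $0$. For $\varphi(s)=s^{1-\theta}$ with $\theta\in[1/2,1)$, the second form is of order $\|v^{k_{i+1}}\|^{2-2\theta}$. Its summability is at least as strong as what you are trying to prove. So ``absorbed through a lagged index'' and ``plus a summable error'' are exactly the unproved step.

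There is a second, smaller problem. The bound $\Xi_{\overline c}(w^k)\le\Theta_{\overline\rho}(x^k)+O(\|v^k\|^2)$ does not follow from the Lipschitz argument you cite. That argument only gives $O(\|v^k\|)$, because of the term $\langle y^k,v^k\rangle$. You need \eqref{model-descent} to cancel that linear term.

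The fix is to telescope along the monotone sequence $\{\Delta_i\}$ instead of $\{E_i\}$. Since \eqref{model-descent} is strict, $v^k\ne0$ for $k\in\mathcal{S}$, so $0<\Delta_i\le E_{i-1}$ and the degenerate case never arises. Because $\varphi'$ is nonincreasing, KL at $w^{k_{i-1}}$ together with Proposition~\ref{rebound} gives $\varphi'(\Delta_i)\ge\varphi'(E_{i-1})\ge 1/(\widetilde c\|v^{k_{i-1}}\|)$. Concavity then gives, with no error term,
\[
\varphi(\Delta_i)-\varphi(\Delta_{i+1})\ge\varphi'(\Delta_i)(\Delta_i-\Delta_{i+1})\ge\frac{\overline\gamma\eta_1\|v^{k_i}\|^2}{\widetilde c\,\|v^{k_{i-1}}\|}.
\]
Hence $2\|v^{k_i}\|\le\|v^{k_{i-1}}\|+\frac{\widetilde c}{\overline\gamma\eta_1}[\varphi(\Delta_i)-\varphi(\Delta_{i+1})]$, which telescopes to $\sum_i\|v^{k_i}\|<\infty$. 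Only the one-sided bound $\Theta_{\overline\rho}(x^{k+1})\le\Xi_{\overline c}(w^k)$ is used, which is why the non-monotonicity of $\{\Xi_{\overline c}(w^k)\}_{k\in\mathcal{S}}$ does no harm. Neither your lower bound $\Xi\ge\Theta-C\|v\|$ nor the $O(\|v\|^2)$ upper bound is needed.

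If you want to keep telescoping $\varphi(E_i)$, you can rescue the error term as follows. Use $\varphi'(E_{i-1})\le\varphi'(\Delta_{i+1})$ and $\overline\gamma\eta_1\|v^{k_{i+1}}\|^2\le\Delta_{i+1}-\Delta_{i+2}$. Together these bound the error by $\frac{c'}{\overline\gamma\eta_1}[\varphi(\Delta_{i+1})-\varphi(\Delta_{i+2})]$, which telescopes. But this estimate has to be supplied, not asserted.
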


As discussed in \cite[Section 4]{Attouch2010}, the KL property is ubiquitous, and every function definable in an o-minimal structure over the real field has the KL property. By the definition of $\Xi_{\overline{c}}$, it is definable in an
o-minimal structure over the real field whenever the constituent functions and sets are definable in that structure. The relevant o-minimal structure can be readily identified when an explicit representation of $\mathcal{M}$ is available. For example, for the $\mathcal{M}$ in Examples \ref{Example1}-\ref{Example4}, $T\mathcal{M}$ is a semi-algebraic set, so $\delta_{T\mathcal{M}}$ is a semi-algebraic function. Consequently, $\Xi_{\overline{c}}$ is definable in an o-minimal structure over the real field whenever $f,\vartheta$ and $g$, as well as the set $K$, are definable in the same structure. 
\section{Numerical experiments}\label{sec6}

To evaluate the performance of Algorithm \ref{PenAl}, we first describe its implementation details, including the solution of the subproblems and the parameter settings.
\subsection{Implementation details of Algorithm \ref{PenAl}}\label{sec6.1}

By the separable structure of subproblem \eqref{Esubprob}, we adopt the semi-proximal ADMM \cite[Appendix B]{Fazel2013} and its accelerated variant \cite{Sun2025} as subproblem solvers. As detailed below, each iteration of these methods involves only low-cost operations. Moreover, the resulting implementations of Algorithms~\ref{sPADMM} and~\ref{aADMM} retain the oracle complexity guarantees in Theorem~\ref{complexity-oracle}(i) and~(ii),
respectively, with Assumption~\ref{uniform-EB-ass} required
for the former.
\subsubsection{(Accelerated) semi-proximal ADMM}\label{sec6.1.1}

Fix any $k$. For any given $\sigma_k\!>0$, the augmented Lagrangian of \eqref{Esubprob} is defined as
\begin{align*}\label{alafunction}
\mathcal{L}_{\sigma_k}(v,z,\lambda)\!:=\ell_k(v)+\!\frac{\beta_k}{2}\|v\|^2+\!\delta_{T_{\!x^k}\mathcal{M}}(v)+\psi_{\rho_k}(z)+\langle \lambda,G(x^k,v)\!-\!z\rangle+\!\frac{\sigma_k}{2}\|G(x^k,v)\!-\!z\|^2. 
\end{align*}
The iteration steps of the semi-proximal ADMM for solving \eqref{Esubprob} are described as follows, where $\mathcal{A}_k:\mathbb{X}\to\mathbb{Z}$ denotes the Jacobian mapping of the affine function $G(x^k,\cdot)$. 
\begin{algorithm}[H]
\begin{algorithmic}[1]
 \renewcommand{\thealgorithm}{A}
 \caption{\label{sPADMM}{\bf(sPADMM for solving subproblem \eqref{Esubprob})}}
 \State Input: $k\in\mathbb{N},\sigma_k>0,\gamma_k=\sigma_k\|\mathcal{A}_k\|^2$, and $w^{k,0}=(v^{k,0},z^{k,0},\lambda^{k,0})\in T_{\!x^k}\mathcal{M}\times\mathbb{Z}\times\mathbb{Z}$.
		
 \For {$l=0,1,2,\ldots$}
		
 \State $v^{k,l+1}:=\mathop{\arg\min}_{v\in\mathbb{X}}\mathcal{L}_{\sigma_k}(v,z^{k,l},\lambda^{k,l})+\frac{1}{2}\|v-v^{k,l}\|_{\gamma_k\mathcal{I}-\sigma_k\mathcal{A}_k^*\mathcal{A}_k}^2$.
 
 \State $       z^{k,l+1}:=\mathop{\arg\min}_{z\in\mathbb{Z}}\mathcal{L}_{\sigma_k}(v^{k,l+1},z,\lambda^{k,l})$. 

 \State $\lambda^{k,l+1}:=\lambda^{k,l}+\sigma_k(G(x^k,v^{k,l+1})-z^{k,l+1})$.
\EndFor	
\end{algorithmic}
\end{algorithm}
\begin{remark}\label{remark-admm}
{\bf(a)} Since $G(x^k,v)=\!\mathcal{A}_kv+G(x^k,0)$, using the definition of  $\ell_k(\cdot)$ and rearranging the terms in $\mathcal{L}_{\sigma_k}(\cdot,z^{k,l},\lambda^{k,l})$ gives $v^{k,l+1}=\frac{1}{\beta_k+\gamma_k}\mathcal{P}_{T_{\!x^k}\mathcal{M}}(u^{k,l})$ with $u^{k,l}=\gamma_kv^{k,l}+\sigma_k\mathcal{A}_k^*(z^{k,l}-\sigma_k^{-1}\lambda^{k,l}\!-\!G(x^k,v^{k,l})-\nabla\!f(x^k)\!-\!\zeta^k$ for each $l\in\mathbb{N}$. For many common manifolds $\mathcal{M}$, the mapping $\mathcal{P}_{T_{\!x^k}\mathcal{M}}(\cdot)$
admits an explicit expression. 

\noindent
{\bf(b)} By the definitions of $\mathcal{L}_{\sigma_k}(v^{k,l+1},\cdot,\lambda^{k,l})$ and $\psi_{\rho_k}(\cdot)$, it follows that, for any $l\in\mathbb{N}$, 
\begin{equation}\label{prox-psik}
\begin{aligned}
 z^{k,l+1}&=\mathcal{P}_{\sigma_k^{-1}\psi_{\rho_k}}(G(x^k,v^{k,l+1})+\sigma_k^{-1}\lambda^{k,l})\\
 &=\begin{pmatrix}
\mathcal{P}_{\sigma_k^{-1}\vartheta}(x^k+v^{k,l+1}+\sigma_k^{-1}\lambda^{k,l})\\
\mathcal{P}_{\rho_k\sigma_k^{-1}\mathrm{dist}(\cdot,K)}(g(x^k)+g'(x^k)v^{k,l+1}+\sigma_k^{-1}\lambda^{k,l})
\end{pmatrix}.
\end{aligned}
\end{equation}
Then $z^{k,l+1}$ can be computed explicitly whenever the proximal mappings $\mathcal{P}_{\sigma_k^{-1}\vartheta}(\cdot)$ and $\mathcal{P}_{\rho_k\sigma_k^{-1}{\rm dist}(\cdot,K)}(\cdot)$ admit closed-form expressions. 

\noindent
{\bf(c)} For each $l\in\mathbb{N}$, it follows from \eqref{prox-psik} that $\lambda^{k,l}+\sigma_k(G(x^k,v^{k,l+1})-z^{k,l+1})\in\partial\psi_{\rho_k}(z^{k,l+1})$ which, by Step 5 of Algorithm \ref{sPADMM}, implies that $\lambda^{k,l+1}\in\partial\psi_{\rho_k}(z^{k,l+1})$. Since $v^{k,l+1}\in T_{\!x^k}\mathcal{M}$, we have $w^{k,l}\!:=(v^{k,l},z^{k,l},\lambda^{k,l})\in T_{\!x^k}\mathcal{M}\times\mathbb{Z}\times\partial\psi_{\rho_k}(z^{k,l})$ for each $l\in\mathbb{N}_{+}$. Moreover, from \cite[Appendix B]{Fazel2013}, the sequence $\{w^{k,l}\}_{l\in\mathbb{N}}$ converges to a KKT point of \eqref{Esubprob}. Using the definition of $\mathcal{R}_k$ in \eqref{KKT-res} then yields $\lim_{l\to\infty}\mathcal{R}_{k}(w^{k,l})=0$. Clearly, $\lim_{l\to\infty}v^{k,l}=\overline{v}^k$. Thus, if $\overline{v}^k=0$,  there exists an index $\widehat{l}\in\mathbb{N}_{+}$ such that $(v^{k,l},z^{k,l},\lambda^{k,l})\in T_{\!x^k}\mathcal{M}\times\mathbb{Z}\times\partial\psi_{\rho_k}(z^{k,l})$  for all $l\ge\widehat{l}$ satisfy the criterion \eqref{inexact-cond1}; otherwise, they satisfy the criterion given in \eqref{model-descent}-\eqref{inexact-cond2} due to the continuity of $\widehat{\Theta}_{\rho_k}(x^k\!+\!\cdot;x^k)$ and $\widehat{\Theta}_{\rho_k}(x^k\!+\overline{v}^k;x^k)< \widehat{\Theta}_{\rho_k}(x^k;x^k)$.  
\end{remark}

When applying the accelerated semi-proximal ADMM in \cite{Sun2025} to solve subproblem \eqref{Esubprob}, its iteration steps are described as follows. 
\begin{algorithm}[H]
\begin{algorithmic}[1]
 \renewcommand{\thealgorithm}{B}
 \caption{\label{aADMM}{\bf(Accelerated sPADMM for solving subproblem \eqref{Esubprob})}}
 \State Input: $k\in\mathbb{N},\sigma_k>0,\gamma_k=\sigma_k\|\mathcal{A}_k\|^2$, parameters $\alpha\ge 2$ and $\varrho\in(0,2]$, and a starting  \hspace*{1.1cm} point $\overline{w}^{k,0}=(\overline{v}^{k,0},\overline{z}^{k,0},\overline{\lambda}^{k,0})\in T_{\!x^k}\mathcal{M}\times\mathbb{Z}\times\mathbb{Z}$. Set $\widehat{w}^{k,0}:=\overline{w}^{k,0}$.
		
 \For {$l=0,1,2,\ldots$}

 \State $          z^{k,l}:=\mathop{\arg\min}_{z\in\mathbb{Z}}\mathcal{L}_{\sigma_k}(\overline{v}^{k,l},z,\overline{\lambda}^{k,l})$.

 \State $\lambda^{k,l}:=\overline{\lambda}^{k,l}+\sigma_k(G(x^k,\overline{v}^{k,l})-z^{k,l})$.

 \State $v^{k,l}:=\mathop{\arg\min}_{v\in\mathbb{X}}\mathcal{L}_{\sigma_k}(v,z^{k,l},\lambda^{k,l})+\frac{1}{2}\|v-\overline{v}^{k,l}\|_{\gamma_k\mathcal{I}-\sigma_k\mathcal{A}_k^*\mathcal{A}_k}^2$.

 \State $\widehat{w}^{k,l+1}=(1-\varrho)\overline{w}^{k,l}+\varrho w^{k,l}$.

 \State $\overline{w}^{k,l+1}=\overline{w}^{k,l}+\frac{\alpha}{2(l+\alpha)}(\widehat{w}^{k,l+1}-\overline{w}^{k,l})+\frac{l}{l+\alpha}(\widehat{w}^{k,l+1}-\widehat{w}^{k,l})$.
\EndFor	
\end{algorithmic}
\end{algorithm}
 Remark \ref{remark-admm}(a) and (b) are likewise applicable to the iterates $w^{k,l}:=(v^{k,l},z^{k,l},\lambda^{k,l})$ generated by Algorithm \ref{aADMM}. Moreover, since $\{w^{k,l}\}_{l\in\mathbb{N}}$ converges to a KKT point of \eqref{Esubprob} by \cite[Corollary 3.5]{Sun2025}, Remark \ref{remark-admm}(c) applies to Algorithm \ref{aADMM} as well. Hereafter, Algorithm \ref{PenAl} equipped with Algorithm \ref{sPADMM} or \ref{aADMM} is referred to as iPLNEP, with the two specific variants denoted as iPLNEPsp and iPLNEPac, respectively. 
\subsubsection{Setting of parameters}\label{sec6.1.2}

Preliminary tests indicate that the values of \(\rho _{0}\) and \(\beta _{0}\) significantly affect the performance of Algorithm \ref{PenAl}. Consider that the penalty parameter balances minimizing the objective value against reducing constraint violations. If the objective function has a large magnitude at the initial point \(x^{0}\), a larger \(\rho _{0}\) will have a negligible impact on the quality of the generated objective values. Conversely, a smaller \(\rho _{0}\) becomes necessary. Motivated by this observation, we adaptively determine \(\rho _{0}\) by the magnitude of \(\Theta(x^0)\):
\begin{equation}\label{set-rho0}
\rho_0:=\max\big\{10^{-2},\min\{\gamma_0|\Theta(x^0)|,10^4\}\big\}\quad{\rm for\ some}\ \gamma_0\in(0,1],
\end{equation}
where the specific value of $\gamma_0$ is detailed in the numerical experiments section. The value of \(\beta _{0}\) determines how well \(\widehat{\Theta}_{\rho}(\cdot;x^0)\) approximates the penalty problem \eqref{penprob} with \(\rho=\rho_0\), which consequently dictates the quality of the final output solution. A larger \(\beta _{0}\) typically yields a solution with a worse objective value. To get a better objective value, we set 
\begin{equation}\label{set-beta0}
 \beta_0:=\max\big\{10^{-8},\min\{\tau_0L_{\nabla\!f(x^0)},10^8\}\big\}\ \ {\rm for\ some}\ \tau_0\in(0,1], 
\end{equation}
where \(L_{\nabla \!f(x^{0})}\) is the Lipschitz constant of \(\nabla\!f\) at \(x^{0}\) and can be efficiently estimated via the Barzilai-Borwein rule \cite{Barzilai1988} for complex \(f\). The specific value of \(\tau_{0}\) is deferred to the numerical experiments section. Notably, \(\widehat{\Theta}_{\rho_0}(\cdot;x^0)\) with this choice of $\beta_0$ is not necessarily a majorant of $\Theta_{\!\rho_0}(\cdot)$ at $x^0$. 

With a fixed initial $\rho_0$, Figure \ref{keans_fig}(a) demonstrates that $\tau\in(1,2]$ has a negligible effect on the final objective value, whereas choosing $\tau\in(1,1.02)$ leads to more running time. Guided by Figure \ref{keans_fig}(a), we set $\tau=1.1$ in the subsequent experiments. We next consider the choice of $a_k\in(0,a_{\rm max}]$ and $b_k\in(0,b_{\rm max}]$ in Step 4, which govern the inexactness condition \eqref{inexact-cond2} in the $k$-th step. Recall that $\lim_{\mathcal{S}\ni k\to\infty}v^k=0$ by Theorem \ref{converge-theorem1}(i). To account for the different scalings of $\Vert{}v^k\Vert{}^2$ and $\Vert{}v^k\Vert{}$, we set $a_k = b_k^2$ with $b_k =\!\max\big\{\frac{b_0}{(k+1-N_{\!f})^{1/2}}, 1.0\big\}$ for some $b_0\ge 1$, where $N_{\!f}$ denotes the cumulative number of unsuccessful iterations up to iteration $k$. This choice makes the constraint-violation tolerance in \eqref{inexact-cond2}
the square of the KKT-residual tolerance. Figure \ref{keans_fig}(b) shows that the final objective value varies little with $b_0$, while small values of $b_0$, particularly those in $[1,50)$, lead to longer running times.
Based on these observations, we use $b_0=10^3$ in the 
subsequent experiments.
\begin{figure}[htbp]
\centering
\includegraphics[width=1.05\textwidth]{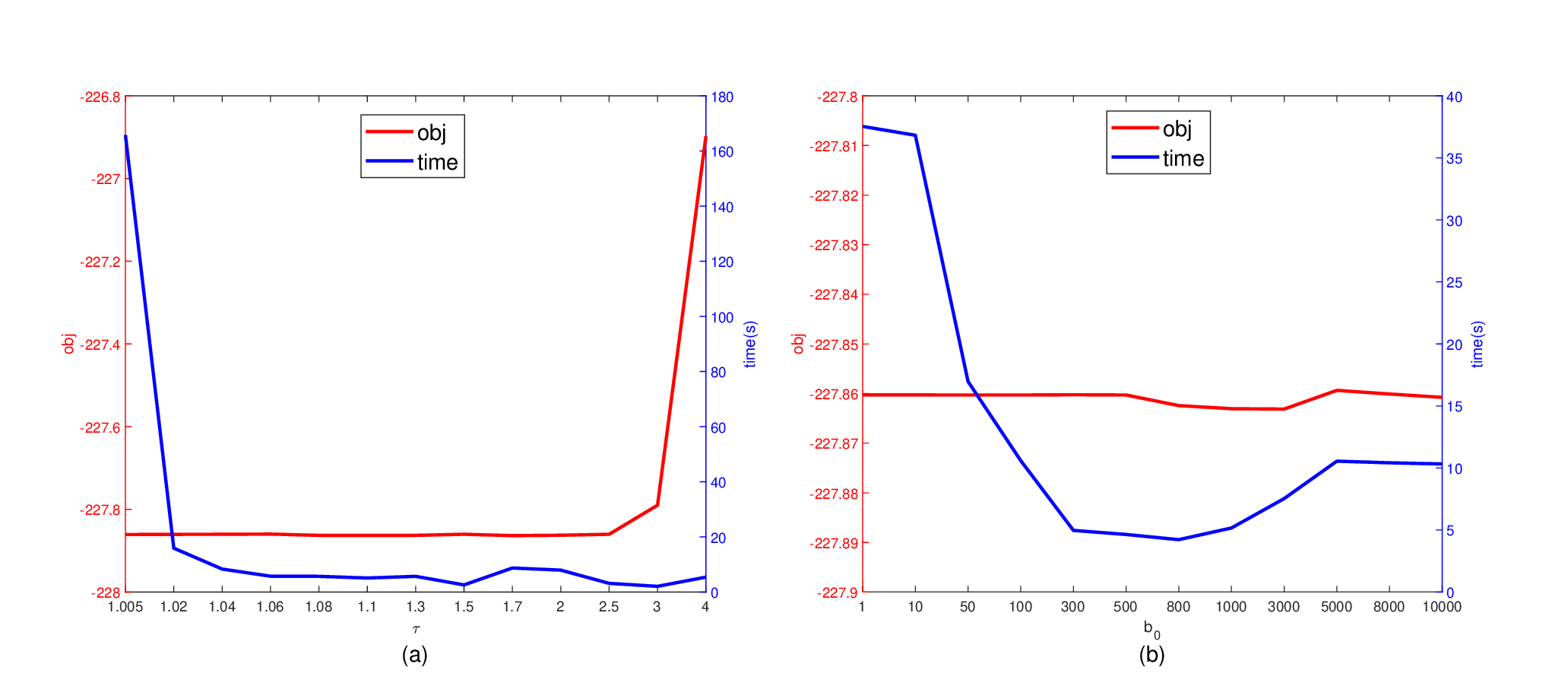}
\caption{Objective values and running times of Algorithm \ref{PenAl} for different $\tau$ in (a) and different $b_0$ in (b) when solving the test problem in Section \ref{sec6.4} on the Iris data.}
\label{keans_fig}
\end{figure}

Theorem \ref{complexity-theorem} and Lemma \ref{lemma1-complexity}(i) show that a smaller $\overline{\gamma}$ increases the complexity constant. However, it also makes  the condition ${\rm pred}_k < \overline{\gamma}\Vert{}v^k\Vert{}^2$ less likely to be satisfied, thereby reducing the number of unsuccessful iterations and the associated computational costs. Balancing these competing effects, we choose $\overline{\gamma} = 10^{-2}$ in the subsequent tests. As illustrated in Figure \ref{ex_inc_fig}(b), Step 5 of Algorithm \ref{PenAl} is seldom invoked in practice, suggesting that the choices of $\varepsilon_0$ and $\varsigma$ have little impact on the overall algorithmic performance. In view of this, we set $\varepsilon_0=10^{-2}$ and \(\varsigma=0.5 \) for all the subsequent tests. As indicated in Step 6 of Algorithm \ref{PenAl}, setting $\sigma_1$ and $\sigma_2$ closer to $1$ results in smaller changes across successive subproblems. This enhances the stability of Algorithm \ref{PenAl}, albeit at the expense of increased running time. Conversely, setting them far from $1$ reduces the running time, but at the cost of poorer stability. We set $\sigma_1=0.9, \sigma_2=1.05$ and $\eta_1=0.15,\eta_2=0.85$. 

To summarize, except for $\rho_0$ and $\beta_0$, the parameters of Algorithm \ref{PenAl} are set as follows:
\begin{equation}\label{parameter}
\begin{aligned}
 \beta_{\rm min}=10^{-10},\,a_{\rm max}=10^6,\,b_{\rm max}=10^3,\,\tau=1.1,\,\varepsilon_0=10^{-2},\varsigma=0.5\\  \eta_1=0.15,\,\eta_2=0.85,\,\sigma_1=1.05,\,\sigma_2=0.9,\,\overline{\gamma}=10^{-2}.\qquad\qquad
\end{aligned}  
\end{equation}
The parameters $a_k$ and $b_k$ appearing in Step 4 are set according to the following rule
\[
  a_k = b_k^2\ \ {\rm with}\ \ b_k =\!\max\Big\{\frac{b_0}{(k+1-N_{\!f})^{1/2}}, 1.0\Big\}\ \ {\rm for}\ b_0=10^3.
\]
When applying Algorithms \ref{sPADMM} or \ref{aADMM} to solve \eqref{Esubprob}, we update
$\sigma_k$ for $k\in\mathbb{N}_+$ as follows:
\begin{equation}\label{sigma-update}
\sigma_k=
\begin{cases}
\min\{1.01\sigma_{k-1},10^4\}
 & \begin{aligned}
  &\text{if (12)-(13) hold and}\\
  &a_k\|v^k\|^2
   \le 2\|G(x^k,v^k)-z^{k,l}\|,
  \end{aligned}\\[2pt]
\sigma_{k-1} & \text{otherwise},
\end{cases}
\end{equation}
where the initial value $\sigma_0$ is specified separately for each experiment. As indicated by the proof of Theorem~\ref{complexity-oracle},
a larger $\sigma_0$ yields a larger constant in the inner iteration
complexity bound. In our experiments, however, smaller values of
$\sigma_0$ tend to require more inner iterations to obtain a triple $(v^k,z^k,\xi^k)\in T_{x^k}\mathcal{M}\times \mathbb{Z}\times\partial\psi_{\rho_k}(z^k)$ satisfying the inexactness condition \eqref{inexact-cond1} or the conditions \eqref{model-descent}-\eqref{inexact-cond2}. The choice of $\sigma_0$ therefore balances the theoretical bound and empirical performance. Throughout the experiments, we set $\alpha=1.9$ and $\varrho=10^4$ in Algorithm~\ref{aADMM}.

\subsection{Numerical illustration of theoretical results}\label{sec6.2}

Let $A$ be a real symmetric matrix generated via the MATLAB command 
\lstinline|A = randn(n)|; \lstinline|A = (A+A')/2|, 
and let $u,s,w\in\mathbb{R}^n$ be linearly independent vectors generated randomly. We illustrate the theoretical results and the computational
behavior of iPLNEP using the following test problem: 
\begin{equation}\label{example5}
\min_{x\in \mathcal{M}}\Big\{\frac{1}{2}x^{\top}Ax +\nu\|x\|_1\ \  {\rm s.t.}\ \ (u^{\top}x)^2(s^{\top}x)=0,\, (u^{\top}x)^2-w^{\top}x\le 0,\,w^{\top}x\le 0\Big\},
\end{equation}
where $\nu>0$ is the regularization parameter,  and $\mathcal{M}\!:=\{x\in\mathbb{R}^n\mid u^{\top}x = 0,\|x\|=1\}$. When $\mathcal{M}$ does not include the sphere constraint, the constraints in this example coincide with those in \cite[Example 1]{Andreani2026}, for which the MFCQ fails. Here, we incorporate the sphere constraint into $\mathcal{M}$ and introduce an objective function to obtain a representative instance within our problem framework. Assumption \ref{ass1}(i) holds automatically since $\mathcal{M}$ is compact. 

Figure~\ref{fig-Penobj}(a) shows that $\rho_k$ remains constant
from some iteration $\bar{k}\le300$ until the end of the run,
providing numerical support for Assumption~\ref{ass1}(ii)
in this instance despite the failure of the extended CQ
in Definition~\ref{def-ECQ} at the accumulation points.
Identifying weaker conditions that ensure boundedness of the
penalty parameter sequence is an interesting direction for
future research. Figure \ref{fig-Penobj}(b) illustrates the convergence behavior of the penalty objective value sequence $\{\Theta_{\rho_k}(x^k)\}_{k\in\mathbb{N}}$, consistent with Theorem \ref{converge-theorem1}(i).
\begin{figure}[htbp]
\centering
\includegraphics[width=1.0\textwidth]{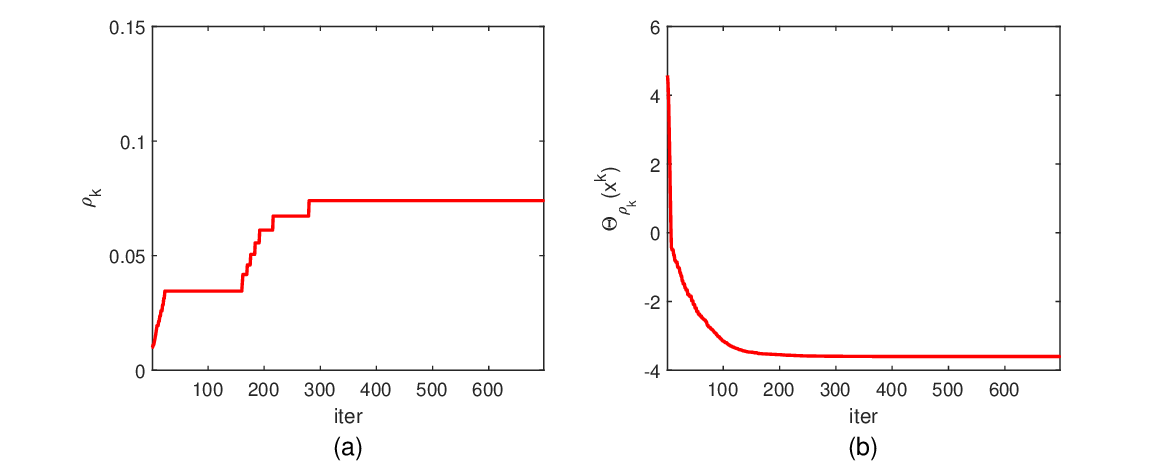}
\caption{Evolution of the penalty parameter $\rho_k$ and penalty function value $\Theta_{\rho_k}(x^k)$}
\label{fig-Penobj}
\end{figure}

Figure~\ref{ex_kkt_fig}(a) shows that the number of outer iterations increases as the accuracy tolerance $\epsilon$ decreases, but grows substantially more slowly than the reference $O(\epsilon^{-2})$ scaling indicated by the blue dashed line. This is consistent with 
Theorem~\ref{converge-theorem1}, which provides a worst-case upper
bound rather than a prediction of the observed iteration count. Figure~\ref{ex_kkt_fig}(b) reports the number of iterations of
Algorithm~\ref{aADMM} performed at each outer iteration.
The inner iteration counts remain relatively low during the
first $500$ outer iterations, after which they fluctuate more
widely, with peaks of approximately $800$. 
\begin{figure}[H]
\centering
\includegraphics[width=1.0\textwidth]{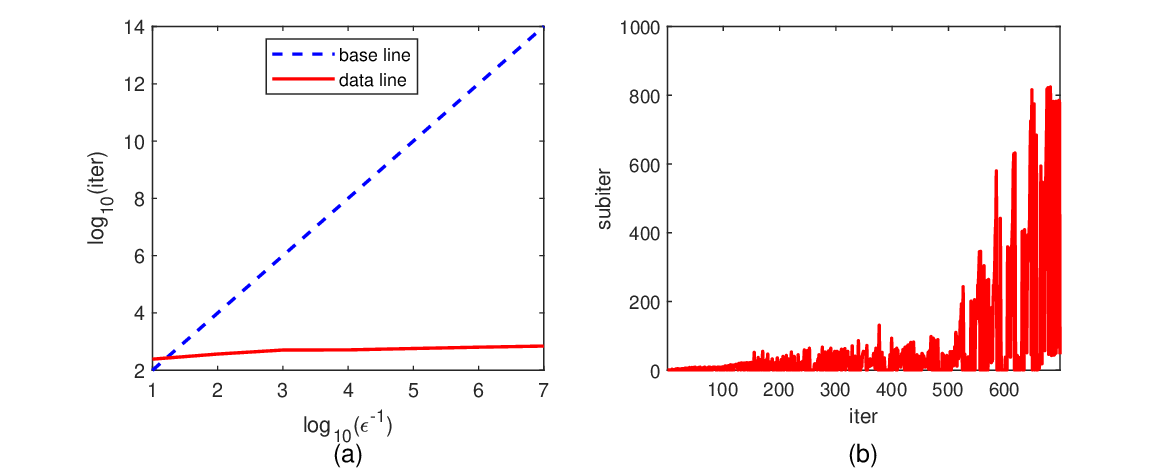}
\caption{(a) Number of iterations increases as the accuracy tolerance $\epsilon$ decreases. (b) Number of iterations of Algorithm \ref{aADMM} at each outer iteration of Algorithm \ref{PenAl}}
\label{ex_kkt_fig}
\end{figure}

The next four subsections present the numerical results of iPLNEP and the comparison methods on the problems in Examples \ref{Example1}-\ref{Example3}, using synthetic and real data. All tests were conducted using MATLAB R2024b on a 64-bit Windows laptop equipped with an Intel(R) Core(TM) i7-9750H CPU at 2.60 GHz and 16.00 GB of RAM.
\subsection{Experiments on row-sparse PCA}\label{sec6.3}

We apply iPLNEP to solve the problem in Example \ref{Example1} with $\varphi(X)=\Vert{}X\Vert{}_{2,1}$ and $v=0$, which is an instance of \eqref{Rcprob} with $\vartheta=\nu\varphi,h\equiv 0,g(X)=X^{\top}X\!-I_p,\mathcal{M}=\mathbb{X}=\mathbb{R}^{n\times p}$, and $K=\{0\}$. In this case, iPLNEP provides a retraction-free approach for nonsmooth optimization problems with orthogonality constraints. When Algorithm~\ref{sPADMM} or~\ref{aADMM} is used to solve \eqref{Esubprob} with the underlying norm
$|\!\lVert\cdot\lVert\!|$ chosen as the entrywise $\ell_1$-norm, 
Remark~\ref{remark1-Alg} gives the following augmented Lagrangian: 
\begin{align*}
\mathcal{L}_{\sigma_k}(V,Z,\Lambda)&=\langle\nabla\!f(X^k),V\rangle+\frac{\beta_k}{2}\|V\|_F^2+\nu\|X^k\!+\!V\|_{2,1}+\rho_k\|Z\|_1+f(X^k)\\
&\quad\  +\langle\Lambda,\ell_{g}(X^k\!+\!V;X^k)\!-\!Z\rangle+\frac{\sigma_k}{2}\|\ell_{g}(X^k\!+\!V;X^k)\!-\!Z\|_F^2. 
\end{align*}
The corresponding linear operator $\mathcal{A}_k$ is given by
$\mathcal{A}_kH=(X^k)^\top H+H^\top X^k$ for $H\in\mathbb{R}^{n\times p}$.

The matrix $M$ is synthetically generated following the data generation procedure of PenCPG \cite{Xiao2021}, as implemented in MATLAB below:
\begin{lstlisting}
       B=randn(m,n); B=B-repmat(mean(B,1),m,1);            
 B=B/max(vecnorm(B,2,1)); B=B'/sqrt(eigs(B'*B,1)); M=B*B'. 
\end{lstlisting}
We compare iPLNEP with two single-loop retraction-free approaches (PenCPG \cite{Xiao2021} and LSALM \cite{Zhu2026}) and a single-loop retraction-based approach (RADMM \cite{Li2025}) on test instances involving the synthetic matrix $M$ generated above, with $m=100$. All four methods start from the same feasible initial point $X^0$, generated by \lstinline|qr(randn(n,p),0)|.

For iPLNEP, we set $\rho_0$ according to \eqref{set-rho0}
with $\gamma_0=0.1$ and $\beta_0$ according to \eqref{set-beta0}
with $\tau_0=10^{-4}$. The remaining parameters are specified in \eqref{parameter}. The parameter $\sigma_k$ in Algorithm~\ref{sPADMM}
or~\ref{aADMM} is updated by \eqref{sigma-update},
with $\sigma_0=0.5$. For PenCPG, we use the default settings in the source code, with two modifications: the coefficients $0.5$ and $0.05$
in the conditional statement of \texttt{safe\_proj} are replaced
by $0.01$ and $0.001$, respectively, to improve numerical
stability, and the post-processing step is disabled so that we compare the outputs before post-processing. For LSALM, we set $\rho=0.5$, $\alpha=0.2$, $\beta=2$,
$\epsilon=10^{-8}$, $r=2$, $\lambda=1.25$,
$R_X^{\rm op}=10$, and $R_Y=5$,
following the notation in \cite{Zhu2026}.
For RADMM, we set the smoothing parameter $\gamma=10^{-8}$,
the penalty parameter $\rho=50$, and the fixed step size
$\eta=0.025$.
Since the original implementations of LSALM and RADMM use
$\varphi(X)=\|X\|_1$, we modify them to handle
$\varphi(X)=\|X\|_{2,1}$ so that all methods solve the same problem.

The infeasible approaches, iPLNEP, PenCPG and LSALM, terminate when either the maximum number of iterations $k_{\rm max}$ is reached or both the constraint violation condition $\|(X^k)^{\top}X^k-I_p\|_F< 10^{-5}$ and the variable update condition $\|X^k-X^{k-1}\|_F< 10^{-4}$ are met. Here, $k_{\rm max}=5000$ for iPLNEP, $k_{\rm max}=3000$ for PenCPG, and $k_{\rm max}=30000$ for LSALM. For iPLNEP, an additional stopping criterion $\max\{\|(X^k)^{\top}X^k-I_p\|_F,\varepsilon_k\}\!<\!10^{-5}$ is imposed. Since RADMM preserves feasibility, it terminates when either $40000$ iterations are reached or $\min\big\{\frac{\|X^k-Z^k\|_{F}}{\max\{1,\|X^k\|_F,\|Z^k\|_F\}},\|X^k\!-\!X^{k-1}\|_F\big\}< 10^{-4}$. Such a termination condition was used in \cite{Zhu2026} for RADMM, where the meaning of \(Z^k\) is given.

\begin{table*}[h]
\centering
\caption{Numerical comparison of four methods for solving row-sparse PCA problems}
	\label{rSPCA-tab}
	\scalebox{0.68}[0.68]{
		\begin{tabular}{c|c|cccccc} 
			\hline    
			\multicolumn{2}{c|}{$(n,p,\nu)$}& (2000,6,0.035)& (2000,9,0.035)&(2000,12,0.035)&(2500,10,0.030)&(2500,10,0.034)&(2500,10,0.038)\\
			\hline
			\multirow{6}*{iPLNEPsp} 
            &obj& 0.0788& 0.0804& -0.4348& -0.2645& 0.2674& 0.1980\\ 
            &rspar& 0.9970& 0.1135& 0.0181& 0.0383& 0.1106& 0.9960\\ 
            &time (s)& 0.43& 1.20& 1.78& 1.43& 1.96& 0.80\\ 
            &iter& 426.6& 763.0& 952.0& 814.9& 980.0& 551.5\\ 
            &initer& 1.2& 1.7& 1.8& 1.6& 2.0& 1.3\\ 
            &infeas& 2.31e-6& 6.79e-7& 7.20e-7& 7.57e-7& 6.48e-7& 2.16e-6\\ 
			\hline  
            \multirow{6}*{iPLNEPac} 
            &obj& 0.0819& 0.0806& -0.4348& -0.2645& 0.2675& 0.1983\\ 
            &rspar& 0.9970& 0.1148& 0.0179& 0.0384& 0.1118& 0.9960\\ 
            &time (s)& 0.40& 1.24& 1.77& 1.47& 2.18& 0.92\\ 
            &iter& 389.4& 709.5& 828.5& 709.1& 918.5& 595.2\\ 
            &initer& 1.4& 2.2& 2.5& 2.1& 2.6& 1.6\\ 
            &infeas& 3.83e-6& 2.64e-8& 3.31e-8& 6.94e-8& 5.17e-8& 2.08e-6\\ 
			\hline  
            \multirow{5}*{PenCPG} 
            &obj& 0.0882& 0.0805& -0.4348& -0.2645& 0.2678& 0.2081\\ 
            &rspar& 0.9970& 0.1150& 0.0180& 0.0384& 0.1120& 0.9960\\ 
            &time (s)& 0.03& 0.46& 0.49& 0.56& 0.87& 0.05\\ 
            &iter& 43.1& 851.2& 758.4& 788.9& 1212.7& 66.2\\ 
            &infeas& 1.21e-6& 7.02e-6& 3.35e-6& 3.59e-6& 7.24e-6& 1.85e-6\\ 
			\hline
            \multirow{5}*{LSALM} 
            &obj& 0.0871& 0.0808& -0.4348& -0.2645& 0.2675& 0.2032\\ 
            &rspar& 0.9970& 0.1154& 0.0181& 0.0384& 0.1103& 0.9960\\ 
            &time (s)& 0.44& 1.52& 1.48& 1.54& 1.92& 0.86\\ 
            &iter& 583.9& 1818.5& 1642.3& 1610.4& 1975.2& 891.8\\ 
            &infeas& 7.15e-6& 5.31e-6& 4.37e-6& 3.80e-6& 5.09e-6& 6.57e-6\\ 
			\hline
            \multirow{4}*{RADMM} 
            &obj& 0.0813& 0.0823& -0.4334& -0.2618& 0.2705& 0.4372\\ 
            &rspar& 0.9962& 0.1112& 0.0183& 0.0383& 0.1067&  0.7145\\ 
            &time (s)& 9.03& 9.40& 9.57& 9.61& 12.50& 24.84\\ 
            &iter& 10623.5& 9255.9& 8576.0& 7982.2& 10312.5& 20160.3\\ 
			\hline
		\end{tabular}
	}
\end{table*}

Table \ref{rSPCA-tab} summarizes the average results over $\textbf{20}$ independent runs for each experiment. For each method, we report the following statistics: average final objective value (``obj''), average row sparsity (``rspar''), average CPU time (``time''), average number of iterations (``iter''), and average constraint violation (``infeas''). Here, row sparsity is defined as the proportion of rows in the solution whose $\ell_2$-norms are smaller than $10^{-5}$, and the \texttt{initer} row denotes the average number of iterations required by Algorithms \ref{sPADMM} and \ref{aADMM} to solve each subproblem \eqref{Esubprob}. It can be observed that Algorithm \ref{sPADMM} solves each subproblem in at most $\textbf{2}$ iterations on average, and its accelerated variant, Algorithm \ref{aADMM}, does not provide a noticeable advantage in this experiment. As shown in Table \ref{rSPCA-tab}, iPLNEP achieves better objective values than RADMM with substantially lower CPU time, while consistently yielding smaller constraint violations than LSALM. Its computational cost is comparable to that of LSALM and remains competitive with PenCPG, a problem-specific method tailored for row-sparse PCA. Moreover, iPLNEP attains row sparsity levels comparable to those of PenCPG and LSALM, demonstrating its ability to achieve a favorable balance among solution quality, feasibility, and computational efficiency.
\subsection{Experiments on community detection}\label{sec6.4}

We apply iPLNEP to solve the problem in Example \ref{Example1}, where $M$ is the modularity matrix, $\varphi(X)=\Vert{}X\Vert{}_{1}$ and $v=e/\sqrt{n}$, with $e\in\mathbb{R}^n$ denoting the all-ones vector. This is an instance of \eqref{Rcprob} with $\vartheta=\nu\varphi,h\equiv 0,g(X)=XX^{\top}v-v,\mathcal{M}={\rm St}(n,p)$ and $K=\{0\}$. Now iPLNEP provides a Riemannian NEP framework for nonsmooth optimization problems with equality and orthogonality constraints. When solving \eqref{Esubprob} with Algorithm \ref{sPADMM} or \ref{aADMM}, with the same underlying norm as in Section~\ref{sec6.3}, the resulting augmented Lagrangian is 
\begin{align*}
\mathcal{L}_{\sigma_k}(V,Z,z,\Lambda)&:=\langle\nabla\!f(X^k),V\rangle+\!\frac{\beta_k}{2}\|V\|_F^2+\nu\|Z\|_{1}+\rho_k\|z\|_1+\delta_{T_{\!X^k}{\rm St}(n,p)}(V) \\
&\quad\ +\langle\Lambda,G(X^k,V)\!-\!(Z;z)\rangle+\frac{\sigma_k}{2}\|G(X^k,V)\!-\!(Z;z)\|_F^2+f(X^k), 
\end{align*}
where $G(X^k,V)=(X^k+V,X^{k}(X^k)^{\top}e-e+X^{k}V^{\top}e\!+\!V(X^k)^{\top}e)$. The linear operator $\mathcal{A}_k$ and its adjoint are $\mathcal{A}_kH=(H,X^{k}H^{\top}e\!+\!H(X^k)^{\top}e)$ and $\mathcal{A}_k^*(\Gamma,\xi)=\Gamma+(e\xi^{\top}\!+\!\xi e^{\top})X^k$.

The data matrix $M$ is generated from synthetic LFR benchmark networks for community detection \cite{Lancichinetti2008}. In the LFR model, the degree and community size distributions follow power laws with exponents $\tau_1$ and $\tau_2$, respectively. The mixing parameter $\mu_{\rm LFR}\in[0,1]$ controls the fraction of edges connecting nodes to other communities, with larger values indicating weaker community structures. The network ensemble is characterized by five parameters: the number of nodes $N$, average degree $d_{\rm ave}$, maximum degree $d_{\rm max}$, community size $N_c$, and number of communities $n_c$; see \cite[Section 5.2]{Huang2025} for more details. The LFR benchmark networks in our experiments are generated using the implementation of I-AManPG (available at \url{ https://www.math.fsu.edu/~whuang2/papers/AROACP.htm}), with parameters $\tau_1=-2,\tau_2=-1$, $N=1000, d_{\rm ave}=20, d_{\rm max}=40, N_c=50$ and $n_c=20$. The regularization parameter $\nu$ is set to $0.15$. 

We evaluate the performance of iPLNEP against I-AManPG, the inexact manifold PG method proposed in \cite{Huang2025} by treating $\mathcal{F}_{e}:=\{X\in{\rm St}(n,p) \mid XX^{\top}e-e=0\}$ as a manifold. Both methods start from the same initial point $X^0\in\mathcal{F}_e$, generated according to the implementation of I-AManPG. For I-AManPG, we adopt the default parameter settings in its source code; while for iPLNEP, we use the QR decomposition as the retraction $R$, set $\rho_0$ by \eqref{set-rho0} with $\gamma_0=10^{-3}$, set $\beta_0$ by \eqref{set-beta0} with $\tau_0=10^{-3}$, and adopt the parameters setting in \eqref{parameter}. The parameters $\sigma_k$ are chosen according to \eqref{sigma-update} with $\sigma_0=5.0$. We terminate iPLNEP when $\|X^k(X^k)^{\top}v-v\|_2< 10^{-5}$ and either $\|V^k\|_F< 10^{-3}$ or $\varepsilon_k< 10^{-5}$ hold, or when the maximum number of iterations $5000$ is reached; and terminate I-AManPG according to its default stopping condition.
\begin{table*}[h]
\centering
\caption{Numerical comparison of iPLNEP and I-AManPG for community detection}
	\label{CD-tab}
	\scalebox{0.685}[0.685]{
		\begin{tabular}{c|c|ccccccccc} 
			\hline    
			\multicolumn{2}{c|}{$\mu_{\rm LFR}$}& 0.0& 0.1&0.2&0.3&0.4&0.5&0.6&0.7&0.8\\
			\hline
			\multirow{6}*{iPLNEPsp} 
            &AMI&1.0000& 1.0000& 1.0000& 1.0000&1.0000&0.9998&0.9704&0.4193&0.0589\\ 
            &NMI&1.0000& 1.0000& 1.0000& 1.0000&1.0000&0.9998&0.9742&0.4656&0.1319\\ 
            &pur&1.0000& 1.0000& 1.0000& 1.0000&1.0000&0.9999&0.9812&0.5739&0.3055\\ 
            &obj&-189.9896& -168.7026& -147.2721& -125.7580&-104.3158&-83.0911&-62.1011&-44.7245&-40.9447\\ 
            &time (s)&4.47& 1.57& 1.13& 0.87&0.98&0.70&0.96&4.87&4.31\\ 
            &infeas&9.10e-6& 8.53e-6& 8.59e-6& 8.09e-6&8.44e-6&7.87e-6&8.24e-6&4.57e-6&5.91e-6\\ 
			\hline
			\multirow{6}*{iPLNEPac} 
            &AMI&1.0000& 1.0000& 1.0000& 1.0000&1.0000&0.9998&0.9618&0.4004&0.0548\\ 
            &NMI&1.0000& 1.0000& 1.0000& 1.0000&1.0000&0.9998&0.9670&0.4487&0.1275\\ 
            &pur&1.0000& 1.0000& 1.0000& 1.0000&1.0000&0.9999&0.9754&0.5592&0.2968\\ 
            &obj&-189.9896& -168.7026& -147.2722& -125.7577&-104.3156&-83.0912&-61.8683&-44.6821&-40.9393\\ 
            &time (s)&6.10& 2.03& 1.26& 1.19&1.04&1.15&1.65&7.48&3.56\\ 
            &infeas&8.64e-6& 8.87e-6& 8.47e-6& 8.38e-6&8.18e-6&7.50e-6&8.30e-6&4.32e-6&6.43e-6\\ 
			\hline  
            \multirow{5}*{I-AManPG} 
            &AMI&1.0000& 1.0000& 1.0000& 1.0000&1.0000&0.9998&0.9541&0.4031&0.0561\\ 
            &NMI&1.0000& 1.0000& 1.0000& 1.0000&1.0000&0.9998&0.9601&0.4511&0.1292\\ 
            &pur&1.0000& 1.0000& 1.0000& 1.0000&1.0000&0.9999&0.9680&0.5591&0.3048\\ 
            &obj&-189.8882& -168.3621& -147.1416& -125.6683&-104.2121&-83.0904&-61.6879&-44.4846&-40.7626\\ 
            &time (s)&0.68& 0.49& 0.53& 0.65&0.42&0.46&1.03&1.51&1.57\\
			\hline
		\end{tabular}
	}
\end{table*}0

Table \ref{CD-tab} summarizes the results of the two methods, where AMI, NMI and purity are clustering quality measures whose definitions can be found in \cite[Section 5.2.3]{Huang2025}. Higher values of these measures indicate better agreement with the ground-truth partition. As shown in Table \ref{CD-tab}, all three methods achieve identical 
AMI, NMI, and purity values for $\mu_{\rm LFR}=0.0,0.1,\ldots,0.5$.
For $\mu_{\rm LFR}=0.6,0.7,0.8$, iPLNEPsp achieves slightly
higher values of all three measures than iPLNEPac and I-AManPG.
I-AManPG also slightly outperforms iPLNEPac on these measures
for $\mu_{\rm LFR}=0.7,0.8$. In terms of objective values, iPLNEPsp produces comparable values with iPLNEPac, which are lower than the objective values given by I-AManPG. Although I-AManPG is faster in most cases, all three methods
complete each test in less than $8$ seconds. These results demonstrate the effectiveness of iPLNEPsp on the tested community detection instances.
\subsection{Experiments on $k$-means clustering}\label{sec6.5}

We apply iPLNEP to solve the problem in Example \ref{Example2} with $f(X)=\frac{1}{2}{\rm tr}(DXX^{\top})$ and $v=e$, where $D\in\mathbb{R}^{n\times n}$ is the data matrix and $e$ is the same as in Section \ref{sec6.4}. As shown in \cite{carson2017}, this problem provides an equivalent manifold reformulation of $k$-means clustering. It is a special case of \eqref{Rcprob} with $\vartheta\equiv 0=h,g(X)=(X,XX^{\top}e-e),\mathcal{M}={\rm St}(n,p)$ and $K=\mathbb{R}_{+}^{n\times p}\times\{0\}$. In this setting, iPLNEP provides a Riemannian nonsmooth penalty framework for optimization problems with nonnegativity and orthogonality constraints. When Algorithm \ref{sPADMM} or \ref{aADMM} is applied to solve \eqref{Esubprob}, with the same underlying norm $|\!\lVert \cdot\lVert\!|$ as above, the corresponding augmented Lagrangian is
\begin{align*}
\mathcal{L}_{\sigma_k}(V,Z,z,\Lambda)&:=\langle\nabla\!f(X^k),V\rangle+\!\frac{\beta_k}{2}\|V\|_F^2+\rho_k[\|\min(0,Z)\|_1\!+\!\|z\|_1]+\delta_{T_{\!X^k}{\rm St}(n,p)}(V)\\
&\ \ +\langle\Lambda,\ell_{g}(X^k\!+\!V;X^k)\!-\!(Z,z)\rangle+\frac{\sigma_k}{2}\|\ell_{g}(X^k\!+\!V;X^k)\!-\!(Z,z)\|_F^2+f(X^k). 
\end{align*}
Moreover, the linear mapping $\mathcal{A}_k$ is given by $\mathcal{A}_kH=g'(X^k)H$ for $H\in\mathbb{R}^{n\times p}$. 

We evaluate the performance of iPLNEP against the RALM proposed in \cite{Andreani2026} using data matrix $D$ from the UCI Machine Learning Repository \cite{Kelly2023uci}. For iPLNEP, we use the QR decomposition as the retraction $R$, set $\rho_0$ according to \eqref{set-rho0} with $\gamma_0=0.2$, set $\beta_0$ according to \eqref{set-beta0} with $\tau_0 = 0.05$, and adopt the parameter settings in \eqref{parameter}. The parameters $\sigma_k$ in Algorithm~\ref{sPADMM} are chosen according to \eqref{sigma-update} with $\sigma_0 = 2$. For RALM, we adopt the default parameter settings and initial point provided in its source code. For iPLNEP, we initialize $X^0$ via \texttt{[X0,\textasciitilde{}] = eigs(D,p)}, which performs better in our experiments; using this initialization for RALM, however, substantially increases its CPU time. We terminate iPLNEP when $\sqrt{\|\min(X^k,0)\|_F^2+\|X^k(X^k)^{\top}e\!-\!e\|_2^2}< 10^{-5}$ and either $\|V^k\|_F< 10^{-4}$ or $\varepsilon_k< 10^{-5}$ hold, or when the maximum number of iterations $5000$ is reached; and terminate RALM according to its default stopping condition.  
\begin{table}[p]
\centering
\caption{Numerical comparison of iPLNEP and RALM
for $k$-means clustering.}
\label{kmeans-tab}
\small
\setlength{\tabcolsep}{5pt}
\renewcommand{\arraystretch}{1.02}
\begin{tabular}{@{}c|c|c|cccccc@{}}
\hline
Dataset & $(N_s,N_d,N_c)$  & Solver & NMI & ACC (\%) & obj & infeas & time (s) \\
\hline
\multirow{3}*{Breast cancer} &\multirow{3}*{(569,30,2)}
 & iPLNEPsp & 0.5858 & 91.74 & -2.73e+3 & 5.66e-7 & 67.48 \\
 & & iPLNEPac & 0.5858 & 91.74 & -2.73e+3 & 4.96e-6 & 131.07 \\
 & & RALM     & 0.5547 & 91.04 & -2.73e+3 & 2.20e-6 & 60.60 \\
 \hline
\multirow{3}*{Cloud} & \multirow{3}*{(2048,10,2)}
 & iPLNEPsp & 1.0000 & 100.00 & -4.27e+3 & 6.21e-6 & 286.09 \\
 & & iPLNEPac & 1.0000 & 100.00 & -4.27e+3 & 4.91e-6 & 341.24 \\
 & & RALM     & 1.0000 & 100.00 & -4.27e+3 & 8.38e-6 & 212.28 \\
\hline
\multirow{3}*{Ecoli} & \multirow{3}*{(336,7,8)}
 & iPLNEPsp & 0.6645 & 66.37 & -8.88e+2 & 4.51e-6 & 8.46 \\
 & & iPLNEPac & 0.6236 & 70.24 & -9.03e+2 & 3.94e-6 & 16.64 \\
 & & RALM     & 0.6276 & 65.77 & -9.05e+2 & 1.76e-5 & 409.02 \\
\hline
\multirow{3}*{Ionosphere} & \multirow{3}*{(351,34,2)}
 & iPLNEPsp & 0.1349 & 71.23 & -1.13e+2 & 2.80e-6 & 1.04 \\
 & & iPLNEPac & 0.1349 & 71.23 & -1.13e+2 & 2.86e-6 & 1.35 \\
 & & RALM     & 0.1349 & 71.23 & -1.13e+2 & 4.07e-7 & 28.71 \\
\hline
\multirow{3}*{Iris} & \multirow{3}*{(150,4,3)}
 & iPLNEPsp & 0.6838 & 85.33 & -2.27e+2 & 9.35e-6 & 3.09 \\
 & & iPLNEPac & 0.6595 & 83.33 & -2.28e+2 & 2.20e-6 & 4.93 \\
 & & RALM     & 0.6588 & 83.33 & -2.28e+2 & 1.07e-5 & 32.61 \\
\hline
\multirow{3}*{Parkinsons} & \multirow{3}*{(195,22,2)}
 & iPLNEPsp & 0.0011 & 70.77 & -7.63e+6 & 8.91e-6 & 48.50 \\
 & & iPLNEPac & 0.0133 & 75.38 & -7.79e+6 & 8.81e-6 & 185.41 \\
 & & RALM     & 0.1367 & 75.38 & -7.97e+6 & 9.55e-7 & 37.94 \\
\hline
\multirow{3}*{Pima diabetes} & \multirow{3}*{(768,8,2)}
 & iPLNEPsp & 0.1203 & 70.96 & -5.04e+2 & 2.43e-6 & 3.29 \\
 & & iPLNEPac & 0.0866 & 68.88 & -4.98e+2 & 5.14e-6 & 15.40 \\
 & & RALM     & 0.0617 & 67.45 & -5.07e+2 & 1.06e-6 & 136.85 \\
\hline
\multirow{3}*{Raisin} & \multirow{3}*{(900,7,2)}
 & iPLNEPsp & 0.3305 & 76.22 & -1.45e+3 & 7.08e-6 & 69.10 \\
 & & iPLNEPac & 0.3305 & 76.22 & -1.45e+3 & 1.98e-6 & 90.38 \\
 & & RALM     & 0.3388 & 76.78 & -1.45e+3 & 2.84e-6 & 101.45 \\
\hline
\multirow{3}*{Seeds} & \multirow{3}*{(210,7,3)}
 & iPLNEPsp & 0.7446 & 92.38 & -5.17e+2 & 4.23e-7 & 22.93 \\
 & & iPLNEPac & 0.7307 & 91.90 & -5.17e+2 & 2.86e-7 & 22.65 \\
 & & RALM     & 0.7172 & 91.43 & -5.16e+2 & 1.42e-5 & 64.80 \\
\hline
\multirow{3}*{SPECTF} & \multirow{3}*{(267,44,2)}
 & iPLNEPsp & 0.0736 & 66.29 & -1.38e+3 & 2.51e-6 & 25.32 \\
 & & iPLNEPac & 0.0736 & 66.29 & -1.38e+3 & 6.74e-6 & 40.80 \\
 & & RALM     & 0.0736 & 66.29 & -1.38e+3 & 4.40e-6 & 23.64 \\
\hline
\multirow{3}*{Thyroid} & \multirow{3}*{(215,5,3)}
 & iPLNEPsp & 0.5669 & 87.44 & -3.05e+2 & 9.14e-7 & 22.28 \\
 & & iPLNEPac & 0.5669 & 87.44 & -3.05e+2 & 3.96e-7 & 25.48 \\
 & & RALM     & 0.5669 & 87.44 & -3.05e+2 & 2.46e-6 & 43.32 \\
\hline
\multirow{3}*{Transfusion} & \multirow{3}*{(748,4,2)}
 & iPLNEPsp & 0.0350 & 76.74 & -1.03e+9 & 9.95e-6 & 64.56 \\
 & & iPLNEPac & 0.0485 & 67.25 & -8.08e+8 & 2.67e-6 & 0.98 \\
 & & RALM     & 0.0172 & 73.93 & -1.16e+9 & 2.88e-5 & 21.79 \\
\hline
\multirow{3}*{Wine} & \multirow{3}*{(178,13,3)}
 & iPLNEPsp & 0.8759 & 96.63 & -1.55e+2 & 3.71e-6 & 1.99 \\
 & & iPLNEPac & 0.8759 & 96.63 & -1.55e+2 & 5.10e-6 & 3.21 \\
 & & RALM     & 0.8759 & 96.63 & -1.55e+2 & 3.18e-6 & 37.58 \\
\hline
\end{tabular}
\end{table}

Table~\ref{kmeans-tab} compares the three solvers in terms of normalized
mutual information (NMI), clustering accuracy (ACC), objective
value, constraint violation, and CPU time, where $N_s$, $N_d$ and $N_c$ denote the numbers of samples, features and clusters, respectively. Compared with RALM, iPLNEPsp achieves higher NMI and ACC values on six datasets, identical values at the reported precision on five datasets, and lower values on the remaining two. The improvements are particularly evident on the Iris,
Pima diabetes, and Transfusion datasets. Moreover, iPLNEPsp requires less CPU time than RALM on eight of the thirteen datasets, with substantial savings on Ecoli, Ionosphere, Iris, Pima diabetes, and Wine. The two iPLNEP variants achieve similar clustering quality
on many datasets, although neither consistently outperforms
the other. iPLNEPsp is faster than iPLNEPac on eleven of
the thirteen datasets. All three solvers attain constraint violations below $3\times10^{-5}$. The objective values do not always rank the solvers in the same order as the clustering measures.
For example, on Ecoli, iPLNEPsp achieves higher NMI and ACC
values than RALM despite attaining a higher objective value.
Overall, these results demonstrate the competitive clustering
performance and computational efficiency of iPLNEPsp
on the tested datasets.

\subsection{Experiments on nonconvex SOCPs}\label{sec6.6}

We first evaluate the performance of iPLNEP in solving DC-regularized SOCPs arising from robust SVMs and compare it with that of DCA \cite{Lopez2018}, which employs SeDuMi \cite{Sturm1999} to solve its inner subproblems. We further test iPLNEP on synthetic nonconvex SOCPs involving a quartic polynomial objective function, a generalized sphere constraint, and a second-order cone constraint, and compare its performance with that of IPOPT \cite{Wachter2006}.
\subsubsection{DC regularized SOCPs for robust SVMs}\label{sec6.5.2}

The SOCP formulation proposed by Saketha Nath and Bhattacharyya \cite{Nath2007} provides a robust classification scheme that has proven to be highly effective in terms of predictive performance \cite{Bhattacharyya2004}. Let $X_l$ ($l = 1, 2$) be $n$-variate random vectors generating the samples of each class, with means $s_l$ and covariance matrices $S_l$. The primary objective is to construct a robust classifier such that the probability of correct classification for each class $l$ is at least $p_l \in (0, 1)$. This leads to the chance-constrained optimization problem: 
\[
 \min_{w\in\mathbb{R}^n,t\in\mathbb{R}}\Big\{\frac{1}{2} \|w\|_2^2\ \ {\rm s.t.}\ \ \Pr\{w^{\top}X_1 + t\ge 0\} \ge p_1, \Pr\{w^{\top}X_2 + t \le 0\} \ge p_2\Big\}. 
 \]
 By representing each class $X_l$ through its mean and covariance $(s_l, S_l)$ ($l = 1, 2$), the robust framework replaces the probabilistic chance constraints with deterministic ones. This guarantees that the classification accuracy for each class $l$ (equivalently, the class recall) is at least $p_l$ under the worst-case distribution within the ambiguity set defined by $(s_l, S_l)$. By applying the multivariate Chebyshev inequality \cite[Lemma 1]{Lanckriet2002}, this distributionally robust formulation can be transformed into the quadratic SOCP (see \cite{Nath2007}):
 \begin{equation}\label{robust-SOCP}
 \min_{w\in\mathbb{R}^n,t\in\mathbb{R}}\Big\{\frac{1}{2} \|w\|_2^2\ \ {\rm s.t.}\ \ w^{\top}s_1 + t \ge 1 + \kappa_1 \sqrt{w^{\top}S_1 w},\, -(w^{\top}s_2 + t) \ge 1 + \kappa_2 \sqrt{w^{\top}S_2 w}\Big\}
 \end{equation}
 where $\kappa_l=\sqrt{\frac{p_l}{1-p_l}}$ for $l=1,2$. Subsequently, López et al. \cite{Lopez2018} incorporated an $\ell_1$ or $\ell_0$-norm regularizer into the objective function of \eqref{robust-SOCP} for simultaneous feature selection and classification. In contrast, we introduce an $\ell_1$-$\ell_2$ DC regularizer into the objective function of \eqref{robust-SOCP} for the same purpose. The corresponding nonconvex SOCP is as follows
 \begin{align}\label{dc-rSOCP}
 &\min_{(x_1,x_2)\in \mathbb{R}\times\mathbb{R}^{n}} \frac{1}{2}\|x_2\|_2^2 + \nu(\|x_2\|_1 - \|x_2\|_2)\nonumber\\
 &\qquad\ {\rm s.t.}\ \ 1+\kappa_1\|D^{\top}_1x_2\|_2\le s_1^{\top}x_2+x_1\\
  &\qquad\qquad\ 1+\kappa_2\|D^{\top}_2x_2\|_2\le -s_2^{\top}x_2-x_1,\nonumber
\end{align}
where $D_1\in\mathbb{R}^{n\times m_1}$ and $D_2\in\mathbb{R}^{n\times m_2}$ satisfy $S_1=D_1D_1^{\top}$ and $S_2=D_2D_2^{\top}$. 

Note that \eqref{dc-rSOCP} is a special case of Example \ref{Example3} with $s=2,m_1=n+1,m_2=n+1$ and
$g(x):=(s_1^{\top}x_2+x_1-1;\kappa_1D_1^{\top}x_2;-s_2^{\top}x_2-x_1-1;\kappa_2D_2^{\top}x_2)$ for $x=(x_1,x_2)\in\mathbb{R}\times\mathbb{R}^{n}$ but without the manifold constraint. In this scenario, iPLNEP provides a retraction-free approach to solving DC programs with SOC constraints. By Remark \ref{remark1-Alg}, when Algorithm \ref{sPADMM} or \ref{aADMM} is employed to solve \eqref{Esubprob} with ${\rm dist}(\cdot,K)=\sum_{i=1}^s{\rm dist}(\cdot,K^{m_i})$, 
\begin{align*}
\mathcal{L}_{\sigma_k}(v,z,\lambda)&:=\langle(0,x_2^k+\zeta_2^k),v\rangle+\frac{\beta_k}{2}\|v\|_2^2+\nu\|x_2^k\!+\!v\|_{1}+\rho_k {\rm dist}(z,K)\\
&\quad\  +\langle\lambda,\ell_{g}(x^k;v)\!-\!z\rangle+\frac{\sigma_k}{2}\|\ell_{g}(x^k;v)\!-\!z\|_2^2 +\frac{1}{2}\|x_2^k\|_2^2-\nu\|x_2^k\|_2, 
\end{align*}
where $\zeta_2^k\in\partial(-\nu\|x_2^k\|_2)$. We evaluate the performance of iPLNEP against the DCA \cite{Lopez2018}, which solves the following convex SOCP at each iteration by directly calling SeDuMi: 
\begin{align}\label{dcsub-rSOCP}
 &\min_{(x_1,x_2)\in \mathbb{R}\times\mathbb{R}^{n}, y\in\mathbb{R}^n} \frac{1}{2}\|x_2\|_2^2 + \nu\langle e,y\rangle - \nu\langle\zeta_2^k,x-x_2^k\rangle -h(x^k)\nonumber\\
 &\qquad\quad\ {\rm s.t.}\ \ 1+\kappa_1\|D^{\top}_1x_2\|_2\le s_1^{\top}x_2+x_1 \nonumber\\
  &\qquad\qquad\quad\  1+\kappa_2\|D^{\top}_2x_2\|_2\le -s_2^{\top}x_2-x_1,\\
  &\qquad\qquad\quad\  -x_{2}\le y\le x_{2}.\nonumber
\end{align}

For iPLNEP, we choose $\rho_0$ and $\beta_0$ according to \eqref{set-rho0} and \eqref{set-beta0}, respectively, with $\gamma_0=0.5$ and $\tau_0=10^{-3}$, and adopt the parameter settings specified in \eqref{parameter}. In addition, the parameters $\sigma_k$ in Algorithm~\ref{sPADMM} or \ref{aADMM} are determined by \eqref{sigma-update} with $\sigma_0=0.1$. For DCA, we call SeDuMi with its default settings. In the experiments, iPLNEP terminates when $(\sum_{i=1}^s{\rm dist}^2(g_i(x^k),K^{m_i}))^{1/2}< 10^{-5}$ and either $\varepsilon_k< 10^{-5}$ or $\|x^{k+1}-x^k\|_2< 10^{-4}$ hold, or when the maximum number of iterations $5000$ is reached; DCA terminates when $\|x^{k+1}-x^k\|_2< 10^{-4}$ or the maximum number of iterations $500$ is reached.

We test the performance of iPLNEP against DCA on the datasets used for SVM classification. These include classification datasets from the UCI Machine Learning Repository \cite{Kelly2023uci} and high-dimensional
gene-expression datasets used in \cite{Lopez2018}. Table~\ref{svm-tab} reports the results averaged over $10$ random starting points for each dataset with $\nu=0.5$, with classification performance measured by accuracy (ACC), where $N_s$ and $N_d$ denote the numbers of samples and features, respectively. All three solvers achieve identical ACC values at the reported precision and nearly identical sparsity levels, except on Pima diabetes (Data~2), where DCA produces no zero components.
iPLNEPsp requires less CPU time than DCA on Data~1, 4--7,
and 9--10, but more on Data~2--3 and~8. The two iPLNEP variants achieve comparable objective values and sparsity levels, while iPLNEPsp is faster on seven of the ten datasets. Thus, as in Section~\ref{sec6.3}, iPLNEPac shows no consistent computational advantage over iPLNEPsp.

\begin{table}[p]
\centering
\caption{Numerical comparison between iPLNEP and DCA for SVM classification}
\label{svm-tab}
\small
\setlength{\tabcolsep}{5pt}
\renewcommand{\arraystretch}{1.02}
\begin{tabular}{c|c|c|ccccc}
\hline
Dataset & $(N_s,N_d)$ & Solver & ACC (\%) & obj & spar & infeas & time (s) \\
\hline
\multirow{3}*{Breast cancer} & \multirow{3}*{(569,30)}
 & iPLNEPsp & 94.73 & 9.5823 & 0.3667 & 3.48e-6 & 0.16 \\
 & & iPLNEPac & 94.73 & 9.5823 & 0.3667 & 3.20e-6 & 0.13 \\
 & & DCA      & 94.73 & 9.5823 & 0.3667 & 2.07e-9 & 0.18 \\
 \hline
\multirow{3}*{Pima diabetes} & \multirow{3}*{(768,8)}
 & iPLNEPsp & 75.39 & 176.5720 & 0.1250 & 2.19e-6 & 0.49 \\
 & & iPLNEPac & 75.39 & 176.5724 & 0.1250 & 1.06e-6 & 0.68 \\
 & & DCA      & 75.39 & 176.5728 & 0.0000 & 3.46e-9 & 0.06 \\
\hline
\multirow{3}*{German Credit} & \multirow{3}*{(1000,24)}
 & iPLNEPsp & 73.40 & 76.3063 & 0.0000 & 1.29e-6 & 0.31 \\
 & & iPLNEPac & 73.40 & 76.3062 & 0.0000 & 2.17e-6 & 0.30 \\
 & & DCA      & 73.40 & 76.3065 & 0.0000 & 4.15e-9 & 0.08 \\
\hline
\multirow{3}*{Splice} & \multirow{3}*{(1000,60)}
 & iPLNEPsp & 82.40 & 9.5355 & 0.4667 & 5.73e-7 & 0.12 \\
 & & iPLNEPac & 82.40 & 9.5355 & 0.4667 & 3.35e-7 & 0.24 \\
 & & DCA      & 82.40 & 9.5355 & 0.4667 & 3.12e-9 & 0.15 \\
\hline
\multirow{3}*{Colorectal} & \multirow{3}*{(62,2000)}
 & iPLNEPsp & 90.32 & 5.2235 & 0.9570 & 6.15e-17 & 2.41 \\
 & & iPLNEPac & 90.32 & 5.2235 & 0.9570 & 5.92e-17 & 2.91 \\
 & & DCA      & 90.32 & 5.2235 & 0.9570 & 6.13e-9 & 5.12 \\
\hline
\multirow{3}*{Gravier} & \multirow{3}*{(168,2905)}
 & iPLNEPsp & 94.05 & 20.0145 & 0.9091 & 1.47e-10 & 5.87 \\
 & & iPLNEPac & 94.05 & 20.0145 & 0.9091 & 9.54e-11 & 9.24 \\
 & & DCA      & 94.05 & 20.0145 & 0.9091 & 5.52e-9 & 30.43 \\
\hline
\multirow{3}*{Lymphoma} & \multirow{3}*{(96,4026)}
 & iPLNEPsp & 96.88 & 0.9305 & 0.9945 & 5.75e-17 & 4.65 \\
 & & iPLNEPac & 96.88 & 0.9305 & 0.9945 & 5.97e-17 & 6.37 \\
 & & DCA      & 96.88 & 0.9305 & 0.9945 & 2.02e-9 & 61.24 \\
\hline
\multirow{3}*{Pomeroy} & \multirow{3}*{(66,7128)}
 & iPLNEPsp & 100.00 & 8.6054 & 0.9773 & 1.44e-9 & 50.35 \\
 & & iPLNEPac & 100.00 & 8.6054 & 0.9773 & 7.36e-9 & 63.61 \\
 & & DCA      & 100.00 & 8.6054 & 0.9771 & 4.60e-9 & 18.79 \\
\hline
\multirow{3}*{Shipp} & \multirow{3}*{(77,7219)}
 & iPLNEPsp & 97.40 & 4.7117 & 0.9892 & 5.02e-17 & 14.79 \\
 & & iPLNEPac & 97.40 & 4.7117 & 0.9892 & 4.09e-17 & 19.61 \\
 & & DCA      & 97.40 & 4.7117 & 0.9892 & 2.56e-9 & 27.74 \\
\hline
\multirow{3}*{West} & \multirow{3}*{(49,7219)}
 & iPLNEPsp & 100.00 & 5.5751 & 0.9870 & 2.25e-17 & 11.82 \\
 & & iPLNEPac & 100.00 & 5.5751 & 0.9870 & 2.29e-17 & 16.72 \\
 & & DCA      & 100.00 & 5.5751 & 0.9870 & 4.97e-9 & 14.53 \\
\hline
\end{tabular}
\end{table}

\subsubsection{Synthetic nonconvex SOCPs}\label{sec6.5.1}

All the preceding test instances involve a quadratic function $f$. In this section, we consider a synthetic nonconvex SOCP with an $\ell_1$-norm regularized quartic polynomial objective function, subject to a generalized sphere constraint and a SOC constraint, formulated as
\begin{align}\label{test-socp1}
&\min_{(x_1,x_2)\in\mathbb{R}\times\mathbb{R}^{n}}\underbrace{\sum_{i=1}^n\big(\frac{1}{4}a_ix_{2i}^4+\frac{1}{3}b_ix_{2i}^3\big)+\frac{1}{2}x^{\top}_2A_0x_2+c_0x_1+c_1^{\top}x_2}_{f(x)}+\nu\|x_2\|_1\nonumber\\
&\qquad{\rm s.t.}\ \ \|x_2\|_2\le t_0,\, \|x_2\|_2\le x_1,\,x_2^{\top}Bx_2= 1,
\end{align}
where $a,b\in\mathbb{R}^n$ and $c_1\in\mathbb{R}^{n}$ are generated by \lstinline|randn(n,1)|, $c_0$ by \lstinline|abs(randn(1))|, $A_0$ by \lstinline|A0 = randn(n); A0 = (A0+A0')/2|, and $B$ by \lstinline|B0 = 0.05*randn(n,m); B = B0*B0'+eye(n)|. We set $t_0=0.1/\lambda_{\rm max}(B)+0.9/\lambda_{\rm min}(B)$. As an extension of the nonlinear SOCP considered in \cite{Okuno2015}, this problem is a special case of \eqref{Rcprob} with $\mathcal{M}=\mathbb{R}\times\big\{z\in\mathbb{R}^n\mid z^{\top}Bz=1\big\}$, $K=\mathbb{R}_{+}\times K^{n+1},h\equiv 0$, and $g(x)=(t_0^2-\!\|x_2\|_2^2,x_1,x_2)$. In this setting, iPLNEP provides a Riemannian NEP framework for solving nonsmooth optimization problems involving SOC constraints and generalized sphere constraints. When Algorithm \ref{sPADMM} or \ref{aADMM} is used to solve \eqref{Esubprob} with ${\rm dist}(\cdot,K)$ induced by the $\ell_1$-norm in $\mathbb{R}^{n+1}$, the augmented Lagrangian is 
\begin{align*}
\mathcal{L}_{\sigma_k}(v,z,\lambda)&:=\langle\nabla\!f(x^k),v\rangle+\!\frac{\beta_k}{2}\|v\|_2^2+\nu\|z_1\|_1+\rho_k{\rm dist}(z_2,K)+\delta_{T_{\!x^k}\mathcal{M}}(v)\\
&\quad\  +\langle\lambda,G(x^k,v)\!-\!z\rangle+\frac{\sigma_k}{2}\|G(x^k,v)\!-\!z\|_2^2+f(x^k)\ \ {\rm for}\ z=(z_1;z_2), 
\end{align*}
where $G(x^k,v)=(x^k\!+\!v;g(x^{k})\!+\!g'(x^k)v)$. Clearly, $\mathcal{A}_k\zeta=(\zeta_1;g'(x^k)\zeta_2)$ for $\zeta=(\zeta_1;\zeta_2)$. 

We evaluate the performance of iPLNEP for solving problem \eqref{test-socp1} against the open-source solver IPOPT \cite{Wachter2006}, applied to the following reformulation: 
\begin{equation}\label{Etest-socp1} 
\min_{(x_1,x_2)\in\mathbb{R}\times\mathbb{R}^{n}\atop y\in\mathbb{R}^n}\Big\{f(x)+\nu\langle e,y\rangle\ \ {\rm s.t.}\ \ \|x_2\|_2^2\le t_0^2,\|x_2\|_2^2\le x_1^2,-y\le x_2\le y,x_2^{\top}Bx_2 = 1\Big\}.
\end{equation}
For iPLNEP, we use the projection mapping onto $\mathcal{M}$ as the retraction $R$, adopt the parameters setting in \eqref{parameter}, and choose $\rho_0$ and $\beta_0$ according to \eqref{set-rho0} and \eqref{set-beta0}, respectively, with $\gamma_0=0.1$ and $\tau_0=0.1$. The parameters $\sigma_k$ of Algorithm~\ref{sPADMM} or \ref{aADMM} are chosen by \eqref{sigma-update} with $\sigma_0 = 20$. We call IPOPT with its default settings. In the experiments, iPLNEP terminates when $(\sum_{i=1}^s{\rm dist}^2(g_i(x^k),K^{m_i}))^{1/2}< 10^{-5}$ and either $\varepsilon_k< 10^{-5}$ or $\|x^{k+1}-x^k\|_2< 10^{-4}$ hold, or when the maximum number of iterations $15000$ is reached. 

Table \ref{socp1-tab} reports the average performance of the solvers over
$10$ independent runs for each instance, where ``$-$'' indicates
that IPOPT exceeds the time limit of $2000$ seconds for all $10$ runs.
On the instances solved by IPOPT within the time limit,
both iPLNEP variants achieve nearly identical objective values
and similar sparsity levels to IPOPT, with substantially
less CPU time.
For the two instances with $n=3000$, the average CPU times
of the iPLNEP variants range from approximately $12$ to
$32$ seconds, whereas IPOPT exceeds the time limit.
These results indicate better scalability of iPLNEP
on the tested instances.
Although IPOPT attains smaller constraint violations
on the instances it completes, all reported constraint
violations are below $5\times 10^{-6}$.
\begin{table*}[h]
\centering
\caption{Numerical comparison between iPLNEP and IPOPT for \eqref{test-socp1}}
	\label{socp1-tab}
	\scalebox{0.8}[0.8]{
		\begin{tabular}{c|c|cccccc} 
			\hline    
			\multicolumn{2}{c|}{$(n, \nu)$}& (100, 0.5)& (500, 0.5)&(3000, 0.5)& (100, 1.0)& (500, 1.0)&(3000, 1.0)\\
			\hline
			\multirow{6}*{iPLNEPsp} 
            &obj& -7.4081& -16.7577& -38.1121& -4.6345& -10.8495& -23.8771\\ 
            &spar& 0.3000& 0.3184& 0.3134& 0.5570& 0.5888& 0.5761\\ 
            &time (s)& 0.030& 0.102& 28.173& 0.025& 0.110& 11.885\\ 
            &iter& 200.1& 290.3& 2939.0& 152.9& 286.2& 1167.8\\ 
            &initer& 2.4& 3.4& 9.3& 3.5& 4.4& 14.5\\ 
            &infeas& 4.44e-6& 3.14e-6& 2.79e-8& 2.76e-6& 7.75e-7& 6.85e-8\\ 
			\hline  
            \multirow{6}*{iPLNEPac} 
            &obj& -7.4081& -16.7579& -38.1121& -4.6346& -10.8495& -23.8771\\ 
            &spar& 0.3000& 0.3206& 0.3135& 0.5570& 0.5894& 0.5762\\ 
            &time (s)& 0.073& 0.211& 31.698& 0.070& 0.198& 12.518\\ 
            &iter& 264.6& 364.4& 3204.3& 250.5& 414.7& 1168.4\\ 
            &initer& 4.3& 3.9& 9.6& 6.3& 5.9& 15.2\\ 
            &infeas& 3.16e-6& 1.62e-6& 4.44e-8& 2.81e-6& 1.13e-6& 5.05e-8\\ 
			\hline  
            \multirow{5}*{IPOPT} 
            &obj& -7.4081& -16.7579&\quad --& -4.6346& -10.8496&\quad --\\ 
            &spar& 0.2990& 0.3190&\quad--& 0.5560& 0.5876&\quad--\\ 
            &time (s)& 1.744& 192.296&\quad--& 1.707& 193.051&\quad --\\   
            &iter& 19.7& 19.0&\quad--& 19.6& 20.2&\quad--\\
            &infeas& 2.98e-8& 5.42e-8&\quad--& 1.42e-8& 5.09e-8&\quad--\\ 			
			\hline
		\end{tabular}
}
  
\end{table*}

\section{Conclusion}\label{sec7}

We have developed an inexact proximal-linearized nonsmooth exact
penalty method, iPLNEP, for DC composite optimization with conic
and manifold constraints. The method penalizes the conic constraint
and approximately solves strongly convex proximal-linearized
subproblems over the tangent spaces of the manifold. Computable
inexactness criteria, together with adaptive proximal and penalty
parameter updates and a retraction-based acceptance mechanism,
yields an implementable framework without requiring each penalized
problem to be solved to a prescribed stationarity accuracy. Under boundedness of the iterate and penalty parameter sequences, iPLNEP achieves an $O(\epsilon^{-2})$ worst-case iteration complexity for finding an $\epsilon$-stationary point. By accounting for the cost of solving the subproblems, we further obtain an overall oracle complexity of $O(\epsilon^{-4})$ when
accelerated semi-proximal ADMM is used as the inner solver. Under an additional uniform error-bound condition, using semi-proximal ADMM as the inner solver improves this bound to $O(\epsilon^{-2}\log\epsilon^{-1})$.  Moreover, if the associated potential function satisfies the KL property, the whole sequence of iterates converges to a stationary point.

Extensive numerical experiments on problems with orthogonal manifold, nonnegative cone, and second-order cone constraints demonstrate the effectiveness, scalability, and practical applicability of iPLNEP to a broad class of nonsmooth nonconvex optimization problems with composite structures and geometric constraints. An interesting direction for future research is to extend the complexity analysis to the case of unbounded penalty parameters. An interesting direction for future research is to extend the complexity analysis to the case of unbounded penalty parameters.

\textbf{Acknowledgments.} The authors gratefully acknowledge Prof. Nachuan Xiao from The Chinese University of Hong Kong, Shenzhen, Prof. Wen Huang from Xiamen University, and Dr. Linglingzhi Zhu from Georgia Institute of Technology for kindly providing the implementations of PenCPG, I-AManPG, and LSALM, respectively, which were helpful for the numerical evaluation of the proposed method.

\textbf{Declarations.} The authors acknowledge the use of ChatGPT to assist with the proof of Proposition~\ref{prop:uniform-kkt} in Appendix C, check the proofs, and improve the English presentation of the manuscript. The authors have reviewed and verified all AI-assisted content and take full 
responsibility for the mathematical correctness and final content
of the paper.

\bibliography{reference}
\bibliographystyle{plain}

\bigskip
\noindent
{\bf\large Appendix A.}

\medskip
\noindent
Let $\mathcal{A}_k$ denote the Jacobian mapping of $G(x^k,\cdot)$. The constraint in \eqref{Esubprob} can be written as $\mathcal{A}_kv-z+G(x^k,0)=0$. Its dual can then be expressed in the minimization form 
\begin{equation}\label{dsubprob}
\min_{\lambda\in\mathbb{Z}} \Phi_k(\lambda):=\underbrace{\frac{1}{2\beta_k}\big\|\mathcal{P}_{T_{\!x^k}\mathcal{M}}(\mathcal{A}_k^*\lambda+\!\nabla\ell_k(0))\big\|^2-\langle\lambda,G(x^k,0)\rangle}_{\phi_k(\lambda)}+\psi_{\rho_k}^*(\lambda).
\end{equation}
The following proposition states how a triplet $(v^k,z^k,\xi^k)\in T_{x^k}\mathcal{M}\times\mathbb{Z}\times\partial\psi_k(z^k)$ satisfying either the inexactness condition \eqref{inexact-cond1} or the alternative criterion consisting of \eqref{model-descent}-\eqref{inexact-cond2} can be obtained from a sequence generated by a solver to \eqref{dsubprob}.  
\begin{aproposition}\label{prop-dual-inexact}
 Fix any $k\in\mathbb{N}$. For any $\lambda\in\mathbb{Z}$, define $\Gamma_k(\lambda)\!:=\mathcal{P}_{\psi_{\rho_k}^*}(\lambda-\!\nabla\phi_k(\lambda))-\lambda$, $v^k(\lambda):=-\beta_k^{-1}\mathcal{P}_{T_{\!x^k}\mathcal{M}}(\mathcal{A}_k^*\lambda+\nabla\ell_k(0))$ and $z^k(\lambda):=\mathcal{P}_{\psi_{\rho_k}}(\lambda-\!\nabla\phi_k(\lambda))$. Let $\{\lambda^l\}_{l\in\mathbb{N}}$ be a sequence generated by a solver for \eqref{dsubprob}. Then, for each $l\in\mathbb{N}$, \((v^k(\lambda^l), z^k(\lambda^l), \overline{\lambda}^l) \in T_{x^k}\mathcal{M} \times \mathbb{Z} \times \partial \psi_k(z^k(\lambda^l))\), where $\overline{\lambda}^l\!:=\lambda^l-\nabla\phi_k(\lambda^{l})-z^k(\lambda^l)$. Moreover, if $\lim_{l\to\infty}\Gamma_k(\lambda^l)=0$, then, for all sufficiently large $l$, the resulting triple satisfy either the inexactness criterion \eqref{inexact-cond1} or the alternative criterion given by \eqref{model-descent}-\eqref{inexact-cond2}. 
\end{aproposition}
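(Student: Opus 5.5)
The plan is to make the triple $(v^k(\lambda),z^k(\lambda),\overline{\lambda})$ explicit through the gradient of $\phi_k$ and the Moreau decomposition, so that both residuals in \eqref{inexact-cond1}–\eqref{inexact-cond2} are controlled by $\|\Gamma_k(\lambda)\|$. A primal strong-convexity argument then gives $v^k(\lambda^l)\to\overline{v}^k$, and a case split on whether $\overline{v}^k=0$ finishes the proof.

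\emph{Step 1 (membership).} Since $\mathcal{P}_{T_{x^k}\mathcal{M}}$ is an orthogonal projection, differentiating gives
$\nabla\phi_k(\lambda)=\beta_k^{-1}\mathcal{A}_k\mathcal{P}_{T_{x^k}\mathcal{M}}(\mathcal{A}_k^*\lambda+\nabla\ell_k(0))-G(x^k,0)=-G(x^k,v^k(\lambda))$.
Hence $\lambda-\nabla\phi_k(\lambda)=\lambda+G(x^k,v^k(\lambda))$ and $z^k(\lambda)=\mathcal{P}_{\psi_{\rho_k}}(\lambda+G(x^k,v^k(\lambda)))$. By definition $v^k(\lambda)\in T_{x^k}\mathcal{M}$. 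The optimality condition of the proximal mapping gives $\overline{\lambda}^l=\lambda^l+G(x^k,v^k(\lambda^l))-z^k(\lambda^l)\in\partial\psi_{\rho_k}(z^k(\lambda^l))$. This proves the first assertion.

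\emph{Step 2 (residuals).} By the Moreau identity, $\mathcal{P}_{\psi^*_{\rho_k}}(u)=u-\mathcal{P}_{\psi_{\rho_k}}(u)$. Therefore $\Gamma_k(\lambda^l)=\overline{\lambda}^l-\lambda^l=G(x^k,v^k(\lambda^l))-z^k(\lambda^l)$, so the constraint residual equals $\|\Gamma_k(\lambda^l)\|$.

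Because $\nabla\ell_k$ is constant and $\beta_kv^k(\lambda)=-\mathcal{P}_{T_{x^k}\mathcal{M}}(\mathcal{A}_k^*\lambda+\nabla\ell_k(0))$, we have
$\mathcal{P}_{T_{x^k}\mathcal{M}}(\nabla\ell_k(v^k)+\beta_kv^k+\mathcal{A}_k^*\overline{\lambda}^l)=\mathcal{P}_{T_{x^k}\mathcal{M}}\mathcal{A}_k^*\Gamma_k(\lambda^l)$.
Its norm is at most $\|\mathcal{A}_k\|\|\Gamma_k(\lambda^l)\|$. Both KKT residuals thus tend to $0$ when $\Gamma_k(\lambda^l)\to0$.

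\emph{Step 3 ($v^k(\lambda^l)\to\overline{v}^k$).} I would repeat the proof of Lemma \ref{Lemma1-Thetak} with the tolerances $a_k\|v^k\|^2$ and $b_k\|v^k\|$ replaced by the residuals of Step 2. With $v=\overline{v}^k$ and $v^l:=v^k(\lambda^l)$, this gives
\[
0\ge \widehat{\Theta}_{\rho_k}(x^k+\overline{v}^k;x^k)-\widehat{\Theta}_{\rho_k}(x^k+v^l;x^k)\ge \frac{\beta_k}{2}\|\overline{v}^k-v^l\|^2-\|\mathcal{A}_k\|\|\Gamma_k(\lambda^l)\|\|\overline{v}^k-v^l\|-2L_l\|\Gamma_k(\lambda^l)\|,
\]
where $L_l$ is a Lipschitz modulus of $\psi_{\rho_k}$ near $[G(x^k,v^l),z^k(\lambda^l)]$, as in Lemma \ref{lemma-psirhok}. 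Once $L_l$ is bounded, the quadratic inequality yields $\|v^l-\overline{v}^k\|\to0$.

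The main obstacle is making $L_l$ uniform in $l$ without a priori boundedness of $\{v^l\}$ or of the dual iterates. The $\rho_k{\rm dist}$ part of $\psi_{\rho_k}$ is globally Lipschitz, but $\vartheta$ is only locally Lipschitz. I would first attack this by bounding $\{v^l\}$ directly. Write the three-point inequality using a subgradient of $\vartheta$ at $x^k+\overline{v}^k$, whose norm is fixed, in place of the one at $z^k(\lambda^l)$. Combined with the strong convexity term, this should give $\frac{\beta_k}{2}\|v^l-\overline{v}^k\|^2\le O(1+\|v^l\|)\|\Gamma_k(\lambda^l)\|$. That yields boundedness of $\{v^l\}$, hence of $\{z^k(\lambda^l)\}$, after which the local Lipschitz constant applies. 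If this route fails, I would fall back on extracting a bounded subsequence from the convergence of the dual solver.

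\emph{Step 4 (case split).} Recall $\varepsilon_k>0$ is fixed. If $\overline{v}^k=0$, all three quantities in \eqref{inexact-cond1} tend to $0$, so \eqref{inexact-cond1} holds for all large $l$. If $\overline{v}^k\ne0$, then $\|v^l\|\to\|\overline{v}^k\|>0$, so $a_k\|v^l\|^2$ and $b_k\|v^l\|$ stay bounded away from $0$ while the residuals vanish, which gives \eqref{inexact-cond2}. Moreover, since $\overline{v}^k\ne0$ is the unique minimizer, $\widehat{\Theta}_{\rho_k}(x^k+\overline{v}^k;x^k)<\widehat{\Theta}_{\rho_k}(x^k;x^k)$. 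Continuity of $\widehat{\Theta}_{\rho_k}(x^k+\cdot;x^k)$ then gives \eqref{model-descent} for all large $l$.
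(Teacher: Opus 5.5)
Your Steps 1, 2 and 4 match the paper's argument: the Moreau decomposition gives $\Gamma_k(\lambda)=G(x^k,v^k(\lambda))-z^k(\lambda)=\overline{\lambda}-\lambda$, the stationarity residual equals $\mathcal{P}_{T_{x^k}\mathcal{M}}\mathcal{A}_k^*\Gamma_k(\lambda^l)$, and the case split on $\overline{v}^k$ is the same.

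The gap is in Step 3, at exactly the point you flag. The estimate of Lemma~\ref{Lemma1-Thetak} must use $\overline{\lambda}^l\in\partial\psi_{\rho_k}(z^k(\lambda^l))$, because that is the only subgradient tied to the known residual. A subgradient at $x^k+\overline{v}^k$ cannot replace it: the subgradient inequality there bounds $\widehat{\Theta}_{\rho_k}(x^k+v^l;x^k)$ from below, whereas you need an upper bound on $\widehat{\Theta}_{\rho_k}(x^k+v^l;x^k)-\widehat{\Theta}_{\rho_k}(x^k+\overline{v}^k;x^k)$. If you instead bring in a multiplier $\overline{\xi}\in\partial\psi_{\rho_k}(G(x^k,\overline{v}^k))$ through monotonicity, you get
\[
\beta_k\|v^l-\overline{v}^k\|^2\le\|\mathcal{A}_k\|\|\Gamma_k(\lambda^l)\|\|v^l-\overline{v}^k\|-\langle\overline{\lambda}^l-\overline{\xi},\Gamma_k(\lambda^l)\rangle .
\]
This still contains $\langle\overline{\lambda}^l,\Gamma_k(\lambda^l)\rangle$, which requires a bound on $\|\overline{\lambda}^l_1\|$ with $\overline{\lambda}^l_1\in\partial\vartheta(z^k_1(\lambda^l))$. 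For a $\vartheta$ that is only locally Lipschitz (e.g.\ $\vartheta=\|\cdot\|^4$), this term cannot be bounded a priori by $O(1+\|v^l\|)\|\Gamma_k(\lambda^l)\|$, so boundedness of $\{v^l\}$ does not follow.

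When $\vartheta$ is globally Lipschitz, as in all the paper's examples, Lemma~\ref{lemma-psirhok} makes $L_l$ uniform and your Step 3 works as written. In that case it is cleaner than the paper's route, since it needs nothing about the dual iterates.

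Your fallback is also insufficient as stated. It either assumes more than the hypothesis $\Gamma_k(\lambda^l)\to0$ (namely convergence of the solver), or, if you extract only one bounded subsequence, it gives the conclusion along that subsequence rather than for all sufficiently large $l$.

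The paper's proof avoids the issue by taking an accumulation point $\lambda^*$ of $\{\lambda^l\}$. It notes that $\lambda^*$ solves \eqref{dsubprob} and concludes $v^k(\lambda^l)\to v^k(\lambda^*)=\overline{v}^k$ by continuity. This tacitly uses boundedness of the whole sequence $\{\lambda^l\}$, which is what your argument is missing as well.

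That boundedness does follow from $\Gamma_k(\lambda^l)\to0$:
\begin{itemize}
\item Since $\overline{\lambda}^l=\lambda^l+\Gamma_k(\lambda^l)=\mathcal{P}_{\psi_{\rho_k}^*}(\lambda^l-\nabla\phi_k(\lambda^l))$, one has $\nabla\phi_k(\overline{\lambda}^l)-\nabla\phi_k(\lambda^l)-\Gamma_k(\lambda^l)\in\partial\Phi_k(\overline{\lambda}^l)$. This element has norm at most $(1+\|\mathcal{A}_k\|^2/\beta_k)\|\Gamma_k(\lambda^l)\|\to0$.
\item The solution set of \eqref{dsubprob} is the multiplier set. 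It is a nonempty closed subset of the compact set $\partial\psi_{\rho_k}(G(x^k,\overline{v}^k))$, because $\psi_{\rho_k}$ is finite-valued.
\item Hence the convex function $\Phi_k$ is level-bounded, so $\Phi_k(\lambda)\ge\alpha\|\lambda\|-\gamma$ for some $\alpha>0$. It therefore cannot have vanishing subgradients along an unbounded sequence.
\end{itemize}
So $\{\lambda^l\}$, $\{v^k(\lambda^l)\}$ and $\{z^k(\lambda^l)\}$ are bounded and $L_l$ is uniform. Your quadratic inequality then gives $v^k(\lambda^l)\to\overline{v}^k$. Equivalently, every accumulation point of $\{\lambda^l\}$ is dual optimal, and the paper's continuity argument applies to the full sequence.
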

\begin{proof}
Since $\mathcal{P}_{\psi_{\rho_k}}(\cdot)=(\mathcal{I}+\partial\psi_{\rho_k})^{-1}(\cdot)$, the definitions of $z^k(\lambda^l)$ and $\overline{\lambda}^l$ imply that $\overline{\lambda}^l\in \partial \psi_k(z^k(\lambda^{l}))$ for all $l\in\mathbb{N}$. By the definition of $v^{k}(\lambda)$, we have \((v^k(\lambda^l), z^k(\lambda^l), \overline{\lambda}^l) \in T_{x^k}\mathcal{M} \times \mathbb{Z} \times \partial \psi_k(z^k(\lambda^l))\). To prove the second assertion, we claim that for each $l\in\mathbb{N}$,
\begin{align}\label{feasi-tempineq}
 &\|G(x^k,v^k(\lambda^l))-z^k(\lambda^l)\|=\|\mathcal{A}_kv^k(\lambda^l)+G(x^k,0)-z^k(\lambda^l)\|=\|\Gamma_k(\lambda^l)\|,\\
&\big\|\mathcal{P}_{T_{\!x^k}\mathcal{M}}[\nabla \ell_k(0)+\beta_kv^k(\lambda^l)+\nabla_{\!v} G(x^k,v^k(\lambda^k))\overline{\lambda}^l]\big\|\nonumber\\
\label{opt-tempineq}
&\le\big\|\mathcal{P}_{T_{\!x^k}\mathcal{M}}[\nabla \ell_k(0)+\mathcal{A}^*_k\lambda^l+\beta_kv^k(\lambda^l)]\big\|+\|\mathcal{A}^*_k\Gamma_k(\lambda^l)\|=\|\mathcal{A}^*_k\Gamma_k(\lambda^l)\|.
\end{align}
Indeed, from the Moreau's decomposition theorem and the definition of $z^k(\lambda)$, it follows that $\mathcal{P}_{\psi_{\rho_k}^*}(\lambda-\nabla \phi_k(\lambda))=\lambda-\nabla\phi_k(\lambda)-z^{k}(\lambda)$, which by the definition of $\Gamma_k$ implies that $\Gamma_k(\lambda)=-\nabla \phi_k(\lambda)-z^{k}(\lambda)$. Noting that $\nabla\phi_k(\lambda)=-\mathcal{A}_kv^k(\lambda)-G(x^k,0)$, we have $\Gamma_k(\lambda)=(\mathcal{A}_kv^k(\lambda)+G(x^k,0))-z^{k}(\lambda)$. This implies the second equality in \eqref{feasi-tempineq}, while the first one is immediate. 
For each $l\in\mathbb{N}$, from $\Gamma_k(\lambda^{l})=-\nabla \phi_k(\lambda^{l})-z^{k}(\lambda^{l})$ and the definition of $\overline{\lambda}^{l}$, we have $\Gamma(\lambda^l)=\overline{\lambda}^l-\lambda^l$, which implies the inequality in \eqref{opt-tempineq}. While by the definition of $v^{k}(\lambda)$, $\mathcal{P}_{T_{\!x^k}\mathcal{M}}[\nabla \ell_k(0)+\mathcal{A}^*_k\lambda+\beta_kv^k(\lambda)]=0$ for any $\lambda\in\mathbb{Z}$, which implies the equality in \eqref{opt-tempineq}. Thus, the claimed relations in \eqref{feasi-tempineq}-\eqref{opt-tempineq} hold.  

Let $\lambda^*$ be an arbitrary accumulation point of $\{\lambda^l\}_{l\in\mathbb{N}}$. Then $\lambda^*$ is an optimal solution of \eqref{dsubprob} by convexity. Consequently, $(v^{k}(\lambda^*),z^k(\lambda^*))$ is an optimal solution of \eqref{Esubprob}, and $\overline{v}^k=v^{k}(\lambda^*)=\lim_{l\to\infty}v^k(\lambda^{l})$. We proceed the proof by two cases $\overline{v}^k=0$ and $\overline{v}^k\ne 0$.

\noindent
\textbf{Case 1:} $\|\overline{v}^k\|=0$. In this case, we have $\lim_{l\to\infty}v^k(\lambda^l)=0$. Together with $\varepsilon_k>0$, there exists $l_0\in\mathbb{N}$ such that for all $l\ge l_0$, $\beta_k\|v^k(\lambda^l)\|\le \varepsilon_k$. Since $\lim_{l\to\infty}\Gamma_k(\lambda^l)=0$, it follows from \eqref{feasi-tempineq} and \eqref{opt-tempineq} that $\|G(x^k,v^k(\lambda^l))-z^k(\lambda^l)\|\le\varepsilon_k$ and $\|\mathcal{P}_{T_{x^k}\mathcal{M}}[\nabla \ell_k(0)+\beta_kv^k(\lambda^l)+\nabla_{\!v} G(x^k,v^k(\lambda^k))\overline{\lambda}^l]\|\le\varepsilon_k$ for all $l\ge l_3$ (if necessary by enlarging $l_0$). This means that  \((v^k(\lambda^l), z^k(\lambda^l), \overline{\lambda}^l)\) for all $l\ge l_0$ satisfy the inexactness criterion \eqref{inexact-cond1}. 

\noindent
\textbf{Case 2:} $\|\overline{v}^k\|>0$. Now, since $\lim_{l\to\infty}v^k(\lambda^l)=\overline{v}^k$, there exists $l_1\in\mathbb{N}$ such that for all $l\ge l_1$, $\|v^k(\lambda^l)-\overline{v}^k\|\le \frac{1}{2}\|\overline{v}^k\|$, so $\|v^k(\lambda^l)\|\ge \frac{1}{2}\|\overline{v}^k\|$. Recall that $\lim_{l\to\infty}\Gamma_k(\lambda^l)=0$ and $\min\{a_k,b_k\}\|\overline{v}^k\|>0$. From \eqref{feasi-tempineq}-\eqref{opt-tempineq}, there exists $\mathbb{N}\ni l_2\ge l_1$ such that for all $l\ge l_2$, 
\[
\|G(x^k,v^k(\lambda^l))-z^k(\lambda^l)\|\le \frac{a_k\|\overline{v}^k\|^2}{4}\le a_k\|v^k(\lambda^l)\|^2,\,\|\mathcal{A}^*_k\Gamma_k(\lambda^l)\|\le \frac{b_k\|\overline{v}^k\|}{2}\le b_k\|v^k(\lambda^l)\|. 
\]
This shows that \((v^k(\lambda^l), z^k(\lambda^l), \overline{\lambda}^l)\) satisfies \eqref{inexact-cond2} for all $l\ge l_2$. In addition, it follows from the $\beta_k$-strong convexity of $\widehat{\Theta}_{\rho_k}(x^k\!+\!\cdot;x^k)+\delta_{T_{x^k}\mathcal{M}}(\cdot)$ that
\begin{equation*}
\Theta_{\rho_k}(x^k)=\widehat{\Theta}_{\rho_k}(x^k;x^k)> \widehat{\Theta}_{\rho_k}(x^k\!+\overline{v}^k;x^k)+\frac{\beta_k}{2}\|\overline{v}^k\|^2.
\end{equation*}
Then, the limit $\lim_{l\to\infty}v^k(\lambda^l)=\overline{v}^k$ and the continuity of $\widehat{\Theta}_{\rho_k}(x^k\!+\!\cdot;x^k)$ imply that $\Theta_{\rho_k}(x^k)>\widehat{\Theta}_{\rho_k}(x^k\!+\!v^k(\lambda^l);x^k)$ for all $l\ge l_2$ (if necessary by enlarging $l_2$). Thus, the triple \((v^k(\lambda^l), z^k(\lambda^l), \overline{\lambda}^l)\) for all $l\ge l_2$ satisfy the criterion given by \eqref{model-descent}-\eqref{inexact-cond2}.
\end{proof}

\medskip
\noindent
{\bf\large Appendix B.}

\begin{aproposition}\label{rho-bounded}
 Under Assumption \ref{ass1}(i), if the extended CQ in Definition \ref{def-ECQ} holds at each accumulation point of $\{x^k\}_{k\in\mathbb{N}}$, then the sequence $\{\rho_k\}_{k\in\mathbb{N}}$ is bounded.
\end{aproposition}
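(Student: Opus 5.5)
We argue by contradiction: suppose $\{\rho_k\}$ is unbounded. The penalty parameter is multiplied by $\tau$ only at Step 5, when ${\rm dist}(z_2^k,K)>\varepsilon_k$, or at Step 7 via \eqref{update-rho}. So there is an infinite index set $\mathcal{K}$ of update iterations with $\rho_k\to\infty$ along $\mathcal{K}$. Since $\{x^k\}$ is bounded, we pass to a subsequence with $x^k\to\overline{x}$, where $\overline{x}\in\mathcal{M}$ because $\mathcal{M}$ is closed. The plan is to extract from the inexactness conditions a nonzero multiplier $\overline{\xi}\in\mathcal{N}_K(\mathcal{P}_K(g(\overline{x})))$ with $0\in\nabla g(\overline{x})\overline{\xi}+N_{\overline{x}}\mathcal{M}$, contradicting the extended CQ of Definition \ref{def-ECQ}.

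\textbf{Step 1: uniform bounds independent of $\rho_k$.} Lemma \ref{bound-lemma} uses Assumption \ref{ass1}(ii), so we cannot invoke it directly. Instead we reprove boundedness of $v^k$ with $\rho$-free constants.
\begin{itemize}
\item On $\mathbb{N}_1$, $\|\beta_kv^k\|\le\varepsilon_0$, so $v^k$ is bounded.
\item On $\mathbb{N}_2$, the descent condition \eqref{model-descent} gives $(\beta_k/2)\|v^k\|^2-c\|v^k\|\le \rho_k[{\rm dist}(g(x^k),K)-{\rm dist}(\ell_g(x^k+v^k;x^k),K)]$. The right-hand side is at most $\rho_kc_u\|g'(x^k)v^k\|$. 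This only yields $\|v^k\|=O(\rho_k/\beta_k)$.
\end{itemize}
We therefore normalize rather than seek a $\rho$-free bound. Write $\xi_2^k=\rho_k\widehat{\xi}_2^k$ with $\widehat{\xi}_2^k\in\partial{\rm dist}(z_2^k,K)$. By Lemma \ref{subdiff-dist}, $|\!\lVert\widehat{\xi}_2^k\rVert\!|_*\le1$, and $\widehat{\xi}_2^k\in\mathcal{N}_K(u^k)$ for some $u^k\in\mathcal{P}_K(z_2^k)$. Whenever $z_2^k\notin K$, we also have $|\!\lVert\widehat{\xi}_2^k\rVert\!|_*=1$.

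\textbf{Step 2: divide the stationarity residual by $\rho_k$.} From \eqref{inexact-cond1} or \eqref{inexact-cond2},
\[
\big\|\mathcal{P}_{T_{x^k}\mathcal{M}}\big(\nabla f(x^k)+\zeta^k+\beta_kv^k+\xi_1^k+\rho_k\nabla g(x^k)\widehat{\xi}_2^k\big)\big\|\le \max\{\varepsilon_k,b_k\|v^k\|\}.
\]
We divide by $\rho_k$. The terms $\nabla f(x^k)$, $\zeta^k$ and $\xi_1^k$ are bounded, the last because $z_1^k$ stays near $x^k+v^k$ and $\vartheta$ is locally Lipschitz. So the claim reduces to showing
\[
(\beta_k\|v^k\|+b_k\|v^k\|+\varepsilon_k)/\rho_k\to0 \quad\text{along a subsequence of }\mathcal{K}.
\]
The update rule \eqref{update-rho} is the key. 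At Step 7 updates, $\rho_k<\beta_k^{-1}\|v^k\|^{-1}$, so $\rho_k\beta_k\|v^k\|<1$. Hence $\beta_k\|v^k\|/\rho_k<\rho_k^{-2}\to0$. Since $\beta_k\ge\beta_{\min}$ and $b_k\le b_{\max}$, the term $b_k\|v^k\|/\rho_k\to0$ as well, and also $v^k\to0$. At Step 5 updates the bound is $\varepsilon_k\le\varepsilon_0$, so again everything is $O(1/\rho_k)$. In both cases $\mathcal{P}_{T_{x^k}\mathcal{M}}(\nabla g(x^k)\widehat{\xi}_2^k)\to0$.

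\textbf{Step 3: pass to the limit.} Take $\widehat{\xi}_2^k\to\overline{\xi}$ along a further subsequence. By continuity of $\mathcal{P}_{T_x\mathcal{M}}$ in $x$, we get $\mathcal{P}_{T_{\overline{x}}\mathcal{M}}(\nabla g(\overline{x})\overline{\xi})=0$, that is, $0\in\nabla g(\overline{x})\overline{\xi}+N_{\overline{x}}\mathcal{M}$.

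Next, $z_2^k\to g(\overline{x})$. At Step 7 updates this follows from $v^k\to0$ and $\|G(x^k,v^k)-z^k\|\le a_{\max}\|v^k\|^2$. At Step 5 updates we only have an $O(\varepsilon_k)$ error, which needs care (see Step 4). The projections $u^k$ are bounded, so we pass to $u^k\to\overline{u}$. Closedness of the graph of the projection gives $\overline{u}\in\mathcal{P}_K(g(\overline{x}))$, and outer semicontinuity of $\mathcal{N}_K$ gives $\overline{\xi}\in\mathcal{N}_K(\overline{u})$.

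\textbf{Step 4: the main obstacle, $\overline{\xi}\neq0$.} Updates occur only when ${\rm dist}(z_2^k,K)>0$. At Step 5 this is ${\rm dist}(z_2^k,K)>\varepsilon_k>0$; at Step 7 the condition $[{\rm dist}(z_2^k,K)]^{-1}<\beta_k^{-1}\|v^k\|^{-1}$ forces ${\rm dist}(z_2^k,K)>0$. Hence $z_2^k\notin K$ and $|\!\lVert\widehat{\xi}_2^k\rVert\!|_*=1$ for all $k\in\mathcal{K}$. Taking limits gives $|\!\lVert\overline{\xi}\rVert\!|_*=1$, so $\overline{\xi}\ne0$, contradicting the extended CQ at the accumulation point $\overline{x}$.

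The delicate points are these:
\begin{itemize}
\item Ensuring $z_2^k\to g(\overline{x})$ also for Step 5 updates. We use $\varepsilon_k\to0$ if infinitely many Step 5 iterations occur. If $\varepsilon_k$ stays bounded away from zero, Step 5 happens finitely often and only Step 7 updates matter.
\item Handling a possibly non-single-valued $\mathcal{P}_K$, which the closed-graph argument covers.
\end{itemize}
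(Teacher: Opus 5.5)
Your proof is correct and follows essentially the same route as the paper's: argue by contradiction and separate penalty updates made in Step 5 from those made via \eqref{update-rho}. In the Step 5 case, infinitely many such updates force $\varepsilon_k\to0$, hence $v^k\to0$ and $z_2^k\to g(\overline{x})$; in the \eqref{update-rho} case, $\rho_k\beta_k\|v^k\|<1$ forces $v^k\to0$. You then normalize $\xi_2^k/\rho_k$ to a unit dual-norm normal vector at a projection of $z_2^k\notin K$, divide the stationarity residual by $\rho_k$, and pass to the limit to contradict the extended CQ.

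Your explicit checks are exactly what the paper's argument relies on, only stated more carefully: that ${\rm dist}(z_2^k,K)>0$ at every update, and that no $\rho$-dependent bound from Lemma~\ref{bound-lemma} is needed.
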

\begin{proof}
Suppose, on the contrary, that $\{\rho_k\}_{k\in\mathbb{N}}$ is unbounded. By the update rules for $\rho_k$ in Step 5 and \eqref{update-rho}, we proceed with the proof by considering the following two cases.
 
\noindent
{\bf Case 1:} there exists an infinite set $\mathcal{K}\subset\mathbb{N}$ such that $\{\rho_k\}_{k\in\mathcal{K}}$ is generated by Step 5 and $\lim_{\mathcal{K}\ni k\to\infty}\rho_k=\infty$. Now, for each $k\in\!\mathcal{K}$, $(v^k,z^k,\xi^k)\in T_{\!x^k}\mathcal{M}\times\mathbb{Z}\times\partial\psi_k(z^k)$ and satisfies \eqref{inexact-cond1}, and ${\rm dist}(z_2^k,K)>\varepsilon_k$. Further, $\lim_{\mathcal{K}\ni k\to\infty}\varepsilon_k=0$, which together with \eqref{inexact-cond1} yields 
\begin{equation}\label{temp-limit}
 \lim_{\mathcal{K}\ni k\to\infty}\beta_kv^k=0\ \ {\rm and}\ \ \lim_{\mathcal{K}\ni k\to\infty}G_k(v^k)-z^k=0.
\end{equation}
Since $\beta_k\ge\beta_{\min}$, the first limit in \eqref{temp-limit} implies $\lim_{\mathcal{K}\ni k\to\infty}v^k=0$. Since $\{x^k\}_{k\in\mathbb{N}}\subset\mathcal{M}$ is bounded by Assumption \ref{ass1}(i), if necessary by taking a subsequence, we assume that $\lim_{\mathcal{K}\ni k\to\infty} x^k=x^*\in\mathcal{M}$. Combining with the second limit in \eqref{temp-limit} and the definition of $G(x^k,\cdot)$ yields $\lim_{\mathcal{K}\ni k\to\infty}z_2^k=g(x^{*})$. Recall that $\nabla\ell_k(v^k)=\nabla\!f(x^k)+\zeta^k$ with $\zeta^k\in\partial(-h)(x^k)$ for each $k\in\mathbb{N}$.  Assumption \ref{ass1}(i) and \cite[Theorem 9.13]{RW98} imply  that $\{\nabla\ell_k(v^k)\}_{k\in\mathcal{K}}$ is bounded. 
For each $k\in\mathcal{K}$, since $\xi^k=(\xi_1^k,\xi_2^k)\in\partial\vartheta(z_1^k)\times\rho_k\partial{\rm dist}(z_2^k,K)$ and $z^k_2\notin K$, it follows from Lemma \ref{subdiff-dist} that $\rho_{k}^{-1}\xi_2^k\in\mathcal{N}_K(\mathcal{P}_K(z^k_2))$ and $|\!\lVert\rho_{k}^{-1}\xi_2^k|\!\lVert=1$ for all $k\in\mathcal{K}$. If necessary by taking a further subsequence, we assume that $\lim_{\mathcal{K}\ni k\to\infty} \rho_{k}^{-1}\xi_2^k=\xi_2^*\in\mathcal{N}_K[\mathcal{P}_K(g(x^{*}))]$ with $|\!\lVert\xi_2^*\lVert\!|_*=1$, where the inclusion follows from the outer semicontinuity of $\mathcal{P}_{K}(\cdot)$ and $\lim_{\mathcal{K}\ni k\to\infty}z_2^k=g(x^{*})$. Moreover, Assumption \ref{ass1}(i) and \cite[Theorem 9.13]{RW98} imply  that $\{\xi^k_1\}_{k\in\mathcal{K}}$ is bounded. Now dividing the both sides of \eqref{inexact-cond1} by $\rho_k$ gives
\(
  \|\mathcal{P}_{T_{\!x^k}\mathcal{M}}[\rho_k^{-1}(\nabla \ell_k(v^k)+\beta_kv^k+\xi_1^k+\nabla g(x^k)\xi_2^k)]\|\le\rho_k^{-1}\varepsilon_{k}.
\)
Passing to the limit $\mathcal{K}\ni k\to\infty$ in this inequality and using the continuity of $\mathcal{P}_{T_{x}\mathcal{M}}$ as a function of $x\in\mathcal{M}$ yields $\mathcal{P}_{T_{\!x^{*}}\mathcal{M}}(\nabla g(x^{*})\xi_2^*)=0$. Thus, there exists $0\ne\xi_2^*\in\mathbb{Y}$ such that 
$\xi_2^*\in\mathcal{N}_{K}(\mathcal{P}_K(g(x^{*})))$ and $\nabla g(x^{*})\xi_2^*\in N_{x^*}\mathcal{M}$, a contradiction to the extended CQ. 

\noindent
{\bf Case 2:} there exists an infinite set $\mathcal{K}\subset\mathbb{N}$ such that $\{\rho_k\}_{k\in\mathcal{K}}$ is generated by Step 8. Then, for each $k\in\mathcal{K}$, $(v^k,z^k,\xi^k)\in T_{\!x^k}\mathcal{M}\times\mathbb{Z}\times\partial\psi_k(z^k)$ satisfying \eqref{model-descent}-\eqref{inexact-cond2}, and  
\begin{equation*}
 \max\{\rho_k,[{\rm dist}(z_2^k,K)]^{-1}\}<\beta_{k}^{-1}\|v^k\|^{-1}.
\end{equation*}
This inequality implies that $\beta_k\|v^k\|<\rho_k^{-1}$ for each $k\in\mathcal{K}$. Together with $\beta_k\ge\beta_{\min}>0$ for each $k\in\mathcal{K}$ and $\lim_{\mathcal{K}\ni k\to\infty}\rho_k=\infty$, it then follows that $\lim_{\mathcal{K}\ni k\to\infty}v^k=0$. Similar to Case 1, we may assume that $\lim_{\mathcal{K}\ni k\to\infty} x^k=x^*\in\mathcal{M}$. Using the first inequality of \eqref{inexact-cond2} and the definition of $G(x^k,\cdot)$, we have $\lim_{\mathcal{K}\ni k\to\infty} z^k=z^*=(x^*,g(x^*))$. For each $k\in\mathcal{K}$, from the second inequality in \eqref{inexact-cond2}, there exists $s^k\in N_{x^k}\mathcal{M}$ such that
\begin{equation*}
\|\nabla\ell_k(v^k)+\beta_kv^k+\xi_1^k+\rho_k\nabla g(x^k)\xi_2^k+s^k\|\le b_k\|v^k\|,
\end{equation*}
where $\xi_1^k\in \partial\vartheta(z_1^k)$ and $\xi_2^k\in\partial {\rm dist}(z^k_2,K)$. 
Dividing the last inequality by $\rho_k$, we obtain
\begin{equation}\label{Qvbar}
 \big\|\rho_k^{-1}[\nabla \ell_k(v^k)+\beta_kv^k+\xi_1^k+\nabla g(x^k)\xi_2^k+s^k\big\|\le \rho_k^{-1}b_k\|v^k\|.
\end{equation}
Following the same arguments as those for Case 1, the subsequence $\{\nabla\ell_k(v^k)\}_{k\in\mathcal{K}}$ is bounded, and $\lim_{\mathcal{K}\ni k\to\infty} \rho_k^{-1}\xi_2^k=\xi_2^*\in\mathcal{N}_K[\mathcal{P}_K(g(x^{*}))]$ with $|\!\lVert\xi_2^*\lVert\!|_*=1$ if necessary by taking a subsequence. Note that $s^k/\rho_{k}\in N_{x^k}\mathcal{M}$ and $b_k\le b_{\max}$. Combining the outer semicontinuity of $N_{x}\mathcal{M}$ with the continuity of $\nabla g(x)$, we obtain from \eqref{Qvbar} that $0\in \nabla g(x^*)\xi^*+N_{x^*}\mathcal{M}$. Thus, there exists a nonzero $\xi_2^*\in\mathbb{Y}$ such that 
$\xi_2^*\in\mathcal{N}_{K}(\mathcal{P}_K(g(x^{*})))$ and $\nabla g(x^{*})\xi_2^*\in N_{x^*}\mathcal{M}$, a contradiction to the extended CQ.  
\end{proof}

\medskip
\noindent
{\bf\large Appendix C.}

\begin{aproposition}\label{prop:uniform-kkt}
Suppose that $\mathcal M=\mathbb R^n,\mathbb{Y}=\mathbb{R}^m$, $\vartheta$ is polyhedral convex, ${\rm dist}(\cdot,K)$ is PLQ, and $g$ is affine. Then, under Assumption~\ref{ass1}, there exists a constant
$\kappa>0$ such that \[
{\rm dist}\bigl(w,\mathcal R_k^{-1}(0)\bigr)
 \le \kappa\|\mathcal R_k(w)\|
 \]
for each $k\in\mathbb{N}$ and
every $w=(v,z,\lambda)\in \mathbb{R}^n\times\mathbb{R}^{n+m}\times\mathbb{R}^{n+m}$ satisfying $\lambda\in\partial\psi_{\rho_k}(z)$.
\end{aproposition}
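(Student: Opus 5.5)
The plan is to exploit the fact that, in this setting, the KKT system of each subproblem \eqref{Esubprob} is a piecewise polyhedral system, and then to show that only finitely many polyhedral pieces occur, all sharing a common Hoffman-type constant. With $\mathcal M=\mathbb R^n$ we have $T_{x^k}\mathcal M=\mathbb R^n$, so the first block of $\mathcal R_k$ reduces to $\nabla\ell_k(v)+\beta_kv+\mathcal A_k^*\lambda$. Here $\nabla\ell_k(v)=\nabla f(x^k)+\zeta^k=:q^k$ is constant in $v$. Since $g$ is affine, $\mathcal A_k=\mathcal A:=(\mathcal I;g')$ does not depend on $k$, and $G(x^k,0)=(x^k;g(x^k))=:c^k$. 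Moreover, $\rho_k$ takes only finitely many values $\rho^{(1)},\dots,\rho^{(r)}$ by Assumption \ref{ass1}(ii) and the update rule.

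For $w=(v,z,\lambda)$ with $\lambda\in\partial\psi_{\rho_k}(z)$, I will first bound the second block of the residual, $z-\mathcal P_{\psi_{\rho_k}}(z+\lambda)$. Because $\lambda\in\partial\psi_{\rho_k}(z)$ means $z=\mathcal P_{\psi_{\rho_k}}(z+\lambda)$, this block vanishes. So $\|\mathcal R_k(w)\|^2=\|q^k+\beta_kv+\mathcal A^*\lambda\|^2+\|z-\mathcal Av-c^k\|^2$. The solution set is
\[
\mathcal R_k^{-1}(0)=\{\overline v^k\}\times\{\overline z^k\}\times\Lambda_k,\qquad \Lambda_k:=\{\lambda\in\partial\psi_{\rho_k}(\overline z^k)\mid \mathcal A^*\lambda=-q^k-\beta_k\overline v^k\}.
\]

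The core step handles the graph $\operatorname{gph}\partial\psi_{\rho}$ for each of the finitely many values of $\rho$. Since $\vartheta$ is polyhedral and ${\rm dist}(\cdot,K)$ is PLQ, $\psi_\rho$ is PLQ, so its subdifferential graph is a finite union of polyhedra $P^\rho_1,\dots,P^\rho_s$ \cite{Sun1986}. The perturbed KKT system then takes the form
\[
(z,\lambda)\in P^\rho_j,\qquad \beta v+\mathcal A^*\lambda=-q+e_1,\qquad \mathcal Av-z=c+e_2,
\]
with data $(\beta,q,c)$. I do not want to treat $\beta$ as part of the matrix, because then the coefficient matrix would vary with $k$. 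To avoid this, I will eliminate $v$ via $v=\beta^{-1}(-q+e_1-\mathcal A^*\lambda)$. This leaves the system $(z,\lambda)\in P^\rho_j$ and $z+\beta^{-1}\mathcal A\mathcal A^*\lambda=-\beta^{-1}\mathcal Aq-c+(\text{error})$, whose matrix still depends on $\beta\in[\beta_{\min},\overline\beta]$. Here I would follow the paper's route: use the uniform quadratic growth of the dual objective $\Phi_k$. On bounded sets, the dual function has the form $\frac1{2\beta_k}\|\mathcal A^*\lambda+q^k\|^2-\langle\lambda,c^k\rangle+\psi^*_{\rho_k}(\lambda)$, where $\psi^*_{\rho}$ is PLQ. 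Standard error-bound theory for $\frac12\|E\lambda\|^2+\langle b,\lambda\rangle+\text{PLQ}$ gives quadratic growth ${\rm dist}(\lambda,\Lambda_k)^2\le C(\Phi_k(\lambda)-\Phi_k^*)$. The growth constant is governed by a Hoffman constant of a system with fixed matrices built from $E=\mathcal A^*$ and the pieces of $\psi_\rho^*$, while the right-hand sides vary with $(\beta_k,q^k,c^k)$. The argument uses that $\mathcal A^*\lambda$ is constant on $\Lambda_k$, and rescaling by $\beta_k\in[\beta_{\min},\overline\beta]$ changes the constant only by a bounded factor. The data $q^k,c^k$ are bounded by Assumption \ref{ass1}(i) and Lemma \ref{bound-lemma}. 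Together, these give a constant independent of $k$ on a fixed bounded region.

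Next I will convert dual growth into the primal-dual error bound. From the residual I build $\tilde v:=-\beta_k^{-1}(q^k+\mathcal A^*\lambda)$, so $\|v-\tilde v\|\le\beta_{\min}^{-1}\|\mathcal R_k(w)\|$. I also need $\Phi_k(\lambda)-\Phi_k^*\le C'\|\mathcal R_k(w)\|\cdot(\text{bounded})$, or better a quadratic bound. For that I will use $\lambda\in\partial\psi_{\rho_k}(z)$, i.e.\ $z\in\partial\psi^*_{\rho_k}(\lambda)$, which shows $\mathcal A\tilde v+c^k-z$ is an element of $\partial\Phi_k(\lambda)$ of size $O(\|\mathcal R_k(w)\|)$. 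Combined with growth and convexity, this gives ${\rm dist}(\lambda,\Lambda_k)\le C''\|\mathcal R_k(w)\|$. Strong convexity in $v$ then gives $\|\tilde v-\overline v^k\|\le\beta_{\min}^{-1}\|\mathcal A^*\|{\rm dist}(\lambda,\Lambda_k)$, and finally $\|z-\overline z^k\|\le\|z-\mathcal Av-c^k\|+\|\mathcal A\|\|v-\overline v^k\|$.

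The main obstacle is that these bounds are local, i.e.\ valid only on bounded sets, while the statement is claimed for every $w$. For $w$ with $\|\mathcal R_k(w)\|$ large, a separate argument is needed. I would try PLQ/polyhedral global growth, which holds along the linear pieces, and otherwise note that ${\rm dist}(w,\mathcal R_k^{-1}(0))$ grows at most linearly while $\|\mathcal R_k(w)\|$ is bounded below away from the solution set. The uniformity of the Hoffman constant with $\beta_k$ in the matrix is the step I expect to require the most care.
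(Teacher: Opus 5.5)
Your final conversion step coincides with the paper's Step 3. That step forms $\tilde v$, exhibits an element of $\partial\Phi_k(\lambda)$ of size $O(\|\mathcal R_k(w)\|)$, and then bounds $v$ and $z$; note that the element is $z-\mathcal A\tilde v-c^k$, not its negative.

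The genuine gap is the one you flag yourself as the main obstacle, and your suggested remedy would not close it. If ${\rm dist}(w,\mathcal R_k^{-1}(0))$ grows linearly while $\|\mathcal R_k(w)\|$ is merely bounded below, the ratio is unbounded and no linear error bound follows.

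The missing idea is that admissibility already confines the dual variable to a fixed compact set. The condition $\lambda\in\partial\psi_{\rho_k}(z)$ is equivalent to $z\in\partial\psi_{\rho_k}^*(\lambda)$, so $\lambda\in{\rm dom}\,\psi_{\rho_k}^*$, and this domain is bounded:
\begin{itemize}
\item Since $\vartheta$ is finite-valued polyhedral convex, ${\rm dom}\,\vartheta^*$ is a polytope.
\item Since ${\rm dist}(\cdot,K)$ is PLQ, $K=(\partial\,{\rm dist}(\cdot,K))^{-1}(0)$ is a polyhedron. Hence ${\rm dist}(\cdot,K)^*=\sigma_K+\delta_{\mathbb{B}_*}$ is polyhedral with polytope domain.
\item Therefore $\psi_\rho^*(\lambda)=\vartheta^*(\lambda_1)+\rho\,{\rm dist}(\cdot,K)^*(\lambda_2/\rho)$ is polyhedral, not merely PLQ, and has bounded domain.
\item Because $\rho_k$ takes only finitely many values, every admissible $\lambda$ lies in one of finitely many polytopes.
\end{itemize}
Quadratic growth of $\Phi_k$ is therefore needed only on ${\rm dom}\,\psi^*_{\rho_k}$. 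The possibly unbounded directions in $(v,z)$ are harmless, because your primal estimates are linear and hold globally.

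The same facts are what make the growth constant uniform in $k$; you assert this via ``standard error-bound theory'' without giving a mechanism. The paper's mechanism, in your notation, is as follows.
\begin{itemize}
\item Write $c^k=\mathcal A x^k+b$ with the fixed vector $b:=(0;g(0))$. The term $\langle\lambda,\mathcal Ax^k\rangle=\langle x^k,\mathcal A^*\lambda\rangle$ is absorbed into the smooth part, so $\lambda$ enters the smooth part only through $\mathcal A^*\lambda$.
\item Lift the dual to the polyhedron ${\rm epi}\,\psi^*_{\rho_k}$. The dual solution set becomes $\{(\lambda,t)\in{\rm epi}\,\psi^*_{\rho_k}\mid \mathcal A^*\lambda=u^k,\ t-\langle b,\lambda\rangle=\varpi_k\}$, where $u^k$ is the common value of $\mathcal A^*\lambda$ that you observed.
\item The coefficient matrix of this system depends only on $\rho_k$. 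The quantities $\beta_k$, $x^k$ and $q^k$ enter only through the right-hand side $(u^k,\varpi_k)$, so a single Hoffman constant serves all $k$. No elimination of $v$ and no $\beta_k$-dependent matrix ever appears.
\end{itemize}
Turning the Hoffman bound into quadratic growth uses, with $\Delta_\lambda:=\lambda-\lambda^*$ and $\Delta_t:=\psi^*_{\rho_k}(\lambda)-t^*$, the two estimates $\|\mathcal A^*\Delta_\lambda\|^2\le 2\overline\beta(\Phi_k(\lambda)-\Phi_k^*)$ and $|\Delta_t-\langle b,\Delta_\lambda\rangle|\le\overline\omega_1\|\mathcal A^*\Delta_\lambda\|+(\Phi_k(\lambda)-\Phi_k^*)$. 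Here $\overline\omega_1$ is a uniform bound on $\|\beta_k^{-1}(u^k+q^k)-x^k\|$. The squared term $(\Phi_k(\lambda)-\Phi_k^*)^2$ is dominated by a multiple of $\Phi_k(\lambda)-\Phi_k^*$ only because $\Phi_k-\Phi_k^*$ is uniformly bounded on these polytopes. So boundedness is essential here as well. If $\psi^*_\rho$ had quadratic pieces, its epigraph would not be polyhedral and this Hoffman step would not apply as stated.
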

\begin{proof}
Let $d_K\!:={\rm dist}(\cdot,K)$. Since $d_K$ is convex PLQ, its subdifferential mapping $\partial d_K$ is piecewise polyhedral \cite{Sun1986}, so $K=\mathop{\arg\min}_{y\in\mathbb{R}^m}d_K(y)=(\partial d_K)^{-1}(0)$ is a finite union of polyhedra. Since $K$ is convex, it is a polyhedron. The conjugate of $d_K$ satisfies $d_K^*=\sigma_K\!+\delta_{\mathbb B_*}$, where $\sigma_K$ is the support function of $K$ and $\mathbb{B}_*\!:=\!\{y\in\mathbb{R}^m\mid|\!\lVert u\lVert\!|_*\le 1\}$. Moreover, $d_K^*$ is convex PLQ, so ${\rm dom}\,d_K^*={\rm dom}\,\sigma_K\cap\mathbb{B}_*$ is a finite union of polyhedra. Since ${\rm dom}\,d_K^*$ is convex, it is a polyhedron. It is also bounded and contains the origin, and hence is a nonempty polytope. Since $K$ is a nonempty polyhedron, $\sigma_K$ is polyhedral convex. Consequently, $d_K^*=\sigma_K+\delta_{{\rm dom}\,d_K^*}$ is polyhedral convex. Since $\vartheta$ is a finite polyhedral convex function, its conjugate $\vartheta^*$ is polyhedral convex and 
${\rm dom}\,\vartheta^*$ is a nonempty polytope. Therefore, for
any $\rho>0$ and
$\lambda=(\lambda_1,\lambda_2)\in
\mathbb{R}^n\times\mathbb{R}^m$,
\begin{equation}\label{psi-star-new}
 \psi_\rho^*(\lambda)
 =
 \vartheta^*(\lambda_1)
 +\rho d_K^*(\lambda_2/\rho).
\end{equation}
Consequently, $\psi_\rho^*$ is polyhedral convex and
${\rm dom}\,\psi_\rho^*={\rm dom}\,\vartheta^*\times\rho{\rm dom}\,d_K^*$ is a nonempty polytope. Since $g:\mathbb{R}^n\to\mathbb{R}^m$ is affine, we may write
$g(x)=Bx+d$ for some $B\in\mathbb{R}^{m\times n}$ and
$d\in\mathbb{R}^m$. Fix any $k\in\mathbb N$. We divide the
proof into the following three steps.

\noindent
{\bf Step 1: to specify subproblem \eqref{Esubprob} and its dual problem.}  Let 
$A:=[I_n\ \ B^{\top}]^{\top}\in\mathbb{R}^{(n+m)\times n}$ and $b=(0;d)\in\mathbb{R}^{n+m}$. Then, $G(x^k,v)=Ax^k+b+Av$ for $v\in\mathbb{R}^n$. Consequently, the $k$-th subproblem involved in Algorithm \ref{PenAl} takes the form of 
\begin{equation}\label{app-Esubprob}
\min_{v\in\mathbb{R}^n,z\in\mathbb{Z}}\Big\{\ell_k(v)+\psi_{\rho_{k}}(z)+\frac{\beta_k}{2}\|v\|^2\ \ {\rm s.t.}\ \ Av-z+Ax^k+b=0\Big\}.
\end{equation}	
An elementary calculation gives the dual problem of \eqref{app-Esubprob} in the minimization form  
\begin{equation}\label{app-Dsubprob}
 \min_{\lambda\in\mathbb{R}^{m+n}}\Phi_k(\lambda)
:=\psi_{\rho_k}^*(\lambda)+\frac{1}{2\beta_k}\|A^{\top}\lambda+c_k\|^2
-\langle\lambda,Ax^k+b\rangle-C_k,
\end{equation}
where $c_k:=\nabla\!f(x^k)+\zeta^k$ and $C_k:=f(x^k)-h(x^k)$. The feasible set of \eqref{app-Esubprob} is polyhedral, and its objective function is convex quadratic and coercive over the feasible set. Hence, both \eqref{app-Esubprob} and \eqref{app-Dsubprob} admit optimal solutions, with equal optimal values. Moreover, the strong convexity of \eqref{app-Esubprob} w.r.t. $v$ ensures the uniqueness of its optimal $v$-component. Together with the constraint $z=Av+Ax^k+b$, its optimal $z$-component is also unique.

\noindent
{\bf Step 2: to establish uniform quadratic growth of $\Phi_k$.}
Since $\psi_{\!\rho}^*$ is polyhedral convex, its epigraph
$\widehat{D}_{\!\rho}:={\rm epi}\,\psi_{\!\rho}^*=\left\{(\lambda,t)\in \mathbb{R}^{n+m}\times\mathbb{R}\mid t\ge\psi_\rho^*(\lambda)\right\}$ is a nonempty polyhedron. Consequently, the dual problem \eqref{app-Dsubprob} is equivalent to
\begin{equation}\label{lifted-dual}
 \min_{(\lambda,t)\in\widehat{D}_{\rho_k}}
 \Big\{t+\phi_k(A^\top\lambda)-\langle b,\lambda\rangle-C_k\Big\}\ \ {\rm with}\ \ \phi_k(\cdot):=\frac{1}{2\beta_k}\|\cdot+c_k\|^2-\langle x^k,\cdot\rangle.
\end{equation}
Let $\widehat\Lambda_k^*$ and $\Lambda_k^*$ denote the solution
sets of \eqref{lifted-dual} and \eqref{app-Dsubprob},
respectively, and let $\Phi_k^*$ be their common optimal value. Since $\phi_k$ is $\beta_k^{-1}$-strongly convex, there exists a
unique $u^k\in\mathbb R^n$ such that $A^\top\lambda=u^k$ for all $(\lambda,t)\in\widehat{\Lambda}_k^*$. Since the objective value is constant over $\widehat{\Lambda}_k^*$, there exists a unique $\varpi_k\in\mathbb R$ such that $t-\langle b,\lambda\rangle=\varpi_k$ for all $(\lambda,t)\in\widehat\Lambda_k^*$. Conversely, every $(\lambda,t)\in\widehat{D}_{\!\rho_k}$ satisfying these two equalities is optimal. Hence,
\begin{equation}\label{lift-solution-set}
 \widehat\Lambda_k^*
 =\left\{(\lambda,t)\in\widehat{D}_{\!\rho_k}\mid A^\top\lambda=u^k,\, 
 t-\langle b,\lambda\rangle=\varpi_k\right\}.
\end{equation}
Furthermore, for each fixed $\lambda\in{\rm dom}\,\psi_{\rho_k}^*$, the objective function in \eqref{lifted-dual} is strictly increasing in $t$. Hence, $\lambda\in\Lambda_k^*$ if and only if $\bigl(\lambda,\psi_{\rho_k}^*(\lambda)\bigr)\in\widehat\Lambda_k^*$, and consequently, $\pi_{\lambda}(\widehat\Lambda_k^*)=\Lambda_k^*$, where $\pi_{\lambda}$ denotes the coordinate projection onto the $\lambda$-component.

By Remark~\ref{remark-ass1}(b), $\rho_k\le\overline\rho$ for all $k$. Since the penalty parameter is either unchanged or multiplied by $\tau>1$ at each update, $\{\rho_k\}_{k\in\mathbb N}$ takes only finitely many distinct values. Hence,
$\{\widehat{D}_{\!\rho_k}\}_{k\in\mathbb N}$ contains only finitely
many distinct polyhedra. For each such polyhedron, write
$\widehat{D}_\rho\!:=\big\{(\lambda,t)\in\mathbb R^{n+m+1}
 \mid H_\rho(\lambda,t)\le d_\rho\big\}$. Since $\widehat\Lambda_k^*\ne\emptyset$, Hoffman's error bound
applied to the linear system in \eqref{lift-solution-set} yields
a constant $\varsigma_k>0$ such that
\[
 {\rm dist}\bigl((\lambda,t),\widehat\Lambda_k^*\bigr)
 \le \varsigma_k\Big(\big\|\max\{0,H_{\rho_k}(\lambda,t)\!-d_{\rho_k}\}\big\|^2+\|A^\top\lambda\!-\!u^k\|^2+|t-\!b^\top\lambda\!-\varpi_k|^2\Big)^{1/2}.
\]
The Hoffman constant $\varsigma_k$ depends only on the coefficient matrices
and not on the right-hand sides. Since $H_{\rho_k}$ is drawn
from a finite collection, the constants $\varsigma_k$ can be
chosen uniformly in $k$. Thus, there exists $\varsigma>0$,
independent of $k$, such that
\begin{equation}\label{lifted-Hoffman}
 {\rm dist}\bigl((\lambda,t),\widehat\Lambda_k^*\bigr)^2
 \le\varsigma^2\left(\|A^\top\lambda-u^k\|^2+|t-b^\top\lambda-\varpi_k|^2
 \right)\ \ {\rm for\ all}\ (\lambda,t)\in\widehat D_{\rho_k}.
\end{equation}

Now fix any $\lambda\in{\rm dom}\,\psi_{\rho_k}^*:=D_{\rho_k}$ and set $t:=\psi_{\rho_k}^*(\lambda)$. Then $(\lambda,t)\in\widehat D_{\rho_k}$. Fix any
$(\lambda^*,t^*)\in\widehat\Lambda_k^*$ and write $\Delta_\lambda:=\lambda-\lambda^*$ and $\Delta_t:=t-t^*$. The first-order optimality condition of \eqref{lifted-dual} at $(\lambda^*,t^*)$ yields $\left\langle
 \beta_k^{-1}(u^k+c_k)-x^k, A^\top\Delta_\lambda\right\rangle
 +\Delta_t-\langle b,\Delta_\lambda\rangle \ge0$. Together with $A^\top\lambda^*=u^k$ and $t^*-\langle b,\lambda^*\rangle=\varpi_k$, it follows that
\begin{align}\label{growth}
 \Phi_k(\lambda)-\Phi_k^*
 &=\big\langle
 \beta_k^{-1}(u^k+c_k)-x^k,A^\top\Delta_\lambda\big\rangle
 +\Delta_t-\langle b,\Delta_\lambda\rangle+\frac{1}{2\beta_k}\|A^\top\Delta_\lambda\|^2\nonumber\\
 &\ge\big\langle
 \beta_k^{-1}(u^k+c_k)-x^k,A^\top\Delta_\lambda\big\rangle
 +\Delta_t-\langle b,\Delta_\lambda\rangle\ge 0. 
\end{align}
By Assumption~\ref{ass1} and Lemma~\ref{bound-lemma},
$\{x^k\}_{k\in\mathbb N}$ and $\{c_k\}_{k\in\mathbb N}$ are
bounded, while $0<\beta_{\min}\le\beta_k\le\overline\beta$ for all $k\in\mathbb N$ by Lemma~\ref{lemma1-complexity}(ii). Moreover,
$\{D_{\rho_k}\}_{k\in\mathbb N}$ contains only finitely many
polytopes. On its domain, each $\psi_{\rho_k}^*$ is the maximum of
finitely many affine functions. It is therefore bounded on the
polytope $D_{\rho_k}$. Since $\rho_k$ takes only finitely many
values, these bounds can be chosen uniformly in $k$.
Also, it follows from $u^k\in A^\top D_{\rho_k}$ that $\{u^k\}_{k\in\mathbb{N}}$ is bounded. Consequently, there exist $\overline\omega_1>0$ and $\overline\omega_\Phi>0$, independent of $k$, such that
\[
 \left\|\beta_k^{-1}(u^k+c_k)-x^k\right\|
 \le\overline\omega_1
 \quad\text{and}\quad
 0\le
 \Phi_k(\lambda)-\Phi_k^*
 \le\overline\omega_\Phi
 \quad\text{for all }\lambda\in D_{\rho_k}.
\]
It then follows from \eqref{growth} that $\|A^\top\Delta_\lambda\|^2
 \le 2\overline\beta\bigl(\Phi_k(\lambda)-\Phi_k^*\bigr)$, and 
\begin{align*}
 |\Delta_t-\langle b,\Delta_\lambda\rangle|^2
 &\le 2\overline\omega_1^2\|A^\top\Delta_\lambda\|^2+2\bigl(\Phi_k(\lambda)-\Phi_k^*\bigr)^2\le\left(4\overline\beta\overline\omega_1^2
 +2\overline\omega_\Phi\right)\bigl(\Phi_k(\lambda)-\Phi_k^*\bigr).
\end{align*}
Substituting these estimates into \eqref{lifted-Hoffman} yields
${\rm dist}\bigl((\lambda,t),\widehat\Lambda_k^*\bigr)^2\le C_{\rm err}\bigl(\Phi_k(\lambda)-\Phi_k^*\bigr)$, where
$C_{\rm err}:=\varsigma^2\left[2\overline\beta(1+2\overline\omega_1^2)+2\overline\omega_\Phi\right]$. Together with $\pi_{\lambda}(\widehat\Lambda_k^*)=\Lambda_k^*$, this gives
\begin{equation}\label{uniform-QG}
 {\rm dist}(\lambda,\Lambda_k^*)^2
 \le C_{\rm err}\bigl(\Phi_k(\lambda)-\Phi_k^*\bigr)
 \quad\text{for all }\lambda\in D_{\rho_k}.
\end{equation}

\noindent
{\bf Step 3: to derive uniform error bound for the KKT residual mapping.} 
Fix any $w=(v,z,\lambda)\in\mathbb{R}^n\times\mathbb{R}^{n+m}\times\mathbb{R}^{n+m}$ satisfying $\lambda\in\partial\psi_{\rho_k}(z)$. The definition of $\mathcal{R}_k$ in \eqref{KKT-res} and the assumption that $\mathcal M=\mathbb{R}^n$ imply that
\[
 [\mathcal{R}_k(w)]_1=\beta_kv+c_k+A^{\top}\lambda,[\mathcal{R}_k(w)]_2=z-\mathcal{P}_{\psi_{\rho_k}}(z+\lambda)=0\ \ {\rm and}\ \ [\mathcal{R}_k(w)]_3=z-Ax^k-b-Av. 
\]
Noting that $z\in\partial\psi_{\rho_k}^*(\lambda)$, which in particular implies $\lambda\in D_{\rho_k}$, we obtain 
\[
 [\mathcal{R}_k(w)]_3+\beta_k^{-1}A[\mathcal{R}_k(w)]_1=z-(Ax^k+b)+\beta_k^{-1}A(c_k+A^{\top}\lambda)\in\partial\Phi_{k}(\lambda).
\]
Let $\lambda^*\in\Lambda_k^*$ be such that $\|\lambda-\lambda^*\|={\rm dist}(\lambda,\Lambda_k^*)$. From \eqref{uniform-QG} and the above inclusion, it holds 
\(
 \|\lambda-\lambda^*\|^2
 \le C_{\rm err}(\Phi_k(\lambda)-\Phi_k^*\bigr)\le C_{\rm err}\big\langle [\mathcal{R}_k(w)]_3+\beta_k^{-1}A[\mathcal{R}_k(w)]_1,\lambda-\lambda^*\big\rangle.
\)
Then,
\begin{equation}\label{lambda-bound}
  \|\lambda-\lambda^*\|\le C_{\rm err}\big[\|[\mathcal{R}_k(w)]_3\|+\beta_{\rm min}^{-1}\|A\|\|[\mathcal{R}_k(w)]_1\|\big].
\end{equation}
Let $v^*:=-\beta_k^{-1}(c_k+A^{\top}\lambda^*)$ and $z^*:=Av^*+Ax^k+b$.
Since $\lambda^*\in\Lambda_k^*$, the optimality condition of
\eqref{app-Dsubprob} at $\lambda^*$ gives $0\in\partial\psi_{\rho_k}^*(\lambda^*)+\beta_k^{-1}A(A^\top\lambda^*+c_k)-(Ax^k+b)$. This, by the definition of $v^*$, is equivalent to $z^*=Av^*+Ax^k+b\in\partial\psi_{\rho_k}^*(\lambda^*)$. Consequently, $(v^*,z^*,\lambda^*)\in\mathcal R_k^{-1}(0)$. Together with the equalities on $[\mathcal{R}_k(w)]_1$ and $[\mathcal{R}_k(w)]_3$, we get 
\begin{align*}
\|v-v^*\|&=\|-\beta_k^{-1}(c_k+A^{\top}\lambda^*)-\beta_k^{-1}[\mathcal{R}_k(w)]_1-\beta_k^{-1}(c_k+A^{\top}\lambda^*)\|\\
&\le\beta_{\min}^{-1}
\bigl(\|[\mathcal{R}_k(w)]_1\|+\|A\|\|\lambda-\lambda^*\|\bigr),\\
\|z-z^*\|&=\|Av^*+Ax^k+b-[\mathcal{R}_k(w)]_3-Ax^k-b-Av\|\\
&\le\|[\mathcal{R}_k(w)]_3\|+\|A\|\|v-v^*\|.
\end{align*} 
Combining these inequalities with \eqref{lambda-bound} and
using $[\mathcal R_k(w)]_2=0$, we obtain 
\[
 {\rm dist}\bigl(w,\mathcal R_k^{-1}(0)\bigr)
 \le\sqrt{L_\lambda^2+L_v^2+(1+\|A\|L_v)^2}\|\mathcal R_k(w)\|,
\]
where $L_\lambda:=C_{\rm err}\sqrt{1+\beta_{\min}^{-2}\|A\|^2}$ and $L_v:=\beta_{\min}^{-1}(1+\|A\|L_\lambda)$. The conclusion follows by the arbitrariness of $w=(v,z,\lambda)\in \mathbb{R}^n\times\mathbb{R}^{n+m}\times\mathbb{R}^{n+m}$. 
\end{proof}

\end{document}